\documentclass[11pt,a4paper,times]{amsart}
\usepackage{amsmath,amssymb,amsfonts,eucal}
\usepackage{amsthm}
\usepackage{bm}
\usepackage{bbm}
\usepackage{geometry}
\usepackage{array}
\usepackage{resizegather}
\usepackage{graphicx}
\usepackage{tabularx}
\usepackage{color}
\usepackage{cancel}
\usepackage[normalem]{ulem}
\usepackage[latin1]{inputenc}
\usepackage[shortlabels]{enumitem}
\usepackage{mathtools}
\usepackage[colorlinks=true]{hyperref}

\hypersetup{
    colorlinks=true,
    linkcolor=blue,  
    citecolor=blue,   
    urlcolor=blue     
}

\theoremstyle{plain}
\newtheorem{thm}{Theorem}[section]  
\newtheorem{thmx}{Theorem}
\newtheorem{corx}[thmx]{Corollary}

\newtheorem{lemma}[thm]{Lemma}
\newtheorem{proposition}{Proposition}[section]

\theoremstyle{definition}
\newtheorem{definition}[thm]{Definition}
 
\newtheorem{remark}{Remark}[section]

\numberwithin{equation}{section}

\DeclareMathOperator\id{id}

\allowdisplaybreaks

\newcommand{\ci}{c_{1}}
\newcommand{\bo}{{\rm O}}
\newcommand{\ds}{\displaystyle}
\newcommand{\dint}{\ds\int}
\newcommand{\dsum}{\ds\sum}

\newcommand{\eqskip}{ \vspace*{2mm}\\ }
\newcommand{\R}{\mathbb{R}}

\newcommand{\N}{\mathbb{N}}
\newcommand{\Z}{\mathbb{Z}}

\newcommand{\B}{{\rm B}} 

\newcommand{\Se}{\mathbb{S}}
\newcommand{\so}{{\rm o}}
\newcommand{\Cw}{{ C_W}}
\newcommand{\Cwnp}[2]{ C_{W #1,#2}}
\newcommand{\D}{\mathbb{D}}

\newcommand{\fr}[2]{\frac{\ds #1}{\ds #2}}
\newcommand{\pn}{p(n)}
\newcommand{\pin}[1]{p^*{#1}}
\newcommand{\pione}[1]{{p_1\!\!}^*{#1}}
\newcommand{\pnphi}{p_{\Phi}(n)}
\newcommand{\piphi}[1]{p^*_{\Phi}{#1}}
\newcommand{\lune}[2]{\mathbb{L}^{#1}_{#2}}
\newcommand{\luned}[2]{\raisebox{0.5pt}{${}$}\mathbb{L}^{#1}_{\pi/#2}}
\newcommand{\dbar}{d\hspace*{-0.08em}\bar{}\hspace*{0.1em}}

\newcommand{\modu}[1]{\;(\bmod\;{#1})}
\newcommand{\G}[1]{\mathcal{G}_{#1}}

\allowdisplaybreaks

\makeatletter
\@namedef{subjclassname@2020}{%
  \textup{2020} Mathematics Subject Classification}
\makeatother

\begin{document}

\title{Spherical  $n$-lunes: billiards and eigenvalues}

\author[P. Freitas]{Pedro Freitas}
\author[I. Salavessa]{Isabel Salavessa}
\address{Grupo de F\'{\i}sica Matem\'{a}tica, Instituto Superior T\'{e}cnico, Universidade de Lisboa, Av. Rovisco Pais, 1049-001 Lisboa, Portugal}
\email{pedrodefreitas@tecnico.ulisboa.pt}
\address{Grupo de F\'{\i}sica Matem\'{a}tica, Instituto Superior T\'{e}cnico, Universidade de Lisboa, Av. Rovisco Pais, 1049-001 Lisboa, Portugal}
\email{isabel.salavessa@tecnico.ulisboa.pt}

\date{\today}

\begin{abstract}
We characterise the periodic orbits of geodesic billiards on spherical lunes on $\Se^{n}$. In the case of angle openings
of the form $\pi/p$ for positive integer $p$ we fully determine their Dirichlet and Neumann spectra. We then show that
lunes with an angle opening smaller than $\pi$ which is not a rational multiple of $\pi$, or those with an angle opening
of the form $\pi/p$ for $p$ larger than one satisfy P\'{o}lya's conjecture eventually, independently of whether the
corresponding geodesic billiards satisfy the nonperiodicity condition or not. For lunes with an angle opening
$\pi/p$ we further provide a two-term asymptotic formula for the eigenvalues
based on sharp upper and lower bounds, together with a corresponding two-term counting function established using the
geoesic billiards approach. Finally,we give an explicit bound on $p$ in terms of the dimension ensuring the corresponding
lunes satisfy P\'{o}lya's conjecture for all eigenvalues.
\end{abstract}
\keywords{spherical lunes; geodesic billiards; Laplace operator; eigenvalues; P\'{o}lya's conjecture}
\subjclass[2020]{\text{Primary: 37C27; 35P15; Secondary: 35J05, 35J25, 35P20}}
\maketitle

\setcounter{tocdepth}{2}
  \tableofcontents

\section{Introduction}\label{INTROGERAL}
Let $\Se^n$ denote the $n-$dimensional sphere in $\R^{n+1}$
\[
 \Se^{n} = \left\{ x\in\R^{n+1}: \left\| x \right\| = 1\right\}
\]
with the canonical metric, and consider its open subsets consisting of the portion of
$\Se^n$ contained between two hyperplanes passing
through {the} origin. Denoting by $\theta$ the dihedral angle between these
hyperplanes, we will denote such a domain by $\lune{n}{\theta}$ and refer to it as the $n-$lune\footnote{In the
previous paper~\cite{fms} we referred to such domains as {\it wedges}, but {\it lunes} seems to be a more usual
designation.} with angle opening $\theta$. We are interested in the spectral properties of the Laplace-Beltrami
operator on such domains and their relations to certain properties of the corresponding geodesic billiards.
While these are specific domains, they display all sorts of possible extreme properties and are thus fundamental
in developing our understanding of spectral behaviour~\cite{horm,sava}.

To be more precise, let us consider the Dirichlet and Neumann eigenvalue problems for the Laplace-Beltrami
operator defined on a domain $\Omega\subset\Se^n$ with boundary $\partial \Omega$ by
\begin{equation}\label{eigprob}
\mbox{(D)}~ \left\{\begin{array}{ll}
\Delta u+\lambda u=0& x\in\Omega\\
u=0 & x\in\partial \Omega
\end{array}\right.
\quad\quad
\mbox{(N)}~\left\{\begin{array}{ll}
\Delta u+\mu u=0& x\in\Omega\\
\partial_{\nu}u=0 & x\in\partial \Omega.
\end{array}\right.
\end{equation}
The spectrum for such problems consists of eigenvalues with finite multiplicities which we will denote by
\[
 0 < \lambda_{1} \leq \lambda_{2} \leq \dots  \mbox{ and } 0=\mu_{0} \leq \mu_{1}\leq \dots,
\]
for Dirichlet and Neumann conditions, respectively, with both sequences converging to infinity as the order $k$
goes to infinity. Weyl's law for $\Omega$ reads as
\begin{equation}\label{weyl1}
\begin{array}{llll}
\lambda_{k}\left(\Omega\right) = \fr{4\pi^2 k^{2/n}}{\left(\omega_{n}|\Omega|\right)^{2/n}} + r_{D}(k) &
\mbox{ and } &
\mu_{k}\left(\Omega\right) = \fr{4\pi^2 k^{2/n}}{\left(\omega_{n}|\Omega|\right)^{2/n}} + r_{N}(k), & \mbox{ as } k\to\infty,
\end{array}
\end{equation}
for the Dirichlet and Neumann problems, respectively, where the remainder terms $r_{D}$ and $r_{N}$ are both $\so(k^{2/n})$ in this limit,
and $\omega_{n}$ denotes the
volume of the Euclidean $n-$ball of unit radius. Under certain geometric conditions, the second term in these
asymptotics is known to be of the form
\begin{equation}\label{2termweyl}
\lambda_{k}(\Omega) = \frac{\ds 4\pi^2k^{2/n}}{\ds \left(\omega_{n}|\Omega|\right)^{2/n}}
 + \frac{\ds 2\pi^2\omega_{n-1}|\partial\Omega|k^{1/n}}{\ds n \left(\omega_{n}|\Omega|\right)^{1+1/n}}+ {\rm o}(k^{1/n})
\end{equation}
and
\begin{equation}\label{2termweylNeumann}
 \mu_{k}(\Omega) = \frac{\ds 4\pi^2k^{2/n}}{\ds \left(\omega_{n}|\Omega|\right)^{2/n}}
 - \frac{\ds 2\pi^2\omega_{n-1}|\partial\Omega|k^{1/n}}{\ds n \left(\omega_{n}|\Omega|\right)^{1+1/n}}+ {\rm o}(k^{1/n}).
\end{equation}
The conditions for which these two-term asymptotics hold are related to the trajectories of the geodesic billiard
defined on $\Omega$, and are known in the literature as {\it nonperiodicity} and {\it nonblocking}
conditions~\cite{sava}. The first of these,
which will be one of the objects of our focus in this article, basically states that the set of periodic orbits of
the corresponding billiard has measure zero within the the set of all trajectories -- see~\cite{sava} for the
details.
These two-term asymptotic formulas then establish a relation between the nonperiodic behaviour of the geodesic
billiard and the asymptotics of eigenvalues. In particular, we see from~\eqref{2termweyl}
and~\eqref{2termweylNeumann} that, since the second term is of definite sign, a domain satisfying these conditions
will, for all {\it sufficiently large} $k$, have its eigenvalues above or below the first term in the Weyl
asymptotics for the Dirichlet and Neumann problems, respectively. This property relates to a famous open problem
in spectral theory, namely, the P\'{o}lya conjecture~\cite{poly1,poly2}, which states that the Dirichlet and
Neumann eigenvalues of the Laplacian on Euclidean domains are {\it always} above and below, respectively, the first
term in the Weyl asymptotics. More precisely,
\begin{equation}\label{polyaconject}
 \mu_{k}\left(\Omega\right) \leq \frac{\ds 4\pi^2k^{2/n}}{\ds \left(\omega_{n}|\Omega|\right)^{2/n}} \leq
 \lambda_{k}\left(\Omega\right), \hspace*{5mm} \mbox{ for all } k\in\N.
\end{equation}
We see that in the case where formulas~\eqref{2termweyl} and~\eqref{2termweylNeumann} hold, then P\'{o}lya's
conjecture is satisfied provided that $k$ is sufficiently large, say $k\geq k^{*}=k^{*}\left(\Omega\right)$ or,
as referred to in~\cite{fs}, that P\'{o}lya's conjecture is satisfied eventually.

In spite of this close connection between the existence of two-term asymptotics of the form~\eqref{2termweyl}
and~\eqref{2termweylNeumann}, and eigenvalues of the Laplacian satisfying P\'{o}lya's
conjecture~\eqref{polyaconject}, there exist (non-Euclidean) examples where the two-term
asymptotics~\eqref{2termweyl} and~\eqref{2termweylNeumann} do not
hold but P\'{o}lya's conjecture is satisfied, and examples for which the two-term asymptotics hold but the
conjecture is not satisfied. The $n-$hemisphere is an example of the former situation, where all geodesic orbits are
periodic but the conjecture is satisfied when $n$ equals two for both the Dirichlet and Neumann cases, while it
fails for the Dirichlet problem and still holds for the Neumann problem in higher dimensions~\cite{bb,blps,fms} --
due to this fact, and as explained below, in this article we will concentrate mostly on the Dirichlet problem. For
an example of the latter situation, consider a cylindrical surface of the form $\mathbb{S}^{1}\times(0,h)$ with $h$
sufficiently large, for which~\eqref{2termweyl} holds but where there exist low Dirichlet eigenvalues not
satisfying~\eqref{polyaconject}~\cite{fs} -- see also~\cite{frwa} for
a precise characterisation of the values of $h$ for which the conjecture holds.

It follows that for domains which do not satisfy the nonperiodicity condition but for which P\'{o}lya's conjecture
is satisfied, for all $k$ or eventually, this indicates that the remainder terms $r_{D}$ and $r_{N}$
in~\eqref{weyl1} are of one sign, for all $k$ or for sufficiently large $k$, respectively.

The main purpose of this article is to study this relation between the nonperiodicity condition and P\'{o}lya's
conjecture in the case where the  domain $\Omega$ is an $n-$lune, as described above. This is a natural continuation
of two previous papers, namely,~\cite{fms} and~\cite{fs}. In the former we studied this relation in the case of
hemispheres for which all orbits are periodic. In the case of Dirichlet boundary conditions we proved that in
dimensions higher than two the conjecture will, in fact, always fail for an infinite number of eigenvalues. This was
done by proving sharp inequalities satisfied by the eigenvalues and thus measuring in a precise way their
deviation from the first term in the Weyl asymptotics. More precisely, we proved the following two-sided
inequalities
\begin{equation}\label{uplowfms}
\hspace*{5mm} \left(n!\right)^{2/n}k^{2/n} -\fr{(n-1)(n-2)}{6}\leq \lambda_{k}\left(\Se^n_+\right) \leq
 \left(n!\right)^{2/n}(k-1)^{2/n} + 2\left(n!\right)^{1/n}(k-1)^{1/n}+n,
\end{equation}
valid for all positive integer $k$ and with both inequalities being asymptotically sharp, {in the sense that
in both cases there are sequences of eigenvalues behaving asymptotically like the first two terms in the given
bounds} -- see Theorems~B and~C in~\cite{fms} for details. This implies that the remainder term in the corresponding
Weyl asymptotics~\eqref{weyl1} which, in this case, reads as
\[
 \lambda_{k}\left(\Se^n_+\right) = \left(n!\right)^{2/n}k^{2/n} + r_{D}(k),
\]
is not symmetric in the sense that while it remains bounded from below, there is a strictly increasing sequence of
positive integers $k_{j}$ for which $r_{D}\left(k_{j}\right)\to +\infty$ as $j$ goes to infinity.  Moreover, since
the constant term in the lower bound is negative, this means P\'{o}lya's conjecture in these cases is not satisfied
eventually.

In the case of Neumann boundary conditions, we showed that P\'{o}lya's conjecture is satisfied for
$\Se_{+}^{n}=\lune{n}{\pi}$ in all dimensions -- see also~\cite{blps}. Because of this, it also follows that it is satisfied
for lunes with angles $\pi/p$, for integer $p$ larger than $2$.

In~\cite{fs} we showed that whenever P\'{o}lya's conjecture is satisfied eventually for a domain
$\Omega$, and for each positive integer $p$ it is possible to subdivide $\Omega$ into $p$ isometric copies which
tile $\Omega$, with $p$ allowed to become arbitrarily large, then there is an order $p_{0}$ after which these
copies will satisfy P\'{o}lya's conjecture~\cite[Lemma 1]{fs}. Hemispheres would be natural domains to apply this
result to, as $\Se_{+}^{n}$ can be divided into an arbitrarily large number of lunes of the form $\lune{n}{\pi/p}$.
However, and as we saw above, when $n$ is larger than two P\'{o}lya's conjecture is not satisfied eventually in the
Dirichlet case, while since the conjecture is satisfied for the Neumann problem it follows that it will also be
satisfied for any tiling of a hemisphere~\cite{fms}. Here we shall thus focus our study of  P\'{o}lya's conjecture on
the Dirichlet case for lunes $\lune{n}{\theta}$ with $n\geq 2$,  when either $\theta/\pi$ is irrational or
$\theta =\pi/p$ with $p\in \N_2$. In the former case we use the fact that the spectrum of $\lune{n}{\theta}$
satisfies~\eqref{2termweyl}, while in the latter we show that all geodesic orbits are periodic, and proceed by
explicitly determining the corresponding spectra using invariant theory with respect  to a group of reflections.

In order to proceed as described, we begin by studying the geodesic billiards on lunes to determine whether or not
the set of periodic orbits has measure zero. In contrast with convex spherical caps, for which the nonperiodicity
condition is always satisfied~\cite[Example 1.3.16]{sava}, here we see that this is never the case for angle openings
which are rational multiples of $\pi$. Note, however, that when this angle is smaller than $\pi$, these lunes
are also convex, thus providing further examples of convex domains not satisfying the nonperiodicity condition.
\begin{thmx}[Lune billiards]\label{thmx_blunes}
 On a lune $\lune{n}{\theta}$ all geodesic billiard trajectories are periodic when $\theta$ is {a rational multiple of
 $\pi$}. All these orbits have (minimal) period $2p$ {when $\theta=m \pi/p$ with $m,p \in \N$ relative prime}, except
 for the (single) trajectory which is orthogonal to all meridians which has period two. When $\theta$ is not a rational
 multiple of $\pi$ there are no periodic orbits except again for the single $2-$periodic orbit orthogonal to all
 meridians.
\end{thmx}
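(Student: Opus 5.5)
We set up coordinates so that the two bounding hyperplanes both contain the codimension-two subspace $V=\{x_1=x_2=0\}$. Write $x=(z,y)$ with $z=(x_1,x_2)\in\R^2$ and $y\in\R^{n-1}$. The lune is then
\[
\lune{n}{\theta}=\{(z,y)\in\Se^n: z=r e^{i\phi},\ 0<\phi<\theta,\ r>0\}.
\]
A geodesic on $\Se^n$ is a great circle $\gamma(t)=\cos t\,a+\sin t\,b$ with $a,b$ orthonormal. Reflection in a bounding hyperplane $H$ is the orthogonal map $R_H\in O(n+1)$. Each of $R_1,R_2$ fixes $y$ and acts on the $z$-plane as a reflection in a line through the origin. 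A billiard trajectory is therefore a concatenation of great-circle arcs, where each reflection replaces the current pair $(a,b)$ by its image under $R_H$.

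The standard unfolding argument applies. Instead of reflecting the trajectory, we reflect the domain, so the trajectory is a single great circle $\Gamma=\mathrm{span}(a,b)\cap\Se^n$ traversed through successive copies $g\lune{n}{\theta}$. Here $g$ ranges over the group generated by $R_1,R_2$, which acts on the $z$-plane only. Project $\Gamma$ to the $z$-plane via $t\mapsto z(t)=\cos t\,a_z+\sin t\,b_z$. This is an ellipse (possibly degenerate) centred at the origin, and its polar angle $\phi(t)$ is monotone whenever the ellipse is nondegenerate. The hits on the boundary correspond exactly to the times when $\phi$ crosses the lines $\phi\equiv k\theta$, as long as $z(t)\neq 0$.

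\emph{Case analysis.} If $\dim\mathrm{span}(a_z,b_z)=2$, then over one full period $t\in[0,2\pi)$ the angle $\phi$ increases by exactly $2\pi$. The unfolded trajectory thus crosses $2\pi/\theta$ copies per turn of the great circle.

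The folding map $\phi\mapsto$ (position in $[0,\theta]$, orientation) is periodic in $\phi$ with period $2\theta$. The folded trajectory in the original lune, described in the coordinates $(z,y)$ with $z$ folded, is determined by $(t \bmod 2\pi,\ \phi \bmod 2\theta)$. It closes up exactly when $2\pi j$ is a multiple of $2\theta$ for some integer $j\ge1$. For $\theta=m\pi/p$ with $\gcd(m,p)=1$, this means $j p/m\in\N$, so the minimal $j$ is $m$ and the number of reflections is $2\pi m/\theta=2p$, giving minimal period $2p$. If $\theta/\pi$ is irrational, no such $j$ exists. In that case the position after each full turn is rotated by $2\pi \bmod 2\theta$, and the trajectory never closes.

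If $\dim\mathrm{span}(a_z,b_z)=1$, the great circle lies in a hyperplane through $V$ of angle $\phi_0$ and passes through $V$. Such a trajectory runs into the edge $V\cap\Se^n$, where the billiard is undefined, so it is excluded as a measure-zero singular set. The exception is $\phi_0$ equal to a boundary angle, in which case the trajectory lies in the boundary and is excluded as well.

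If $a_z=b_z=0$, the geodesic lies in $V$, which is the singular edge, not the interior.

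The remaining special orbit is the one "orthogonal to all meridians". Its circle lies in the $z$-plane, $\Gamma=\{(e^{i\phi},0)\}$, and inside the lune it is the arc $\phi\in(0,\theta)$. It hits each wall orthogonally, so it bounces back and forth with period two. In the computation above this orbit is precisely the one whose folded trajectory retraces itself, since the ellipse is a circle hitting the walls perpendicularly. It has to be checked that it is the unique orbit with normal incidence on both walls.

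\emph{Main obstacle.} The delicate step is making the minimal-period claim rigorous. I must show that the pair (folded position, velocity) cannot close up before $j=m$ turns for a generic nondegenerate ellipse. Such an earlier closure could only happen if the trajectory hit a wall orthogonally, which reverses it along itself and halves the period count. I would handle this by computing the velocity at a wall hit. Orthogonal incidence forces $\dot\phi$ to be the whole velocity and the $y$-component to vanish. That places the great circle in the $z$-plane, so the only such orbit is the period-two one.

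One further convention needs stating. Period $2p$ counts reflections per closed loop, and when $m>1$ this includes the possibility of $m$ windings.
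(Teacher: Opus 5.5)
Your folding of the angular coordinate by a $2\theta$-periodic triangle wave is the same unfolding the paper uses ($T=R_{-2k\theta}\gamma$ on $\mathbb{I}_{2k}$ and $T=R_{2k\theta}S_{2\theta}\gamma$ on $\mathbb{I}_{2k+1}$). It correctly shows that every admissible orbit closes at $t$-period $2m\pi$ after $2p$ reflections. The gap is in the minimal-period claim and in the irrational case.

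The sentence ``it closes up exactly when $2\pi j$ is a multiple of $2\theta$'' silently assumes that every closure time $\tau$ (i.e.\ $T(t+\tau)=T(t)$ for all $t$) is a multiple of $2\pi$. You never justify this, and the repair you sketch for the ``main obstacle'' rests on a false step. Orthogonal incidence forces the \emph{velocity} at the hit to have zero $y$-component, not the position, so it does not put the great circle in the $z$-plane. For example, take $\gamma(t)=(\tfrac12\cos t,\ \sin t;\ \tfrac{\sqrt3}{2}\cos t\,e)$ with $e\in\R^{n-1}$ a unit vector. It is admissible since $z(t)\neq 0$. It meets $\phi=0$ orthogonally at $t=0$ and $\phi=\pi$ orthogonally at $t=\pi$. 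For $\theta=\pi/3$ it therefore hits both walls normally without being equatorial, so the uniqueness you say ``has to be checked'' is false. Yet its period is still $6=2p$. An orthogonal hit makes an orbit retrace itself, but the orbit returns to its initial phase point only after $t$-period $2m\pi$. Orthogonal hits neither shorten the period nor characterise the equatorial orbit.

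The missing idea is the one the paper uses through the $w$-components (the first lines of \eqref{a} and \eqref{b} in the proof of Theorem~\ref{thmA}). Both reflections fix the polar-axis coordinate, so the folded trajectory has $y(t)=\cos t\,a_y+\sin t\,b_y$ for all $t$. If $T(t+\tau)=T(t)$, then $y(t+\tau)=y(t)$ for all $t$. This forces $\tau\in 2\pi\mathbb{Z}$ unless $a_y=b_y=0$, i.e.\ unless $\gamma$ is the equatorial geodesic. For $\tau=2\pi j$ you have $\phi(t+\tau)=\phi(t)+2\pi j$ and $|z(t+\tau)|=|z(t)|$. Since $\phi$ sweeps a half-line and the triangle wave has minimal period $2\theta$, closure at $\tau=2\pi j$ holds if and only if $2\pi j\in 2\theta\mathbb{Z}$. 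For $\theta=m\pi/p$ with $(m,p)=1$ this means $m\mid j$, which gives minimal period $2p$. For $\theta/\pi$ irrational it never holds. Put this step in place of the orthogonal-incidence paragraph and your argument is complete.
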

\begin{remark}
 Although most of the terms used above are fairly standard, for the benefit of the reader we define them in
 Section~\ref{lunedefinition}, while a more detailed version of the theorem is given in Theorem~\ref{thmA}.
\end{remark}

This immediately implies that (convex) lunes whose angle opening is an irrational multiple of $\pi$
will satisfy P\'{o}lya's conjecture eventually. More surprisingly, it turns out that this is also the case for
lunes whose opening angle is of the form $\theta=\pi/p$ with $p$ an integer larger than two -- we recall that
this was not the case for hemispheres ($p=1$). We shall refer to these by dihedral lunes, as they are associated
with the dihedral group $\mathbb{D}_p$.
\begin{thmx}[Generalised two-term asymptotic formula]\label{thmxeventually}
 On the dihedral lune $\lune{n}{\pi/p}$  $(p\in \N)$, the generalised $2$-term asymptotic formula
 \[\lambda_k= \left(n!\ p\right)^{2/n} k^{2/n}+  c(p,k)k^{1/n}+\so(k^{1/n}),\; \mbox{ as } k\to +\infty\]
 holds for $\lambda_k$ in the $K$-chain of the eigenvalue $K(K+n-1)$ with
 \[
 c(p,k)=\left(n!\ p\right)^{1/n}p+ \left(n!\ p\right)^{1/n}\left(1-2 \frac{k-k_-}{m_{n,p}[K(K+n-1)]-1}\right).
 \]
 Here $m_{n,p}[K(K+n-1)]$ denotes the multiplicity of $\lambda_{k}$, and $k_-$ is the lowest order of the corresponding chain.
 Moreover, $c(p,k)$  is sharp and bounded, with values in each chain between the corresponding minimum $\left(n!\ p\right)^{1/n}(p-1)$
 and maximum $\left(n!\ p\right)^{1/n}(p+1)$, taken at the highest and lowest orders $k=k_{+}=k_{-}+m_{n,p}[K(K+n-1)]-1$ and
 $k=k_-$, respectively.
\end{thmx}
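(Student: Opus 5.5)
The plan is to reduce everything to an explicit count of eigenvalues with multiplicity, and then invert the counting function inside each block of equal eigenvalues (a "chain").

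\emph{Spectrum and multiplicities.} First I would use the explicit description of the Dirichlet spectrum of $\lune{n}{\pi/p}$ obtained via invariant theory for the dihedral group $\mathbb{D}_p$. Let $\mathbb{D}_p$ act by reflections on the $(x_1,x_2)$-plane. The Dirichlet eigenfunctions are the restrictions of spherical harmonics of degree $K$ on $\R^{n+1}$ that are anti-invariant under $\mathbb{D}_p$ (they transform by the sign character). Hence the eigenvalues are $K(K+n-1)$, with multiplicity $m_{n,p}[K(K+n-1)]$ equal to the dimension of the anti-invariant harmonics of degree $K$.

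Concretely, writing $x_1+ix_2=re^{i\varphi}$, these harmonics are spanned by functions of the form $\sin(jp\varphi)\,r^{jp}\,h(r^2,x_3,\dots,x_{n+1})$ with $j\ge 1$. From this (or from a Molien-type generating series for anti-invariant harmonic polynomials) I would derive a closed formula for $m_{n,p}(K)$. Its leading behaviour is
\[
m_{n,p}(K)=\frac{K^{n-1}}{(n-1)!\,p}+\bo(K^{n-2}).
\]

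\emph{Two-term cumulative count.} Next I would sum the multiplicities to get $N(K)=\sum_{L\le K} m_{n,p}(L)$ with two terms:
\[
N(K)=\frac{K^n}{n!\,p}+\alpha K^{n-1}+\bo(K^{n-2}).
\]
The chain of $K(K+n-1)$ then occupies the orders $k_-=N(K-1)+1,\dots,k_+=N(K)$. Obtaining the exact constant $\alpha=\alpha(n,p)$ is the main obstacle. The lower-order terms of $m_{n,p}$ depend on $K \bmod p$ through the floor $\lfloor K/p\rfloor$, and I need these fluctuations to contribute only $\bo(K^{n-2})$ to $N(K)$. I would handle this by writing $m_{n,p}$ as a sum over $j\le K/p$ of binomial dimensions of harmonics in the remaining variables, and then applying Euler--Maclaurin in $j$. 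The constant $\alpha$ should come out so that $(n-1)!\,p\,\alpha=(n-p)/2$, which is exactly what the stated formula for $c(p,k)$ requires.

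\emph{Inversion within a chain.} Write $k=k_-+t\,(m_{n,p}-1)$ with $t\in[0,1]$. Then
\[
k=\frac{K^n}{n!\,p}+\beta K^{n-1}+\bo(K^{n-2}),\qquad \beta=\alpha-\frac{1-t}{(n-1)!\,p}.
\]
Set $X=(n!\,p\,k)^{1/n}$. Expanding gives $X=K+(n-1)!\,p\,\beta+\so(1)$. Substituting $K=X-(n-1)!\,p\,\beta+\so(1)$ into $\lambda_k=K^2+(n-1)K$ yields
\[
\lambda_k=X^2+X\big[(n-1)-2(n-1)!\,p\,\beta\big]+\so(X).
\]
Since $X=(n!\,p)^{1/n}k^{1/n}$, this means
\[
c(p,k)=(n!\,p)^{1/n}\big[(n-1)-2(n-1)!\,p\,\alpha+2(1-t)\big].
\]
With the value of $\alpha$ above, this is $(n!\,p)^{1/n}\,(p+1-2t)$, which is the stated expression. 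As $t$ runs from $0$ to $1$ it decreases linearly from $(n!\,p)^{1/n}(p+1)$ at $k_-$ to $(n!\,p)^{1/n}(p-1)$ at $k_+$.

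\emph{Sharpness.} This follows because both endpoints $k_\pm$ are attained in every chain, and $K\to\infty$ along the chains.
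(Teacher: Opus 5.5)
Your argument is correct and is essentially the paper's proof. Both use the explicit multiplicities from the Hilbert--Poincar\'e series, a two-term expansion of the cumulative count $k_\pm$, and an expansion of $k^{2/n}$ inside each chain.

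The difference is only in bookkeeping. The paper expands in $m=\lfloor K/p\rfloor-1$ at a fixed residue $r$ using Faulhaber's formula, so the $r$-dependence of its coefficient $f_{n-2}$ has to be cancelled by hand. You expand directly in $K$, where that cancellation is automatic. This also makes your inversion $X=(n!\,p\,k)^{1/n}=K+(n-1)!\,p\,\beta+\bo(1/K)$, uniform in $t\in[0,1]$, cleaner.

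The one step you assert rather than carry out is the value of $\alpha$, which you read off from the target formula. It does come out as you need:
\begin{itemize}
\item Write $K=Jp+r$ with $0\le r\le p-1$. The hockey-stick identity gives
\[
N(K)=\sum_{s=0}^{J-1}\binom{sp+r+n-1}{n-1}=\frac{1}{(n-1)!}\sum_{s=0}^{J-1}\Bigl[(sp+r)^{n-1}+\tfrac{n(n-1)}{2}(sp+r)^{n-2}\Bigr]+\bo(K^{n-2}).
\]
\item Euler--Maclaurin, uniformly in $r$, gives $\sum_{s=0}^{J-1}(sp+r)^{n-1}=\frac{K^n}{np}-\frac{K^{n-1}}{2}+\bo(K^{n-2})$ and $\sum_{s=0}^{J-1}(sp+r)^{n-2}=\frac{K^{n-1}}{(n-1)p}+\bo(K^{n-2})$.
\item Hence $(n-1)!\,\alpha=\frac{n}{2p}-\frac12$, that is, $(n-1)!\,p\,\alpha=\frac{n-p}{2}$.
\end{itemize}
With this filled in, your derivation of $c(p,k)=(n!\,p)^{1/n}(p+1-2t)$ is complete.
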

\begin{remark}
In Appendix~\ref{ApC} we derive the corresponding formula for the associated counting function, following the approach in~\cite{sava} -- see, in particular, Proposition~\ref{2termsava}.
\end{remark}
As a consequence of Theorems~\ref{thmx_blunes} and~\ref{thmxeventually} we obtain that all lunes with angle
openings smaller than $\pi$ satisfy P\'{o}lya's conjecture eventually, independently of the dimension.
\begin{corx}[P\'{o}lya's conjecture for large orders] \label{CorC}
 Let $\lune{n}{\theta}$ be a lune with angle opening $\theta$ smaller than $\pi$ {that is either an
 irrational multiple of $\pi$ or of the form $\pi/p$ with $p\in \N_2$}. Then its Dirichlet spectrum
 satisfies P\'{o}lya's conjecture eventually.
 \end{corx}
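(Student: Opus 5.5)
We treat the two cases in the statement separately, and in each we show that the remainder $r_D(k)$ in \eqref{weyl1} is eventually positive. The Weyl term for $\lune{n}{\theta}$ comes from $|\lune{n}{\theta}| = \frac{\theta}{2\pi}|\Se^n|$. For $\theta=\pi/p$ this gives
\[
\frac{4\pi^2k^{2/n}}{(\omega_n|\lune{n}{\pi/p}|)^{2/n}}=(n!\,p)^{2/n}k^{2/n},
\]
which matches the leading term in Theorem~\ref{thmxeventually}. This uses $\omega_n|\Se^n_+|=(2\pi)^n/n!$, which is consistent with \eqref{uplowfms}.

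\emph{Irrational case.} Let $\theta/\pi\notin\mathbb{Q}$ with $\theta<\pi$. By Theorem~\ref{thmx_blunes} the only periodic billiard trajectory is the single $2$-periodic orbit orthogonal to all meridians. This orbit lives in a set of measure zero in the phase space $S^*\lune{n}{\theta}$, so the nonperiodicity condition holds.

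The nonblocking condition must also be checked. Since $\theta<\pi$, the lune is convex, and its boundary is piecewise smooth: two totally geodesic hemispheres meet along an $(n-2)$-sphere with dihedral angle $\theta<\pi$. The set of trajectories hitting this singular edge, or reaching it after finitely many reflections, has measure zero. This is the setting of the Safarov--Vassiliev results \cite{sava} for convex domains with corners, or for manifolds with boundary whose singular set has codimension two. I expect this verification to be the main technical point. The plan is to cite the relevant theorem in \cite{sava}, noting that the dihedral edge is a codimension-two wedge singularity and that the billiard flow is well defined off a null set.

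Granting both conditions, \eqref{2termweyl} holds. Its second term is a positive multiple of $k^{1/n}$, since $|\partial\Omega|>0$. Hence
\[
\lambda_k\geq \frac{4\pi^2k^{2/n}}{(\omega_n|\Omega|)^{2/n}}
\]
for all $k\geq k^*$.

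\emph{Dihedral case $\theta=\pi/p$, $p\geq2$.} Here the nonperiodicity condition fails, and we use Theorem~\ref{thmxeventually} instead. For every $k$ lying in some $K$-chain,
\[
\lambda_k-(n!\,p)^{2/n}k^{2/n}=c(p,k)k^{1/n}+\so(k^{1/n}),
\]
and $c(p,k)\geq (n!\,p)^{1/n}(p-1)\geq (n!\,p)^{1/n}>0$ because $p\geq2$.

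The one point needing care is that the $\so(k^{1/n})$ term must be uniform in $k$ as $k\to\infty$ across all chains. I read Theorem~\ref{thmxeventually} this way, since its formula is derived from explicit upper and lower bounds on each chain. Then there exists $k^*$ such that for $k\geq k^*$ the error is at most $\frac12(n!\,p)^{1/n}k^{1/n}$. This gives
\[
\lambda_k-(n!\,p)^{2/n}k^{2/n}\geq \tfrac12(n!\,p)^{1/n}k^{1/n}>0,
\]
which is P\'olya's inequality from $k^*$ on.

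For $p=1$ the lower constant $c(1,k)$ vanishes at the top of each chain, which explains why the hemisphere is excluded.
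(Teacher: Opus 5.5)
Your proof is correct and follows the paper's own route. The irrational case is Theorem~\ref{thmA}~3a): nonperiodicity and nonblocking give \eqref{2termweyl}, whose second term is positive. The dihedral case is Theorem~\ref{thmxeventually} together with $c(p,k)\geq (n!\,p)^{1/n}(p-1)>0$ for $p\geq 2$; the $\so(k^{1/n})$ there is already a limit over all $k\to+\infty$ (Proposition~\ref{PropC3}), so your uniformity concern is built in.

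The nonblocking step you defer to an unspecified result of \cite{sava} is proved directly in the paper by unfolding. Theorem~\ref{thmA}~1) shows the collision times satisfy $t_j\to+\infty$ for every admissible initial geodesic, since otherwise the unfolded great circle would meet the polar sphere. Non-admissible geodesics form a null set, and together these give exactly the nonblocking condition.
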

As pointed out above, this implies that the remainder term $r_{D}$ in~\eqref{weyl1} is non-negative for these orders.
A direct application of~\cite[Lemma 1]{fs} then yields that for each $n$ there exists sufficiently large $p$ such
that $\lune{n}{\pi/p}$  satisfies P\'{o}lya's conjecture. We shall now provide a quantitative version of this result.
\begin{thmx}[P\'{o}lya's conjecture for all orders]\label{thmxquantitative}
 Given a positive integer $p$, the lune $\lune{n}{\pi/p}$ satisfies P\'{o}lya's conjecture for $3\leq n \leq
 8$ if and only if $p\in\N_{2}$. For $n\geq 9$, P\'{o}lya's conjecture holds if $p\geq n-1$, {while
 it fails if $p<\fr{-1+\sqrt{1+4e^{-2}}}{2}n \approx 0.120754n$.}
\end{thmx}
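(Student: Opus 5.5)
The plan is to make everything explicit through the Dirichlet spectrum of $\lune{n}{\pi/p}$ obtained via invariant theory for the dihedral group $\mathbb{D}_p$, which acts on the $(x_1,x_2)$-plane and trivially on the remaining $n-1$ coordinates. Dirichlet eigenfunctions on the lune correspond to degree-$K$ harmonic polynomials on $\R^{n+1}$ transforming by the sign character of $\mathbb{D}_p$, with eigenvalue $K(K+n-1)$. By Molien's formula for the sign character, divided by the factor $(1-t^2)^{-1}$ contributed by $|x|^2$, the generating function of the multiplicities is $t^p/\bigl((1-t)^{n-1}(1-t^p)\bigr)$. This gives
\[
m_{n,p}[K(K+n-1)]=\sum_{j\ge1,\ pj\le K}\binom{K-pj+n-2}{n-2},
\qquad
N(K)=\sum_{j\ge1,\ pj\le K}\binom{K-pj+n-1}{n-1},
\]
where $N(K)$ is the index $k_+$ of the last eigenvalue in the $K$-chain. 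Because $\lambda_k$ is constant along a chain while $k$ increases, P\'olya's inequality $\lambda_k\ge (n!\,p)^{2/n}k^{2/n}$ holds for all $k$ if and only if, for every $K\ge p$,
\[
N(K)\le \frac{\bigl(K(K+n-1)\bigr)^{n/2}}{n!\,p}. \tag{$\ast$}
\]
The whole proof then reduces to checking or violating $(\ast)$.

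\emph{Failure.} Take $K=p$, which gives $N=1$ and $\lambda_1=p(p+n-1)$. Then $(\ast)$ fails as soon as $p(p+n-1)<(n!\,p)^{2/n}$. Since $(n!)^{1/n}>n/e$ and $p^{2/n}\ge1$, this holds whenever $p^2+p(n-1)<n^2e^{-2}$. That condition is implied by $p^2+pn<n^2e^{-2}$, i.e.\ by $p<\tfrac{-1+\sqrt{1+4e^{-2}}}{2}\,n$. For $p=1$ (the hemisphere) failure in every $n\ge3$ is already known from~\eqref{uplowfms}, and this also gives the ``only if'' part for $3\le n\le8$.

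\emph{Validity for $p\ge n-1$.} I would bound $N(K)$ by comparison with an integral. The summand $f(x)=\binom{K-x+n-1}{n-1}$ is decreasing in $x$, so $\sum_{j\ge1}f(pj)\le \frac1p\int_0^{K-p+p}f(x)\,dx$, with the integral truncated carefully near $x=K$. To bound $f$ itself I would avoid the crude AM--GM estimate $(K-x+n/2)^{n-1}/(n-1)!$, which loses too much: it yields $(K+n/2)^2>K(K+n-1)$. Instead I would write $N(K)$ in closed form in the first regime $p\le K<2p$, where $N(K)=\binom{K-p+n-1}{n}$ up to a single term, and then use the hypothesis $p\ge n-1$ to absorb the shift. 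The point is that the largest nonzero term is $\binom{K-p+n-1}{n-1}$, and $K-p+n-1\le K$ exactly when $p\ge n-1$. One then compares $\prod_{i=1}^{n}(K-pj+i)$ with $(K(K+n-1))^{n/2}$ by pairing factors $i$ and $n-i$, since $(a+i)(a+n-i)\le K(K+n-1)$ whenever $a+n\le K+n-1$. For the general $K$ I would sum over $j$ and use $\sum_j (K-pj+n-1)^{\underline{n-1}}\le \frac1{np}(K+n-1-p+p)^{\underline{n}}$-type telescoping identities for falling factorials. The main obstacle is making this telescoping estimate sharp enough to give exactly the factor $1/p$ without a loss of order $n/p$; I would first try the exact identity $\binom{M+1}{n}-\binom{M+1-p}{n}\ge p\binom{M+1-p}{n-1}$ to turn the sum into a bound of the form $N(K)\le \frac1p\binom{K+n-1}{n}\cdot\frac{n}{1}\cdot\frac{(\text{something})}{}$ and then reduce to the pairing inequality.

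\emph{Small dimensions.} For $3\le n\le 8$ and $p\ge2$, the cases $p\ge n-1$ are covered by the previous step, so only finitely many pairs $(n,p)$ with $2\le p\le n-2$ remain. For these I would verify $(\ast)$ by using Theorem~\ref{thmxeventually}. Since $c(p,k)\ge (n!\,p)^{1/n}(p-1)>0$, $(\ast)$ holds for all $K$ beyond an explicit threshold obtained by tracking the $\so(k^{1/n})$ error in the closed form of $N(K)$. Below that threshold the remaining finitely many $K$ are checked directly from the explicit formula for $N(K)$.
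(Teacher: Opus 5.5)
Your reduction to $(\ast)$, the multiplicity and counting formulas, and the failure half are correct. The bound $(n!)^{2/n}>n^2e^{-2}$ is essentially the Stirling argument of Proposition~\ref{First}, and the hemisphere case $p=1$ gives the ``only if'' for $3\le n\le 8$.

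The genuine gap is the sufficiency claim for $p\ge n-1$. This is the core of Theorem~\ref{thmxquantitative}, and your small-dimension step also rests on it: for fixed $n\le 8$ the range $p\ge n-1$ is infinite, so no finite check can replace it. None of the estimates you list can yield $(\ast)$.
\begin{itemize}
\item Summing $\binom{M+1}{n}-\binom{M+1-p}{n}\ge p\binom{M+1-p}{n-1}$ over $M+1-p=sp+r+n-1$, $s=0,\dots,m$, gives $n!\,p\,N(K)\le K^{\overline{n}}-r^{\overline{n}}$. That is $1/p$ times the hemisphere counting function. But $(K+i)(K+n-1-i)=K(K+n-1)+i(n-1-i)$, so $K^{\overline{n}}>(K(K+n-1))^{n/2}$ for $n\ge 3$. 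Hence for $r=0$ your bound lies strictly above the right-hand side of $(\ast)$ and cannot imply it. This is just the failure of P\'olya's inequality on $\Se^n_+$.
\item The integral comparison is worse. It gives $n!\,p\,N(K)\le K^n+\frac{n^2}{2}K^{n-1}+\cdots$, against $(K(K+n-1))^{n/2}=K^n+\frac{n(n-1)}{2}K^{n-1}+\cdots$.
\item Bounding the $m+1$ terms by the largest one loses a factor of order $n$.
\item The pairing can at best handle the first chain $m=0$. There $N(K)=\binom{K-p+n-1}{n-1}$ is a single term, not $\binom{K-p+n-1}{n}$.
\end{itemize}
The true margin is thin. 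One has $n!\,p\,N(K)=K^n+\frac{n(n-p)}{2}K^{n-1}+O(K^{n-2})$, leaving only $\frac{n(p-1)}{2}K^{n-1}$ to spare. That margin comes precisely from each lune term $\binom{sp+r+n-1}{n-1}$ being the \emph{smallest} of the $p$ hemisphere terms in its block, which is exactly what the binomial telescoping discards.

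The missing idea is to telescope against the Weyl side instead. With $F(R)=(R(R+n-1))^{n/2}$, prove the one-step inequality $F(R+p)-F(R)\ge np\,(R+1)^{\overline{n-1}}$ for all $R\ge 0$, i.e.\ $\Phi_n(p,R)\ge 0$. Summing over $R=sp+r$, $s=0,\dots,m$, gives $(\ast)$ at once (Lemma~\ref{REDUCE}). The paper proves the one-step inequality in two stages:
\begin{itemize}
\item it shows $\partial_p\Phi_n\ge 0$ for $p\ge n-1$ using the rising-factorial bound~\eqref{A.1} (Lemma~\ref{Dlemma1});
\item it proves the base case $p=n-1$ (Proposition~\ref{start}) via the Hermite--Hadamard estimate $g(M+d)-g(M-d)\ge 2d\,g'(M)$, with $g(x)=x^{n/2}$, $M=R+n-1$, $d=n-1$ (Lemmas~\ref{lmx1} and~\ref{auxconvlemma}).
\end{itemize}
Evaluating the derivative at the midpoint is what recovers the half-block offset your estimates lose. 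The remaining cases $3\le n\le 8$, $p\ge 2$ are then settled by directly checking $\Phi_n(p,R)\ge 0$. That is in the spirit of your finite verification, but it needs the $p\ge n-1$ step, or the monotonicity in $p$, to cover infinitely many $p$.
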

\begin{remark}
  Although the linear order of growth on the dependence of $p$ so that $\lune{n}{\pi/p}$ satisfies the
  conjecture is sharp, the actual values for which this happens may be improved. In
  Theorem~\ref{thmD} we show how this can be done and illustrate it by providing optimal quantitative results for dimensions up to $50$.
  From these examples we also see that it is possible for the conjecture to fail for a low eigenvalue other than the
  first (and doing so only for a finite number of eigenvalues), as is the case of $\luned{24}{4}$, for instance -- see Remark~\ref{remark_examples1}.
  A similar effect may also be observed in the case of a cylindrical strip~\cite{frwa}. This means that the threshold for the
  conjecture not to hold given above based on what happens to the first eigenvalue will not necessarily be optimal in general.
\end{remark}

The proofs of both Theorems~\ref{thmxeventually} and~\ref{thmxquantitative} rely partially on the explicit
knowledge of the Dirichlet eigenvalues of dihedral lunes, that is, those with $\theta=\pi/p$ for $p\in \N$
-- see Definition~\ref{lunedef} below.
In this case any eigenvalue of $\lune{n}{\pi/p}$ is also an eigenvalue of $\Se^{n}$, albeit with a different multiplicity
which will depend on $n$ and $p$. To the best of our
knowledge, the only case which had been studied in the literature was for lunes in $\Se^{2}$~\cite{gr}, and so we
provide a full description of the spectra of $\lune{n}{\pi/p}$ here. This is done by using the Hilbert--Poincar\'{e}
series of graded algebras of anti-invariant homogeneous polynomials with respect to dihedral
groups of reflections, following the method applied in~\cite{bb} to the particular case of $\Se^2_+$.

\begin{thmx}[Dihedral lune eigenvalues]\label{thmxmultiplicity} Let $n\in \N_2$ and $p\in \N$. The Dirichlet eigenvalues of $\lune{n}
{\pi/p}$ are of the form $K(K+n-1)$ with $K\geq p$, thus forming a proper subset of those of $\Se^n$. The corresponding multiplicities
are given by
\[m_{n,p}[K(K+n-1)]=\sum_{s=0}^m\frac{(sp+r+1)^{\overline{n-2}}}{(n-2)!},\]
{where $m = \left\lfloor\fr{K}{p}\right\rfloor-1$, $r=K\bmod p$, and the rising factorial $q^{\overline{t}}$ is defined by
$q(q+1)\cdots(q+t-1)$.}
\end{thmx}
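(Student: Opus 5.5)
We will realise $\lune{n}{\pi/p}$ as a fundamental domain for the dihedral group $\mathbb{D}_p$ acting on the $(x_1,x_2)$-plane of $\R^{n+1}$ and fixing $x_3,\dots,x_{n+1}$. The two bounding hyperplanes are reflection hyperplanes of this group, and the reflections in them generate the whole group. A Dirichlet eigenfunction on the lune extends, by successive odd reflections across the walls, to an eigenfunction on $\Se^n$ that is anti-invariant under $\mathbb{D}_p$: each reflection acts by $-1$, i.e.\ transforms by the sign character $\det$. Conversely, every anti-invariant eigenfunction of $\Se^n$ vanishes on the reflection hyperplanes, hence restricts to a Dirichlet eigenfunction of the lune. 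This gives a bijection. Consequently the Dirichlet eigenvalues are among the spherical ones $K(K+n-1)$, and $m_{n,p}[K(K+n-1)]$ equals the dimension of the anti-invariant subspace of the space $\mathcal{H}_K$ of harmonic homogeneous polynomials of degree $K$ on $\R^{n+1}$.

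To compute this dimension we use $\mathcal{P}_K=\mathcal{H}_K\oplus |x|^2\mathcal{P}_{K-2}$. Since $|x|^2$ is invariant, $\dim\mathcal{H}_K^{\rm anti}=a_K-a_{K-2}$, where $a_K$ is the dimension of anti-invariant polynomials of degree $K$. By Chevalley's theorem the invariants of $\mathbb{D}_p$ acting on $(x_1,x_2)$ form a polynomial ring on generators of degrees $2$ and $p$. The anti-invariants form the free module generated by the Jacobian $J=\mathrm{Im}(x_1+ix_2)^p$, of degree $p$, so the $(x_1,x_2)$-part has Hilbert--Poincar\'e series $t^p/((1-t^2)(1-t^p))$. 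Tensoring with the free polynomial ring in the remaining $n-1$ variables gives
\[
\sum_K a_K t^K=\frac{t^p}{(1-t^2)(1-t^p)(1-t)^{n-1}},
\]
and therefore
\[
\sum_K m_{n,p}[K(K+n-1)]\,t^K=\frac{t^p}{(1-t^p)(1-t)^{n-1}} .
\]
In particular the multiplicity vanishes for $K<p$, which gives the claim $K\ge p$. Properness of the subset then follows, for instance, from the absence of the eigenvalues with $K<p$; note that $K=0$ always fails, since $p\ge1$.

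It remains to extract coefficients. We expand $t^p/(1-t^p)=\sum_{j\ge1}t^{jp}$ and use $(1-t)^{-(n-1)}=\sum_{\ell\ge0}\binom{\ell+n-2}{n-2}t^\ell$. This gives
\[
m_{n,p}=\sum_{j\ge1,\ jp\le K}\binom{K-jp+n-2}{n-2}.
\]
Writing $K=(m+1)p+r$ and $s=m+1-j$, the quantity $K-jp$ runs over $sp+r$ for $s=0,\dots,m$. Moreover $\binom{sp+r+n-2}{n-2}=(sp+r+1)^{\overline{n-2}}/(n-2)!$, which is exactly the stated formula.

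We expect the main obstacle to be justifying the anti-invariant structure. This rests on the classical fact that anti-invariants of a reflection group are $J$ times invariants, which we will invoke as in~\cite{bb}. A second point needing care is that the harmonic decomposition respects the group action. This holds because the $\mathbb{D}_p$ action is orthogonal and hence commutes with $\Delta$.
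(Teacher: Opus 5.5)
Your proposal is correct and follows essentially the same route as the paper. You identify Dirichlet eigenfunctions with $\tilde{\mathcal{W}}$-anti-invariant spherical harmonics, obtain the Hilbert--Poincar\'e series $T^p/((1-T^p)(1-T)^{n-1})$ from Chevalley's theorem together with the factorisation of anti-invariants as $\mathcal{E}$ (your $J$) times invariants, and read off the coefficients by expanding $\sum_{j\ge1}T^{jp}$ against $(1-T)^{-(n-1)}$; the only difference is that your argument covers all $p\ge 1$ uniformly, whereas the paper handles $p=1$, $p=2$ and $p\ge 3$ separately.
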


The organisation of the paper is as follows. In the next section we provide the notation and definitions for lunes and spheres.
This has turned out to be more involved that expected, as there are many concepts such as meridians, and  polar and equatorial spheres,
for instance, that do not seem to have fully standard definitions in the literature. We have thus taken some care to set up clearly
in this section what the definitions we are using are. After this, we can then define the relevant billiard (geodesic) trajectories
on lunes and the related concepts that will be used in Section~\ref{proofbill} to prove Theorem~\ref{thmx_blunes}. We then
determine the eigenvalues and corresponding multiplicities of the lunes $\lune{n}{\pi/p}$. This again requires setting up several concepts
and techniques in order to proceed and associate to each lune with opening angle $\pi/p$ its eigenvalues and the corresponding
multiplicities. Once these are in place, we are ready to prove Theorem~\ref{thmxmultiplicity}, which we do in Section~\ref{Sec 4}.
In Section~\ref{introEIGEN} we derive conditions for P\'{o}lya's conjecture to hold on dihedral lunes, leading to the proofs
of Theorems~\ref{thmxeventually} and~\ref{thmxquantitative} in Section~\ref{Sec Third term}. 

As was already apparent from the proofs in~\cite{fms}, in order to derive inequalities such as~\eqref{uplowfms} or, in the
present paper, Theorem~\ref{thmxquantitative}, quite sharp algebraic inequalities are needed. These include upper and lower
bounds such as those given in~\cite[Lemma~A.1]{fms} for the rising factorial, which coincide with the first three and two terms in
the corresponding asymptotic expansions, respectively. Here, and apart from also using this inequality, we again needed new sharp
inequalities such as that given in Lemma~\ref{lmx1}. We believe this to be a feature of the spectrum of these problems, which may be
interpreted not only as a consequence of their large intrinsic eigenvalue multiplicities, but also as being indeed limiting geometric
domains where certain properties hold -- recall, for instance, that an infinite number of eigenvalues in the two-hemisphere give equality
in P\'{o}lya's conjecture.

In Appendix~\ref{ApA} we collect several results about reflection groups and Hilbert-Poincar\'{e} series which are used to calculate
the eigenvalue multiplicities of dihedral lunes in Section~\ref{Eigen}. In Appendix~\ref{Sec 8} we present the calculations leading
to the optimal values of $p$ for which $\lune{n}{\pi/p}$ satisfies P\'{o}lya's conjecture for $n$ up to fifty. Appendix~\ref{ApC}
follows the approach in~\cite{sava} to determine the two-term asymptotic formula for the counting function of dihedral lunes, where
the nonperiodicity condition is not satisfied. Finally, in Appendix~\ref{prooflemma64} we state and prove Lemma~\ref{lmx1}
which, as mentioned above, is instrumental in the proof of Theorem~\ref{thmxquantitative}.

\section{Lune billiards}
\subsection{Notation and definitions}\label{lunedefinition}
 The $n$-dimensional lune on the $n$-sphere $\Se^n$ is defined by two
 hyperplanes of $\mathbb{R}^{n+1}$ whose intersection with $\Se^n$ defines the boundary of the lune.  For
 definiteness, we consider the angle between the two  {defining hyperplanes} as being in the last two
 variables.
 \begin{definition}[Lune of opening angle $\theta$]\label{lunedef}
 Given $n\in\N$ and $\theta\in (0,2\pi]$, an $n-$lune is defined by
  \begin{equation}
 \lune{n}{\theta} = \left\{~(w,r\cos\alpha, r\sin\alpha)\in \Se^n:\, w\in \R^{n-1},\, |w|\leq 1,\,
  r=\sqrt{1-|w|^2},\, 
   \alpha\in [0,\theta]~\right\}. \label{lune}
  \end{equation}
 We refer to the angle $\theta$ as the angle opening of the lune. According to whether $\pi/\theta$ is
 rational or an irrational number, we refer to $\lune{n}{\theta}$ as a rational or an irrational $n$-lune,
 respectively. In the former case, if $\pi/\theta$ is an integer, we call these dihedral lunes.
  \end{definition}
\subsubsection{Meridians, the polar and equatorial spheres, and the equatorial geodesic}
  The boundary of $\lune{n}{\theta}$ is the (piecewise smooth) union of two hemi-hyperspheres of $\Se^n$ given by the equations
  $\alpha=0$ and $\alpha=\theta$,  and we take these as models for {a family of $(n-1)$-dimensional meridians
  of $\Se^n$ invariant by rotations of $\R^{n+1}$ that fixes an axis space of dimension $n-1$.}
\begin{definition}[Meridians]\label{(n-1)meridian}
Given $\theta\in {[0,2\pi]}$, we define the corresponding $(n-1)$-dimensional meridian of
$\Se^n$ by
\begin{equation}\label{partial}
\partial_{\theta}=  H_{\theta}^+\cap \Se^n
= \left\{ (w, r\cos\theta, r\sin\theta): w\in \R^{n-1}, |w|\leq 1,  r=r(|w|)=\sqrt{1-|w|^2}\right\}, 
 \end{equation} 
where the half-hyperplane $H^+_{\theta}=\R^{n-1} \times\R_0^+[(\cos\theta, \sin \theta)]$ is said to support the
meridian.
\end{definition}
In this way, the  boundary of $\lune{n}{\theta}$ consists of the union of two meridians $\partial_0\cup \partial_{\theta}$.
Two distinct meridians will intersect at an $(n-2)-$sphere and this intersection is, in fact, independent of the meridians
considered and we shall call it the polar sphere.
\begin{definition}[Polar sphere] Denote the union of two antipodal meridians by the \em meridian  $(n-1)$-sphere \em
$\Se^{n-1}_{\theta}:=\partial_{\theta}\cup\partial_{\theta+\pi}$ with support $H_{\theta}\cap \Se^n.$
We shall call the \em  space $\R^{n-1}\times \{(0,0)\}\subset \R^{n+1}$ the  $(n-1)$-dimensional \em polar axis.
 Given any two distinct angles $\theta$ and $\phi$, we define the polar $(n-2)-$sphere by
\begin{equation}\label{POLE}
\Se^{n-2}_{polar}:=
\partial_{\theta}\cap \partial_{\phi}
=\Se^{n}\cap (\R^{n-1}\times
 \{(0,0)\})= \bigcap_{\theta\in \R} \Se^{n-1}_{\theta}.
 \end{equation}
\end{definition}
It follows that the polar sphere consists on the intersection of all meridians and is also the intersection of
the polar axis space with $\Se^n$. In this setting, and provided the opening angle of the lune is not $\pi$, the
two meridians defining the boundary of the lune intersect at the polar sphere of $\Se^{n}$ which then constitutes
the singular set of the boundary.

Note also that $\partial_{\theta}=R_{\theta}\partial_0$ where $R_{\theta}$ is the rotation of angle $\theta$ around
the $(n-1)$-dimensional polar axis -- see~\eqref{RSphi} below.

Our last definition of subsets of $\Se^{n}$ is the equatorial sphere.
\begin{definition}[Equatorial sphere] A point $u=(w,0,0)$ in $\Se^{n-2}_{polar}$ together with its
 antipodal point $-u$ define a polar line axis $\R u\subset \R^{n-1}\times\{(0,0)\}$ and an
 \em equatorial $(n-1)$-sphere \em given by 
$\Se^{n-1}_{equator}(u):=(\R u)^{\bot}\cap \Se^n$.
The intersection of all such equatorial spheres is a one-dimensional sphere  orthogonal to all meridians, defining the
(positively oriented) equatorial geodesic
 \begin{equation}\label{equatgeod}
 \gamma_{e}(t)=(0^{n-1}, \cos t, \sin t).
 \end{equation}
\end{definition}
 
\begin{remark} A strongly convex domain $M$ \cite[Definition 1.3.14 ]{sava} is given by the condition ${\bf k}:=-\ddot{x}_n|
 _{t=0}> 0 $,
where $x_n=d(x,\partial M)$. Lunes are convex if $\theta<\pi$ but not strongly convex because 
 the boundary is piecewise totally geodesic, and so in this case  $x_n(t)=d(x(t),\partial M)=0$ for 
 any  ray $x(t)$
 tangent to the boundary.
\end{remark} 

\subsubsection{Isometries fixing the polar axis space $\R^{n-1}\times\{(0,0)\}$.} 
The unit normal to
$\partial_{\theta}$ at $b(w)$ is given by the constant vector,
\begin{equation}\label{normal}
  \nu_{\theta}= \nu_{\theta}(b(w))=\left(0^{n-1},\cos\left(\frac{\pi}{2}+\theta\right), \sin\left(\frac{\pi}{2}+\theta\right)\right)~\bot~~ H_{\theta}.
  \end{equation}

In the Euclidean plane let $R^{\phi}$ and $S^{\phi}$ be a counter-clockwise rotation by an angle $\phi$ and the reflection with respect to an axis
making an angle of $\phi/2$ with the positive part of the horizontal axis, respectively. Writing these transformations as
\[
R^{\phi}=\left[\begin{array}{cc}
\cos \phi &-\sin \phi\\ \sin\phi&\cos\phi\end{array}\right], \quad  S^{\phi}=\left[
\begin{array}{cc}\cos \phi &\sin \phi\\ \sin\phi&-\cos\phi\end{array}\right],
\]
we now extend them respectively as a rotation and a reflection to $\R^{n+1}$, as
\begin{equation}\label{RSphi}
R_{\phi}=\left[\begin{array}{cc}
\mathbbm{1}_{n-1}&0\quad 0\\ 
\begin{array}{c} 0\\0\end{array}& R^{\phi}\end{array}\right], \quad\quad
S_{\phi}=\left[\begin{array}{cc}
\mathbbm{1}_{n-1}&0\quad 0\\ 
\begin{array}{c} 0\\0\end{array}& S^{\phi}\end{array}\right],
\end{equation}
where $\mathbbm{1}_{n-1}=\id_{\R^{n-1}}$. The polar axis space $\R^{n-1}\times\{(0,0)\}$ is invariant by both $R_{\phi}$ and $S_{\phi}$,
while the usual properties of rotations and reflections in the plane extend to the transformations $R_{\theta}$ and
$S_{\theta}$ in $\R^{n+1}$. We will, in particular, make use of, 
\begin{gather}\label{ALI2}
\begin{array}{ll}
R_{\theta}\partial_0=\partial_{\theta}, &{S_{2\theta}}_{| \partial_{\theta}}=\id_{|\partial_{\theta}}\\
(S_{0\theta}S_{2\theta})^{\ell}=R_{-2\ell\theta},
&(S_{2\theta}S_{0\theta})^{\ell}=R_{2\ell\theta}.
\end{array}
\end{gather}
Note further that
\begin{equation}\label{R2redution}
S_{2\theta}(w,x,y)= (w, S^{2\theta}(x,y))= s_{\nu_{\theta}}(U)=U-2\langle U,\nu_{\theta}\rangle \nu_{\theta}
\end{equation}
where  $s_{\nu_{\theta}}(U)$
is the reflection  in $\R^{n+1}$ with respect to  $H_{\theta}$. Here, and in what follows, $\langle \cdot,\cdot \rangle$ denotes
the standard inner product in Euclidean space.
 \subsection{Billiard {trajectories}}
A geodesic billiard trajectory on a  domain $\Omega$ with a piecewise smooth boundary  of an $n$-dimensional
Riemannian manifold is a continuous curve $T(t)$ composed by unit-speed geodesic segments that hit the $(n-1)$-dimensional boundaries
transversely, satisfying the reflection principle, that is, the incidence and reflection angles are equal at the point where the
trajectory hits the boundary.
If it hits a corner of the boundary by convention {(classical case)} the trajectory is assumed to stop at this point~\cite{g}.
Trajectories that hit the boundary  tangentially at some point are non-transversal
and called grazing  \cite[Section 1.3]{sava}.

A trajectory that performs an infinite number of transversal reflections in finite time is called a dead-end
trajectory.
If the boundary is smooth, the set of these trajectories has measure zero, in the sense that the set of starting points  is
a set of measure zero in the co-sphere bundle~\cite[Lemma 1.3.11 and Appendix D]{sava} (See Remark~\ref{zeromeasure}~1).

\begin{definition}\label{periodic}
A trajectory $T(t)$ on a domain with boundary defined by an initial geodesic $\gamma(t)$, with $T(t_0)=\gamma(t_0)$ a boundary
point, is said to be {\it periodic of period  $q$} if it is a closed trajectory with $q$ geodesic segments, defining an oriented
closed {(geodesic)} polygon, possibly with self-intersections~\cite{g}. Equivalently, there are $q$ boundary collisions
such that the last collision point at $t=t_q$  equals the initial point $T(t_0)$ and the angles at $T(t_q)=T(t_0)$ of the
boundary {with the last and initial geodesic segments} also satisfying the reflection principle.The usual period in the variable
$t$, $t_q-t_0$,  is named {\it the $t$-period of $T(t)$}.
\end{definition}

Domains for which the sets of dead-end trajectories and of periodic trajectories have zero measure within the set of
all trajectories are said to satisfy the nonblocking  and nonperiodicity conditions,
respectively~\cite[Definition 1.3.22]{sava}. If the two conditions are
satisfied and if the boundary  is piecewise smooth with a finite number of connected $(n-1)$-dimensional
  components  making an angle $\theta_i\in (0, \pi)$ at corner boundary points  of dimension less than or equal to $(n-2)$,
  then the two-term Weyl asymptotic formula (\ref{2termweyl}) for the Dirichlet eigenvalues $\lambda_k$  holds
 -- see~\cite[Theorem~1.6.1 and Example~1.6.16]{sava} and~\cite{va} for the smooth and piecewise smooth cases, respectively. 

 Although lunes do not have a smooth boundary, they will satisfy the conditions in~\cite{va} provided their opening
 angle is smaller than $\pi$. In Theorem~\ref{thmA} we show that for rational lunes the nonblocking condition is satisfied
 but all trajectories are periodic, while {for irrational lunes } both the  nonperiodicity and nonblocking conditions are satisfied. In particular, if $\pi/\theta$ is an irrational number larger than one,~\eqref{2termweyl} holds for Dirichlet eigenvalues, extending the case $n=2$ obtained by Gromes~\cite{gr} -- note, however, that his formula  holds for any irrational $n$-lune with $\theta<2\pi$.

 A trajectory  of $\lune{n}{\theta}$ is defined by an initial unit-speed geodesic $\gamma(t)$ of $\Se^n$. Without loss of generality, we always assume
 $T(t)$ starts at $t_0$ with $T(t_0)=\gamma(t_0)\in \partial_0$. The condition that $T(t)$ is transversal to the $(n-1)$-dimensional boundary in all its
 segments turns out to be equivalent  to the  {initial geodesic  $\gamma$  not being  part of any meridian sphere or, equivalently, not crossing}
 the polar sphere.
 This is a consequence from the following observation on reflected geodesics at meridians.
  
Unit speed geodesics of $\Se^n$ are one-dimensional spheres $\Se^1$. Given a point $\gamma(t_{0})$ on $\mathbb{S}^{n}$ and
 a unit (direction) vector $\gamma'(t_{0})$, these trajectories are of the form $\gamma(t)=\cos(t-t_0) \gamma(t_0)+\sin(t-t_0) \gamma'(t_0)$.
 Then $\gamma(t+\pi)$ is the antipodal point $-\gamma(t)$,
 $\gamma'(t)=\gamma(t+\pi/2)$, and  $\gamma''(t)=-\gamma(t)$. The latter identity is equivalent to the geodesic equation
 $\nabla_{d/dt}(\gamma'{(t)})=(\gamma''(t))^{\top}=0$, with respect to the Levi-Civita connection $\nabla$ of $\Se^n$.
 {As we shall see in} Lemma~\ref{rigidity} below, if $\gamma$ does not cross the polar sphere, then $\gamma$ crosses meridians at a
 single point, {implying the existence of a first point $t_{1}$ in $(t_0, t_0+\pi]$ for which $\gamma(t_1)\in \partial_{\theta}$.}
 In particular,  by choosing the \em right orientation, \em  we may always assume the first segment of $\gamma(t)$ stays inside
 $\lune{n}{\theta}$ for $t\in (t_0,t_1)$.
  Since $S_{2\theta}=s_{\nu_{\theta}}$ as described by~\eqref{R2redution} is the reflection of $\R^{n+1}$ with respect to $H_{\theta}$,
  then $\gamma(t)$ reflects at $t=t_1$  as a unit-speed geodesic {$\gamma_{r}$} with initial conditions
$ \gamma_r(t_1)=\gamma(t_1)\in \partial_{\theta}$, and  $\gamma'_r(t_1)=
S_{2\theta}\gamma'(t_1)$.
  We write
 \begin{gather}\label{gammar1}
 \gamma_r(t)=_{t_1} S_{2\theta}\gamma(t), \quad \forall t\in {[t_1, t_1+0^+)}, 
 \end{gather}
 and have that $\gamma_{r}$ satisfies
\begin{equation}\label{Vr}
\langle \gamma'_r(t_1),\nu_{\theta}\rangle= -\langle \gamma'(t_1), \nu_{\theta}\rangle=:-\cos a=\cos(a+\pi).
\end{equation}
The angle $a$ is the  angle of incidence and reflection of the billiard trajectory at the hitting point $\gamma(t_1)$, that by construction
satisfies   $0\leq a\leq \pi/2$, and by the  transversality property it satisfies $0\leq a<\pi/2$. From~\eqref{Vr} we see that $\gamma_r'(t_1)$
is directed inwards, which means that  $\gamma(t_1^-):=\lim_{\tau\to 0^+}\gamma(t_1-\tau)$ and $\gamma_r(t_1^+):=\lim_{\tau\to 0^+}\gamma(t_1+\tau)$ are to the same side of the meridian $ \partial_{\theta}$,
{and we may repeat the same argument as before.}
 Hence,  the  transversality condition of $\gamma$  with respect to meridians, or equivalently, the  non-intersection with the polar sphere, is
 preserved by the reflection with respect to a meridian, that is, $\gamma_r$ satisfies the same condition {(see Lemma~\ref{elementar}~4))}.
\begin{definition}\label{admissibility}
An \em admissible \em  initial geodesic of a billiard trajectory on a lune is  a unit-speed geodesic pointing to the interior of the lune and which
does not intersect the polar sphere.
\end{definition}
\begin{remark}\label{zeromeasure}
1)  Each unit-speed geodesic $\gamma$ of $\mathbb{S}^n$ can be identified with the oriented two-dimensional vector subspace $\Pi_{\gamma}=\mathrm{Span}\{\gamma(t), \gamma'(t)\}$
for any chosen $t$, defining its orientation
as an  element of $\tilde{G}_2(n+1)=SO(n+1)/(SO(n-1)\times SO(2))$. This Grassmannian manifold, compact and of dimension $2(n-1)$, represents
\em the manifold of oriented geodesics \em  $\mathcal{G}(\Se^n)$ of $\Se^n$~\cite[Example~2.8]{Be}. 
 Moreover, the   unit tangent bundle of $\Se^n$,
 $U\Se^n= \{(x,v): x,v\in \Se^n, v\bot x \}$
 can be represented as the homogeneous space  $SO(n+1)/SO(n-1)$, and the map $P:U\Se^n\to \mathcal{G}(\Se^n)$,
 $P(x,v)= \Pi_{\gamma_{(x,v)}}$, where $\gamma_{(x,v)}(t):=\cos t \, x +\sin t\, v$, defines $U\Se^n$ as a principal bundle with structural
 group $\Se^1=SO(2)$ that acts as a free action of $\R$ on $U\Se^n$  by the geodesic flow, defining   $U\Se^n/\Se^1$  isometric tn
 $\mathcal{G}(\Se^n)$~\cite[2.5]{Be}, with $\Se^1$-invariant fibres diffeomorphic to $\Se^1$. Locally, on a chart $\Omega\subset \mathcal{G}(\Se^n)$,  $P^{-1}\Omega$
 is  diffeomorphic to  $\Omega \times \Se^1$, and so   $P^{-1}Y$ diffeomorphic to $Y\times \Se^1$ has measure zero in $U\Se^n$ if and only if
 $Y\subset \Omega$ has measure zero.  The co-sphere bundle $S^*\Se^n$ of the initial points  of the Hamiltonian geodesic flow is used to define
 the measure of subsets of trajectories in \cite{sava}. Hence, sets of geodesics with zero measure in $\mathcal{G}(\Se^n)$ define sets of trajectories
 with zero measure in $S^*\Se^n$ and vice-versa. {For example, the set of unit-speed  geodesics of $\Se^n$ that intersect the polar sphere has Hausdroff dimension {a.e.} $(n-2)\times 2 + 1$, and so of measure zero on $\mathcal{G}(\Se^n)$ (Lemma~\ref{elementar}~3). With no loss of generality
 we consider trajectories of a lune of opening angle $\theta$ starting at $\partial_0$, that covers the cases starting at $\partial_{\theta}$
 by using rotations or reflections that fix the polar axis. Moreover we may also assume $t_0=0$. In this case,  if we take  $\tilde{\gamma}(t)=\gamma(-t)$
 then $\Pi_{\tilde{\gamma}}$ and $\Pi_{\gamma}$ span the same 2-plane but with the opposite orientations. Trajectories with admissible initial geodesic
 starting at an interior point of the lune are uniquely defined by any  hitting boundary point. }\\[1mm]
 2) On a rational lune $\lune{n}{m \pi/p}$ all admissible trajectories but one
  are periodic  of $t$-period $\mathbb{T}=2m\pi $  (Theorem~\ref{thmA}).
Any smooth segment of $T$  may be parameterised as  $T(t)=\cos(t-\mathbb{T})y+\sin(t-\mathbb{T})\eta^{\sharp} $ for $t\in (\mathbb{T}-\epsilon, \mathbb{T}+\epsilon)$, 
where $(y, \eta)$ belongs to $S^*\mathbb{S}^n$
and $T(\mathbb{T})=y$ is not a boundary point of the lune. Set $(x^*(t; y,\eta),\xi^*(t;y, \eta))
=(T(t),T'(t)^{\flat})$ ( $\sharp$ and $\flat$ are the isometric vector bundle  musical isomorphisms). Then $ F(t,y, \eta):=|x^*(t; y,\eta)-y|^2 + |\xi^*(t;y,\eta)-\eta|^2$
is given by $ 4(1-\cos(t-\mathbb{T}))$.   Therefore,  $F(\mathbb{T},y,\eta)$ has an infinite order zero at $(y,\eta)$.
This means  $T(t)$ is \em absolutely periodic \em in the sense given in \cite[pag.\ 229]{va} (\cite[Definition 1.3.2.]{sava} in the no boundary case).
In Theorem~\ref{thmA} we choose the initial $T(t_0)$ belonging to the boundary $\partial_0$.
\end{remark} 
Unit-speed spherical geodesics satisfy the geodesic equation $\gamma''(t)=-\gamma(t)$,  and
are thus of the form  $\gamma(t)=\cos(t-t_0)\gamma(t_0)+\sin(t-t_0)\gamma'(t_0)$.
We {also} use the splitting $\R^{n+1}= \R^{n-1}\times \R^2$, giving a representation in the form
\begin{equation}\label{representation}
\gamma(t)=(w(t), f(t)e^{i\phi(t)}),
\end{equation}
with $w(t)=P_{n-1}(\gamma(t))$ and $f(t)e^{i\phi(t)}=P_2(\gamma(t))$ smooth real functions, where $P_m$ is the orthogonal projection on $\R^m$, $f(t)\geq 0$, and  $\phi(t)\in \R$. Then $f^2= 1-|w|^2$, and $\phi$ is not well defined at points $t$ such that $f(t)=0$.
Nevertheless, for all $t$, $\gamma(t)\in {\partial_{\phi(t)}}$. The geodesic equation implies a system of equations on $w(t)$, $f(t)$ and $\phi(t)$ that provides information on the trajectories as we will see in the proof of Theorem~\ref{thmA}.

\section{Proof of Theorem~\ref{thmx_blunes}}\label{proofbill}

We characterise the periodicity of every geodesic billiard trajectory $T(t)$ on $\lune{n}{\theta}$
by unfolding reflections. Starting from an admissible initial geodesic $\gamma$ of $\Se^n$ with
$\gamma(t_0)\in\partial_0$, we construct inductively a sequence of collision times
\[
t_0<t_1<t_2<\cdots,\qquad \mathbb{I}_j=[t_j,t_{j+1}]\subset [t_0,+\infty),
\]
so that $T(t)$ alternately hits $\partial_0$ and $\partial_\theta$, and $\gamma(t_j)$ lies on the meridian
$\partial_{j\theta}$. On each interval $\mathbb{I}_j$ the trajectory is obtained from $\gamma$ by an isometry
$Q_{\theta,j}$ of $\R^{n+1}$ fixing the polar axis. In particular, any periodic billiard trajectory must
close after an even number of geodesic segments.

\begin{thm}\label{thmA}
Let $\theta\in (0,2\pi)$, $n\ge 2$, and let $T(t)$ be a geodesic billiard trajectory on $\lune{n}{\theta}$
defined by an initial admissible unit-speed geodesic $\gamma$ of $\Se^n$ with $\gamma(t_0)\in \partial_0$.
Then the following holds.
\begin{enumerate}[{\rm 1)}]
\item There exists a unique strictly increasing sequence $t_j\to +\infty$ such that
$T(t_{2k})\in \partial_0$, $T(t_{2k+1})\in\partial_{\theta}$, and
$T(t)\notin \partial_0\cup\partial_\theta$ for $t\in (t_{2k},t_{2k+1})$, $k=0,1,\ldots$.
In particular $T(t)$ is defined for all $t\in [t_0,+\infty)$, i.e.\ there are no dead-end trajectories.\\[-3mm]

\item If $T(t)$ is periodic, its period must be even. Moreover, $T(t)$ is periodic of period $2p$
if and only if
\[
R_{2p\theta}\gamma(t)=\gamma(t+t_{2p}-t_0)\qquad\forall\,t,
\]
where $t_0$ and $t_{2p}$ are as in {\rm 1)}. In this case
$\Pi_{\gamma}=\Pi_{R_{2p\theta}\gamma}=R_{2p\theta}\Pi_{\gamma}$.\\[-3mm]

\item[{\rm 3a)}] If $\pi/\theta$ is irrational, the set of periodic trajectories reduces to the single
right-oriented equatorial geodesic $\gamma_e(t)=(0,e^{it})$, $t\ge 0$.
If moreover $\theta<\pi$, then the two-term asymptotic formula \eqref{2termweyl} holds.\\[-3mm]

\item[{\rm 3b)}] If $\pi/\theta$ is rational, with $\theta=m\pi/p$ and $(m,p)=1$ ($m$ and $p$ are relatively prime), then all admissible
trajectories are periodic of period $2p$ and of $t$-period $2m\pi$.
More precisely, $T(t)$ is obtained by folding $\gamma$ by suitable isometries of $\R^{n+1}$ fixing the
polar axis: over $[t_0,t_0+2m\pi]$ it is a folded partition of $\gamma$ into $2p$ geodesic segments.\\[-3mm]

\item[{\rm 3c)}] For any $\theta$, if $\gamma(t)=\gamma_e(t)$ then $T(t)$ is periodic of period $2$ and $t$-period  $2\theta$. If $\theta=m\pi/p$, then $2$ is the minimal possible
periodicity.
\end{enumerate}
\end{thm}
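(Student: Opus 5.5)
The approach is the classical unfolding of reflections, in the coordinates \eqref{representation} $\gamma(t)=(w(t),f(t)e^{i\phi(t)})$.

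\textbf{Step 1: monotonicity of $\phi$.} Write $\gamma(t)=\cos(t-t_0)A+\sin(t-t_0)B$. Then $P_2\gamma(t)=\cos(t-t_0)a+\sin(t-t_0)b$, with $a,b\in\R^2$ the projections of $A,B$. Admissibility, i.e.\ $\gamma$ does not meet the polar sphere, is equivalent to $P_2\gamma(t)\neq0$ for all $t$, hence to $a,b$ being linearly independent. Then $P_2\gamma$ traces a nondegenerate ellipse centred at $0$. Consequently $f^2\phi'=\det(a,b)$ is a nonzero constant, so $\phi$ is strictly monotone and advances by exactly $2\pi$ over one $t$-period $2\pi$; with the right orientation it is increasing. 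This is the content of Lemma~\ref{rigidity}: $\gamma$ crosses each meridian exactly once per period.

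\textbf{Step 2: the unfolded curve.} Define $t_1$ as the first time with $\phi(t_1)=\phi(t_0)+\theta$, so $\gamma(t_1)\in\partial_\theta$. Reflect by $S_{2\theta}$ as in \eqref{gammar1}. Inductively, on $\mathbb{I}_j$ set $T=Q_{\theta,j}\gamma$, where $Q_{\theta,j}$ is a product of the reflections $S_0,S_{2\theta}$. By \eqref{ALI2}, $Q_{\theta,2k}=R_{-2k\theta}$ and $Q_{\theta,2k+1}=S_{2\theta}R_{-2k\theta}$. The times $t_j$ are then characterised by $\phi(t_j)=\phi(t_0)+j\theta$. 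Since $\phi$ increases and $\phi(t+2\pi)=\phi(t)+2\pi$, the gaps $t_{j+1}-t_j$ are bounded below by a positive constant (uniform continuity of $\phi^{-1}$), so $t_j\to\infty$. This gives existence, uniqueness and the alternation in 1), and there are no dead ends. Transversality at each hit follows from $\phi'\neq0$, equivalently from \eqref{Vr} with $a<\pi/2$.

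\textbf{Step 3: periodicity criterion (part 2).} Each segment meets $\partial_0$ or $\partial_\theta$ alternately, so a closed polygon must have an even number $2p$ of segments. Closing means that the state (point, velocity) at $t_{2p}$ coincides with that at $t_0$. Since $T=R_{-2p\theta}\gamma$ on $\mathbb{I}_{2p}$, this is the condition $R_{-2p\theta}\gamma(t_{2p}+s)=\gamma(t_0+s)$, which is the stated identity up to sign conventions. The invariance of $\Pi_\gamma$ follows because $\gamma$ and its rotated copy span the same oriented plane.

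\textbf{Step 4: parts 3a, 3b, 3c.} For 3b, take $\theta=m\pi/p$. Then $R_{2p\theta}=R_{2m\pi}=\id$, and $\phi(t_{2p})=\phi(t_0)+2m\pi$, which forces $t_{2p}=t_0+2m\pi$ because $\gamma$ is $2\pi$-periodic and $\phi$ gains exactly $2\pi$ per period. The criterion of Step 3 therefore holds. Minimality of the period $2p$ for non-equatorial $\gamma$ comes from $(m,p)=1$: a shorter period $2q$ would require $R_{2q\theta}$ to map the plane $\Pi_\gamma$ onto itself. Such a plane is not contained in $\R^2$ and is not a product plane, so this forces $R_{2q\theta}=\id$, that is $2qm\pi/p\in2\pi\Z$, hence $p\mid q$.

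For 3a, suppose $\pi/\theta$ is irrational and $T$ is periodic. Then $R_{2p\theta}$ is a nontrivial rotation of the $\R^2$ factor leaving $\Pi_\gamma$ invariant. Decomposing $\Pi_\gamma$ relative to $\R^{n-1}\oplus\R^2$, invariance under a rotation that is nontrivial on $\R^2$ and trivial on $\R^{n-1}$ forces $\Pi_\gamma\subset\R^{n-1}$ or $\Pi_\gamma=\R^2$. The first is excluded by admissibility, and the second gives the equatorial geodesic. Since the set of such exceptional geodesics has measure zero (Remark~\ref{zeromeasure}), the result of~\cite{va} yields \eqref{2termweyl} when $\theta<\pi$.

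For 3c, $\gamma_e$ hits $\partial_\theta$ orthogonally and simply retraces itself, giving period $2$ and $t$-period $2\theta$.

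\textbf{Main obstacle.} The hardest step is the plane-invariance argument used in 3a and in the minimality claim of 3b. I would carry it out by computing the eigenspaces of $R_\psi$ restricted to an invariant $2$-plane. If $\Pi_\gamma$ is invariant under $R_\psi$ with $\psi\notin2\pi\Z$, then $R_\psi|_{\Pi_\gamma}$ is a rotation whose fixed vectors lie in $\R^{n-1}$ and whose rotating part lies in $\R^2$. A $2$-plane mixing the two factors cannot be invariant unless the rotation is trivial.
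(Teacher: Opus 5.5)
Your proof is correct. Its skeleton is the paper's: unfolding by $S_0,S_{2\theta}$, $T=R_{-2k\theta}\gamma$ on $\mathbb{I}_{2k}$, and the closure criterion $R_{2p\theta}\gamma(t)=\gamma(t+t_{2p}-t_0)$. The two key steps, however, are argued differently.

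For part 1, the paper only knows $\gamma(t_j)\in\partial_{j\theta}$ modulo $2\pi$ (Lemma~\ref{rigidity}). It proves $t_j\to+\infty$ by contradiction: an accumulation point $t^*$ of collision times would force $f(t^*)=0$. Your conserved quantity $f^2\phi'=\det(a,b)\neq 0$ instead gives a strictly increasing lift with $\phi(t+2\pi)=\phi(t)+2\pi$. Hence $t_j=\phi^{-1}(\phi(t_0)+j\theta)$ exactly. This delivers ordering, divergence, transversality and the rational identity $t_{2p}=t_0+2m\pi$ at once; the paper obtains the last separately in Lemma~\ref{thmAA}~2).

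For 3a and for minimality in 3b, the paper equates components of $\gamma(t)$ and $R_{-2q\theta}\gamma(t+t_{2q})$ at $t=0$ and $t=-t_{2q}$, then does a case analysis on $\sin t_{2q}$. You use instead that $R_{2q\theta}$ must preserve $\Pi_\gamma$.

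The one point you assert without justification is that $\Pi_\gamma$ is not a product plane. It follows in one line:
\begin{itemize}
\item For $\psi\notin 2\pi\Z$, the projection onto $\R^{n-1}$ along $\R^2$ is a polynomial in $R_\psi$. So any $R_\psi$-invariant plane splits as $(\Pi\cap\R^{n-1})\oplus(\Pi\cap\R^2)$.
\item Your Step 1 shows admissibility is exactly $\Pi_\gamma\cap(\R^{n-1}\times\{0\})=\{0\}$. A product plane $\R v\oplus\R w$ contains the polar point $v$, so it is not admissible.
\end{itemize}
With this, your argument is uniform in all cases. That includes $2q\theta\equiv\pi\pmod{2\pi}$ (e.g.\ $\theta=\pi/2$, $q=1$), where product planes really are invariant. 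This case cannot occur when $\pi/\theta$ is irrational, which is why your two-option dichotomy in 3a is exact. In the paper's trigonometric analysis, the corresponding subcase $\sin t_{2q}=0$, $\cos t_{2q}=-1$ is left implicit.

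What the paper's route buys is that it stays within the representation \eqref{representation} and the intersection lemmas it also needs for the measure-zero statements in Remark~\ref{zeromeasure}. Yours is shorter and avoids the case analysis.
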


We split the proof of Theorem~\ref{thmA} into two lemmas. The admissibility assumption implies that
$\gamma$ and all reflected segments intersect each meridian at a single point modulo $2\pi$
(see Lemmas~\ref{rigidity}~3) and \ref{elementar}~4)), which makes the unfolding procedure global.

\begin{lemma}\label{thmAA}
The following properties are satisfied by the trajectory $T(t)$ on $\lune{n}{\theta}$, $\theta\in(0,2\pi)$,
corresponding to an initial admissible geodesic $\gamma(t)$. Here $j,k\in\N_0$ and $m,q\in\N$.
\begin{enumerate}[{\rm 1)}]
\item There exists a strictly increasing sequence $(t_j)_{j\ge0}$ such that $T(t_{2k})\in\partial_0$ and
$T(t_{2k+1})\in\partial_\theta$. Moreover,
\begin{equation}\label{gamma2k}
\begin{array}{lll}
&T(t)=(S_0S_{2\theta})^k\,\gamma(t)& \mbox{ for } t\in \mathbb{I}_{2k},\eqskip
&T(t)=(S_{2\theta}S_0)^k\,S_{2\theta}\gamma(t) & \mbox{ for } t\in \mathbb{I}_{2k+1}.
\end{array}
\end{equation}
The intervals are optimal in the sense that $t_{j+1}$ is the smallest real number greater than $t_j$
with the same boundary property, and $t_j$ is characterised by the crossing condition
$\gamma(t_j)\in\partial_{j\theta}$, which is unique modulo $2\pi$.\\[-3mm]

\item In the rational case $\theta=m\pi/q$, we have
$t_0,\ldots,t_{2q}\in [t_0,t_0+2m\pi]$ and $t_{2q}=t_0+2m\pi=t_0+2q\theta$.
Moreover $t_{j+2q}=t_j+2m\pi$.\\[-3mm]

\item If $T(t)$ is periodic, then it closes after an even number $2q$ of segments, i.e.\
$T(t_{2q})=T(t_0)$, and the reflection principle holds at the closing point:
$S_0(T'(t_{2q}^-))=T'(t_0^+)$. The trajectory is periodic with period $2q$ if and only if
either (hence both) of the following equivalent conditions holds:
\begin{eqnarray}\label{closure1}
&&(S_0S_{2\theta})^q\gamma(t_{2q})=\gamma(t_0)\quad \mbox{and}\quad
(S_0S_{2\theta})^q\gamma'(t_{2q})=\gamma'(t_0),\\
\label{closure2}
&&(S_0S_{2\theta})^q\gamma(t-t_0+t_{2q})=\gamma(t)\quad\forall\,t.
\end{eqnarray}
\end{enumerate}
\end{lemma}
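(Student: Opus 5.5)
The proof proceeds by induction on $j$ via the unfolding principle. First I would record the facts that Lemma~\ref{rigidity} and Lemma~\ref{elementar} (stated later but usable) are cited for. An admissible geodesic has $f(t)>0$ for all $t$, so $\phi(t)$ is a smooth lift. From the geodesic equation, $\phi$ is strictly monotone with $\phi(t+\pi)=\phi(t)+\pi$. The monotonicity follows from conservation of angular momentum $f^2\phi'$ in the $\R^2$-factor, which is nonzero precisely because $\gamma$ avoids the polar sphere. Consequently $\gamma$ meets each meridian $\partial_\alpha$ exactly once modulo $2\pi$, and admissibility (pointing into the lune) fixes $\phi'>0$. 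The isometries $R_\phi$ and $S_\phi$ fix the polar axis, so they preserve admissibility.

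\textbf{Part 1).} I define $t_j$ by $\phi(t_j)=j\theta$, normalising $\phi(t_0)=0$. This gives a strictly increasing sequence, unique by monotonicity, with $\gamma(t_j)\in\partial_{j\theta}$. Now set $Q_{2k}=(S_0S_{2\theta})^k=R_{-2k\theta}$ by \eqref{ALI2}, and $Q_{2k+1}=(S_{2\theta}S_0)^kS_{2\theta}$. On $\mathbb{I}_{2k}$ the curve $Q_{2k}\gamma$ has angular coordinate $\phi-2k\theta\in[0,\theta]$. On $\mathbb{I}_{2k+1}$, the reflection $S_{2\theta}$ sends angle $\alpha$ to $2\theta-\alpha$, and the rotation $R_{2k\theta}$ then adds $2k\theta$. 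Hence $Q_{2k+1}\gamma$ has angle $2\theta-(\phi-2k\theta)\cdot$\ldots; concretely the angle equals $2(k+1)\theta-\phi\in[0,\theta]$ on that interval. So both pieces stay in the lune and reach the boundary exactly at the endpoints.

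Continuity at $t_{j+1}$ holds because the reflection fixing the meridian hit there acts as the identity at that point. The velocity rule is checked using $Q_{j+1}=Q_jS_{(j+1)2\theta}$ conjugated appropriately, which reduces to \eqref{gammar1}--\eqref{Vr}. By induction, $T$ as defined is the billiard trajectory, and the optimality of the intervals is immediate from the monotonicity of $\phi$.

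\textbf{Part 2).} When $\theta=m\pi/q$, we have $t_{2q}$ defined by $\phi(t_{2q})=2m\pi$. The relation $\phi(t+\pi)=\phi(t)+\pi$, iterated $2m$ times, gives $\phi(t_0+2m\pi)=2m\pi$, hence $t_{2q}=t_0+2m\pi$. The same argument applied from any starting index gives $t_{j+2q}=t_j+2m\pi$.

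\textbf{Part 3).} Closure of $T$ requires returning to $\partial_0$ with the correct direction. Hitting $\partial_0$ happens only at even indices, so the period is even, say $2q$. The condition $T(t_{2q})=T(t_0)$ together with the reflection principle at the closing point, $S_0T'(t_{2q}^-)=T'(t_0^+)$, translates through the formulas above into \eqref{closure1}. Here one uses that $T$ on $\mathbb{I}_{2q-1}$ followed by the reflection $S_0$ matches the continuation $(S_0S_{2\theta})^q\gamma$. Since a spherical geodesic is determined by its position and velocity at one time, \eqref{closure1} is equivalent to \eqref{closure2}.

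The main obstacle is the monotonicity of $\phi$ and its $\pi$-equivariance. I would obtain these by writing $\gamma(t)=\cos t\,x+\sin t\,v$, projecting onto $\R^2$ as $P_2x\cos t+P_2v\sin t$, and noting that $f^2\phi'=\det(P_2x,P_2v)$ is constant. This determinant is nonzero exactly when $P_2x$ and $P_2v$ are independent, which is equivalent to $\gamma$ not meeting the polar sphere.
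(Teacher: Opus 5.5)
Your proof is correct, and its overall architecture is the same as the paper's. Both unfold the trajectory by the isometries $(S_0S_{2\theta})^k=R_{-2k\theta}$ and $(S_{2\theta}S_0)^kS_{2\theta}=R_{2k\theta}S_{2\theta}$. Both get continuity and the reflection law at each $t_{j+1}$ from the fact that the relevant reflection fixes the meridian being hit. Both reduce closure to the two conditions using uniqueness of geodesics with given initial data.

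The genuine difference is how you control the order of the crossings.

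\begin{itemize}
\item The paper builds $t_{j+1}$ as the first hitting time after $t_j$ and reads off $\gamma(t_j)\in\partial_{j\theta}$ from the group relations. It gets minimality and $t_{2q}=t_0+2m\pi$ from the intersection facts of Lemma~\ref{rigidity} (each meridian is crossed exactly once per $2\pi$). The fact that $\partial_\theta,\partial_{2\theta},\dots$ are met in cyclic order is left implicit there.
\item You instead extract a single analytic invariant, $f^2\phi'=\det(P_2x,P_2v)$, which is nonzero exactly for geodesics avoiding the polar sphere. You define $t_j$ outright by $\phi(t_j)=j\theta$, then check afterwards that $Q_j\gamma$ has angular coordinate in $[0,\theta]$ on $\mathbb{I}_j$.
\end{itemize}

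Your route buys three things:
\begin{itemize}
\item It makes the monotone sweep through the meridians explicit.
\item It gives $t_{j+2q}=t_j+2m\pi$ for general $m$ by iterating the half-period shift, rather than by analogy with $m=1$.
\item It yields $t_j\to+\infty$ (no dead ends, Theorem~\ref{thmA}~1)) for free, since $\phi$ is unbounded. The paper needs a separate limiting argument for that.
\end{itemize}

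One small point to fill in: constancy of $f^2\phi'$ gives strict monotonicity, but not by itself that the increment of $\phi$ over a half-period is exactly $\pi$. For that, note two facts:
\begin{itemize}
\item $P_2\gamma(t+\pi)=-P_2\gamma(t)$, so the increment is continuous in $t$ and congruent to $\pi$ modulo $2\pi$, hence constant.
\item $t\mapsto\cos t\,P_2x+\sin t\,P_2v$ is an ellipse centred at the origin, traversed once per $2\pi$, so the increment over a full period is exactly $2\pi$.
\end{itemize}
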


\begin{proof}
1) Define the family of unit-speed geodesics
\[
\gamma_{2k}(t):=(S_0S_{2\theta})^k\gamma(t),\qquad
\gamma_{2k+1}(t):=(S_{2\theta}S_0)^kS_{2\theta}\gamma(t),\qquad k\ge0.
\]
Since $(S_0S_{2\theta})^k=R_{-2k\theta}$ and $(S_{2\theta}S_0)^k=R_{2k\theta}$, induction in $k$
together with \eqref{R2redution} and \eqref{ALI2} yields
\begin{equation}\label{family}
\begin{array}{ll}
\gamma_{2k}(t)=S_0\gamma_{2k-1}(t)=R_{-2k\theta}\gamma(t), &\quad
\gamma_{2k+1}(t)=S_{2\theta}\gamma_{2k}(t)=R_{2k\theta}S_{2\theta}\gamma(t).
\end{array}
\end{equation}

We start with $T(t)=\gamma(t)$ on $[t_0,t_1]$, where $t_1>t_0$ is the first time with
$\gamma(t_1)\in\partial_\theta$ (cf.\ \eqref{gammar1}). Reflecting at $t_1$ across $\partial_\theta$
produces a geodesic $\gamma_1$ and defines $T=_{t_1}\gamma_1$ on $[t_1,t_2]$, where $t_2>t_1$ is the
first time with $\gamma_1(t_2)\in\partial_0$. Iterating this procedure gives a strictly increasing
sequence $t_j$ and intervals $\mathbb{I}_j=[t_j,t_{j+1}]$ such that
$T(t)=\gamma_{2k}(t)$ on $\mathbb{I}_{2k}$ and $T(t)=\gamma_{2k+1}(t)$ on $\mathbb{I}_{2k+1}$.
Equivalently, each segment is obtained from the previous one by reflection at the collision point:
\[
(\gamma_{2k})_r=_{t_{2k-1}}\gamma_{2k-1},\qquad
(\gamma_{2k+1})_r=_{t_{2k}}\gamma_{2k},\]
and
\[
\begin{array}{ll}
\gamma_{2k-1}(t_{2k})=\gamma_{2k}(t_{2k})\in\partial_0,
&\qquad \gamma_{2k}(t_{2k+1})=\gamma_{2k+1}(t_{2k+1})\in\partial_\theta,
\eqskip
\end{array}
\]
This proves~\eqref{gamma2k}. Since $T(t_{2k})\in\partial_0$ and
$T(t_{2k+1})\in\partial_\theta$, the identities in \eqref{ALI2} imply
$\gamma(t_{2k})\in\partial_{2k\theta}$ and $\gamma(t_{2k+1})\in\partial_{(2k+1)\theta}$, and the
minimality of each $t_{j+1}$ follows from the uniqueness modulo $2\pi$ (Lemma~\ref{rigidity}~3)).

\smallskip
2) Assume first $\theta=\pi/q$. By admissibility, $\gamma$ meets each meridian at a unique point
modulo $2\pi$. Define inductively $t^*_j>t^*_{j-1}$ as the smallest time with
$\gamma(t^*_j)\in\partial_{j\theta}$. Then $t^*_{2q}=t^*_0+2\pi$, and
$t^*_{j+2kq}=t^*_j$ on $[2k\pi,2(k+1)\pi]$, $k\in\N$. Using \eqref{family} and \eqref{ALI2}, we see
$\gamma_{2k}(t^*_{2k})\in\partial_0$ and $\gamma_{2k+1}(t^*_{2k+1})\in\partial_\theta$, hence the
trajectory constructed in 1) satisfies $t_j=t^*_j$.

If $\theta=m\pi/q$, the same argument gives $\gamma(t_{2q})\in\partial_{2qm\pi}=\partial_0$, and
$t_0,\ldots,t_{2q}\in[t_0,t_0+2m\pi]$ with $t_{2q}=t_0+2m\pi=t_0+2q\theta$. In particular
$t_{2q}-t_0=2m\pi=2q\theta$.

\smallskip
3) If $T(t)$ is periodic, transversality forces it to hit $\partial_0$ and $\partial_\theta$ alternately,
hence it must close after $2q$ segments for some $q\in\N$. From 1), on $\mathbb{I}_{2q-1}$ and
$\mathbb{I}_{2q}$ we have
\begin{equation}
\begin{array}{rcl}\label{fact0}
 T(t) &=& (S_{2\theta}S_0)^{q-1}S_{2\theta}\gamma(t),\quad  t\in \mathbb{I}_{2q-1},
\end{array}
\end{equation}
\begin{equation}
\begin{array}{rcl}\label{fact1}
T(t) & = & S_0(S_{2\theta}S_0)^{q-1}S_{2\theta}\gamma(t)\eqskip
     & = & (S_0S_{2\theta})R_{-2(q-1)\theta}\gamma(t)\nonumber\eqskip
     & = & (S_0S_{2\theta})^q\gamma(t),\qquad \quad t\in \mathbb{I}_{2q}.\nonumber
\end{array}
\end{equation}
Evaluating at $t=t_{2q}$ gives $T(t_{2q})=(S_0S_{2\theta})^q\gamma(t_{2q})=\gamma(t_0)$, which is
the first identity in \eqref{closure1}. The reflection principle at the closing point reads
$S_0(T'(t_{2q}^-))=T'(t_0^+)$. Differentiating \eqref{fact0} at $t=t_{2q}$, applying $S_0$, and
using \eqref{fact1} yields $(S_0S_{2\theta})^q\gamma'(t_{2q})=\gamma'(t_0)$, the second identity in
\eqref{closure1}. Finally, \eqref{closure2} is equivalent to \eqref{closure1} because both sides are
geodesics of $\Se^n$ with the same initial data.
\end{proof}


\begin{lemma}\label{thmAB}
Under the hypothesis in Lemma~\ref{thmAA}, let $t_j$ be defined as in Lemma~\ref{thmAA}{\rm~1)}. Then we have periodicity in the following cases.
\begin{enumerate}[{\rm 1)}]
\item If $\theta=m\pi/q$, then $T(t)$ is a periodic trajectory of period $2q$, and of $t$-period $2m\pi$, thus,  for all $k\in \N$, it satisfies
$T_{|[t_{2kq},\, t_{2(k+1)q}]}(t+2km\pi)=T_{|[t_0,\,t_{2q}]}(t)$, 
with $t_{2kq}-t_{2(k-1)q}=2m\pi$, and $t_{j+2kq}= t_j +2km\pi$ for all $j=0,1,\ldots,2q-1$.
More precisely, $T(t)$ is obtained by partitioning 
$\gamma(t)$ on $[t_0,t_0+2m\pi]$ into $2q$ segments, each folded by an isometry fixing the polar axis.
 If $(m,q)\neq 1$ we may reduce 
$\theta=m'\pi/q'$ with $(m',q')=1$, obtaining smaller periods. $2q'$ and $2m'\pi$.
If $\gamma(t)=\gamma_e(t)=(0, e^{it})$ 
is the equatorial geodesic, the period is reduced to minimal $2$.\\[-3mm]

\item If $\pi/\theta$ is irrational, the only periodic trajectory is the one defined by the equatorial geodesic $\gamma(t)=\gamma_{e}(t)$.
For any non-periodic trajectory, $\{\gamma(t_j)\}_{j\ge0}$ is an infinite set of distinct points.\\[-3mm]

\item Given any $\theta$, the equatorial geodesic $\gamma(t)=\gamma_{e}(t)\in \partial_{t}$ intersects all meridians of $\Se^n$ perpendicularly.
In this case, $t_{j}=j\theta$ at the boundary of $\lune{n}{\theta}$, and
\[
(S_0S_{2\theta})\gamma(t+t_2)=(0,e^{it})=\gamma(t),
\]
that is,~\eqref{closure2} holds with $q=1$. Hence $T(t)$ is periodic with period $2$ and of $t$-period  $2\theta$
Explicitly, for $k\in \Z$,
\[
\begin{array}{llll}
T(t) & = & \left\{
\begin{array}{ll}
\gamma(-2k\theta+t), & \text{for } t\in [2k\theta, (2k+1)\theta] \eqskip
\gamma(2(k+1)\theta-t), & \text{for } t\in [(2k+1)\theta,(2k+2)\theta].\eqskip
\end{array}
\right.
\end{array}
\]
Moreover $R_{2\theta}\gamma(t)=\gamma(t+2\theta)$, and 
$\Pi_{\gamma}=\Pi_{R_{2\theta}\gamma}=R_{2\theta}\Pi_{\gamma}$.
In the case considered in {\rm 1)}, $\theta=m\pi/q$, this reduces the periodicity to its minimum possible value $2$.\\[-2mm]
\end{enumerate}
\end{lemma}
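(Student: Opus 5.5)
The plan is to deduce all three items from Lemma~\ref{thmAA}, in particular from the closure criterion \eqref{closure2} together with the identity $(S_0S_{2\theta})^q=R_{-2q\theta}$ from \eqref{ALI2}.

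For 1), take $\theta=m\pi/q$. Then $2q\theta=2m\pi$, so $R_{-2q\theta}=\mathrm{id}$. By Lemma~\ref{thmAA}~2) we have $t_{2q}-t_0=2m\pi$, and since $\gamma$ has $t$-period $2\pi$, we get $\gamma(t-t_0+t_{2q})=\gamma(t+2m\pi)=\gamma(t)$. Hence \eqref{closure2} holds trivially, and by Lemma~\ref{thmAA}~3) the trajectory is periodic of period $2q$ and $t$-period $2m\pi$. The translation identities follow from \eqref{gamma2k}:
\begin{itemize}
\item on $\mathbb{I}_{j+2kq}$ the folding isometry is $R_{\mp 2k q\theta}=\mathrm{id}$ composed with the one on $\mathbb{I}_j$;
\item $t_{j+2kq}=t_j+2km\pi$ comes from Lemma~\ref{thmAA}~2).
\end{itemize}
The "folded partition" description is just \eqref{gamma2k} restricted to $[t_0,t_0+2m\pi]$. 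If $(m,q)\neq1$, the same $\theta$ equals $m'\pi/q'$, and applying the argument with $q'$ gives the smaller periods. The equatorial case is item 3).

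For 3), compute directly. $\gamma_e(t)=(0,\cos t,\sin t)$ lies on $\partial_t$, and its velocity $(0,-\sin t,\cos t)$ equals $\nu_t$ by \eqref{normal}, so it meets every meridian orthogonally. Starting at $t_0=0\in\partial_0$, the first hit of $\partial_\theta$ is at $t_1=\theta$, and reflection reverses direction, so $t_j=j\theta$. Then $S_0S_{2\theta}=R_{-2\theta}$ acts on the last two coordinates as multiplication by $e^{-2i\theta}$, giving $R_{-2\theta}\gamma_e(t+2\theta)=\gamma_e(t)$. This is \eqref{closure2} with $q=1$, so the orbit is $2$-periodic with $t$-period $2\theta$. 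The explicit back-and-forth formula follows from \eqref{gamma2k}, using $S_{2\theta}\gamma_e(t)=\gamma_e(2\theta-t)$. Finally, $R_{2\theta}\gamma_e(t)=\gamma_e(t+2\theta)$ shows that $R_{2\theta}$ preserves the oriented plane $\Pi_{\gamma_e}$. Period $2$ is minimal because periods are even by Lemma~\ref{thmAA}~3).

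For 2), the main obstacle, suppose $\pi/\theta$ is irrational and the trajectory is periodic of period $2q$. By \eqref{closure2}, $R_{-2q\theta}\gamma(s+c)=\gamma(s)$ for all $s$, where $c=t_{2q}-t_0$. So $R_{2q\theta}$ maps the oriented plane $\Pi_\gamma$ to itself and acts on it as the rotation by $c$. Decompose $\R^{n+1}=\R^{n-1}\oplus\R^2$. Since $2q\theta\notin2\pi\Z$, the only vectors fixed by $R_{2q\theta}$ are those in $\R^{n-1}\times\{0\}$, and on $\R^2$ it is a nontrivial rotation. I would argue in two cases.
\begin{itemize}
\item If $c\notin 2\pi\Z$, then $R_{2q\theta}|_{\Pi_\gamma}$ has no fixed vectors. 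Since the restriction to $\Pi_\gamma$ of an invariant-plane orthogonal map must be a block of $R_{2q\theta}$, and the only nontrivial block sits in $\R^2$, we get $\Pi_\gamma=\{0\}\times\R^2$ and hence $\gamma=\gamma_e$ up to orientation. Right-orientation then gives $\gamma_e$.
\item If $c\in2\pi\Z$, then $\Pi_\gamma\subset\R^{n-1}\times\{0\}$. Then $\gamma$ lies in the polar sphere, which contradicts admissibility.
\end{itemize}
For the nonperiodic case, I would show that distinct indices give distinct points. If $\gamma(t_j)=\gamma(t_{j'})$ with $j<j'$, this point lies on both $\partial_{j\theta}$ and $\partial_{j'\theta}$. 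If these meridians are distinct, the point is on the polar sphere, which is excluded. If they coincide, then $(j'-j)\theta\in2\pi\Z$, contradicting irrationality. Hence $\{\gamma(t_j)\}$ is infinite.
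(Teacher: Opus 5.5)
Your proposal is correct. Items 1) and 3) follow the paper's proof essentially verbatim:
\begin{itemize}
\item for 1), $R_{-2q\theta}=\mathrm{id}$ when $\theta=m\pi/q$, together with $t_{2q}-t_0=2m\pi$ from Lemma~\ref{thmAA}~2) and the closure criterion \eqref{closure2};
\item for 3), a direct computation with $S_{2\theta}\gamma_e(t)=\gamma_e(2\theta-t)$ and $R_{-2\theta}\gamma_e(t+2\theta)=\gamma_e(t)$.
\end{itemize}

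You genuinely differ in item 2). The paper's proof of this item only says that the meridians $\partial_{j\theta}$ never repeat, so $T(t)$ cannot close. Read literally, this would also exclude the equatorial orbit, which closes after two segments for every $\theta$: closing means $R_{-2q\theta}\gamma(t_{2q})=\gamma(t_0)$, not $\partial_{2q\theta}=\partial_0$. The actual characterisation is done later, in the proof of Theorem~\ref{thmA}~3a). There the paper writes $\gamma=(w,fe^{i\phi})$, evaluates the closure identity at $t=0$ and $t=-t_{2q}$, deduces $w(0)=w'(0)=0$, and concludes via $w''=-w$.

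Your argument replaces that computation with a coordinate-free one. Set $R=R_{2q\theta}$ and $c=t_{2q}-t_0$. Then $R$ leaves $\Pi_\gamma$ invariant and acts on it as rotation by $c$. Since $2q\theta\notin2\pi\Z$, the fixed space of $R$ is exactly $\R^{n-1}\times\{0\}$. So either $c\in2\pi\Z$, which forces $\gamma$ into the polar sphere, or $R|_{\Pi_\gamma}$ has no fixed vectors, which forces $\Pi_\gamma=\{0\}\times\R^2$. This is shorter, makes the role of admissibility transparent, and makes item 2) self-contained rather than dependent on Theorem~\ref{thmA}.

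The one step you should state more carefully is the ``block'' claim. Justify it as follows: $R-\mathrm{id}$ is injective on the invariant plane $\Pi_\gamma$, so $\Pi_\gamma=(R-\mathrm{id})\Pi_\gamma\subseteq\mathrm{Im}(R-\mathrm{id})=\{0\}\times\R^2$, with equality by dimension.

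Your distinct-points argument matches the paper's. It actually holds for every admissible trajectory when $\pi/\theta$ is irrational, not only the nonperiodic ones.
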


\begin{proof}
1) Clearly,   $\id_{\R^{n+1}}=(S_0S_{2\theta})^q=R_{-2q\theta}$ is equivalent to $\theta=m\pi/q$ for some $m\in \N$, that is, to $\theta/\pi$ being rational. Moreover,  condition $(m,q)=1$ is equivalent to $(S_0S_{2\theta})^p\neq \id_{\R^{n+1}}$ for all $1\leq p<q$.
From Lemma~\ref{thmAA}{\rm~2)} we have $t_{2q}=t_0+2m\pi$, and by Lemma~\ref{thmAA}{\rm~3)} and its proof \eqref{closure2} holds, the closure at $t_{2q}$ satisfies the reflection principle with respect to $\partial_0$, namely
$S_0(T'(t^-_{2q}))=T'(t_0^+)$.
The $t$-periodicity follows immediately.
The second  statement will be proved in the proof of Theorem~\ref{thmA}{\rm~3b)}. The last statement is proved in 3). The reduction of period and $t$-period follows trivially.  The periodicity may not depend  on $(S_0S_{2\theta})^q$ to be the identity, by  on a  combination  equation \eqref{closure2}.
\smallskip

2) If $\pi/\theta$ is irrational, then the meridians $\partial_{j\theta}$ never repeat.
In particular, $T(t)$ cannot be closed. Moreover, $\gamma(t_j)$ is an infinite sequence of distinct points.
\smallskip

3) We take $t_0=0$.
The trajectory $T(t)$ of the equatorial geodesic $\gamma(t)=(0,e^{it})$ is periodic with period $q=2$, whatever $\theta$ is.
To show this, recall first that for all $j\geq 0$, $\gamma(t_j)\in \partial_{j\theta}$ if and only if $t_j=j\theta\modu{2\pi}$.
In particular, from \eqref{ALI2} we have
$S_{2\theta}\gamma(t)=(0, e^{i(2\theta-t)})$  and $ (S_0S_{2\theta})^k\gamma(t)=  (0,e^{i(-2k\theta+t)})$.
Therefore, for $k\geq 1$ we get
\[
(S_0S_{2\theta})^k\gamma(t+t_2)=(S_0S_{2\theta})^k\gamma(t+2\theta)=(0,e^{i(-2(k-1)\theta+ t)}),
\]
that is, \eqref{closure2} holds if and only if $q=k=1$. In particular $T(t)$ is periodic of period $2$, and the explicit values of $T(t)$ follow trivially. Moreover,
for all $k\geq 0$, $T(t_{2k})=\gamma(0)\in \partial_0$ and $T(t_{2k+1})=\gamma(\theta)\in \partial_{\theta}$, and the same holds for $k\in \Z$.
Hence $T(t)$ closes after two geodesic segments.
Since $\gamma'(t_j)= \nu_{j\theta}$, by \eqref{Vr} we have $a=0$, meaning that $\gamma(t)$ and the reflected $\gamma_r(t)$ meet the boundary orthogonally, and so the same holds for $T(t)$.
\end{proof}

\begin{proof}[Proof of Theorem~\ref{thmA}]
1) Take the sequence $t_j$ obtained in Lemma~\ref{thmAA}{\rm~1)}. We prove that $t_j\to +\infty$.
Assuming $t_j$ bounded, there exist a subsequence $t_{j}\nearrow t^*<+\infty$.
Let $\theta^*$ be such that $\gamma(t^*)\in \partial_{\theta^*}$.
Then $\|\gamma(t_{2k+1})-\gamma(t_{2k})\|\to 0$. 
Using the representation \eqref{representation} we have $w(t_j)\to w(t^*)$ and $f(t_j)e^{ij\theta}\to f(t^*)e^{i\theta^*}$,
with $|w(t_j)|\neq 1$ and $0\neq f(t_j)\to f(t^*)$.
Therefore, for $k\to +\infty$, we have
\[
\|w(t_{2k})-w(t_{2k+1})\|\to 0, \qquad
\|f(t_{2k})e^{i2k\theta}-f(t_{2k+1})e^{i(2k+1)\theta}\|\to 0.
\]
From the latter limit we get
\[
f(t^*)\|(1-e^{i\theta})\|
=f(t^*)\times \lim_{k\to +\infty}\left\|\left(1-\frac{f(t_{2k+1})}{f(t_{2k})}e^{i\theta}\right)\right\|\to 0,
\]
and hence $f(t^*)=0$. This means that $\gamma(t)$ crosses the polar sphere, yielding a contradiction.
\smallskip

2) This follows from Lemma~\ref{thmAA}{\rm~3)}. Applying \eqref{R2redution} and \eqref{representation} implies the equalities with $\Pi_{\gamma}$.
\smallskip

3) From Lemma~\ref{thmAB}{\rm 2)} the set $\{\gamma(t_j)\}_{j\ge0}$ is infinite and consists of distinct points, and from 1) we have $t_j\to +\infty$.
So there are no dead-ends. Assume $T(t)$ is periodic. For simplicity we take $t_0=0$.
By \eqref{closure2} we have
\[
\gamma(t)=R_{-2q\theta}\gamma(t+t_{2q}).
\]
Using this identity, with $\gamma(0)\in \partial_0$, $\gamma(t_{2q})\in \partial_{2q\theta}$, and \eqref{representation}, with $f(t)\neq 0$ for all $t$ by admissibility,  we obtain $\phi(0)=0$, $\phi(t_{2q})=2q\theta$,  where
\[
\begin{array}{ll}
\gamma(t)=(w(t),f(t)e^{i\phi(t)}), &
\gamma'(t)=(w'(t), (f'(t)+i\phi'(t)f(t))e^{i\phi(t)}),\eqskip
R_{-2q\theta}\gamma(0)= (w(0),f(0)e^{-2iq\theta}),&
R_{-2q\theta}\gamma'(0)= (w'(0),(f'(0)+i\phi'(0)f(0))e^{-2iq\theta}),
\end{array}
\]
and
\[
\begin{array}{lll}
\gamma(t) & = & \cos t \,\gamma(0) + \sin t\,\gamma'(0)\eqskip
& = & (\cos t\, w(0)+ \sin t\, w'(0), \cos t\, f(0)+\sin t\, f'(0)+ i\sin t\, \phi'(0)f(0)).
\end{array}
\]
On the other hand, the periodicity condition gives
\[
\begin{array}{lll}
\gamma(t)&=&  R_{-2q\theta}\gamma(t+t_{2q})\eqskip
&=& \cos (t+t_{2q}) R_{-2q\theta}\gamma(0)+ \sin (t+t_{2q})R_{-2q\theta}\gamma'(0)\eqskip
&=& \cos (t+t_{2q})\left( (w(0),f(0)e^{-2iq\theta})\right)+ \sin (t+t_{2q})\left((w'(0),(f'(0)+i\phi'(0)f(0))e^{-2iq\theta})\right)\eqskip
&=& \left( \cos (t+t_{2q})w(0)+ \sin (t+t_{2q})w'(0)~,~ (\cos (t+t_{2q})f(0)
+ \sin (t+t_{2q}) f'(0))e^{-2iq\theta} \right.\eqskip
&& \hspace*{5mm} \left.+\sin(t+t_{2q})\phi'(0)f(0)ie^{-2iq\theta}\right).
\end{array}
\]
Therefore we obtain the following identities,
\[
\begin{array}{rcl}
\label{c}
\cos t\, w(0)+ \sin t\, w'(0) & = &
 \cos (t+t_{2q})w(0)+ \sin (t+t_{2q})w'(0)\eqskip
 \cos t\, f(0)+\sin t\, f'(0)+ i\sin t\, \phi'(0)f(0)
 & = & \left(\cos (t+t_{2q}) f(0)+
\sin (t+t_{2q}) f'(0)\right)e^{-2iq\theta}\nonumber \eqskip
& &\hspace*{5mm}+\sin(t+t_{2q})\phi'(0)f(0)ie^{-2iq\theta}.\nonumber
\end{array}
\]
Writing the last identity in real variables,
\begin{equation}\label{d}
 \begin{array}{rcl}
\cos(t)\, f(0)+\sin t\, f'(0) & = & (\cos (t+t_{2q}) f(0)+ \sin (t+t_{2q}) f'(0))\cos(-2q\theta)
-\sin(t+t_{2q})\phi'(0)f(0)\sin(-2q\theta), \eqskip
\sin(t)\, \phi'(0)f(0) & = & (\cos (t+t_{2q}) f(0)+
\sin (t+t_{2q}) f'(0))\sin(-2q\theta)+\sin(t+t_{2q})\phi'(0)f(0)\cos(-2q\theta).\nonumber
\end{array}
\end{equation}
The resulting equations at $t=0$ then yield
\begin{equation}\label{a}
\begin{array}{cll}
w(0) & = &\cos(t_{2q})w(0) + \sin(t_{2q})w'(0)\eqskip
 1 & = & \left(\cos(t_{2q})+A\sin(t_{2q})\right)\cos(-2q\theta)-\sin(t_{2q})\phi'(0)\sin(-2q\theta)\eqskip
 0 & = & \left(\cos(t_{2q}) +A\sin(t_{2q})\right)\sin(-2q\theta) +\sin(t_{2q})\phi'(0)\cos(-2q\theta),
\end{array}
\end{equation}
where $A=f'(0)/f(0)$, while at $t=-t_{2q}$ we obtain
\begin{equation}\label{b}
\begin{array}{cll}
 w(0) & = & \cos(-t_{2q})w(0)+\sin(-t_{2q})w'(0)\eqskip
 \cos(-2q\theta) & = & \cos(-t_{2q})+\sin( {-}t_{2q})A\eqskip
 \sin(-2q\theta) & = & \sin(-t_{2q})\phi'(0).
\end{array}
\end{equation}
From the first identities in \eqref{a} and \eqref{b}, we get $\sin(t_{2q})w'(0)=0$.
If $\sin(t_{2q})=0$ then by the third equation in \eqref{a} we obtain $\sin(-2q\theta)=0$.
Next we continue the proof separating the cases {\rm 3a)} and {\rm 3b)}.
\smallskip

3a) Since $\pi/\theta$ is irrational, we have $\sin(-2q\theta)\neq 0$. Hence $w'(0)=0$ and by the first identity in \eqref{b}
$(1-\cos(t_{2q}))w(0)=0$, with $\cos(t_{2q})\neq 1$, and thus $w'(0)=w(0)=0$.
On the other hand, the geodesic equation for $\gamma(t)$ is $\gamma''(t)=-\gamma(t)$, which implies $w''(t)=-w(t)$.
The initial conditions at $t=0$ then yield $w(t)=0$, hence $f(t)=1$ and $\gamma(t)=(0, e^{i\phi(t)})$.
By the geodesic equation $\gamma''=-\gamma$ we get $\phi''=0$ and $(\phi')^2=1$. Hence $\phi(t)=\pm t$, and so
$\gamma(t)=(0, e^{it})=\gamma_{e}(t)$ is the equatorial geodesic.
Hence, the set of periodic geodesics has measure zero, so the nonperiodicity condition is satisfied and from 1) the nonblocking condition is also satisfied.
Then if $\theta<\pi$, the two-term asymptotic formula \eqref{2termweyl} holds.
\smallskip

3b) Now we take $\theta=m\pi/p$. The first part is proved in Lemma~\ref{thmAB}~{\rm 1)} (with $q$ and $p$ switched).
We assume $(m,p)=1$ and $1\leq q<p$, and so $R_{-2q\theta}\neq \id$.
Let us assume $T(t)$ is also of period $2q$, for some $1\leq q<p$.
By Lemma~\ref{thmAA}{\rm~3)} this means that $R_{-2q\theta}\gamma(t+t_{2q})=\gamma(t)$, and so $R_{-2q\theta}\gamma(t_{2q})=\gamma(0)$.
If $\sin(t_{2q})\neq 0$ then $w'(0)=0$ and, following the proof of {\rm~3a)}, either $w(0)=0$ or $\cos(t_{2q})=1$.
In the first case we conclude $\gamma(t)=\gamma_e(t)$.
In the second case, by \eqref{b} we have $\cos(-2q\theta)=1$, which is equivalent to $p/m=q/k$ for some $k\in \N$ with $q<p$.
This contradicts the assumption that $(m,p)=1$.
Hence there are no periodic solutions other than the equatorial geodesic.
\smallskip

3c) Follows from Lemma~\ref{thmAB}{\rm~3)}.
\end{proof}

\subsection{Intersection results}\label{intersection}
Here we justify the transversality condition of a unit-speed geodesic $\gamma(t)$ of $\Se^n$ with respect to a family of meridians $\partial_{\phi}$ assumed in Theorem~\ref{thmA}.

In Remark~\ref{zeromeasure}~{\rm 1)} we introduced the space of oriented unit-speed geodesics $\mathcal{G}(\Se^n)$ of the $n$-sphere.
Each unit-speed geodesic $\gamma$ is identified with an oriented $2$-plane $\Pi_{\gamma}$ spanned by initial conditions $\{\gamma(t_0), \gamma'(t_0)\}$.
By definition, this $2$-plane satisfies the property $\Se^n\cap \Pi_{\gamma}=\gamma =\Se^1$.

In the next two lemmas we state some useful elementary facts {about geodesics of $\Se^{n}$.

\begin{lemma}\label{elementar}
 Let $\gamma$ be a unit-speed geodesic of $\Se^n$.
\begin{enumerate}[{\rm 1)}]
\item If $\gamma$ intersects an $m$-sphere $\Se^m\subset \Se^n$ at two linearly independent points then $\gamma\subset \Se^m$.\\[-3mm]
 \item For each $\phi$, either $\gamma$ intersects transversely $\partial_{\phi}$ at a single point or $\gamma\subset \Se^{n-1}_{\phi}$.\\[-3mm]
 \item If $n=2$ only the constant geodesic identically equal to North or South poles is part of the polar sphere.
If $n\geq 3$ the space of unit-speed geodesics that are part of the polar sphere has dimension $2\times (n-3)$.
If $n=3$ the latter is a set of two polar geodesics $\gamma(t)=(e^{\pm it}, 0^2)$.
The space of geodesics that are part of a given meridian sphere has dimension $2\times(n-2)$.
The set of geodesics that are part of some meridian sphere, or equivalently that intersect the polar sphere, is of Hausdorff dimension { $2\times (n-2)+1$ a.e.}.\\[-3mm]
\item Any rotation or reflection fixing the polar axis transforms meridians of $\Se^n$ into meridians of $\Se^n$.
In particular if $\gamma(t)$ does not cross the polar sphere, then neither does $R_{\phi}\gamma(t)$ nor $S_{\phi}\gamma(t)$.
 \end{enumerate}
\end{lemma}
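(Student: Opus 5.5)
The plan is to work throughout with the identification $\gamma=\Pi_\gamma\cap\Se^n$ of a geodesic with an oriented $2$-plane through the origin (Remark~\ref{zeromeasure}~1)), and with the fact that each $\Se^m\subset\Se^n$ considered here is $V\cap\Se^n$ for a linear subspace $V$ of dimension $m+1$. All four items then become linear algebra statements in $\R^{n+1}$.

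For 1), suppose $\gamma$ meets $\Se^m=V\cap\Se^n$ at two linearly independent points $x,y$. Then $x,y\in V\cap\Pi_\gamma$, and since $\Pi_\gamma$ is two-dimensional, $\Pi_\gamma=\mathrm{Span}\{x,y\}\subset V$. Hence $\gamma\subset V\cap\Se^n=\Se^m$.

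For 2), apply 1) with $V=H_\phi$, the hyperplane supporting $\Se^{n-1}_\phi$. Since $\dim(\Pi_\gamma\cap H_\phi)\ge1$, either $\Pi_\gamma\subset H_\phi$, so that $\gamma\subset\Se^{n-1}_\phi$, or $\Pi_\gamma\cap H_\phi$ is a line $\R u$. In the latter case $\gamma$ meets $\Se^{n-1}_\phi$ exactly at the antipodal pair $\pm u$. If $u\notin\Se^{n-2}_{polar}$, exactly one of $\pm u$ lies in the half-hyperplane $H_\phi^+$, since the projections $P_2(\pm u)$ are opposite nonzero multiples of $(\cos\phi,\sin\phi)$. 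Transversality follows because $\gamma'$ at that point is not in $H_\phi$: otherwise $\Pi_\gamma\subset H_\phi$. The degenerate case $u$ polar, where both $\pm u$ lie in every meridian, is exactly the case in which $\gamma$ crosses the polar sphere. This case must be either stated as an exception or absorbed into the dichotomy. I would note that if $u$ is polar, then for a generic $\phi$ the plane is not contained in $H_\phi$, so "single point modulo $2\pi$" must be read as holding under admissibility. Handling this wording carefully is the one delicate point.

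For 3), note that geodesics contained in $V\cap\Se^n$ with $\dim V=d$ correspond to the oriented Grassmannian $\tilde G_2(d)$, of dimension $2(d-2)$. For the polar sphere $d=n-1$, which gives $2(n-3)$. For $n=3$ this is $\tilde G_2(2)$, the two orientations $(e^{\pm it},0^2)$. For $n=2$ the polar sphere is $\Se^0$ and contains no geodesic, so only the degenerate "constant" case remains. For a meridian sphere $d=n$, which gives $2(n-2)$.

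The geodesics meeting the polar sphere are exactly those with $\Pi_\gamma\cap(\R^{n-1}\times\{0\})\ne0$. By 2), these are exactly the ones lying in some $\Se^{n-1}_\phi$: if $u\in\Pi_\gamma$ is polar, pick $\phi$ so that $H_\phi$ contains the $\R^2$-projection of a second basis vector of $\Pi_\gamma$. This set is the union over $\phi\in[0,\pi)$ of the $2(n-2)$-dimensional families, i.e.\ the image of a fibration with one extra parameter. It therefore has dimension $2(n-2)+1<2(n-1)$ at generic points, and so has measure zero. The main obstacle is justifying this Hausdorff dimension count: the union over $\phi$ overlaps along planes inside the polar axis space, so the fibres are not disjoint. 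I would first compute the dimension on the open dense part where $\Pi_\gamma\cap(\R^{n-1}\times\{0\})$ is exactly one-dimensional, which is where the "a.e." in the statement comes from. On that part the meridian sphere is uniquely determined, since $\phi$ is fixed by the projection of the other basis vector.

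For 4), $R_\psi$ and $S_\psi$ act as the identity on $\R^{n-1}$ and as planar isometries on $\R^2$. They therefore send $H^+_\phi$ to $H^+_{\phi+\psi}$ and to $H^+_{\psi-\phi}$ respectively, and hence map meridians to meridians. Both fix the polar sphere pointwise and are bijections. Consequently $R_\phi\gamma$ or $S_\phi\gamma$ meets the polar sphere if and only if $\gamma$ does.
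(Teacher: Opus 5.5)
Your arguments for 1), 2) and 4) are essentially the paper's linear algebra. For 4) you argue directly that $R_\psi H^+_\phi=H^+_{\phi+\psi}$, $S_\psi H^+_\phi=H^+_{\psi-\phi}$, and that both maps fix the polar sphere pointwise; this is cleaner than the paper's detour through $\langle R_\phi\gamma',\nu_\theta\rangle$.

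Your caveat in 2) is well founded. A geodesic through a polar point $u$ that is not contained in $\Se^{n-1}_\phi$ meets $\partial_\phi$ at both $\pm u$, exactly as Lemma~\ref{rigidity}~2) records. The paper's proof tacitly assumes the crossing point is non-polar when it places the antipode in $\partial_{\phi+\pi}$. So the dichotomy holds only for geodesics avoiding the polar sphere, which is the only setting in which it is used.

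For 3) the routes genuinely differ. The paper takes the Grassmannian counts as evident and proves only the Hausdorff claim. For the upper bound it uses the Lipschitz map $\tau(\phi,\Pi)=R_\phi\Pi$ on $[0,2\pi)\times\G{0}$. For the lower bound it lifts to $\bar\tau:\Se^1\times U\Se^{n-1}_0\to U\Se^n$, checks the differential off $U\Se^{n-2}_{polar}$, and descends through the circle bundle $P$.

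You instead describe the set intrinsically as $\{\Pi:\Pi\cap L\neq 0\}$ with $L=\R^{n-1}\times\{0\}$, and stratify by $\dim(\Pi\cap L)$. This buys several things:
\begin{itemize}
\item it supplies the counts $2(n-3)$ and $2(n-2)$ that the paper omits;
\item it avoids the unit tangent bundle;
\item it explains where the ``a.e.'' comes from, since the paper's $\tau$ is not injective: $\G{\phi}=\G{\phi+\pi}$, and $\mathcal{G}(\Se^{n-2}_{polar})$ lies in every $\G{\phi}$.
\end{itemize}

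Two points each need a line.
\begin{itemize}
\item The equivalence ``meets the polar sphere iff lies in some meridian sphere'' does not follow from 2). One direction is your choice of $\phi$. The other is the count $\dim(\Pi\cap L)\ge 2+(n-1)-n$ inside $H_\phi$.
\item Injectivity on the generic stratum does not by itself give the lower bound $2n-3$. On that stratum, $\phi(\Pi)$ is the angle of the line $P_2(\Pi)\subset\R^2$, which depends smoothly on $\Pi$. Hence $\Pi\mapsto(\phi(\Pi),R_{-\phi(\Pi)}\Pi)$ is a locally Lipschitz inverse, and your parametrization is locally bi-Lipschitz.
\end{itemize}
For the measure-zero statement actually used later, the Lipschitz upper bound common to both arguments already suffices.
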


\begin{proof}
 1) If $\gamma(t_1),\gamma(t_2)\in \Se^m$ and are linearly independent, then they span $\Pi_{\gamma}$ and the latter is contained in the vectorial support of $\Se^m$.
Then $\gamma= \Pi_{\gamma}\cap \Se^n\subset \Se^m$.\\[1mm]
 2) If $\dim(H_{\phi}+\Pi_{\gamma})=n$ then $\gamma$ is part of $H_{\phi}$, and so
 $\gamma=\Pi_{\gamma}\cap \Se^n\subset H_{\phi}\cap \Se^n=\Se^{n-1}_{\phi}$.
Now assume $\gamma$ is not part of $\Se^{n-1}_{\phi}$.
Then
 \[
 \dim(H_{\phi}\cap \Pi_{\gamma})=\dim(H_{\phi})+ \dim(\Pi_{\gamma})-\dim(H_{\phi}+\Pi_{\gamma})\geq n+2-(n+1)\geq 1,
 \]
 we conclude that $\gamma\cap H_{\phi}= \Se^n\cap \Pi_{\gamma}\cap H_{\phi}\neq \emptyset$.
Hence $\gamma$ intersects $\Se^{n-1}_{\phi}$ at least twice with $\gamma(t_1)\in\partial_{\phi}$, and its antipodal $\gamma(-t_1)\in \partial_{\phi+\pi}$.
From 1) there are no more intersection points.\\[1mm]
3) It only remains  to prove the last statement. 
For each $0\leq \phi<\pi$, set $\G{\phi}:=\mathcal{G}(\Se^{n-1}_{\phi})$. 
Then  for $\phi\neq 0$ we have a sequence  of submanifolds $\mathcal{G}(\Se^{n-2}_{polar})=\G{0}\cap \G{\phi}\subset \G{\phi}\subset\mathcal{G}(\Se^{n}) $ of strictly increasing Hausdorff measure, $2(n-3)$ (zero if $n=2$), $2(n-2)$ and  $2(n-1)$, respectively (cf. Remark~\eqref{zeromeasure})
 We now claim there exists a smooth map \ $\tau:[0, 2\pi)\times \G{0}\to \mathcal{G}(\Se^{n})$,
$\tau(\phi, \Pi_{\gamma_{(x,v)}} )=\Pi_{\gamma_{(R_{\phi}x,R_{\phi}v)}}$,  whose image $Y=\cup_{\phi}\G{\phi}$ has necessarily Hausdorff measure at most $2(n-2)+1$. To
see it is smooth, we recall that  $\mathcal{G}(\Se^n)$ is a double cover of the usual Grassmannian $G_2(\R^{n+1})$, and so they have the same dimension. The 2-dimensional subspace
$F=\Pi_{\gamma}$ is an element of $G_2(\R^{n+1})$ that can be identified with  the orthogonal projection $\pi_F$  onto
$F$~\cite{ms}. Since $\Pi_{\gamma_{(x,v)}}=\Pi_{\R[x,v]}$, 
then $\pi_{\R[x,v]}(X)=\langle X, x\rangle x+\langle X, v\rangle v$, $\forall X\in \R^{n+1}$, and 
\[\tau(\phi, \pi_{\R[x,v]})(X)=\langle X, R_{\phi}x\rangle R_{\phi}x+\langle X, R_{\phi}v\rangle R_{\phi} v, \]
is clearly smooth and injective. By a classical result on Lipschitz maps, the dimension of the image $Y$ has Hausdorff dimension at most
$1+ 2(n-2)$. Using local principal charts $\Omega\times \mathbb{S}^1$ of $U\Se^n$, and taking $Y'\subset Y\cap \Omega$ to be a sufficiently small neighbourhood  of any given point of $Y$ (cf.\ Remark~\ref{zeromeasure}), we get that the dimension (a.e) of $Y$ is indeed  $2n-3$,
as shown in what follows.
Associated to $\tau$ is the injective map $\bar{\tau}:\Se^1\times  U\Se^{n-1}_0\!\to \!U\Se^n$ defined by,
\[\bar{\tau}\left(e^{i\phi},((w, \sqrt{1-|w|^2}(1,0)), (h, \sqrt{1-|h|^2}e^{i\frac{\pi}{2}})\right)= \left((w, \sqrt{1-|w|^2}e^{i\phi}), (h, \sqrt{1-|h|^2}e^{i(\phi+\frac{\pi}{2})})\right), \]
where we use~\eqref{representation} for $x$ and  $v\bot x$, that clarifies the injectivity. Now we show  its derivative at $(e^{i\phi},(x,v))$ is also injective if {$(x,v)$  is away from  $U\mathbb{S}^{n-2}_{polar}$}. 
The derivative with respect to $\phi$ is given by
\[\frac{\partial \bar{\tau}}{\partial \phi}=((0, \sqrt{1-|w|^2}ie^{i\phi}), (0, \sqrt{1-|h|^2}ie^{i\phi})), \]
that vanishes only if $|w|=|h|=1$, that is at elements $(x,v)\in U\Se^{n-2}_{polar}$, that have measure zero on $U\Se_0^{n-1}$. Taking curves $ w_s$ and $ h_s$ with
vector variation at $s=0$, $W$ and $H$ respectively, we obtain the derivative of $\tau$ in the variables $w$ and $h$ away from {geodesics} of the polar sphere,
\begin{gather*}
\frac{d \bar{\tau}}{ds}\Big{|}_{s=0}\!\left(e^{i\phi}, \left((w_s, \sqrt{1-|w_s|^2}(1,0)), (h_s, \sqrt{1-|h_s|^2}e^{i\frac{\pi}{2}})\right)\right)=
\left((W, \frac{-\langle w_0,W\rangle}{\sqrt{1-|w_0|^2}}e^{i\phi}),
(H, \frac{-\langle h_0,H\rangle}{\sqrt{1-|h_0|^2}}e^{i(\phi+\frac{\pi}{2})})                                                                                                                                                                                                                                                                                                                                                                                                                                                                                                                                                                                                                                                                                                              \right), 
\end{gather*}
this vanish if and only if $W=H=0$.  This implies that the image $Z:=\bar{\tau}( \Se^1\times U\Se_0^{n-1})=
\cup_{\phi\in \Se1} U\Se^{n-1}_{\phi}$ has Hausdorff dimension
$2n-3 +1 =2n-2$ away  from $|w_0|=|h_0|=1$. Since $P^{-1}(Y)=\cup_{\phi} P^{-1}(\mathcal{G}_{\phi})$,  and for each $\phi$,  $P^{-1}(\mathcal{G}_{\phi})=U\mathbb{S}^{n-1}_{\phi}$, then $P^{-1}(Y)=Z$. This implies $Y$ has dimension $2n-1=2(n-2)+1$  away from $\mathcal{G}({S}^{n-2}_{polar})$.
\smallskip

 4) The first and second statements follow by applying~\eqref{ALI2} and identities $\langle R_{\phi}\gamma'(t), \nu_{\theta}\rangle$
 $= \langle \gamma'(t), \nu_{-\phi+\theta}\rangle$,and  $\langle S_{\phi}\gamma'(t), \nu_{\theta}\rangle= \langle \gamma'(t), \nu_{\phi- \theta}\rangle$, respectively.
 Note that admissibility of $\gamma(t)$ is equivalent to $ \langle \gamma'(t), \nu_{\theta}\rangle \neq 0$ for all $t$  and $\theta$.
\end{proof}

 \begin{lemma}\label{rigidity}
Let $\gamma$ be a unit-speed geodesic of $\Se^n$. Then we have.
\begin{enumerate}[{\rm 1)}]
\item  $\gamma$ is part of the polar sphere $\Se^{n-2}_{polar}$ if and only if it is part of  $\Se^{n-1}_{\phi}$ for any $\phi$.\\[-3mm]
\item $\gamma$ intersects the polar sphere  transversely (at two antipodal points $\pm\gamma(t_1)$ {only})
 if and only if  there exists a single $\phi\modu{\pi}$ such that $\gamma\subset \Se^{n-1}_{\phi}$. In this case $\gamma$ intersects transversely $\Se^{n-1}_{\psi}$ at the same two antipodal points $\pm\gamma(t_1)$ for all $\psi\neq \phi\modu{\pi}$.\\[-3mm]
 \item $\gamma$ does not intersect the polar sphere
 if and only if it intersects $\Se^{n-1}_{\phi}$
 transversely (at two antipodal points {only}) for all $\phi$. If this is the case, then for each $k\in \N_0$, there exists unique  $t_k\in[t_0, t_0+2\pi)$ such that $\gamma(t_k)\in \partial_{k\theta}$.
 \end{enumerate}
\end{lemma}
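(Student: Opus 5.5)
The plan is to reduce everything to linear algebra on the oriented $2$-plane $\Pi_\gamma=\mathrm{Span}\{\gamma(t),\gamma'(t)\}$ and the subspaces $H_\phi=\R^{n-1}\times\R[(\cos\phi,\sin\phi)]$. These are the vectorial supports of the meridian spheres $\Se^{n-1}_\phi$, and their common intersection is the polar axis $P:=\R^{n-1}\times\{(0,0)\}$. Throughout we use $\gamma=\Pi_\gamma\cap\Se^n$, together with Lemma~\ref{elementar}~1) and 2).

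For 1), note that $\bigcap_\phi H_\phi=P$. Hence $\gamma\subset\Se^{n-1}_\phi$ for every $\phi$ forces $\Pi_\gamma\subset P$, which gives $\gamma\subset\Se^{n-2}_{polar}$. The converse is immediate from \eqref{POLE}. In fact it suffices that $\Pi_\gamma\subset H_\phi\cap H_\psi$ for two values $\phi\neq\psi \modu{\pi}$, since $H_\phi\cap H_\psi=P$. This stronger form is what I will reuse in 2).

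For 2), I will study $\dim(\Pi_\gamma\cap P)$, which is $0$, $1$ or $2$. The value $2$ is case 1). The value $1$ means $\gamma$ meets the polar sphere exactly at a pair $\pm\gamma(t_1)$, transversally, because $\gamma\not\subset P$. In that case $\Pi_\gamma=\R\gamma(t_1)\oplus\R v$ with $v\notin P$. Then $\Pi_\gamma\subset H_\phi$ exactly when the projection $P_2(v)\neq0$ lies on the line $\R(\cos\phi,\sin\phi)$, which determines a single $\phi \modu{\pi}$. Conversely, if $\gamma\subset\Se^{n-1}_\phi$ for a unique $\phi \modu{\pi}$, then $\dim(\Pi_\gamma\cap P)\ge \dim\Pi_\gamma+\dim P-\dim H_\phi=2+(n-1)-n=1$, and it is not $2$ by uniqueness and 1). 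For the last claim of 2), take $\psi\neq\phi \modu{\pi}$. By the above $\gamma\not\subset\Se^{n-1}_\psi$, so Lemma~\ref{elementar}~2) says $\gamma$ meets $\Se^{n-1}_\psi$ in exactly one antipodal pair. Since $\pm\gamma(t_1)\in P\subset H_\psi$, that pair must be $\pm\gamma(t_1)$.

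For 3), $\Pi_\gamma\cap P=0$ means no $\Pi_\gamma\subset H_\phi$, by the dimension count above. Lemma~\ref{elementar}~2) then gives a transverse intersection with each $\Se^{n-1}_\phi$ in exactly two antipodal points. Conversely, if $\gamma$ met $P$, it would lie in some $\Se^{n-1}_\phi$ by 1) and 2), so it would not meet that meridian sphere transversally. For the last assertion, write $\gamma(t)=(w(t),f(t)e^{i\phi(t)})$ as in \eqref{representation}, with $f>0$ for all $t$. Then $\phi$ can be chosen continuous, and its increase over one period $2\pi$ is an odd multiple of $\pi$, since $\gamma(t+\pi)=-\gamma(t)$. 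Because each $\Se^{n-1}_\phi$ is met only at an antipodal pair, each half-meridian $\partial_\psi$ is hit exactly once per period $[t_0,t_0+2\pi)$. Equivalently, $\phi$ must be strictly monotone and advance by exactly $2\pi$ per period, so $t\mapsto e^{i\phi(t)}$ is a bijection of $[t_0,t_0+2\pi)$ onto $\Se^1$. Taking $\psi=k\theta$ gives the unique $t_k$.

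The main obstacle is this monotonicity and degree-one step. I would first try to show that $\phi'\neq0$ using transversality, i.e.\ $\langle\gamma',\nu_{\phi(t)}\rangle=f\phi'\neq0$. If that works, the degree is forced to be $1$ by the single-hit count.
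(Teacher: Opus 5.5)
Your proposal is correct and follows essentially the paper's route. Your splitting $\Pi_\gamma=\R\gamma(t_1)\oplus\R v$ with $v\notin P$ is exactly the paper's explicit formula $\gamma(t)=(\cos(t-t_1)w_0+\sin(t-t_1)w',\,\sin(t-t_1)he^{i\phi})$ with $h\neq 0$, and your dimension count for the converse of 2) and your argument for 3) spell out what the paper dismisses as following from 1) and 2).

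The monotonicity/degree-one step you flag as the main obstacle is not needed. The single-hit count you already derived is itself the existence and uniqueness of $t_k$: of the antipodal pair $\gamma\cap\Se^{n-1}_{k\theta}$, exactly one point lies in $\partial_{k\theta}$, because $\partial_{k\theta}\cap\partial_{k\theta+\pi}=\Se^{n-2}_{polar}$ is avoided. Your idea $f\phi'=\langle\gamma',\nu_{\phi}\rangle\neq 0$ would also work. One small slip: the increase of $\phi$ is an odd multiple of $\pi$ over a half-period, so over a full period it is an odd multiple of $2\pi$, not of $\pi$.
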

\begin{proof}
1) Follows immediately from (\ref{POLE}).
2) Let $t_1$ such that $\pm \gamma(t_1)\in \Se^{n-2}_{polar}$, that is $\gamma(t_1)=(w_0,0^2)$ and $\gamma'(t_1)=(w', he^{i\phi})\bot \gamma(t_1)$, with $\langle w_0,w'\rangle =0$.  Note that $h\neq 0$ otherwise $\gamma(t)$ would be tangent to the polar sphere, and so   part of it.   Then for all $t$,
\begin{eqnarray*}
\gamma(t)
&=& (\cos(t-t_1)w_0+ \sin(t-t_1)w',  \sin(t-t_1)he^{i\phi})\in \Se^{n-1}_{\phi}.
\end{eqnarray*}
In particular, $\gamma(t_1)=(w_0,0^2) \in \Se^{n-1}_{\psi}$ for any $\psi$ and the intersection is transversal for $\psi\neq \phi$
otherwise $\gamma$ would be part of  $\Se^{n-1}_{\psi}$ as well, and so of the polar sphere by 1).  The reciprocal follows from 1) as well.
3) Follows easily by contradiction from 1) and 2).
\end{proof}
  \begin{remark} In Lemma~\ref{rigidity}~3), $t_k\in [t_0,t_0+2\pi)$ are not necessarily ordered by increasing order, while in
  Theorem~\ref{thmA} $t_k\in [t_0, +\infty)$ are taken in a strictly increasing order, interactively by construction using the
  trajectory $T(t)$ in $\lune{n}{\theta}$.
  \end{remark}
\section{Dirichlet and Neumann eigenvalues of $\lune{n}{\pi/p}$\label{Eigen}}
As mentioned in the Introduction, eigenvalues of lunes with opening $\pi/p$, where $p\in \N_2$, are also eigenvalues of $\Se^n$,
the main question being what the corresponding multiplicities are. In order to fully describe these spectra, we will use the
fact that $\lune{n}{\pi/p}$ may be associated to {the dihedral group} $\mathbb{D}_{p}$
of order $2p$, that is the group of symmetries of a regular polygon with $p$ sides, defining a Coxeter system.
This fact allows us  to compute the multiplicities of the Dirichlet and the Neumann eigenvalues of $\lune{n}{\pi/p}$ using invariant
theory, {and thus fully describe both spectra}. This does, in turn, allow us to derive a formulation
of P\'{o}lya's conjecture depending only on the lowest and highest order eigenvalues in each chain in the Neumann and Dirichlet cases,
respectively, namely inequality~\eqref{PolyaExtreme} below. We then show this to be  equivalent to the non-negativeness of a family
of $p$  polynomial functions in one variable and with coefficients depending on $n$, $p$ and a parameter $r$ with integer values
between $0$ and $p-1$. Given $n\geq 2$, we determine which dihedral $n$-lunes satisfy P\'{o}lya's conjecture by defining a
slightly stronger P\' {o}lya-type inequality, $\Phi_n(p,R)\geq 0$, {where $\Phi_n$ denotes a one-parameter family of functions on
the variables $p\in \N$, and $R\in \N_0$, indexed by the dimension $n$.} The relation with $M_{n,p,r}(m)$ comes from
the decomposition $R+p=K=(m+1)p+r$.

\subsection{Background and notation.}\label{Notations}
Consider the set of distinct Dirichlet eigenvalues $\bar{\lambda}_K=\bar{\lambda}_K(\Omega)$, $K=1,2, \ldots$, with
$0<\bar{\lambda}_1<\bar{\lambda}_2<\ldots$, with corresponding multiplicities denoted by $m(K):=m[\bar{\lambda}_K]$. We further denote
the sum of multiplicities by
\[\sigma(K):=\sigma[\bar{\lambda}_K] =m(1)+m(2)+\ldots+m(K)= \sigma(K-1)+m(K),\]
where $m(1)=\sigma(1)=1$,  and we make the convention that  $\sigma(0)=0$. Then, for each $K\geq 1$, the $K$-chain of eigenvalues is
composed by $m(K)$ eigenvalues $\lambda_k$ that are equal to $\bar{\lambda}_K$ and can be described by the integers $k=\sigma(K-1)+{q}$
such that $ k_-(K)=\sigma(K-1)+1\leq k \leq k_+(K)=\sigma(K)$, where $q=1,\ldots,m(K)$, and $k_-(K)$ and $k_+(K)$ denote the lowest and
the highest order of the $K$-chain,  respectively.
In a similar way we consider the distinct Neumann eigenvalues $\bar{\mu}_K$, $K=0,1, \ldots$, ordered increasingly
$0=\bar{\mu}_0<\bar{\mu}_1<\ldots$, with multiplicities $m'(K):=m'[\bar{\mu}_K]$ and sum of multiplicities
\[\sigma'(K):= \sigma'[\bar{\mu}_K]=m'(0)+ m'(1)+\ldots+m'(K)=\sigma'(K-1)+m'(K),\] where $m'(0)=\sigma'(0)=1$, and again
make the convention that $m'(-1)=\sigma'(-1)=0$. The $K$-chain is given by the $m'(K)$ eigenvalues $\mu_k$ equal to $\bar{\mu}_K$
where $k=\sigma'(K-1)+{q}$ and  $k'_-(K)=\sigma'(K-1)\leq k\leq k'_+(K)=\sigma'(K)-1$,  for ${q}=0,\ldots,m'(K)-1$.

With this notation, each of the two P\'{o}lya inequalities in~\eqref{polyaconject} may be written in terms of the distinct eigenvalues
$\bar{\mu}_K$ and $\bar{\lambda}_K$ by noting that if they hold for the lowest and highest orders on the $K$-chains in the Neumann
and Dirichlet cases, respectively, then they hold for all eigenvalues in the corresponding chain. More precisely, P\'{o}lya's conjecture
may be written in an equivalent way as
\begin{equation}\label{PolyaExtreme}
\begin{array}{lll}
 	\bar{\mu}_K \leq \fr{4\pi^2 (k'_-(K))^{2/n}}{(\omega_n|\Omega|)^{2/n}} & \mbox { and } &
 	\bar{\lambda}_K \geq \fr{4\pi^2 (k_+(K))^{2/n}}{(\omega_n|\Omega|)^{2/n}},
\end{array}
 \end{equation}
in the Neumann and Dirichlet cases, respectively.

As mentioned in the Introduction, dihedral $n$-lunes satisfy P\'{o}lya's conjecture in the Neumann case, as they tile the corresponding
hemisphere which is, in turn, known to satisfy P\'{o}lya's conjecture~\cite{blps,fms}. This is a consequence of spectral comparison
results for tiling domains obtained by P\'{o}lya in \cite{poly2}.  It thus remains to study the
Dirichlet problem, which we will do by considering invariant tilings with respect to Coxeter groups. This allows us to derive further
consequences for eigenvalues, by applying invariant theory to dihedral lunes.

If $M=\mathbb{S}^n$, and  $M'=\mathbb{S}^n_+$,  assuming that $M''$ tiles $M'$ or $\mathbb{S}^n$,  the Dirichlet and Neumann  eigenvalues $\lambda''_k$, $\mu''_k$  of $M''$ are considered in \cite{bb},   namely the solutions of the  respective eigenvalue problems (\ref{eigprob}) with $\Omega=M''$ 
under the condition the tiling is invariant with respect to a finite reflection group  of $\mathbb{R}^{n+1}$
of  the form $\tilde{\mathcal{W}}=\{\id_{\mathbb{R}^{n+1-l}}\}\times\mathcal{W}$, given by a trivial extension of a {Coxeter} group of isometries $\mathcal{W}$ generated by $l$  reflections of $\mathbb{R}^l$, for some $1\leq l\leq n+1$.
 In this case,   $M''=(\mathbb{R}^{n+1-l}\times \mathfrak{C})\cap \mathbb{S}^{n+1}$ where $\mathfrak{C}$ is a chamber defined by a Coxeter system of rank $l$ of  $\mathcal{W}$. The fact that  $\mathcal{W}$ acts transitively and faithfully among the Weyl chambers, and so covering all tiles, implies that given a smooth function $u$ on $M''$, there is a unique  $\tilde{\mathcal{W}}$-anti-invariant
($\tilde{\mathcal{W}}$-invariant, respectively) function $\tilde{u}$ on $\mathbb{S}^n$ that extends $u$, namely $\tilde{u}(w(x))=\epsilon(w)u(x)$, where $x\in M''$, $w\in \tilde{\mathcal{W}}$ and $\epsilon(w)=\det(w)$ ($=1$, respectively).     
If  $u$ is a solution  on $\Omega=M''$ of (\ref{eigprob})(D) ((N), respectively), the  anti-invariant (invariant, respectively) extension  $\tilde{u}$  is an eigenfunction
of the Laplacian on $\mathbb{S}^n$ for the closed problem  with respect to the same eigenvalue $\lambda$ ($\mu$, respectively).
The reciprocal also holds, by taking $\tilde{u}$ restricted  to $M''$ {as long as $\tilde{u}=0$ on $\partial M''$}~\cite[Prop.\ 7]{bb}. 

{We observe that the above framework used in~\cite{bb}  {to completely  describe the spectrum of $M''$} holds in the same way for finite Coxeter groups $\mathcal{W}$ as the dihedral group of any order $2p$, $p\geq1$, that are Weyl groups only for $p=2,3,4, 6$ (more details are given  in Appendix~\ref{ApA})}.
The model tile $M''$ we consider here 
is given by a  lune of angle opening   $\pi/p$,  defined by a sector
$\mathfrak{C}$  of  $\mathbb{R}^2$, 
$\luned{n}{p}= (\mathbb{R}^{n-1}\times \mathfrak{C})\cap \mathbb{S}^{n}$,  where
 $\mathfrak{C} =\left\{ re^{i\varphi}\in \mathbb{R}^2:  \varphi\in \left[0,\pi/p\right], r\geq 0 \right\}$ is just a chamber defined by  the dihedral group $\mathbb{D}_p$,  
 giving a tiling by $2p$ tiles of a $n$-dimensional 
 sphere by slices that contains two fixed antipodal points of the boundary. This is also a $p$-tiling
 of the  hemisphere. Each slice is an extension of a chamber in $\mathbb{R}^2$ defined by the  $p$ axes of symmetry of a regular $p$-polygon.
 The corresponding reflection group is  $\tilde{\mathcal{W}}=\{\id_{\mathbb{R}^{n-1}}\}\times \D_p$.
  Deriving the Hilbert-Poincar\'{e} series of graded algebras of $\tilde{\mathcal{W}}$--invariant and anti-invariant homogeneous polynomials  (details are given in  Appendix~\ref{ApA}), we compute  the multiplicity of each eigenvalue of a lune from the  coefficients of the series.
  
This method of computing multiplicities and its application to proving P\'{o}lya conjecture are described in~\cite{bb}, {but this was carried out
  explicitly in that paper only for the $2-$hemisphere $H_2=\mathbb{S}^2_+$, being also mentioned that other domains tiling $\Se^{2}$, namely
  $H_4=\mathbb{S}^2\cap\{(x,y,z): x\geq 0, y\geq 0\}$ and $H_8=H_4\cap\{(x,y,z):z\geq 0\}$ (respectively $\lune{2}{\pi/2}$ and $\lune{2}{\pi/4}$ in
  our notation), can be treated in the same way.}  Note that $H_8$ tiles $H_4$
  and $H_4$ tiles $H_2$, but what is relevant is that these three cases  correspond to  reflection groups associated to a root system of type $A_1$,
  $A_1\times A_1$, and $A_1 \times A_1\times A_1$, respectively.
  We also observe that while $\luned{2}{1}=H_2$, and  $\luned{2}{2}=H_4$  contain two antipodal points, as is the case for all lunes, 
  this is no longer the case for $H_8$.
  
  We use a corresponding notation for the eigenvalues of $M=\mathbb{S}^n$, $M'=\mathbb{S}_+^n$,  and $M''=\lune{n}{\pi/p}$, denoting
  the zero eigenvalue of $\mathbb{S}^n$ as the $0$-th eigenvalue, and the same for the zero Neumann eigenvalue of $M'$ and $M''$. 
  In Lemma~\ref{lemmaA} we show that the  eigenvalues of $\luned{n}{p}$, $p\geq 1$, are exactly the eigenvalues of $\mathbb{S}_+^n$,
  albeit with different multiplicities, so of the form $K(K+n-1)$, with $K\geq p$ for the Dirichlet case and $K\geq 0$ for the Neumann
  case. We denote the Dirichlet eigenvalues by $\bar{\lambda}_{K}=K(K+n-1)$,  with multiplicity $m_{n,p}[K(K+n-1)]$, and denote the  sum of the multiplicities from the first $\bar{\lambda}_p$  till $\bar{\lambda}_K$
  by $\sigma_{n,p}[K(K+n-1)]$. Similarly for the  Neumann eigenvalues $\bar{\mu}_{K}=K(K+n-1)$, where $K\geq 0$, where
  we adopt the notation $m'_{n,p}$, and $\sigma'_{n,p}$ for the multiplicities. We also extend the notation used
  in~\cite{fms} for the case $p=1$ to $p\geq 2$, as
\[
 m_{n,p}(K)=m_{n,p}[K(K+n-1)]
\]
for the Dirichlet eigenvalues, and
\[
 m'_{n,p}(K)=m'_{n,p}[K(K+n-1)]
\]
for the Neumann eigenvalues. Similar notations will be used for $\sigma_{n,p}(K)$ and $\sigma'_{n,p}(K)$.

{To prove Theorem~\ref{thmxmultiplicity} we will use the following representation of non-negative integers to describe the
multiplicities of eigenvalues of a lune $\lune{n}{\pi/p}$ given in Lemma~\ref{lemmaA} below.}
\begin{definition}
Fix $n,p\geq 2$. Given $K\geq 0$ we take the following representation 
 \begin{equation}\label{K(m,r)}
 K=K(m,r):=mp+r,  \quad \mbox{where~} m=\left\lfloor \frac{K}{p}\right\rfloor
 \mbox{~and~} r=K-mp, 
 \end{equation}
 where $m\geq 0$ is an integer, $r=0, 1, \ldots, p-1$. 
 The lexicographic order,
$K(m,r)> K(m',r')$ if and only if $m>m'$ or $m=m'$ and $r>r'$,
agrees with the natural order of the eigenvalues of $\Se^n$,  
$\bar{\lambda}_{K(m,r)}$.
 \end{definition}

Recall now the rising factorial, given for $k\geq 0$ and $n\geq 1$ by
$k^{\bar{n}}=k(k+1)\ldots (k+n-1)$, with $k^{\bar{0}}=1$, and the following relations,
the second one named the parallel summation identity
\begin{equation}\label{parallelsum}
\fr{k^{\overline{n}}}{n!}=\binom{n+k-1}{n} = \dsum_{j=1}^{k}\binom{n+j-2}{j-1} = \dsum_{j=0}^{k}\frac{j^{\overline{n-1}}}{(n-1)!}.
\end{equation}

The multiplicities of the  $K^{\rm th}$  Dirichlet eigenvalue of $\mathbb{S}^n_+$, $\bar{\lambda}_K=K(K+n-1)$, $K\geq 1$, and of the
Neumann eigenvalue $\bar{\mu}_K=K(K+n-1)$, $K\geq 0$, are given by, respectively~\cite{bsj,bb},
\begin{gather}\label{hemimulti}
\begin{array}{ll}
m_{n,1}(K) = \fr{K^{\overline{n-1}}}{(n-1)!},
&\quad \sigma_{n,1}(K) = m_{n,1}(1)+\ldots + m_{n,1}(K)=  \fr{K^{\overline{n}}}{n!},\eqskip
 m'_{n,1}(K) = \fr{(K+1)^{\overline{n-1}}}{(n-1)!},&\quad
 \sigma'_{n,1}(K) = m'_{m,1}(0)+\ldots +m'_{n,1}(K)=  \fr{(K+1)^{\overline{n}}}{n!}.
 \end{array}
 \end{gather}
The $K$-chain of the Dirichlet eigenvalues of  $\luned{n}{p}$ is given by $\lambda_k=\bar{\lambda}_{K}$,   where  
 \[k_-=k_-(K)\leq k\leq k_+(K)=k_+,\]
 with $k_-=\sigma_{n,p}(K-1)+1$, and $k_+=\sigma_{n,p}(K), $ the lowest and highest orders in that chain, respectively, for each  $K\geq p$.
The $K$-chain of the Neumann eigenvalues of  $\luned{n}{p}$, 
$\mu_k=\bar{\mu}_K=K(K+n-1)$,  
 is given by 
 \[k'_-=k'_-(K)\leq k\leq 
 k'_+(K)=k'_+,\]
  where $k'_-=\sigma'_{n,p}(K-1)$, $k'_+=\sigma'_{n,p}(K)-1 $, for each $K\geq 0$.
 The volume and  area of the boundary are given by 
 \begin{equation}\label{areap}
 \begin{array}{l}
 |\lune{n}{\pi/p}|=\fr{1}{2p}|\mathbb{S}^n|=\fr{(n+1)}{2p}\omega_{n+1},\eqskip
 |\partial \lune{n}{\pi/p}| = 2|\{(x_1, \ldots, x_{n-1},r)\in \mathbb{S}^{n-1}, r\geq 0\}|=2|\mathbb{S}^{n-1}_+|=
 |\mathbb{S}^{n-1}|.
 \end{array}
 \end{equation}
From
$\omega_n |\mathbb{S}^n|=\omega_n (n+1)\omega_{n+1}=
\frac{2^{n+1}}{n!}\pi^n$ we get
\begin{equation}\label{cwp}
\Cw_{n,p}=\Cw[\lune{n}{\pi/p}]:= \frac{ 4\pi^2}{(\omega_n|\lune{n}{\pi/p}|)^{2/n}}= (n!p)^{2/n} =  p^{2/n}\Cw_{n,1}.    \end{equation}

\subsection{Relations between multiplicities: Proof of Theorem~\ref{thmxmultiplicity}\label{Sec 4}}
 In this section we compute the coefficients of the Hilbert-Poincar\'{e} series
of $\tilde{\mathcal{W}}=\{\id_{\R^{n-1}}\}\times \mathbb{D}_p$ derived in  Appendix~\ref{ApA}, obtaining the multiplicity of the
eigenvalues of $\lune{n}{p}$ in terms of sums of the multiplicity of a family of $p$-consecutive eigenvalues of the
$(n-1)$-hemisphere, $S_{+}^{n-1}$.
For each $j\geq 0,$  $0\leq r\leq p-1$,  and $ m\geq 0$ define
\begin{equation}\label{dj}
\begin{array}{lll}
d_j :=  {\ds \binom{n-2+j}{j}} = \fr{(j+1)^{\overline{n-2}}}{(n-2)!},\;  d_0=1,\eqskip
S_j :=\dsum_{0\leq j'\leq j}d_{j'} = \binom{n-1+j}{j}  = \fr{(j+1)^{\overline{n-1}}}{(n-1)!}
\end{array}
\end{equation}
and
\begin{equation}
\begin{array}{lll}
C_{(m,p,r)} & : = & d_r+ d_{p+r}+ d_{2p+r}+\cdots+d_{mp+r}\eqskip
& \ = &~\dsum_{s=0}^{m}{{n-2+s p +r}\choose{s p+ r}}\eqskip
& \ = & \dsum_{s=0}^m\fr{(sp+r+1)^{\overline{n-2}}}{(n-2)!}~=~\dsum_{s=0}^m m_{n-1,1}(sp+r+1).\label{Cmpr}
\end{array}
\end{equation}
%

Now we develop the series obtained in Proposition~\ref{Inv-Anti} for any $p\geq 1$.

\begin{lemma}\label{PSlune}  Let  $p\geq 1$, and $n\geq 2$.
The Hilbert-Poincar\'{e} series of the $\tilde{\mathcal{W}}$-invariant and anti-invariant harmonic homogeneous polynomials on $\mathbb{R}^{n+1}$,  $H^i(n+1)$ and $H^a(n+1)$, respectively, are given by 
\begin{eqnarray*}
\begin{array}{ll}
\mathcal{HP}_{H^i(n+1)}(T) &=
\fr{1}{(1-T^p)(1-T)^{n-1}} = \dsum_{m\geq 0}\dsum_{ r\geq 0}^{ p-1}\, C_{(m,p,r)}\,\, T^{mp+r}\eqskip
\mathcal{HP}_{H^a(n+1)}(T) &= \fr{T^p}{(1-T^p)(1-T)^{n-1}}=\dsum_{m\geq 0}\dsum_{r\geq 0}^{  p-1}\,C_{(m,p,r)}\,\,T^{(m+1)p+r}
\end{array}
\end{eqnarray*}
\end{lemma}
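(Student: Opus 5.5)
The plan is to take the closed forms of the two series from Proposition~\ref{Inv-Anti} in Appendix~\ref{ApA} and then expand them as power series. Those closed forms come from Molien's theorem together with Chevalley's theorem for the reflection group $\tilde{\mathcal{W}}=\{\id_{\R^{n-1}}\}\times\D_p$. The invariant algebra of $\tilde{\mathcal{W}}$ on $\R^{n+1}$ is a polynomial algebra. Its generators are the $n-1$ coordinates $w_1,\dots,w_{n-1}$ (degree $1$), together with the two basic invariants of $\D_p$ acting on $\R^2$: $x^2+y^2$ (degree $2$) and $\mathrm{Re}(x+iy)^p$ (degree $p$). Its Hilbert--Poincar\'e series is therefore
\[
\frac{1}{(1-T)^{n-1}(1-T^2)(1-T^p)}.
\]
The anti-invariants form a free module over the invariants, generated by the Jacobian $J=\prod$ of the $p$ linear forms defining the reflecting lines, which equals $\mathrm{Im}(x+iy)^p$ up to a constant. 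This multiplies the series by $T^p$.

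To pass to harmonic polynomials, I use the decomposition $\mathcal{P}_k=\mathcal{H}_k\oplus|x|^2\mathcal{P}_{k-2}$. This decomposition is $O(n+1)$-equivariant, and $|x|^2$ is $\tilde{\mathcal{W}}$-invariant, so it restricts to both the invariant and the anti-invariant subspaces. Hence both series get multiplied by $(1-T^2)$. Since $|x|^2=|w|^2+x^2+y^2$ is among the basic invariants, this exactly cancels the factor $(1-T^2)$. This gives the two closed forms stated in Lemma~\ref{PSlune}. If Proposition~\ref{Inv-Anti} already records these closed forms, this step is simply quoted.

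The remaining work is the coefficient extraction. Write
\[
\frac{1}{(1-T)^{n-1}}=\sum_{j\ge0}\binom{n-2+j}{j}T^j=\sum_{j\ge0}d_jT^j,
\qquad
\frac{1}{1-T^p}=\sum_{s\ge0}T^{sp}.
\]
The Cauchy product then has coefficient of $T^K$ equal to $\sum_{s\,:\,sp\le K}d_{K-sp}$. Write $K=mp+r$ as in \eqref{K(m,r)}, with $0\le r\le p-1$. The admissible indices are $s=0,\dots,m$, and the corresponding $K-sp$ run exactly over $r,\,p+r,\,\dots,\,mp+r$. So the coefficient is $C_{(m,p,r)}$ by \eqref{Cmpr}. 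Each $K\ge0$ corresponds to exactly one pair $(m,r)$, so reindexing the sum over $K$ as a double sum over $m\ge0$ and $0\le r\le p-1$ gives the first identity.

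The anti-invariant identity follows immediately: multiplying by $T^p$ shifts the exponent $mp+r$ to $(m+1)p+r$. The case $p=1$ is consistent, since then $r=0$, $C_{(m,1,0)}=S_m$, and we recover the hemisphere multiplicities \eqref{hemimulti}.

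The only delicate point is the first step. One must justify that harmonic projection commutes with the group action, and that the cancellation of $(1-T^2)$ is legitimate, which requires $|x|^2$ to be a basic invariant rather than some other quadratic. For $p=1$ the two $\D_1$ invariants have degrees $1$ and $2$, and the formula still holds. For $p=2$ the degree $2$ invariants include both $x^2$ and $y^2$, and one chooses $x^2+y^2$ together with $x^2-y^2$ as the basic invariants. Everything after that is a routine series computation.
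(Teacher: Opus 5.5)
Your proposal is correct and follows the paper's proof. The closed forms are quoted from Proposition~\ref{Inv-Anti}, i.e.\ \eqref{Fi}, whose derivation via basic invariants, the anti-invariant generator and the factor $(1-T^2)$ you reproduce in the spirit of Appendix~\ref{ApA}. The coefficient $C_{(m,p,r)}$ of $T^{mp+r}$ is then read off the Cauchy product of $\sum_s T^{sp}$ with $\sum_j d_jT^j$, and the shift by $T^p$ gives the anti-invariant case. Your ``delicate point'' is only overcaution: cancelling $(1-T^2)$ needs just that $|x|^2$ is $\tilde{\mathcal{W}}$-invariant and that some basic invariant has degree $2$. It does not need $|x|^2$ itself to be chosen as a basic invariant.
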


\begin{proof}
The first equalities are given in  \eqref{Fi}.
 The second equality for the first identity is obtained developing~\eqref{Fi} by applying~\eqref{PS1} and using
 $(1-T^m)^{-1}=\dsum_{k\geq 0}T^{mk}$,
\[
 \begin{array}{lll}
\mathcal{HP}_{H^i(n+1)}(T) & = & \dsum_{m\geq 0}T^{pm}\sum_{R\geq 0}d_RT^R\eqskip
& = & \left(1+ T^p+ T^{2p}+\ldots\right)\left(d_0 +d_1T^1+ \ldots d_{s}T^{s}+\ldots \right)\eqskip
& = & d_0+d_1T+d_2T^2+\ldots + d_{p-1}T^{p-1}\eqskip
& & \hspace*{5mm} + (d_p+d_0)T^p +(d_{p+1}+d_1)T^{p+1}\eqskip
& & \hspace*{10mm}+ (d_{p+2}+d_2)T^{p+2}+\ldots + (d_{p+p-1} +d_{p-1})T^{p+p-1}\eqskip
& & \hspace*{15mm}+ (d_{2p}+d_{p}+d_0)T^{2p+0}
+ (d_{2p+1}+d_{p+1}+d_1)T^{2p+1}\eqskip
& & \hspace*{20mm}+\ldots+ (d_{2p+p-1} +d_{p+p-1}+ d_{p-1})T^{2p+p-1}\eqskip
& &  \hspace*{25mm}+ \ldots\\
&=& \dsum_{m\geq 0}\, \dsum_{0\leq r\leq p-1}(d_r+ d_{p+r}+ d_{2p+r}+\ldots+d_{mp+r})\,
T^{mp+r},
\end{array}
\]
The second identity follows trivially from the first.
\end{proof}

 Theorem~\ref{thmxmultiplicity} is part of next lemma. 
\begin{lemma}\label{lemmaA}
Let $p\geq 2$ and $n\geq 2$.
\begin{enumerate}[{\rm 1)}]
\item The  Dirichlet eigenvalues of $\lune{n}{\pi/p}$ are the eigenvalues $K(K+n-1)$ of $\mathbb{S}^n$
with $K\geq p$. For each $K=K(m+1,r)\geq p$, $K(K+n-1)$ is the
 $(K-p+1)$-th eigenvalue of $\lune{n}{\pi/p}$
 and its multiplicity and
the  sum of multiplicities up to the $(K-p+1)$-th  eigenvalue,  $\dsum_{j=p}^{K}m_{n,p}[j(j+n-1)]$,
are  given by the following equivalent identities, respectively
\[
\begin{array}{lll}
m_{n,p}[K(K+n-1)] & = &
 \dsum_{s=0}^m\frac{(sp+r+1)^{\overline{n-2}}}{(n-2)!}\eqskip
& = & \dsum_{s=0}^mm_{n-1,1}(sp+r+1).
\end{array}
\]
\[
\begin{array}{lll}
\sigma_{n,p}[K(K+n-1)]
 & = & \dsum_{s=0}^m\frac{(sp+r+1)^{\overline{n-1}}}{(n-1)!}\eqskip
 & = & \dsum_{s=0}^m m_{n,1}(sp+r+1).
\end{array}
\]
\noindent
\item The  Neumann eigenvalues  of $\lune{n}{\pi/p}$ are the  eigenvalues  $K(K+n-1)$ of $\mathbb{S}^n$ with $K\geq 0$.
For each $K=K(m,r)\geq 0$, the multiplicity of the $K$-th  Neumann eigenvalue $K(K+n-1)$ and the sum of multiplicities up to the $K$-th eigenvalue, $\sum_{j=0}^K m'_{n,p}[j(j+n-1)]$, are given by the following equivalent identities, respectively
\[
\begin{array}{lll}
m'_{n,p}[K(K+n-1)] & = &\dsum_{s=0}^{m}\frac{(sp+r+1)^{\overline{n-2}}}{(n-2)!}\eqskip
& = & \dsum_{s=0}^{m}m'_{n-1,1}(sp+r).
\end{array}
\]
\[
\begin{array}{lll}
\sigma'_{n,p}[K(K+n-1)] & = &  \dsum_{s=0}^m\frac{(sp+r+1)^{\overline{n-1}}}{(n-1)!}\eqskip
& = & \dsum_{s=0}^m m'_{n,1}(sp+r).
\end{array}
\]
\end{enumerate}
\end{lemma}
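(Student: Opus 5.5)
The plan is to read off the multiplicities from the Hilbert--Poincar\'e series in Lemma~\ref{PSlune}, using the correspondence recalled from~\cite[Prop.~7]{bb}.

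\textbf{Identifying eigenspaces with coefficients.} A Dirichlet (resp.\ Neumann) eigenfunction of $\luned{n}{p}$ with eigenvalue $\lambda$ extends uniquely to a $\tilde{\mathcal{W}}$-anti-invariant (resp.\ invariant) eigenfunction on $\Se^n$, and conversely. The eigenvalues of $\Se^n$ are $K(K+n-1)$, and the eigenspace for $K(K+n-1)$ is the restriction of the harmonic homogeneous polynomials of degree $K$. Since $\tilde{\mathcal{W}}$ is orthogonal, it commutes with the Laplacian and preserves degree. Hence $m_{n,p}[K(K+n-1)]$ (resp.\ $m'_{n,p}$) is the dimension of the degree-$K$ anti-invariant (resp.\ invariant) harmonic polynomials, that is, the coefficient of $T^K$ in $\mathcal{HP}_{H^a(n+1)}$ (resp.\ $\mathcal{HP}_{H^i(n+1)}$).

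\textbf{Dirichlet case.} By Lemma~\ref{PSlune} the coefficient of $T^K$ in $\mathcal{HP}_{H^a}$ vanishes for $K<p$. For $K=(m+1)p+r$ it equals $C_{(m,p,r)}>0$, since $d_r\ge1$. So the Dirichlet eigenvalues are exactly $K(K+n-1)$ with $K\ge p$. Ordering by $K$, the value $K(K+n-1)$ is the $(K-p+1)$-th distinct eigenvalue, and its multiplicity is the formula \eqref{Cmpr}, which also gives the expression $\sum_s m_{n-1,1}(sp+r+1)$ via \eqref{hemimulti}. Since $K\ge p$, we have $K=(m+1)p+r$ with $m=\lfloor K/p\rfloor-1$, which matches Theorem~\ref{thmxmultiplicity}.

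\textbf{Sum of multiplicities.} Sum $C_{(m',p,r')}$ over all $(m',r')$ lexicographically at most $(m,r)$. Reorganise the sum by the index $j=sp+r'$ of the $d_j$ appearing. The term $d_{sp+r'}$ occurs once for each $m'$ with $s\le m'$ and $(m',r')\le(m,r)$. Instead, I will evaluate the sum directly through the generating function: multiply $\mathcal{HP}_{H^a}$ by $(1-T)^{-1}$ to obtain the partial sums,
\[
\frac{T^p}{(1-T^p)(1-T)^{n}}=\sum_{s\ge0}T^{(s+1)p}\sum_{j\ge0}S_jT^j .
\]
The coefficient of $T^K$ is therefore $\sum_{s=0}^{m}S_{K-(s+1)p}$. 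Reindexing by $s\mapsto m-s$ gives $\sum_{s=0}^m S_{sp+r}=\sum_{s=0}^m\frac{(sp+r+1)^{\overline{n-1}}}{(n-1)!}$ by \eqref{dj}, and this equals $\sum_s m_{n,1}(sp+r+1)$ by \eqref{hemimulti}.

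\textbf{Neumann case.} The same argument applies with $\mathcal{HP}_{H^i}$. All $K\ge0$ occur, since $d_r\ge1$. For $K=mp+r$ the multiplicity is $C_{(m,p,r)}$, and the partial sum is $\sum_{s=0}^m S_{sp+r}$. The rewriting in terms of $m'_{n-1,1}$ and $m'_{n,1}$ follows from $m'_{N,1}(k)=(k+1)^{\overline{N-1}}/(N-1)!$.

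The main obstacle is the reindexing step in the cumulative sums. I expect this to go through via the $(1-T)^{-1}$ trick, but the indices (the shift by $p$ in the Dirichlet case) need care.
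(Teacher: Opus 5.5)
Your proof is correct. For the eigenvalues and multiplicities you follow the paper exactly. Both arguments use the anti-invariant/invariant extension of \cite[Prop.~7]{bb} and read $m_{n,p}$ and $m'_{n,p}$ off as the $T^K$-coefficients of $\mathcal{HP}_{H^a(n+1)}$ and $\mathcal{HP}_{H^i(n+1)}$ in Lemma~\ref{PSlune}.

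The difference is in how the sums of multiplicities are computed. The paper expands $\sigma_{n,p}(K(m+1,r))$ as a double sum of the $d_j$ and regroups it into weighted blocks of $p$ consecutive $d_j$'s. It rewrites the result as $\sum_{j=1}^m S_{(m-j)p+p-1}+\sum_{s=0}^m\bigl(S_{sp+r}-S_{sp-1}\bigr)$. It then needs the terms $(\mu p)^{\overline{n-1}}$ to cancel against $(sp)^{\overline{n-1}}$ to reach $\sum_{s=0}^m S_{sp+r}$. For Neumann, the paper does not recompute: it uses $\sigma'_{n,p}(K(m,r))=\sigma_{n,p}(K(m+1,r))$, which comes from $\mathcal{HP}_{H^a}=T^p\,\mathcal{HP}_{H^i}$.

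You instead multiply the series by $(1-T)^{-1}$, so the partial sums become the coefficients of $T^p(1-T^p)^{-1}(1-T)^{-n}$, or of $(1-T^p)^{-1}(1-T)^{-n}$ in the Neumann case. The formula $\sum_{s=0}^m S_{sp+r}$ then follows in one line for both boundary conditions. You handle the index range $s\le m$ correctly, since $0\le r<p$.

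Your route is shorter and much less prone to index errors; the paper's regrouping is more hands-on but considerably longer. The same device also gives, almost immediately, the paper's later remark that $p$ consecutive lune multiplicities sum to a hemisphere multiplicity. Multiplying $\mathcal{HP}_{H^a(n+1)}$ by $(1-T^p)/(1-T)$ gives $T^p(1-T)^{-n}$. Its coefficient of $T^{K'+p-1}$ is $S_{K'-1}=K'^{\overline{n-1}}/(n-1)!$, which is that hemisphere multiplicity.
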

\begin{proof}
 1)
A Dirichlet eigenvalue of $\lune{n}{\pi/p}$  is an eigenvalue of $\mathbb{S}^n$, 
$K(K+n-1)$,  such that the  $T^K$-coefficient of the Hilbert-Poincar\'{e} series $\mathcal{HP}_{H^a(n+1)}(T)$ given in Lemma~\ref{PSlune} is not zero,  that is $K\geq p$.   In this case it  gives $\dim(H^a_K(n+1))$.
Using the notation in (\ref{K(m,r)}) we  take $K=K(m+1,r)=(m+1)p+r$
with $m\geq 0$ and $0\leq r\leq p-1$. 
The first eigenvalue of $\luned{n}{p}$ is when
$K(1,0)$, i.e. 
$\bar{\lambda}_p(\mathbb{S}^n)=p(p+n-1)$. 
Then, $m_{n,p}(K(m+1,r))= C_{(m,p,r)}$
defined in (\ref{Cmpr}).
\[
\begin{array}{lll}
\sigma_{n,p}(K(m+1,r)) & = & m_{n,p}(K(1,0))+ \ldots + m_{n,p}(K(1,p-1))\eqskip
&& \hspace*{2mm} +m_{n,p}(K(2,0))+\ldots+ m_{n,p}(K(2,p-1))\eqskip
&& \hspace*{4mm}+\ldots\eqskip
&& \hspace*{6mm} + m_{n,p}(K(m,0))+ \ldots +m_{n,p}(K(m,p-1))\eqskip
&& \hspace*{8mm}  + m_{n,p}(K(m+1,0))+\ldots + m_{n,p}(K(m+1,r))
\eqskip
& = &  \dsum_{\scriptsize \begin{array}{c}
0\leq l\leq m-1\\ 0\leq r'\leq p-1\end{array}} m_{n,p}(K(l+1,r'))\eqskip
& & \hspace*{5mm}  + m_{n,p}(K(m+1,0))+\ldots + m_{n,p}(K(m+1,r))\eqskip
& = &\dsum_{ 0\leq l\leq m-1}\dsum_{\scriptsize \begin{array}{c}  0\leq r'\leq p-1\end{array}}
(d_{r'}+ d_{p+r'}+ d_{2p+r'}+\ldots+d_{lp+r'})\eqskip
&&
\hspace*{5mm} + \dsum_{0\leq r'\leq r} (d_{r'}+ d_{p+r'}+ d_{2p+r'}+\ldots+d_{mp+r'}),
\end{array}
\]
where the first term in $\sum_{l,r'}$ is ignored if $m=0$. Rearranging the sums,

\[
\begin{array}{lll}
\sigma_{n,p}(K(m+1,r))&=& \Big{[}d_0+ d_1+\ldots+ d_{p-1}\Big{]}\eqskip
&&\hspace*{2mm} + \Big{[}   \left[d_0+ d_1+\ldots+ d_{p-1}\right]+
\left[d_p+ d_{p+1}+\ldots+ d_{p+ (p-1)}\right]\Big{]}\eqskip
&& \hspace*{4mm} + \ldots+ \Big{[} \left[d_0+ d_1+\ldots+ d_{p-1}\right]+
\left[d_p+ d_{p+1}+\ldots+ d_{p+ (p-1)}\right]\eqskip
&&\hspace*{8mm}
+ \ldots+\left[d_{(m-1)p}+ d_{(m-1)p+1}+\ldots+ d_{(m-1)p+ (p-1)}\right] \Big{]}\eqskip
&& \hspace*{12mm}+
\dsum_{0\leq r'\leq r} (d_{r'}+ d_{p+r'}+ d_{2p+r'}+\ldots+d_{mp+r'})
\end{array}
\]
\[
\begin{array}{lll}
\phantom{\sigma_{n,p}(K(m+1,r))}
&=& m \left(d_0+ d_1+\ldots+ d_{p-1}\right)
\hspace*{2mm}+ (m-1) \left(d_{p}+ d_{p+1}+\ldots+ d_{p+ (p-1)}\right) \eqskip
&& \hspace*{4mm}+(m-2)\left(d_{2p}+ d_{2p+1}+\ldots+ d_{2p+ (p-1)}\right)\eqskip
&& \hspace*{6mm}+\ldots\eqskip
&& \hspace*{8mm}+ \left(d_{(m-1)p}+ d_{(m-1)p+1}+\ldots+ d_{(m-1)p+ (p-1)}\right)\eqskip
&& \hspace*{10mm}+ \dsum_{0\leq r'\leq r} (d_{r'}+ d_{p+r'}+ d_{2p+r'}+\ldots+d_{mp+r'})
\end{array}
\]
\[
\begin{array}{lll}
\phantom{\sigma_{n,p}(K(m+1,r))}
&=&  \Big{(}\left(d_0+d_1+\ldots+d_{p-1}\right) +\left(d_p+d_{p+1}+\ldots+d_{p+ (p-1)}\right)\eqskip
&&\hspace*{4mm} +\ldots+
\left(d_{(m-1)p}+d_{(m-1)p+1}+\ldots+d_{(m-1)p+(p-1)}\right)\Big{)}\eqskip
&&\hspace*{8mm}+  \Big{(} \left(d_0+d_1+\ldots+d_{p-1}\right)
+\left(d_p+d_{p+1}+\ldots +d_{p+ (p-1)}\right)\eqskip
&& \hspace*{12mm}+\ldots+\left(d_{(m-2)p}+d_{(m-2)p+1}+\ldots+d_{(m-2)p+(p-1)}\right)\Big{)}\\
&& \hspace*{14mm}+\ldots\eqskip
&& +  \Big{(} \left(d_0+d_1+\ldots+d_{p-1}\right)\Big{)}
+ \dsum_{0\leq r'\leq r} \Big{(}d_{r'}+ d_{p+r'}+ d_{2p+r'}+\ldots+d_{mp+r'}\big{)}
\end{array}
\]
\[
\begin{array}{lll}
\phantom{\sigma_{n,p}(K(m+1,r))}
& = & \dsum_{j=1}^m  \Big{(} d_0+d_1+\ldots +d_p +\ldots +  d_{(m-j)p+(p-1)}\Big{)}\eqskip
&& \hspace*{4mm} + \dsum_{0\leq r'\leq r} \Big{(}d_{r'}+ d_{p+r'}+ d_{2p+r'}+\ldots+d_{mp+r'}\Big{)}\eqskip
& = & \dsum_{j=1}^m  S_{(m-j)p+(p-1)}
+S_r + (S_{p+r}-S_{p-1}) +(S_{2p+r}-S_{2p-1})\eqskip
& & \hspace*{4mm} + \ldots + (S_{mp+r}-S_{mp-1})\eqskip
& = & \fr{1}{(n-1)!}\Big{(}\dsum_{j=1}^m  ((m-j+1)p)^{\overline{n-1}}+ \dsum_{s=0}^m\Big{(}(r+sp+1)^{\overline{n-1}}-(sp)^{\overline{n-1}}\Big{)} \Big{)}\eqskip
& = &\fr{1}{(n-1)!}\Big{(}\dsum_{\mu=1}^m  (\mu p)^{\overline{n-1}}+ \dsum_{s=0}^m\Big{(}(r+sp +1)^{\overline{n-1}}-(sp)^{\overline{n-1}}\Big{)} \Big{)}\eqskip
& = &  \fr{1}{(n-1)!}\dsum_{s=0}^m(sp+r+1)^{\overline{n-1}}.
\end{array}
\]
\noindent
2) The proof for Neumann eigenvalues is similar, where now we take $K=K(m,r)$, with $m\geq 0$, i.e $K\geq 0$. The multiplicity of $K(K+n-1)$ as a Neumann eigenvalue of $\lune{n}{\pi/p}$ is the  non-zero $T^K$-coefficient of  the  series $\mathcal{HP}_{H^i(n+1)}(T)$, given by $\dim(H^i_K(n+1))$ that is,
\[m'_{n,p}(K(m,r))= C_{(m,p,r)}= m_{n,p}(K(m+1,r))=
(d_r+ d_{p+r}+ d_{2p+r}+\ldots+d_{mp+r}).\]
and the sum of multiplicities by
\begin{gather*}
\sigma'_{n,p}(K(m,r)) =
\!\!\!\! \dsum_{\scriptsize
\begin{array}{c}
0\leq l\leq m-1\\[-1mm] 0\leq r'\leq p-1\end{array}} \!\!\!\!\!\!m'(K(l,r'))~~~ + m'(K(m,0))+\ldots m'(K(m,r))~=~\sigma_{n,p}(K(m+1,r)).
\end{gather*}
\end{proof}

\begin{remark}
The above expressions extend naturally to the case $p=1$ with $r=0$, for $K=(m+1)$ in the Dirichlet case and  $K=m$ in the Neumann case, for all $m\geq 0$.
Extending the case $p=1$~\cite[Sections 2.1 and 2.2]{fms}, for  all $p\geq 1$, $n\geq 2$ and $K\geq 0$, we have
\[ \begin{array}{l}
m'_{n,p}[K(K+n-1)]=m_{n,p}[(K+p)(K+p+n-1)]
\eqskip
\sigma'_{n,p}[K(K+n-1)]=\sigma_{n,p}[(K+p)(K+p+n-1)].\end{array}
\]
\end{remark}
\begin{remark}
In Lemma~\ref{lemmaA}  the multiplicity of an  eigenvalue $K(K+n-1)$ of $\lune{n}{\pi/p}$  with $K=(m+1)p+r$ is given in terms of a sum
of the multiplicities of $m+1$  
  eigenvalues $sp+r+1$ of the $(n-1)$-hemisphere by consecutive   multiples of $p$.  A converse also holds, namely,
 the multiplicity of any   eigenvalue of the hemisphere with $K\geq p$ can be expressed in terms of the sum of the multiplicities of $p$
 consecutive  eigenvalues of the lune. Let $K'=(m+1)p+r_1$ where $m\geq 0$, $0\leq r_1\leq p-1$, and consider the $p$-consecutive eigenvalues
 $K_i(K_i+n-1)$, $i=0,\ldots, p-1$, where $K_i=K'+i$ with $K_{0}=K'$.
Then the following relations hold for the Dirichlet eigenvalues.
\begin{equation}\label{consecutivep-sum2}
\begin{array}{lllll}
\dsum_{i=1}^{p}m_{n,p}[K_i(K_i+n-1)]
&=& \fr{((m+1)p+r_1)^{\overline{n-1}}}{(n-1)!} & = & m_{n,1}[K'(K'+n-1)],\eqskip
\dsum_{i=1}^{p}\sigma_{n,p}[K_i(K_i+n-1)]
&=& \fr{((m+1)p+r_1)^{\overline{n}}}{n!} & = & \sigma_{n,1}[K'(K'+n-1)].
\end{array}
\end{equation}
  Moreover, the Neumann version is similar to the above two equalities, replacing  $m_{n,p}$ by $m'_{n,p}$,  and $\sigma_{n,p}$ by
$n'_{n,p}$ and taking now $ K'=mp+r_1$
where $m\geq 0$ and $0\leq r_1\leq p-1$.
 
The proof consists in  parameterizing $K_i$ as
$K(m+1,r)$ for $r_1\leq r\leq  p-1$, and as $K(m+2,r)$ for $0\leq r\leq r_1-1$, where    $K(m+2,r)$ is assumed empty if $r_1=0$.
Assuming first that $r_1=0$, that is, $K'=(m+1)p=K_0$,  the
parallel summation formula (\ref{parallelsum}) gives
\begin{gather*}
\begin{array}{lll}
\dsum_{r=0}^{p-1}m_{n,p}(K(m+1,r)) & = & \dsum_{r=0}^{p-1}\dsum_{s=0}^{m}\frac{(sp +r+1)^{\overline{n-2}}}{(n-2)!}
 = \dsum_{l=1}^{(m+1)p} \frac{l^{\overline{n-2}}}{(n-2)!}\eqskip
 & = & \fr{\left( (m+1) p\right)^{\overline{n-1}}}{(n-1)!}=m_{n,1}(K')
 \end{array}
\end{gather*}
and
\begin{gather*}
\begin{array}{lll}
\dsum_{r=0}^{p-1}\sigma_{n,p}(K(m+1,r))
 & = &\fr{\left( (m+1)p\right)^{\overline{n}}}{n!}=\sigma_{n,1}(K'),
 \end{array}
\end{gather*}  
We get the same equality for $1\leq r_1\leq p-1$, following the case $r_1=0$, and using it, splitting the sum at $r=r_1$.
The sum of the multiplicities follows as well.
The case of Neumann eigenvalues can be obtained  from the Dirichlet case using  the relations  in the previous remark.
 \end{remark}
\section{P\'{o}lya inequalities for $\lune{n}{\pi/p}$}\label{Sec 5}

\subsection{P\'{o}lya inequalities on dihedral lunes\label{introEIGEN}}

Consider the Dirichlet eigenvalues of the $K$- chain of $\luned{n}{p}$, $\lambda_k=\lambda_k(\luned{n}{p})=K(K+n-1)$,  where $K=K(m+1,r)$, with  $m\in \mathbb{N}_0$, $r=0, \ldots, p-1$.
 Each eigenvalue $\lambda_k$ of the $K$-chain is defined by its order $k$ such that $k_-=k_-(K)\leq~ k~ \leq k_+(K)=
k_+$. Conversely, writing the order $k$ {as} $k_-(K)+j$ defines $K$ and $j\in\{0, \ldots, m_{n,p}(K)-1\}$ as follows
(see Section~\ref{Notations}). From  Lemma~\ref{lemmaA},   we have 
\begin{gather}\label{k+}
\begin{array}{l}
k_+(K)=\sigma_{n,p}(K(m+1,r))=\frac{1}{(n-1)!}\Big{(} \dsum_{s=0}^m(sp +r+1)^{\overline{n-1}} \Big{)},\eqskip
k_-(K)=\sigma_{n,p}(K(m+1,r)-1)+1
= \frac{1}{(n-1)!}\left(\dsum_{s=0}^{m}(sp +r)^{\overline{n-1}}\right) +1,
\eqskip
k =k_-(K)+j=k_+(K)-m_{n,p}(K)+1+j \quad\quad\mbox{for}~0\leq j\leq m_{n,p}(K)-1.
\end{array}
\end{gather}
Since $\Cw_{n,p}= (n!p)^{2/n}$, for $p\geq 2$, $m\geq 0$, and  $0\leq r\leq p-1$, we have the identities
\begin{gather}\label{cwsnp}
\begin{array}{l}
\Cw_{n,p}\left(k_+(K)\right)^{2/n}=
(np)^{2/n}\Big{(} \dsum_{s=0}^m(sp +r+1)^{\overline{n-1}} \Big{)}^{2/n},
\eqskip
\Cw_{n,p}\left(k_-(K)\right)^{2/n}=
\Big{(} \dsum_{s=0}^m(np)(sp +r)^{\overline{n-1}}+(n!p) \Big{)}^{2/n}.
\end{array}
\end{gather}

Given $n\geq 2$, $p\geq 1$, $r=0,\ldots, p-1$, $m\geq 0$, we define
\begin{gather*} 
\left\{\begin{array}{ll}
A_n(m,p,r)=(\bar{\lambda}_{(m+1)p+r})^n=\Big{(}\big{(}(m+1)p+r\big{)}\big{(}(m+1)p+r+n-1\big{)}\Big{)}^{n}, \quad  \eqskip
B_n(m,p,r)= (np)^2\left(\dsum_{s=0}^m(sp+r+1)^{\overline{n-1}}\right)^2,
\end{array}\right.
\end{gather*}
and a family of $p$ polynomial functions on the variable $m$, parameterised by $r=0, \ldots, p-1$,
 \begin{equation}\label{M}
M_{n,p,r}(m):=A_n(m,p,r)-B_n(m,p,r).
\end{equation}
The next lemma follows immediately from the first identity in~\eqref{cwsnp}.
%
\begin{lemma}[P\'{o}lya's inequality] \label{3.4} The Dirichlet eigenvalue $K(K+n-1)$ of $\luned{n}{p}$, where  $K=(m+1)p+r$,
satisfies P\'{o}lya's inequality~\eqref{PolyaExtreme} if and only if $M_{n,p,r}(m)\geq 0$, that is
%
\begin{equation}\label{polyanp}
\big{(}(m+1)p+r\big{)}\big{(}(m+1)p+r+n-1\big{)}^{ n}\geq (np)^2\Big{(}\sum_{s=0}^m(sp+r+1)^{\overline{n-1}}\Big{)}^2.
\end{equation}
\end{lemma}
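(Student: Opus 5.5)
The plan is to reduce P\'olya's inequality for the whole $K$-chain to the single top-order eigenvalue and then raise both sides to the power $n$. First, by the reformulation \eqref{PolyaExtreme}, the Dirichlet inequality holds for every eigenvalue of the $K$-chain if and only if
\[
\bar{\lambda}_K\geq \Cw_{n,p}\,(k_+(K))^{2/n}.
\]
The reason is that all $\lambda_k$ in the chain equal $\bar{\lambda}_K$, while the right-hand side $\Cw_{n,p}k^{2/n}$ is increasing in $k$. Here $\Cw_{n,p}=(n!p)^{2/n}$ by \eqref{cwp}, and $\bar{\lambda}_K=K(K+n-1)$ with $K=(m+1)p+r$, by Lemma~\ref{lemmaA}.

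Next I substitute the expression for $k_+(K)$ from \eqref{k+}, which comes from the sum of multiplicities in Lemma~\ref{lemmaA}:
\[
k_+=\fr{1}{(n-1)!}\dsum_{s=0}^m(sp+r+1)^{\overline{n-1}}.
\]
Then
\[
\Cw_{n,p}k_+^{2/n}=\Big(n!\,p\cdot \fr{1}{(n-1)!}\dsum_{s=0}^m(sp+r+1)^{\overline{n-1}}\Big)^{2/n}=\Big((np)\dsum_{s=0}^m(sp+r+1)^{\overline{n-1}}\Big)^{2/n}.
\]
This is exactly the first identity in \eqref{cwsnp}, once the factor $(np)^{2/n}$ is pulled out.

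Finally, both sides are positive, and $x\mapsto x^n$ is strictly increasing on $[0,\infty)$. So the inequality is equivalent to
\[
\big(K(K+n-1)\big)^n\geq (np)^2\Big(\dsum_{s=0}^m(sp+r+1)^{\overline{n-1}}\Big)^2,
\]
which is $A_n(m,p,r)\geq B_n(m,p,r)$, that is, $M_{n,p,r}(m)\ge 0$.

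There is one bookkeeping point to watch. The left-hand side of \eqref{polyanp} as printed should be read as $\big(((m+1)p+r)((m+1)p+r+n-1)\big)^n$, consistent with the definition of $A_n$. No step is a genuine obstacle; the only care needed is tracking the $n!/(n-1)!=n$ simplification correctly.
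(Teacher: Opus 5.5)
Your proof is correct and follows essentially the same route as the paper: it notes that the lemma follows at once from the first identity in \eqref{cwsnp}, by raising the top-order Dirichlet inequality in \eqref{PolyaExtreme} to the $n$-th power. You are also right to read the left-hand side of \eqref{polyanp} as $(((m+1)p+r)((m+1)p+r+n-1))^n$, which agrees with the definition of $A_n(m,p,r)$.
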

\begin{proposition} [First eigenvalue] \label{First}
Let {$p\geq 1$}. The first Dirichlet eigenvalue of $\luned{n}{p}$, $p(p+n-1)$,  satisfies P\'{o}lya  inequality  if and only if  $M_{n,p,0}(0)\geq 0$,
that is, the following inequality holds
\begin{equation}\label{BigBang}
 p^{n-2}(p+n-1)^n\geq (n!)^2. 
 \end{equation}
 This is the case when $n=2$. If $3\leq n\leq 8$, the first eigenvalue satisfies P\'{o}lya's inequality
 if and only if $p\geq 2$.  It is not
 satisfied for $p\leq  \Big{(}\frac{-1+\sqrt{1+4 e^{-2}}}{2}\Big{)} n \approx 0.120754 \, n$, for any $n\geq 2$.
 On the other hand if $p$ is such that
 \begin{equation}\label{firstyes}
  p > \left[ -\fr{1}{2} +\fr{\sqrt{4(2\pi n^3 e^3)^{1/n}+e^2}}{2e}\right] (n-1),
 \end{equation}
 then the first Dirichlet eigenvalue of $\luned{n}{p}$ satisfies P\'{o}lya's conjecture.
\end{proposition}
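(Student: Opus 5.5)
The plan is to specialise Lemma~\ref{3.4} to the first eigenvalue and then study the resulting one-variable inequality in $p$ by monotonicity, a finite check, and Stirling-type bounds.

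\emph{Reduction to \eqref{BigBang}.} The first Dirichlet eigenvalue $p(p+n-1)$ corresponds to $K=p$, that is, $m=0$ and $r=0$ in Lemma~\ref{3.4}. In this case $A_n(0,p,0)=p^n(p+n-1)^n$. Since the sum in $B_n$ has the single term $1^{\overline{n-1}}=(n-1)!$, we get $B_n(0,p,0)=(np)^2((n-1)!)^2=p^2(n!)^2$. Dividing by $p^2$ shows that $M_{n,p,0}(0)\ge 0$ is equivalent to \eqref{BigBang}. The left-hand side of \eqref{BigBang} is strictly increasing in $p$ for $n\ge 2$. Hence the set of $p$ satisfying \eqref{BigBang} is always a final segment of $\N$, and everything reduces to locating its threshold.

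\emph{Small dimensions.} For $n=2$, \eqref{BigBang} reads $(p+1)^2\ge 4$, which holds for every $p\ge1$. For $p=1$ it reads $n^n\ge (n!)^2$. This fails for every $n\ge3$: pairing factors gives $(n!)^2=\prod_{k=1}^n k(n+1-k)$, and $k(n+1-k)\ge n$ with strict inequality for $2\le k\le n-1$. For $p=2$ the inequality is $2^{n-2}(n+1)^n\ge (n!)^2$, which I will check numerically for $3\le n\le 8$. By monotonicity in $p$ this gives the "if and only if $p\ge2$" claim. I will also note, and compare numerically, that $p=2$ fails at $n=9$; this is not needed but it explains the range.

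\emph{Failure for $p\le cn$.} Let $c=\frac{-1+\sqrt{1+4e^{-2}}}{2}$, the positive root of $c^2+c=e^{-2}$. For $1\le p\le cn$ I bound $p^{n-2}\le p^n$ and $p+n-1<p+n$. This gives
\[
p^{n-2}(p+n-1)^n<\bigl(p(p+n)\bigr)^n\le \bigl(c(c+1)\bigr)^n n^{2n}=(n/e)^{2n}\le (n!)^2,
\]
where the last step uses the elementary bound $n!\ge (n/e)^n$. So \eqref{BigBang} fails.

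\emph{Sufficient condition \eqref{firstyes}.} I write $p=x(n-1)$, so that $p(p+n-1)=x(x+1)(n-1)^2$. I then use an upper Stirling bound of the form $(n!)^2\le 2\pi n^{2n+1}e^{-2n}e^{1/(6n)}$. The factor $p^{-2}$ and the discrepancy between $(n-1)^{2n}$ and $n^{2n}$ (roughly a factor $e^{-2}$) are absorbed into the $n$-th root. The goal is to reduce \eqref{BigBang} to
\[
x(x+1)>e^{-2}(2\pi n^3e^3)^{1/n},
\]
whose positive-root solution is exactly the bracket in \eqref{firstyes}. This last step is the main obstacle. One has to bookkeep the stray factors $p^{-2}$, $(1-1/n)^{2n}$ and $e^{1/(6n)}$ carefully so that they are dominated by the constant $2\pi n^3e^3$. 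The $p^{-2}$ factor should be handled using a crude a priori bound $p\le n$ in the relevant range; for $p>n$ I will verify \eqref{BigBang} directly. If the constant does not close, I will first try the sharper inequality $(1-1/n)^{n}\ge e^{-1}(1-1/n)$.
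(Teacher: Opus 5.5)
Your proposal is correct and follows the paper's route. You reduce through Lemma~\ref{3.4} to \eqref{BigBang}, check $p=1,2$ for $3\le n\le 8$ by hand, and use Stirling-type bounds on $n!$ in both directions. Your failure argument, via $p^{n-2}\le p^n$, $p+n-1<p+n$ and $n!\ge (n/e)^n$, is a slightly cleaner version of the paper's, which uses the lower Stirling bound and argues by contradiction.

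The step you flag as the main obstacle does close, using exactly the fallback you name. Write $p=x(n-1)$ and use Robbins' bound $(n!)^2\le 2\pi n^{2n+1}e^{-2n}e^{1/(6n)}$ together with $(n/(n-1))^{2(n-1)}\le e^2$. Then \eqref{BigBang} follows from the sufficient condition
\[
\left(x(x+1)e^2\right)^n\ \ge\ x^2\,2\pi n^3 e^{2+1/(6n)} .
\]

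You do not need the a priori bound $p\le n$ and a separate treatment of $p>n$. By the monotonicity in $p$ you already established, it suffices to verify this at the real value $x=\alpha_n$, the bracket in \eqref{firstyes}. Since $\alpha_n<1$ for $n\ge 4$, the bounds $x^2\le 1$ and $e^{1/(6n)}\le e$ turn it into exactly $\alpha_n(\alpha_n+1)\ge e^{-2}(2\pi n^3e^3)^{1/n}$, which is the defining equation of $\alpha_n$.

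This is what the paper does. It applies Stirling to $(n-1)!$ rather than $n!$, which is the same bookkeeping. It disposes of $n=2,3$ through the finite check, where the thresholds are about $1.63$ and $2.02$.

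Your own variant with $p\le n$ also works for $n\ge 3$. The extra factor it produces is $(n/(n-1))^2e^{1/(6n)}$, which is at most $e$ there, so it is absorbed into the constant $e^3$.
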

\begin{proof}
 From  Lemma~\ref{3.4}, P\'{o}lya's condition for the first eigenvalue reads as
\[ p\times (p+n-1)\geq (np)^{2/n}((n-1)!)^{2/n} = (n!p)^{2/n},\]
which is equivalent to condition~\eqref{BigBang}.
The case $n=2$ follows directly, and the cases $n=3, \ldots, 8 $  by direct inspection of (\ref{BigBang}), together with the proof of Lemma~\ref{Dlemma2} and Theorem~\ref{thmC}.
 
 Assume $p=\alpha n$ for some  $0<\alpha\leq (-1+\sqrt{1+4e^{-2}})/2$ and  (\ref{BigBang}) holds.
 Then, using the Stirling lower bound for $n!$, $(\alpha n)^{n-2}((\alpha+1)n)^n> 2\pi e^{-2n}n^{2n-1}$, that is equivalent to
 $e\alpha \big{(}e\alpha +e\big{)}> (2\pi n^3\alpha ^2)^{1/n}>1$. That is $\alpha^2+\alpha -e^{-1}>0$ implying $\alpha > (-1 +\sqrt{1+4e^{-2}})/2$ contradicting the assumption.
 
  When $n$ equals $2$ or $3$ the right-hand side of~\eqref{firstyes} yields approximately $1.63312$ and $2.02424$, for which we
 already know the result holds. We will thus consider $n$ larger than three from now on. Assume $p\geq \alpha (n-1)$ with
 \[
 \alpha = \alpha_{n} = -\fr{1}{2} +\fr{\sqrt{4(2\pi n^3 e^3)^{1/n}+e^2}}{2e}.
 \]
 Then
 \[
 \begin{array}{lll}
  p^{n-2} (p+n-1)^{n} & \geq & \fr{\left[\alpha(\alpha+1)\right]^n}{\alpha^2} (n-1)^{2(n-1)}.\eqskip
  & \geq &  \fr{\left[\alpha(\alpha+1)\right]^n}{\alpha^2} \fr{e^{2(n-1)-1/(6(n-1))}}{2\pi n^2(n-1)}\left(n!\right)^2\eqskip
  & \geq & \left[\alpha(\alpha+1)\right]^n \fr{e^{2(n-1)-1/(6(n-1))}}{2\pi n^3}\left(n!\right)^2,
  \end{array}
 \]
 where the second inequality follows from the Stirling approximation inequality
 \[
  n! \leq \sqrt{2\pi n} \left(\fr{n}{e}\right)^n e^{1/(12n)}
 \]
 applied to $(n-1)^{n-1}$, while the third is a consequence that for $n$ larger than or equal to four the factor $\alpha$ is smaller than one. We thus see that it is enough to show that the factor multiplying $(n!)^2$ on the right-hand
 side is larger than one. This is equivalent to
 \[
  \begin{array}{llll}
    & \left[\alpha(\alpha+1)e^2\right]^n & \geq & 2\pi n^3 e^{2+1/(6(n-1))}\eqskip
    \Leftrightarrow & \alpha(\alpha+1)e^2 & \geq & \left( 2\pi n^3 e^{2+1/(6(n-1))}\right)^{1/n}.
  \end{array}
 \]
 The above inequality is satisfied if we replace $2+1/(6(n-1))$ by $3$ and solve the resulting inequality for $\alpha$,
 yielding the desired result.
 \end{proof}
\begin{remark}
(1) \label{remark_examples1} 
 The following examples illustrate possible different behaviours, including the fact that having the first eigenvalue
 satisfy~\eqref{polyaconject} is not sufficient for the conjecture to hold.
 All eigenvalues of $\luned{9}{2}$ satisfy P\'{o}lya inequality except for the first. Moreover $\luned{9}{p}$ satisfies P\'{o}lya
 conjecture for all $p\geq 3$. In case of $\luned{32}{5}$, if $r=0,1,2$ then $M_{32,5,r}(0)<0$, but for $r=3,4$, $M_{32,5,r}(m)>0$
 for all $m\geq 0$, that is, $r$ is not a secondary parameter. We also note that $\luned{32}{p}$ satisfies P\'{o}lya conjecture for
 all $p\geq 6$. On the other hand the first and the third eigenvalues of $\luned{24}{4}$ satisfy P\'{o}lya inequality, namely
 $M_{24,4,r}(0)>0$, for $r=0,2,3$ respectively, but  $M_{24,4,1}(0)<0$, i.e the second does not.
\end{remark}

\begin{definition}\label{DEF5.2} Define an integer $\pn\geq 1$ with respect to $M_{n,\cdot,\cdot}(\cdot)$ given by~\eqref{M} if it is such that for all $p\geq \pn$ and  $r=0, \ldots,p-1$,  P\'{o}lya's inequality \eqref{polyanp} holds for all $m\geq 0$. The smallest of such integers is denoted by $\pin(n)$. Similarly, we define $p_1(n)$ and $p_1^*(n)$ with respect to the first eigenvalue.
\end{definition}
\begin{remark}
Proposition~\ref{First} implies that $\pione(n)$ grows to infinity linearly in $n$, with
 \[\fr{1}{2}\left(-1+\sqrt{1+4e^{-2}}\right)n~\leq ~\pione(n) ~< ~\fr{1}{2}\left(-1+\sqrt{1+4e^{-2}(2\pi n^3e^3)^{1/n}}\right)(n-1),\]
 and we also see that the limit
 of the factor $\alpha_{n}$ on the right is
 $ \fr{1}{2}\left( \sqrt {1 + 4e^{-2}} - 1 \right)\approx 0.120754.$
\end{remark}

\section{A generalised two-term asymptotic formula\label{Sec Third term}}
The two-term asymptotic formulas (\ref {2termweyl}) and  (\ref{2termweylNeumann}) hold on domains satisfying the nonperiodicity and nonblocking
conditions. This is not the case of the  rational lunes whose geodesic billiards    are all periodic (Theorem~\ref{thmx_blunes}).
We derive {a generalised two-term} asymptotic formula of the type of  \cite[1.7.1]{sava} for the Dirichlet eigenvalues of  dihedral lunes $\lune{n}{\pi/p}$
replacing the constant coefficient $\sqrt{\Cw_{n,p}}\,p$ of the second term in~\eqref {2termweyl} by a  bounded function  $c(k,p)$
that depends on $n$ and the order  $k$ of $\lambda_k$ in the corresponding $K$-chain,  given by a  parameter $j$ that runs from  zero up to $m_{n,p}
(K)-1$. Moreover, $c(p,k)$  is sharp, being constant along lower and higher  orders where it takes its maximum and minimum values, respectively.
Due to the fact that the formula and its proof for lunes on the two-dimensional sphere is considerably simpler than the general case
and also allows for the derivation of sharp two-sided inequalities, we consider this case first.

We use the notations given in Section~\ref{Notations}, and the formulas in  Lemma~\ref{lemmaA} and in the introduction of  Section~\ref{introEIGEN}.

 \subsection{${n=2}$ and ${p\geq 1}$}
 \label{C6.1}
The multiplicity of $\bar{\lambda}_K=K(K+1)$ for $K=K(m+1,r)=(m+1)p+r$, $r=0,\ldots p-1$, is given by
$ m_{2,p}(K)=m+1$. Let $\lambda_k$ in the $K$-chain. Then $k= k_-(K)+ j= k_+(K)-m+ j$ for a unique $j=0, \ldots m_{2,p}(K)-1$. Therefore
 \begin{eqnarray*}\begin{array}{ll}
 \lambda_k=m^2p^2 +mp(2p+2r+1)+(p+r)(p+r+1)=:Q_2(m)&
 \eqskip
 k_+(K)=\dsum_{s=0}^m(sp+r+1)=
 \fr{m(m+1)}{2}p +(m+1)(r+1),& \eqskip
 k_-(K)= \fr{m(m+1)}{2}p +(m+1)r+1, & \eqskip
 k=
 \fr{m(m+1)}{2}p +(m+1)r+1+j. &\\
   \end{array}
 \end{eqnarray*}
 Recall that  $\Cwnp{2}{p}=2p$ and the second constant in~\eqref{2termweyl} is $\sqrt{C_{W 2,p}}p=\sqrt{2p}p$.
 \begin{proposition} \label{PropC.1}
 Assume {$p\geq 1$}. The following generalised 2-term asymptotic formula holds on $\lune{2}{\pi/p}$ for $k\to +\infty$,
 \begin{equation}\label{Gen2term}
 \lambda_k=\Cwnp{2}{p} k + c(p,k)k^{1/2} +\so(k^{1/2}),
 \end{equation}
 with 
 \[c(p,k)= \sqrt{\Cwnp{2}{p}}\, p+\sqrt{\Cwnp{2}{p}}\left(1-2\frac{j}{m}\right),\]
 where $k$ is in the $K$-chain and
$j=k-k_-(K)\in \{0, \ldots, m_{2,p}(K)-1\}$. This
 second coefficient function $c(p,k)$ is bounded and  sharp along highest and lowest orders where equality is achieved, 
 \[\sqrt{\Cwnp{2}{p}}(p-1) = c(p,k_+(K))\leq c(p,k)= \sqrt{2p}\left(p+1-2\frac{j}{m}\right)\leq c(p,k_-(K))=\sqrt{\Cwnp{2}{p}}(p+1).\]
\end{proposition}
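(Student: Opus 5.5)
Since everything in the $K$-chain is explicit for $n=2$, the plan is a direct computation: write both $\lambda_k$ and $k$ as functions of $m$ (with $r$ and $j$ as parameters) and compare $\lambda_k$ with $\Cwnp{2}{p}k=2pk$. Using the expressions just before the proposition,
\[
2pk = p^2m^2 + p(p+2r)m + 2pr + 2p(1+j),
\qquad
\lambda_k = p^2m^2 + p(2p+2r+1)m + (p+r)(p+r+1).
\]
Subtracting gives the exact remainder
\[
\lambda_k - 2pk = p(p+1)m + (p+r)(p+r+1) - 2pr - 2p(1+j),
\]
which holds for every $k$ in the chain with $j\in\{0,\ldots,m\}$. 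No asymptotics enter until the last step.

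Next I express $k^{1/2}$ in terms of $m$. Since $r\le p-1$ and $j\le m$, we have $2pk = p^2m^2 + \bo(m)$, so $k^{1/2} = \fr{pm}{\sqrt{2p}} + \bo(1)$, uniformly in $r$ and $j$. Hence $m = \fr{\sqrt{2p}}{p}k^{1/2} + \bo(1)$. Also $k\to\infty$ is equivalent to $m\to\infty$, because $p$ is fixed and $r$ ranges over a finite set.

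Now I substitute into the remainder. The terms $p(p+1)m$ and $-2pj$ are of order $m$; the rest is bounded. Writing $-2pj = -2p\,(j/m)\,m$, with $0\le j/m\le 1$ a bounded factor, I get
\[
\lambda_k - 2pk = \left(p+1-2\frac{j}{m}\right)p\,m + \bo(1) = \sqrt{2p}\left(p+1-2\frac{j}{m}\right)k^{1/2} + \bo(1).
\]
Here I used $p\,m = \sqrt{2p}\,k^{1/2} + \bo(1)$, and the bounded factor times an $\bo(1)$ error stays $\bo(1)$. This gives \eqref{Gen2term} with the stated $c(p,k)$, and the error is in fact $\bo(1)$, which is stronger than $\so(k^{1/2})$.

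For the bounds, $c(p,k)$ is affine and decreasing in $j/m\in[0,1]$. Its maximum $\sqrt{2p}(p+1)$ is attained at $j=0$, i.e. $k=k_-$. Its minimum $\sqrt{2p}(p-1)$ is attained at $j=m=m_{2,p}(K)-1$, i.e. $k=k_+$. Sharpness follows from the exact identity: along the sequence $k_-(K)$ (or $k_+(K)$), $(\lambda_k-2pk)/k^{1/2}$ converges exactly to the stated extreme constant, so neither bound can be improved.

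The only delicate point is the uniformity of the $\bo(1)$ terms in $j$ and $r$. This is handled by keeping $j/m$ as a bounded factor rather than expanding it. The case $m=0$, where $j/m$ is undefined, only involves finitely many $k$ and is irrelevant to the asymptotics.
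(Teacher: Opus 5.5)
Your proposal is correct and follows essentially the same route as the paper. The paper likewise writes $\lambda_k-2pk=m\bigl(p^2+p-2p\,j/m\bigr)+p(p-1)+r(r+1)$ exactly, divides by $k^{1/2}\sim m\sqrt{p/2}$, and lets $m\to+\infty$ with $j/m\in[0,1]$ kept as a bounded factor, reading off the extremes at $j=0$ and $j=m$. Your uniform estimate $\sqrt{2pk}=pm+\bo(1)$ also shows that the remainder is in fact $\bo(1)$, which is slightly sharper than the $\so(k^{1/2})$ the paper records.
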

\begin{proof}
   Consider the following function
 \begin{gather*}
 C_{p}(r,j,m) :=
  \frac{\lambda_k-\Cwnp{2}{p}k}{k^{1/2}}={
 \frac{
 m\left[ p^2+p - \frac{2p j}{m}\right]+p(p-1)+r(r+1)}{\sqrt{\frac{m^2}{2}p+m\left(\frac{p}{2}+r+\frac{j}{m}\right)+r+1}}}.
 \end{gather*}
  Take $k\to +\infty$, that is $m\to +\infty$. Then,
\begin{eqnarray*}
\lefteqn{\lim_{k\to +\infty} \frac{\lambda_{k}-\Cwnp{2}{p}k -c(p,k)k^{1/2}}{k^{1/2}}
=\lim_{k\to +\infty}\left(C_p(r,j,m)-c(p,k)\right) }\\
&=& 	\lim_{k\to +\infty}\left({\frac{m\left(p^2+p-2p\frac{j}{m}\right)}{\frac{m\sqrt{p}}{\sqrt{2}}}}-c(p,k)\right)=
\lim_{k\to +\infty}\left(c(p,k)-c(p,k)\right)=0
\end{eqnarray*}
The last statement follows  trivially using $j=m$ and $j=0$.
 \end{proof}
 Next we prove two-sided, two-term, inequalities holding for all Dirichlet eigenvalues.
\begin{proposition}
\begin{enumerate}[{\rm 1)}]
\item The following two inequalities hold for all $p\geq 1$ and $k\geq 1$,
\begin{gather}\label{twotermineq}
\quad\Cwnp{2}{p} k+\sqrt{\Cwnp{2}{p}}(p-1)k^{1/2}~\leq ~ \lambda_k~
 \leq~ C_{W 2,p} k+\sqrt{\Cwnp{2}{p}}(p+1)k^{1/2}+ p(p-1)+r(r+1).
\end{gather}
 The right-hand side is asymptotically sharp along the lowest orders when $k\to +\infty$ in the sense 
 \[\frac{\lambda_k- \Cwnp{2}{p} k_--\sqrt{\Cwnp{2}{p}}(p+1)k_-^{1/2}}{k_-^{1/2}}\to 0,\]
 when $k_-=k_-(K)\to +\infty$. The  left-hand side holds for all $p\geq 1$, and is also asymptotically
 sharp along the highest orders when $k\to +\infty$ (in the same sense). \\[1mm]
 \item For $1\leq p\leq 8$ the right-hand side in the inequality above may be improved to,
 \begin{eqnarray}\label{ddag}
  \lambda_k &\leq &\Cwnp{2}{p} k+\sqrt{\Cwnp{2}{p}}(p+1)
  k^{1/2}+ \frac{p(p-3)}{2}+r(r+1).
\end{eqnarray}
\end{enumerate}
\end{proposition}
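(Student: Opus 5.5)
The proof reduces everything to the explicit parametrisation of Section~\ref{C6.1}. On the $K$-chain with $K=(m+1)p+r$ we have $\lambda_k=Q_2(m)$ and $k=k_-(K)+j$ with $0\le j\le m$. Here $k_+=k_-+m$ and $\Cwnp{2}{p}=2p$. For fixed $K$ the quantity $\lambda_k-2pk$ decreases in $j$, while $\sqrt{2p}\,c\,k^{1/2}$ increases in $j$. So the lower bound in \eqref{twotermineq} only needs checking at $k=k_+$, and the upper bounds in \eqref{twotermineq} and \eqref{ddag} only at $k=k_-$. Both checks are direct algebra in $(m,p,r)$ after squaring. The sharpness statements are exactly Proposition~\ref{PropC.1} with $j=0$ and $j=m$, where $c(p,k_\mp)=\sqrt{2p}(p\pm1)$.

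\emph{Lower bound.} A direct expansion gives
\[
\lambda_{k_+}-2pk_+=(m+1)p(p-1)+r(r+1)\ge 0.
\]
For $p=1$ (so $r=0$) this is trivially $\ge 0$, which is the required bound. For $p\ge2$, both sides are non-negative, so I square them. I use $2pk_+=p^2m(m+1)+2p(m+1)(r+1)$ and drop the term $r^2(r+1)^2\ge0$. After dividing by $(m+1)p(p-1)$ it suffices to prove
\[
(p-1)(p-2r-2)+2r(r+1)=p^2-(2r+3)p+2(r+1)^2\ge 0 .
\]
As a quadratic in $p$ its discriminant is $1-4r-4r^2<0$ for $r\ge1$. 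For $r=0$ it equals $(p-1)(p-2)\ge0$ for integer $p$. This proves the left inequality.

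\emph{Upper bounds.} At $k=k_-=k_+-m$ we get
\[
\lambda_{k_-}-2pk_-=(m+1)p(p+1)-2p+r(r+1).
\]
Subtracting the constant $p(p-1)+r(r+1)$ leaves $mp(p+1)$. So the right inequality of \eqref{twotermineq} reduces to $mp(p+1)\le\sqrt{2p}(p+1)k_-^{1/2}$. After squaring this reads
\[
m^2p^2\le 2pk_-=p^2m(m+1)+2p(m+1)r+2p,
\]
which is obvious. For \eqref{ddag} the constant is smaller by $p(p+1)/2$. The required inequality then becomes $p^2(m+\tfrac12)^2\le 2pk_-$, i.e. $p^2/4\le 2p(m+1)r+2p$. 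This holds whenever $p\le8$, which is exactly where the restriction in part~2) comes from.

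The only delicate point is the lower bound: it is the one place where the residue $r$ genuinely interacts with $p$. I expect the discriminant argument, with the separate integer check when $r=0$, to be the main step to get right.
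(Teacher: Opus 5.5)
Your proof is correct and follows the paper's argument. You reduce to $k=k_+$ for the lower bound and to $k=k_-$ for the upper bounds, square, and reach the same quadratic condition $p(p-1)\ge 2(r+1)(p-r-1)$, which the paper settles by maximising $x(p-x)$ rather than by your discriminant. For the upper bounds, your direct squaring to $p^2/4\le 2p(m+1)r+2p$ is a slightly cleaner version of the paper's estimate of $\sqrt{1+1/m+2/(m^2p)}$, and it covers $m=0$ without a separate case.
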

\begin{proof}
We look for the largest $c'(p)> 0$,  such that for all $K\geq p$ along the highest order eigenvalues $k=k_+=k_+(K)$ we have
\begin{eqnarray*}
\bar{\lambda}_{K}-\Cwnp{2}{p}k_+
&=&(m+1)p^2-(m+1)p+r(r+1)\nonumber \\
&\geq&
\label{lowerbound} c'(p)\sqrt{\left(\sum_{s=0}^m(sp+r+1)\right)}, 
\end{eqnarray*}
that is
\begin{equation}\label{c'1}
\left((m+1)(p(p-1))+r(r+1)\right)^2 \geq {c'}(p)^2\left(\frac{m(m+1)}{2}p + (m+1)(r+1)\right).
\end{equation}
Note that,
\[
\lim_{m\to +\infty}\frac{\left((m+1)p(p-1)+r(r+1)\right)}{
\sqrt{\left(\frac{m(m+1)}{2}p+ (m+1)(r+1)\right)}}= \sqrt{2p}(p-1)= \sqrt{\Cwnp{2}{p}}(p-1).
\]
Then we take $c'(p)=c^-(p)$, and verify  inequality (\ref{c'1}) holds, or equivalently,
\begin{equation*}
(m+1)p^2(p-1)^2+r^2(r+1)^2 \geq 2(m+1)p(p-1)(r+1)(p-(r+1)).
\end{equation*}
The later inequality holds if we prove the following  inequality holds, for all $0\leq r\leq p-1$
\begin{equation*}
p(p-1)\geq 2(r+1)(p-(r+1)).
\end{equation*}
This inequality holds for $p=1$ (when both sides equal zero), and for $p\geq 2$ since $x(p-x)$ takes its maximum at $x= p/2$ when  $x\in [1,p]$, with equality only if $p=2$ and $r=0$.
This proves the right-hand side inequality, that is strict except when $p=2$ and  $m=r=0$. {Since we used highest order eigenvalues the same inequality (\ref{twotermineq}) holds for all $k\geq 1$.  From the  above limit} follows the asymptotic sharpness when $m\to +\infty$ along highest order eigenvalues.

Similarly, for the left-hand inequality we have along the lower order eigenvalues
\[\bar{\lambda}_K-C_{W 2,p}k_-=\left(mp(p+1)+p(p-1)+r(r+1)\right),\]
 and so
\[
\begin{array}{lll}
{\ds \lim_{m\to +\infty}} \fr{\bar{\lambda}_K-C_{W 2,p}k_-}{k_-^{1/2}} & = &
{\ds \lim_{m\to +\infty}}\fr{\left(mp(p+1)+p(p-1)+r(r+1)\right)}{\sqrt{\left(\frac{m(m+1)}{2}p+ (m+1)r+1\right)}}\eqskip
& = & \sqrt{2p}(p+1)\eqskip
\end{array}
\]
We have the following inequalities
\begin{eqnarray}
\lefteqn{\bar{\lambda}_K-C_{W 2,p}k_- -\sqrt{\Cwnp{2}{p}}(p+1)k_-^{1/2}}\nonumber\\
&=&\left(mp(p+1)+p(p-1)+r(r+1)\right) 
-\sqrt{2p}(p+1)\sqrt{\frac{m(m+1)}{2}p+(m+1)r+1}\quad \label{dag}\eqskip
&\leq& mp(p+1)+p(p-1)+r(r+1)
-\sqrt{2p}(p+1)\frac{m\sqrt{p}}{\sqrt{2}}\sqrt{1+\frac{1}{m}+ \frac{2}{m^2p}}\nonumber\eqskip
&\leq&  mp(p+1)+p(p-1)+r(r+1)
-\sqrt{2p}(p+1)\frac{m\sqrt{p}}{\sqrt{2}}\nonumber\eqskip
&=& p(p-1)+r(r+1)\nonumber 
\end{eqnarray}
If $m=0$ we get  
$(\ref{dag})=p(p-1)+r(r+1)-\sqrt{2p}(p+1)\sqrt{r+1}$, so the stated inequality still holds.
If $p\leq 8$, we may take the following step valid for $m\geq 1$ (and $m=0$ since $2\sqrt{2}\sqrt{r+1}\geq \sqrt{p}$),
\begin{eqnarray*}
(\ref{dag})&\leq & mp(p+1)+p(p-1)+r(r+1)
-\sqrt{2p}(p+1)\frac{m\sqrt{p}}{\sqrt{2}}\sqrt{\left(1+\frac{1}{2m}\right)^2}\eqskip
&=& mp(p+1)+p(p-1)+r(r+1)
-\sqrt{2p}(p+1)\frac{m\sqrt{p}}{\sqrt{2}}\left(1+\frac{1}{2m}\right)\eqskip
&=& p(p-1)+r(r+1)-\frac{p(p+1)}{2}
\end{eqnarray*}
\end{proof}
\begin{remark} When $p=1$ the above inequalities~\eqref{twotermineq} and~\eqref{ddag} reduce to
\begin{equation}
 \label{*}2 k \leq ~ \lambda_k~\leq 2 k+2\sqrt{2}k^{1/2}-1
\end{equation}
A inequality on the (late) counting function $N(z)=\#\{k: \lambda_k\leq z\}$ for  the Dirichlet eigenvalues of $\mathbb{ S}^2_+$ is given in \cite[Theorem 3.2.1]{blps}, writing any positive real as $z=w(w+1)$, so we have $N(w(w+1))=N(K(K+1))=k_+(K)=K(K+1)/2$ where $K=\lfloor w \rfloor$, and setting  $\varepsilon:=\varepsilon(w)=w-\lfloor w \rfloor$
\begin{gather}
\frac{w(w+1)}{2}\left(1-\frac{\varepsilon(w)}{\sqrt{w(w+1)}}\right)^2-\frac{\varepsilon(w)}{8\sqrt{w(w+1)}}~\leq ~N(w(w+1))~
 \leq~ \frac{w(w+1)}{2}\left(1-\frac{\varepsilon(w)}{\sqrt{w(w+1)}}\right)^2, \quad\quad \label{**}
 \end{gather}
When $z=K(K+1)=\bar{\lambda}_K$, $K\in \N$, the inequalities are just a double equality $N(K(K+1))=k_+(K)$. By taking $z=(K+\varepsilon)(K+\varepsilon +1)= K(K+1)+\epsilon(2K+1)+\epsilon^2$,
with $\varepsilon\in [0,1)$,   
next we analyse which of the two $(\ref{*})$ and $(\ref{**})$ is sharper, where $j=0, \ldots, K-1$, and $w=K+\varepsilon$. First we compare the l.h.s. of (\ref{*})
with the r.h.s. of (\ref{**}),
that is, 
$(1+j)\leq K$, with   $ 0\leq  \varepsilon^2$. They are equally sharp.
Now  we compare the other two.
Since $k_-=K(K-1)/2 +1$, we are comparing 
$(K-1/2)\leq \sqrt{(K-1)K+2}$, i.e. $0\leq 8$, with
\[(K(K+1)+\varepsilon(2K+1)+\varepsilon^2)\left((2K+1+2\varepsilon)^2-1\right)\leq
4\left(K(K+1)+\varepsilon(2K+1)+\varepsilon^2\right)^2+\frac{1}{16}\]
that turns out to be equivalent to $0\leq \frac{1}{64}$. 
This means they are also sharply equivalent.
\end{remark} 
\noindent
\subsection{$n\geq 3$ and {$p\geq 1$}}\label{C6.2}
  The multiplicity of  $\bar{\lambda}_K=K(K+n-1)$ with  $K=K(m+1,r)=(m+1)p+r$, is given by
  $ m_{n,p}(K)=\frac{1}{(n-2)!}\sum_{s=0}^m(sp +r+1)^{\overline{n-2}}$. Recall the following equalities
 \begin{gather*}\begin{array}{l}
 \bar{\lambda}_K= K(K+n-1)=m^2p^2+mp( 2p+2r+n-1)+(p+r)(p+r+n-1)=: Q_2(m)\\
 k_+(K)=\frac{1}{(n-1)!}\sum_{s=0}^m(sp +r+1)^{\overline{n-1}}\\
 k_-(K)= k_+(K)-m_{n,p}(K)+1,
 \end{array}
 \end{gather*}
and  if $k$ is in the $K$-chain, then  $k= k_-(K)+ j$, where $j=0, \ldots, m_{n,p}(K)-1$. Recall that
 $\Cwnp{n}{p}=(n!p)^{2/n}$ and the second constant of (\ref{2termweyl}) is $\sqrt{\Cwnp{n}{p}}p$. We use simplified notation $k_{\pm}=k_{\pm}(K)$. Next we prove Theorem~\ref{thmxeventually}.
\begin{proposition} \label{PropC3} Assume $p\geq 1$ and $n\geq 3$.
The following generalised $2$-term asymptotic formula on $\luned{n}{p}$ holds for $k\to +\infty$,
\begin{eqnarray}\label{o-3term}
 \lambda_k &=&\Cwnp{n}{p}k^{2/n}+  c(p,k)k^{1/n} +\so(k^{1/n})
\end{eqnarray}
 where
 \[c(p,k)=\sqrt{\Cwnp{n}{p}}\, p+ \sqrt{\Cwnp{n}{p}}\left( 1-2\frac{j}{m_{n,p}(K)-1}\right),\]
satisfies
\begin{gather*}
\sqrt{\Cwnp{n}{p}}(p-1)=~c(p,k_+)~\leq ~ c(p,k)~\leq ~  c(p,k_-)~=~\sqrt{\Cwnp{n}{p}}(p+1).
\end{gather*}
This extends the expression of $c(p,k)$ obtained for $n=2$ in Proposition~\ref{PropC.1}
 \end{proposition}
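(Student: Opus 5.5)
The plan mirrors the proof of Proposition~\ref{PropC.1}: compute the normalised remainder
\[
C(k):=\frac{\lambda_k-\Cwnp{n}{p}k^{2/n}}{k^{1/n}}
\]
for $k=k_-+j$ in the $K$-chain, $K=(m+1)p+r$, and show that $C(k)-c(p,k)\to 0$ as $m\to\infty$, uniformly in $r\in\{0,\dots,p-1\}$ and $j\in\{0,\dots,m_{n,p}(K)-1\}$. The inputs are $\lambda_k=Q_2(m)=m^2p^2+mp(2p+2r+n-1)+O(1)$ and the expressions for $k_\pm$ and $m_{n,p}(K)$ from Lemma~\ref{lemmaA}.

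\textbf{Step 1: expansions in $m$ of the counting quantities.} Using $(x)^{\overline{n-1}}=x^{n-1}+\binom{n-1}{2}x^{n-2}+O(x^{n-3})$ and Euler--Maclaurin (or Faulhaber) for $\sum_{s=0}^m (sp+r+1)^{n-1}$, I will obtain
\[
(n-1)!\,k_+=\frac{p^{n-1}m^n}{n}+\alpha_+ m^{n-1}+O(m^{n-2}),\qquad (n-2)!\,m_{n,p}(K)=\frac{p^{n-2}m^{n-1}}{n-1}+O(m^{n-2}),
\]
with $\alpha_+$ explicit, and hence $(n-1)!\,k_-=\frac{p^{n-1}m^n}{n}+\alpha_- m^{n-1}+O(m^{n-2})$ with $\alpha_-=\alpha_+-p^{n-2}$. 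Writing $t=j/(m_{n,p}(K)-1)\in[0,1]$ gives
\[
(n-1)!\,k=\frac{p^{n-1}m^n}{n}+\bigl(\alpha_-+t\,p^{n-2}\bigr)m^{n-1}+O(m^{n-2}),
\]
uniformly in $t$.

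\textbf{Step 2: expand $k^{2/n}$ and $k^{1/n}$.} Set $k_0=p^{n-1}m^n/n!$. Then $\Cwnp{n}{p}k_0^{2/n}=(n!p)^{2/n}(p^{n-1}/n!)^{2/n}m^2=p^2m^2$, so the leading term cancels against $\lambda_k$. Expanding the second-order term of $k^{2/n}$ gives a linear term $\frac{2}{n}p^2m\cdot\frac{n(\alpha_-+tp^{n-2})}{p^{n-1}}$. Subtracting this from $mp(2p+2r+n-1)$ leaves a coefficient of $m$ that is affine in $t$. Also $k^{1/n}\sim (p^{n-1}/n!)^{1/n}m$ and $\sqrt{\Cwnp{n}{p}}=(n!p)^{1/n}$, so $\sqrt{\Cwnp{n}{p}}\,k^{1/n}\sim pm$.

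\textbf{Step 3: identify the limit.} It remains to show that the coefficient of $m$ equals $p\bigl(p+1-2t\bigr)$, which gives $C(k)=\sqrt{\Cwnp{n}{p}}(p+1-2t)+O(1/m)$. This requires computing $\alpha_+$ exactly. Summing $(sp+r+1)^{n-1}+\binom{n-1}{2}(sp+r+1)^{n-2}$ over $s$, the $m^{n-1}$ coefficient is
\[
\frac{p^{n-1}}{2}+(r+1)p^{n-2}+\frac{(n-1)(n-2)}{2(n-1)}\,p^{n-2}\cdot\frac{1}{1},
\]
up to careful bookkeeping. Inserted into Step 2, the $r$- and $n$-dependence should cancel against $2r$ and $n-1$ in $Q_2$, leaving $p^2+p-2tp$. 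This exact cancellation is the main obstacle and must be checked carefully, with uniformity in $r$ and $t$ ensured by the fact that all $O(m^{n-2})$ constants depend only on $n$ and $p$.

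\textbf{Step 4: the bounds on $c(p,k)$.} These are immediate since $t\in[0,1]$, with $t=0$ at $k_-$ and $t=1$ at $k_+$. For $p=1$ the same argument applies with $r=0$.
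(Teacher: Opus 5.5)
Your proposal is correct and follows essentially the same route as the paper. Both arguments use Faulhaber expansions of $k_+$ and $m_{n,p}(K)$ in $m$, the identity $\Cwnp{n}{p}\bigl(p^{n-1}m^n/n!\bigr)^{2/n}=p^2m^2$ to cancel the leading term of $Q_2(m)$, and a first-order binomial expansion of $k^{2/n}$ with $k=k_+-m_{n,p}(K)+1+j$. Your parametrisation by $t=j/(m_{n,p}(K)-1)\in[0,1]$ handles uniformity more cleanly than the paper's detour through $(1+j)(n-1)!/(m^{n-1}p^{n-2})$.

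The cancellation you flagged as the main obstacle does go through. Your value $\alpha_+=\frac{p^{n-1}}{2}+p^{n-2}\bigl(r+\frac{n}{2}\bigr)$ is right, so $2p^{3-n}(\alpha_-+tp^{n-2})=p^2+2pr+pn-2p+2pt$. Subtracting this from $p(2p+2r+n-1)$ leaves exactly $p(p+1-2t)$.
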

 \begin{proof}
Denoting by $\sigma_i=\sigma_i(1,\ldots, n-1)$ the elementary symmetric polynomial of degree $i$ in $(n-1)$ variables,  we have
\[
(x+1)^{\overline{n-1}}=(x+1)(x+2)\ldots (x+n-1)=
\dsum_{i=0}^{n-1}\sigma_ix^{n-1-i},
\]
obtaining 
\[
\begin{array}{lll}
k_+(K) & = &  \fr{1}{(n-1)!}\dsum_{s=0}^m(sp+r+1)^{\overline{n-1}}\eqskip
& = & \dsum_{s=0}^{m}\left(  \dsum_{i=0}^{n-1}\fr{\sigma_i}{\sigma_{n-1}}(sp +r)^{n-1-i}\right) \eqskip
& = & \dsum_{s=0}^{m}\left(  \dsum_{i=0}^{n-1}\dsum_{\alpha=0}^{n-1-i}\fr{\sigma_i}{\sigma_{n-1}}\binom{n-1-i}{\alpha}(sp)^{\alpha} r^{n-1-i-\alpha}\right)\eqskip
& = & \dsum_{s=0}^{m}\left(  \dsum_{\alpha=0}^{n-1}\dsum_{i=0}^{n-1-\alpha}\frac{\sigma_i}{\sigma_{n-1}}\binom{n-1-i}{\alpha}(sp)^{\alpha} r^{n-1-i-\alpha}\right)\eqskip
& = & \dsum_{\alpha=0}^{n-1}\left(\dsum_{s=0}^{m}(sp)^{\alpha}\left[\sum_{i=0}^{n-1-\alpha}
 \frac{\sigma_i}{\sigma_{n-1}}\binom{n-1-i}{\alpha} r^{n-1-i-\alpha}\right]\right).
\end{array}
\]
Applying Faulhaber's formula we get
 \begin{eqnarray}
k_+(K) &=&  \sum_{s=0}^m(sp)^0f_0
 +\sum_{s=1}^m s f_1+
\sum_{\alpha=2}^{n-1}\left(\frac{m^{\alpha+1}}{\alpha+1}+\frac{m^{\alpha}}{2}+\frac{1}{\alpha+1}\sum_{l=2}^{\alpha}\binom{\alpha+1}{l}\B_lm^{\alpha+1-l}\right)f_{\alpha}\nonumber\eqskip
&=& (m+1)f_0+\frac{m(m+1)}{2}f_1+ \sum_{\alpha=2}^{n-1}\left(\frac{m^{\alpha+1}}{\alpha+1}+\frac{m^{\alpha}}{2}+\frac{1}{\alpha+1}
\sum_{l=2}^{\alpha}\binom{\alpha+1}{l}\B_lm^{\alpha+1-l}\right)f_{\alpha}\nonumber\eqskip
&=&
\left(\fr{m^n}{n}+\frac{m^{n-1}}{2}+\frac{1}{n}\sum_{l=2}^{n-1}\binom{n}{l}\B_lm^{n-l}\right)f_{n-1}\nonumber\eqskip
&&\hspace*{5mm}+\left(\frac{m^{n-1}}{n-1}+\frac{m^{n-2}}{2}+\frac{1}{n-1}\sum_{l=2}^{n-2}\binom{n-1}{l}\B_lm^{n-1-l}\right)f_{n-2}\nonumber\eqskip
&&\hspace*{10mm}+\ldots    \label{kmaisn}\eqskip
&&\hspace*{15mm}+\left(\frac{m^3}{3}+\frac{m^2}{2}+\frac{1}{3} \binom{3}{2}\B_2m\right)f_2+ \frac{m(m+1)}{2}f_1+(m+1)f_0\nonumber
\end{eqnarray}  
where $B_l$ are the Bernoulli numbers and    $f_{\alpha}$ are constants that  depend on $n,p,r$  given by
\[\begin{array}{ll}
f_{\alpha}=p^{\alpha}\dsum_{i=0}^{n-1-\alpha}
 \fr{\sigma_i}{\sigma_{n-1}}\binom{n-1-i}{\alpha} r^{n-1-i-\alpha}, & \alpha=0,1,2,  \ldots, n-1.\eqskip
 f_{n-1}=\fr{p^{n-1}}{(n-1)!}.
 \end{array}\]
 If $n=3$ in~\eqref{kmaisn}, then  $f_{n-1}=f_2$, $f_{n-2}=f_1$, and the corresponding coefficients coincide since the summation on $2\leq l\leq n-2=1$ is empty.
 Therefore, for any  $n\geq 3$ the following holds
\begin{eqnarray}\label{kmais2}
k_+(K) =  m^{n}\left(\frac{f_{n-1}}{n}\right) +m^{n-1}\left(\frac{f_{n-1}}{2}+\frac{f_{n-2}}{n-1}\right) +\bo(m^{n-2}),\\
\label{tilde2term}
m_{n,p}(K) = m^{n-1}\left(\frac{\tilde{f}_{n-2}}{n-1}\right) +m^{n-2}\left(\frac{\tilde{f}_{n-2}}{2}+\frac{\tilde{f}_{n-1}}{n-2}\right) +{\bo(m^{n-3})},
\end{eqnarray}
where $\bo(m^{k})$ is a polynomial of degree ${k}$ on the variable $m$, and  we use the tilde $\tilde{f}_{\alpha}$  with respect to
$m_{n,p}(K)$, and 
  $\tilde{\sigma}_i$ the $i-$elementary symmetric function on $n-2$ variables applied to $(1,2, \ldots, n-2)$. That is,  $\tilde{f}_{\beta}$ are given by
\[\begin{array}{l}
\tilde{f}_{\beta}=p^{\beta}\dsum_{i=0}^{n-2-\beta}
 \fr{\tilde{\sigma}_i}{\tilde{\sigma}_{n-2}}\binom{n-2-i}{\tilde{\beta}} r^{n-2-i-\beta}, \quad \beta=0,1, 2, \ldots, n-2.
 \end{array}\]

 Next we compute $k= k_-(K)+ j= k_+(K)-m_{n,p}(K)+1+j$, $j=0, \ldots, m_{n,p}(K)-1$.
 Observe that $m_{n,p}(K)/m^{n-1}$ is bounded. Now we have,
\begin{eqnarray}\nonumber
k^{2/n}&=&\big{(}k_+-m_{n,p}(K)+1+j \big{)}^{2/n}\eqskip
&=&\left[ m^n\left(\frac{f_{n-1}}{n}\right)+ m^{n-1}\left(\frac{f_{n-1}}{2}
+\frac{(f_{n-2}-\tilde{f}_{n-2})}{n-1} +\frac{1+j}{m^{n-1}}\right)+ \bo(m^{n-2})\right]^{2/n}\nonumber
 \eqskip \label{upper}
 &=& \left(m^n\,\frac{f_{n-1}}{n}\right)^{2/n}\left( 1 +\frac{1}{m}\left( n/2
+\frac{n(f_{n-2}-\tilde{f}_{n-2})}{(n-1)f_{n-1}} +\frac{(1+j)n}{m^{n-1}f_{n-1}}\right)+\bo\left(\frac{1}{m^2}\right)\right)^{2/n}.
 \end{eqnarray}
  Note that
 \begin{equation}\label{cut}
 (n!p)^{2/n}\left(m^n\,\frac{f_{n-1}}{n}\right)^{2/n}=m^2p^2,
 \end{equation}
 and from~\eqref{tilde2term} we obtain, 
 \begin{equation}\label{almost}
 \frac{1}{m_{n,p}(K)}-\frac{(n-1)!}{m^{n-1}p^{n-2}}=\frac{\bo(m^{n-2})}{\bo(m^{2n-2})}=\bo\left(\frac{1}{m^{n}}\right).
 \end{equation}
Using~\eqref{upper},~\eqref{cut}) and~\eqref{almost}, and the two-term binomial expansion, we have,
\[
\begin{array}{lll}
\lambda_k-C_{W,n,p}k^{2/n} & = & Q_2(m) - m^2p^2\left[ 1 +\fr{1}{m}\left(\fr{n}{2}
+\fr{n(f_{n-2}-\tilde{f}_{n-2})}{(n-1)f_{n-1}} +\fr{(1+j)n}{m^{n-1}f_{n-1}}\right)\right.\eqskip
& &\hspace*{5mm}\left.+\bo\left(\fr{1}{m^2}\right)\right]^{2/n}\eqskip
&=& Q_2(m) - m^2p^2\left[] 1 +\fr{1}{m}\left(1+\fr{2(f_{n-2}-\tilde{f}_{n-2})}{(n-1)f_{n-1}} +\fr{2(1+j)}{m^{n-1}f_{n-1}}\right)\right.
\eqskip
& &\hspace*{5mm} \left.+\fr{2}{n}\bo(\frac{1}{m^2})+\so
\left(\fr{1}{m}\right)
\right]\eqskip
&=& mp(2p+2r+n-1)+(p+r)(p+r+n-1)\eqskip
& &\hspace*{5mm}-{mp^2}\left(1 +\fr{2(f_{n-2}-\tilde{f}_{n-2})}{(n-1)f_{n-1}} + \fr{2(1+j)}{m^{n-1}f_{n-1}}\right)+\so(m).
\end{array}
\]
Now,
 $f_{n-2}=\frac{p^{n-2}}{(n-2)!}\left(r+\frac{n}{2}\right)~$, $\frac{f_{n-2}}{f_{n-1}}=\frac{(n-1)(r+\frac{r}{2})}{p}~$, $\tilde{f}_{n-2}=\frac{p^{n-2}}{(n-2)!}$,
 and taking $k\to +\infty~$ we have
\begin{gather*}
\begin{array}{lll}
\lefteqn{\lim_{m\to +\infty}\frac{\lambda_k-C_{W,n,p}k^{2/n}}{k^{1/n}
}}\nonumber\eqskip
&=& {\ds\lim_{m\to +\infty}}\frac{ mp(2p+2r+n-1)+(p+r)(p+r+n-1)-{mp^2}\left(1
+\frac{2(f_{n-2}-\tilde{f}_{n-2})}{(n-1)f_{n-1}} +
\frac{2(1+j)}{m^{n-1}f_{n-1}}\right)+\so(m)}{
\left(m^n\,\frac{f_{n-1}}{n}\right)^{1/n}\left( 1 +\frac{1}{m}\left(\frac{1}{2}
+\frac{(f_{n-2}-\tilde{f}_{n-2})}{(n-1)f_{n-1}} +\frac{(1+j)}{m^{n-1}f_{n-1}}\right)+\bo\left(\frac{1}{m^2}\right)\right)
}\nonumber\eqskip
&=&\!\! {\ds\lim_{m\to +\infty}}
 \frac{mp(2p+2r+n-1)-{mp^2}
\left(1+2\frac{f_{n-2}}{(n-1)f_{n-1}} +
\frac{2(1+j)}{m^{n-1}f_{n-1}}
-2\frac{\tilde{f}_{n-2}}{(n-1)f_{n-1}}\right)
}
{m\left(\fr{f_{n-1}}{n}\right)^{1/n}}\eqskip
&=& {\ds\lim_{m\to +\infty}} \fr{mp(p-1)-\fr{2(1+j)(n-1)!}{m^{n-2}p^{n-3}}+2mp}{m\left(\fr{p^{n-1}}{n!}\right)^{1/n}}\eqskip
&=& {\ds\lim_{m\to +\infty}} \left[
(pn!)^{1/n}(p+1)-(pn!)^{1/n}\left(\fr{2(1+j)(n-1)!}{m^{n-1}p^{n-2}}\right)\right]\eqskip
&=&  {\ds\lim_{m\to +\infty}}
\sqrt{C_{W n,p}}\left( p+1-\fr{2(1+j)(n-1)!}{m^{n-1}p^{n-2}}\right)\eqskip
&=& {\ds\lim_{m\to +\infty}}
\sqrt{C_{W n,p}}\left( p+1-\fr{2(1+j)}{m_{n,p}(K)}+(1+ j)\times \bo\left(\fr{1}{m^n}\right)\right)  \eqskip
&=& {\ds\lim_{m \to +\infty}}
\sqrt{C_{W n,p}}\left( p+1-\fr{2j}{m_{n,p}(K)-1}\right)
\end{array}
\end{gather*}
where the second but last identity comes from~\eqref{almost},~\eqref{tilde2term}, and the fact that
 $j\leq m_{n,p}(K)-1$.
 Note that this limit may not be defined for different sublimits of $j/(m_{n,p}(K)-1)$, but if we take the following quotient we get the existence of the generalised two-term asymptotics,
\begin{eqnarray*}
\lim_{k\to +\infty}\frac{\lambda_k-C_{W,n,p}k^{2/n}- c(p,k)k^{1/n}}{k^{1/n}} &=&\lim_{k\to +\infty}\left(\frac{\lambda_k-C_{W,n,p}k^{2/n}}{k^{1/n}}- c(p,k)\right)~
 =~0,
\end{eqnarray*}
and the Proposition is proved for $n\geq 3$, extending the  same formula for the case $n=2$. 
\end{proof}

\subsection{Stronger P\'{o}lya inequalities $\Phi_n\geq 0$ and $\Psi_n\geq 0$.}
In order to obtain an estimate from above for  $\pin(n)$ we  introduce {the functions}
 \begin{gather}\label{PhiPsi}
 \begin{array}{lll}
\Phi_n(p,R) &:= & \Big{(}(R+p)(R+p+n-1)\Big{)}^{n/2}\!\!\!-
 \Big{(}R(R+n-1)\Big{)}^{n/2}\!\!\!- (np)(R+1)^{\overline{n-1}},
 \eqskip
\Psi_n(p,R) & := & \Big{(}(R+p)(R+p+n-1)\Big{)}^{n}\!-
 \Big{(}R(R+n-1)\Big{)}^{n}\!- \left((np)(R+1)^{\overline{n-1}}\right)^2\eqskip
 & & \hspace*{5mm}  -2(R(R+n-1))^{n/2}(np)(R+1)^{\overline{n-1}}.
 \end{array}
 \end{gather}
{ We will now consider the inequalities $\Phi_n(p,R)\geq 0$, or, equivalently $\Psi_n(p,R)\geq 0$. Although there are slightly stronger than
$M_{n,p,r}(m)\geq 0$, we will find them easier to handle.}
 \begin{definition} Given $n\geq 2$ denote by $\pnphi$ any integer larger than or equal to $2$ for which $\Phi_n(p,R)\geq 0$ for all
 non-negative integers $R$, and $p\geq \pnphi$. We denote the smallest possible value of $\pnphi$ by $\piphi(n)$.
 \end{definition} 
 
 \begin{lemma}  \label{REDUCE}
Given integers $n\geq 2$,  $r=0,1,\ldots, p-1$, and $m\geq 0$,  consider $K=(m+1)p+r$. Assume 
  $\Phi_n(p,R)\geq 0$ holds for any integer $R$ (real, respectively) such that $r\leq R\leq K-p$. Then {the eigenvalue $K(K+n-1)$} satisfies a refined P\'{o}lya inequality, 
\begin{equation}\label{strongpPolya}
\Big{(}(K(K+n-1)\Big{)}^{n/2}-
 \Big{(}r(r+n-1)\Big{)}^{n/2}\geq  (np)\left(\sum_{s=0}^m(sp+r+1)^{\overline{n-1}}\right).
 \end{equation}
 In particular, $K(K+n-1)$ satisfies P\'{o}lya inequality (\ref{polyanp}). Consequently, if $\Phi_n(p,R)\geq 0$ holds for all {non-negative integer (real, respectively) $R$},  $\lune{n}{\pi/p}$ satisfies P\'{o}lya's conjecture
 $M_{n,p,r}(m)\geq 0$ for all $r=0,1,\ldots, p-1$,
 $m\geq 0$ integer (real, respectively).
  In particular  $\piphi(n)\geq \pin(n)$.
 \end{lemma}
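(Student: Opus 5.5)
The plan is a telescoping argument over the arithmetic progression $R=r, r+p, \ldots, r+mp=K-p$. Write $g(R):=\big(R(R+n-1)\big)^{n/2}$. Then $\Phi_n(p,R)\geq 0$ reads
\[
g(R+p)-g(R)\geq (np)(R+1)^{\overline{n-1}} .
\]
Apply this with $R=sp+r$ for $s=0,1,\ldots,m$. These values satisfy $r\leq sp+r\leq mp+r=K-p$, so they lie in the admissible range, which is all the hypothesis requires. Summing over $s$, the left-hand side telescopes to
\[
g((m+1)p+r)-g(r)=\big(K(K+n-1)\big)^{n/2}-\big(r(r+n-1)\big)^{n/2},
\]
while the right-hand side becomes $(np)\sum_{s=0}^m(sp+r+1)^{\overline{n-1}}$. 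This is exactly \eqref{strongpPolya}. For real $m$ or real $R$ the same telescoping works verbatim, since only the values $R=sp+r$ are used.

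Next, since $g(r)\geq 0$, we obtain $\big(K(K+n-1)\big)^{n/2}\geq (np)\sum_{s=0}^m(sp+r+1)^{\overline{n-1}}$. Both sides are nonnegative, so squaring gives $A_n(m,p,r)\geq B_n(m,p,r)$, that is, $M_{n,p,r}(m)\geq 0$. By Lemma~\ref{3.4} this is P\'olya's inequality \eqref{polyanp} for $K(K+n-1)$.

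Finally, suppose $\Phi_n(p,R)\geq0$ for all nonnegative $R$. Then for every $r\in\{0,\ldots,p-1\}$ and every $m\geq0$ the hypothesis holds on the required range, so every chain satisfies \eqref{PolyaExtreme}, and hence $\lune{n}{\pi/p}$ satisfies the conjecture. If this holds for all $p\geq \piphi(n)$, then $\piphi(n)$ is an admissible $\pn$ in the sense of Definition~\ref{DEF5.2}, so $\piphi(n)\geq\pin(n)$ by minimality of $\pin(n)$.

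There is no real obstacle here. The only points that need care are the index bookkeeping, namely that the endpoint $R=K-p$ is included, and the squaring step, which relies on the nonnegativity of both sides.
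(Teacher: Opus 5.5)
Your proposal is correct and matches the paper's proof: you apply $\Phi_n(p,sp+r)\geq 0$ for $s=0,\ldots,m$, telescope, then discard $(r(r+n-1))^{n/2}\geq 0$ and square to obtain $M_{n,p,r}(m)\geq 0$. One small caution: your aside that the telescoping works verbatim for real $m$ is not right, since the sum over $s$ requires integer $m$; the paper's proof does not treat that case either.
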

\begin{proof}
By assumption, for each $s=0,1, \ldots m$, $\Phi_n(p,R)\geq 0$,  where $R=sp+r$, yielding
\begin{eqnarray*}
\lefteqn{\sum_{s=0}^{m}\left\{(np)(sp+r+1)^{\overline{n-1}}\right\}}\eqskip
&\leq&
\sum_{s=0}^m\left\{\left[ \big{(}(s+1)p+r\big{)}\big{(}
(s+1)p+r+n-1\big{)}\right]^{n/2}-\left[\big{(}sp+r\big{)}\big{(}sp+r+n-1\big{)}\right]^{n/2}\right\}\eqskip
&=&\left[\big{(}(m+1)p+r\big{)}\big{(}(m+1)p+r+n-1\big{)}\right]^{n/2}-\left[r(r+n-1)\right]^{n/2}
\end{eqnarray*}
This implies 
(\ref{strongpPolya}) holds. In particular $\bar{\lambda}_K$  satisfies P\'{o}lya inequality (\ref{polyanp}).
\end{proof}

 \begin{lemma}\label{Dlemma1}
 Given $n\geq 2$,  $R\geq 0$ and  $p\geq 1$,  the derivative 
 \[\frac{\partial \Phi_n}{\partial p}(p,R) =
 n\left(p+R+\frac{n-1}{2}\right)\left((R+p)(R+p+n-1)\right)^{\frac{n}{2}-1}-n(R+1)^{\overline{n-1}}\]
is non-negative for all $R\geq 0$ if $p\geq
\hat{p}(n)=\left\lceil\frac{(n-1)}{2}z(n)\right\rceil$,
where $z(n)=\sqrt{1+\rho(n)}-1$ with $\rho(n)=\frac{2n(n+1)}{3(n-1)^2}$ is decreasing in $n$,  and  range given by
$(\sqrt{5}-1)=z(2)\geq z(n)> (\sqrt{5/3}-1)=\lim_{n\to \infty}z(n)$.  In particular if 
 $p\geq (n-1)$  then for each $R\geq 0$, $\Phi_n(p,R)$ is increasing in $p$.
 \end{lemma}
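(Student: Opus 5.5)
The derivative formula is a direct computation. In $\Phi_n(p,R)$ only the first term depends on $p$ once we use that $\partial_p\big((R+p)(R+p+n-1)\big)=2\left(p+R+\frac{n-1}{2}\right)$, and the last term is linear in $p$ with coefficient $n(R+1)^{\overline{n-1}}$. Differentiating $\big((R+p)(R+p+n-1)\big)^{n/2}$ by the chain rule then gives the stated expression at once. The real content is the sign claim. My plan is to symmetrise both sides about their centres and compare them factor by factor.

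\emph{Centring.} Put $a=\frac{n-1}{2}$ and $x=R+p+a$, so that $(R+p)(R+p+n-1)=x^2-a^2$. Nonnegativity of $\partial_p\Phi_n$ is then equivalent to
\[
x\,(x^2-a^2)^{\frac{n-2}{2}}\;\geq\;(R+1)^{\overline{n-1}} .
\]
On the right, write $y=R+\frac{n}{2}$, so that $(R+1)^{\overline{n-1}}=\prod_{i=1}^{n-1}(y+i-\tfrac n2)$. The offsets $d_i=i-\frac n2$ are symmetric about $0$. Pairing $d$ with $-d$ gives factors $y^2-d^2$, together with a single factor $y$ when $n-1$ is odd. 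On the left, $x=y+p-\frac12$.

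\emph{First, crude comparison.} Pairing each factor $y^2-d_i^2\le y^2$ with one factor $x^2-a^2$ on the left shows that it suffices to have $x\ge y$ (true since $p\ge1$) and $x^2-a^2\ge y^2$. The second condition is increasing in $R$, so it reduces to $R=0$, where it reads $p(p+n-1)\ge n^2/4$. This already yields the final assertion of the lemma. Indeed, for $p\ge n-1$ we have $p(p+n-1)\ge 2(n-1)^2\ge n^2/4$ for all $n\ge2$, so $\Phi_n(\cdot,R)$ is increasing in $p$ for every $R\ge0$.

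\emph{Sharp threshold.} The threshold $\hat p(n)$ corresponds to the weaker condition $p(p+n-1)\ge \frac{n(n+1)}{6}$. To see this, substitute $p=\frac{n-1}{2}z$; the condition becomes $z^2+2z\ge\rho(n)$, that is $z\ge\sqrt{1+\rho(n)}-1=z(n)$. Reaching it requires keeping the offsets $d_i$ instead of discarding them. The plan is to take logarithms and use $\log(y^2-d^2)\le 2\log y-d^2/y^2$. Summing, the right-hand side loses roughly $\sum_i d_i^2/(2y^2)$, and $\sum_i d_i^2=\frac{(n-2)(n-1)n}{12}$. The left-hand side loses $\frac{n-2}{2}\log(1-a^2/x^2)$, which I would bound from below using $x\ge y$. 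Comparing the two second-order terms, together with the gain $\log(x/y)$ coming from $x-y=p-\frac12$, should produce exactly a quadratic condition of the form $p(p+n-1)\ge \frac{n(n+1)}{6}$ after the worst case $R=0$ is identified. I would justify that $R=0$ is the worst case by showing that the difference of the logarithmic derivatives in $R$ of the two sides is nonnegative. Concretely, this means checking that $\frac1x+\frac{(n-2)x}{x^2-a^2}\ge\sum_{i=1}^{n-1}\frac{1}{R+i}$, which follows from the same pairing, since $\frac1{y-d}+\frac1{y+d}=\frac{2y}{y^2-d^2}$ is decreasing in $y$. The properties of $z(n)$ are elementary: $\rho(n)=\frac{2n(n+1)}{3(n-1)^2}$ is decreasing, with $\rho(2)=4$ and limit $\frac23$, which gives the stated range.

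The main obstacle is the last step. The exact constant $\frac{n(n+1)}{6}$ must come out of a second-order expansion that is a genuine inequality, not just an asymptotic one. If the logarithmic route leaves a small loss, I would fall back on proving only the crude sufficient condition uniformly, and verify the finitely many small $n$ against $\hat p(n)$ directly. This does not affect the conclusion for $p\ge n-1$, which is what is used afterwards.
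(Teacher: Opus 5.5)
Your derivative formula, the facts about $z(n)$, and the crude pairing argument are correct. The crude argument does prove the last assertion, for $p\ge n-1$.

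However, the central claim, $\partial_p\Phi_n\ge 0$ for every $p\ge\hat p(n)$, is not proved. Your sufficient condition $p(p+n-1)\ge n^2/4$ corresponds asymptotically to $p\gtrsim\frac{\sqrt2-1}{2}n\approx 0.207n$. By contrast, $\hat p(n)\approx\frac{\sqrt{5/3}-1}{2}n\approx 0.146n$. The two thresholds differ for infinitely many $n$. Already at $n=12$ we have $\hat p(12)=2$, but $2\cdot 13=26<36$. So checking finitely many small $n$ cannot close the gap. Nor is $p\ge n-1$ the only thing used later: $\hat p(n)$ is used in Appendix~\ref{Sec 8} to cut down the search for $p^*_{\Phi}(n)$, $n\le 50$.

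The step you propose for reducing to $R=0$ is also false. The inequality $\frac1x+\frac{(n-2)x}{x^2-a^2}\ge\sum_{i=1}^{n-1}\frac1{R+i}$ fails, e.g., for $n=3$, $p=10$, $R=0$ (about $0.18$ against $1.5$). In fact, for the full inequality $R=0$ is not the worst case. For fixed $p$ both sides behave like $R^{n-1}$ as $R\to\infty$, so their ratio tends to $1$, while at $R=0$ the ratio can be large.

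The missing idea is already inside your pairing. Rather than discarding the offsets via $y^2-d_i^2\le y^2$, or linearising the logarithm (which is weaker), average them with AM--GM. Since $\prod_{i=1}^{n-1}(R+i)=\prod_{i=1}^{n-1}\sqrt{(R+i)(R+n-i)}$,
\[
(R+1)^{\overline{n-1}}\le\Big(\frac{1}{n-1}\sum_{i=1}^{n-1}(R+i)(R+n-i)\Big)^{\frac{n-1}{2}}=\Big(R^2+nR+\frac{n(n+1)}{6}\Big)^{\frac{n-1}{2}}.
\]
This is exactly the bound $\big[(R+1)(R+n-1)+\frac{(n-2)(n-3)}{6}\big]^{(n-1)/2}$ from Lemma~A.1 of~\cite{fms} that the paper invokes.

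Combined with $x\ge\sqrt{x^2-a^2}$, it then suffices that $(R+p)(R+p+n-1)\ge R^2+nR+\frac{n(n+1)}{6}$. This is equivalent to $(2p-1)R+p(p+n-1)\ge\frac{n(n+1)}{6}$, which is increasing in $R$. So at this stage $R=0$ really is the worst case, and it gives exactly $p(p+n-1)\ge\frac{n(n+1)}{6}$, i.e. $z\ge z(n)$. This is the paper's proof.
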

\begin{proof}
 From \cite[Lemma A.1]{fms} we have  {for all $n\geq 2$ and $R\geq 0$}
 \begin{gather}\label{A.1}
 (R+1)^{\overline{n-1}}\leq \left((R+1)(R+n-1)+\frac{(n-2)(n-3)}{6}\right)^{\frac{(n-1)}{2}}.
 \end{gather}
It is thus enough to show
\begin{gather*}
\sqrt{(R+p)(R+p+n-1)}\left((R+1)(R+n-1)+\fr{(n-2)(n-3)}{6}\right)^{\frac{(n-1)}{2}}\\
\leq \left(p+R+\frac{n-1}{2}\right)
\left((R+p)(R+p+n-1)\right)^{\frac{n-1}{2}}
\end{gather*} 
holds, in order to prove $\frac{\partial \Phi_n}{\partial p}\geq 0$.
This is true if  the following two inequalities hold for any $R\geq 0$ and $p\geq \hat{p}(n)$
\begin{eqnarray*}\begin{array}{l}
\sqrt{(R+p)(R+p+n-1)}\leq p+R+\frac{n-1}{2},\eqskip
  (R+1)(R+n-1)+\fr{(n-2)(n-3)}{6} \leq (R+p)(R+p+n-1).
  \end{array}
  \end{eqnarray*}
The first inequality is always true for $R, p\geq 0$. The second one holds only if
for all $R\geq 0$
\[(2p-1)R +p(p+n-1)\geq \left((n-1)+\frac{(n-2)(n-3)}{6}\right). \]
This is the case if we demand
$p^2+(n-1)p\geq \left((n-1)+{(n-2)(n-3)}/{6}\right)$, 
that is,
\[p\geq \frac{(n-1)}{2}\left(\sqrt{1+ \rho(n) }
-1\right)=\frac{(n-1)}{2}\times z(n).\]
Since
$(n-1)> \hat{p}(n)\geq \frac{(n-1)}{2}\sup_n z(n)= \frac{\sqrt{5}-1}{2}(n-1)$, the last statement follows.
 \end{proof}
 \begin{remark} \label{8.1} From  $z(n)$ given in Lemma~\ref{Dlemma1} we obtain a list of $\hat{p}(n)$ for $n\leq 53$
 shown in the following table.
{\footnotesize
\begin{table}[h!]
\centering
\begin{tabular}{|c|c||c|c||c|c||c|c|}
\hline
$n$ & $\hat{p}(n)$ & $n$ & $\hat{p}(n)$ & $n$ & $\hat{p}(n)$ & $n$ & $\hat{p}(n)$\\
\hline
\rule{0pt}{10pt}
$2-5$ &$1$ &$13-18$ &$3$ &$26-32$ &$5$ &$40-47$ &$7$ \\
\hline
\rule{0pt}{10pt}
$6-12$ &$2$ &$19-25$ &$4$ &$33-39$ &$6$ &$44-53$ &$8$\\
 \hline
 \end{tabular}\vspace*{3mm}
 \caption{Values of $\hat{p}(n)$ for $n=1,\dots,53$.}
 \end{table}
 }
\end{remark}

\begin{proposition}\label{start} For all $n\geq 2$ we have  $\Phi_n(n-1,R)\geq 0$ for all $R\geq 0$. In particular $\Phi_n(p,R)\geq 0$ for all $p\geq n-1$ and all $R\geq 0$.
\end{proposition}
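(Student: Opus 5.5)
The plan is to fix $p=n-1$ and compare the difference of the two powers in $\Phi_n$ with the product term through an integral (mean value) representation. The second assertion then follows at once. By Lemma~\ref{Dlemma1}, $\partial\Phi_n/\partial p\geq 0$ for all $R\geq 0$ once $p\geq n-1$, so $\Phi_n(p,R)\geq\Phi_n(n-1,R)\geq 0$ for $p\geq n-1$.

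Set $a=R(R+n-1)$ and $b=(R+n-1)(R+2n-2)$. Then
\[
b-a=2(n-1)(R+n-1),\qquad \frac{a+b}{2}=(R+n-1)^2 .
\]
Write $f(t)=t^{n/2}$, so that
\[
b^{n/2}-a^{n/2}=\int_a^b f'(t)\,dt=\frac n2\int_a^b t^{\frac n2-1}\,dt .
\]
For $n=2$ and for $n\geq 4$ the function $t^{n/2-1}$ is convex (it is linear when $n=2$). The Hermite--Hadamard inequality then gives
\[
b^{n/2}-a^{n/2}\geq \frac n2 (b-a)\Big(\frac{a+b}{2}\Big)^{\frac n2-1}= n(n-1)(R+n-1)^{n-1}.
\]
Hence $\Phi_n(n-1,R)\geq n(n-1)\big[(R+n-1)^{n-1}-(R+1)^{\overline{n-1}}\big]$. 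The bracket is non-negative because each factor $R+i$, $1\leq i\leq n-1$, is at most $R+n-1$. This settles $n=2$ and all $n\geq4$.

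The case $n=3$ is the one obstacle, since $t^{1/2}$ is concave and the midpoint bound goes the wrong way. Here I would divide through by $(R+2)^{3/2}$, so the claim becomes
\[
(R+4)^{3/2}-R^{3/2}\geq \frac{6(R+1)}{\sqrt{R+2}} .
\]
For concave $\sqrt t$ the other half of Hermite--Hadamard applies: the average of $\sqrt t$ over $[R,R+4]$ is at least $\tfrac12(\sqrt R+\sqrt{R+4})$. So the left-hand side is at least $3(\sqrt R+\sqrt{R+4})$, and it suffices to show
\[
\sqrt{R(R+2)}+\sqrt{(R+4)(R+2)}\geq 2(R+1).
\]
This follows from $\sqrt{R(R+2)}\geq R$ and $\sqrt{(R+4)(R+2)}\geq R+2$. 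Together with the monotonicity from Lemma~\ref{Dlemma1}, this completes the proof of Proposition~\ref{start}.
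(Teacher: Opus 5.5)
Your proof is correct, and it reaches the conclusion by a shorter route than the paper.

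\textbf{Same first step.} Lemma~\ref{lmx1} factors out $(R+n-1)^{n/2}$ and applies the Hermite--Hadamard bound (Lemma~\ref{auxconvlemma}) to $x^{n/2}$ on $[R,R+2n-2]$ about the midpoint $M=R+n-1$. After the substitution $t=Mx$, this is exactly your estimate $b^{n/2}-a^{n/2}\ge n(n-1)(R+n-1)^{n-1}$. For $n=3$, the paper's Jensen argument in the variable $y=\sqrt t$ gives, after multiplying your bound by $\sqrt{R+2}$, the same quantity $3\sqrt{M}\,(\sqrt{M-2}+\sqrt{M+2})$ as your trapezoid estimate.

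\textbf{Where you differ.} The difference lies in how the rising factorial is handled. The paper replaces $(R+1)^{\overline{n-1}}$ by the sharper bound~\eqref{A.1} from \cite{fms}. This then requires an extra check, $6M>d-2$ with $d=n-1$. It also leads to a separate treatment of $R=0$, via $(n-1)^{n-1}2^{n/2}\ge n(n-1)!$, which is proved only for $n\ge 4$. It further calls on the computer-assisted Lemma~\ref{Dlemma2} for $2\le n\le 8$, and on an unspecified ``tedious but standard'' calculation for $n=3$.

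You observe that the crude bound $(R+1)^{\overline{n-1}}\le (R+n-1)^{n-1}$ already suffices. This makes the argument uniform in $n\ge 2$ and $R\ge 0$ and self-contained. It also yields the explicit lower bound $\Phi_n(n-1,R)\ge n(n-1)\big[(R+n-1)^{n-1}-(R+1)^{\overline{n-1}}\big]$. Your two-line finish for $n=3$, using $\sqrt{R(R+2)}\ge R$ and $\sqrt{(R+2)(R+4)}\ge R+2$, replaces the paper's unspecified calculation. The sharper bound~\eqref{A.1} is genuinely needed only in Lemma~\ref{Dlemma1}, which both you and the paper invoke for the ``in particular'' statement.

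\textbf{Minor wording.} For $n=2$ the integrand $t^{n/2-1}$ is constant rather than linear; it is linear for $n=4$. This does not affect convexity.
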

\begin{proof}
We have
\begin{eqnarray*}
\Phi_n(n-1,R)&=&((R+n-1)(R+2n-2))^{n/2}-(R(R+n-1)))^{n/2}-n(n-1)(R+1)^{\overline{n-1}}.
\end{eqnarray*}
The proof for $R>0$ is divided into two cases, both of which are handled in the appendices. In Lemma~\ref{Dlemma2} in Appendix~\ref{Sec 8}, 
we show directly that $\Phi_n(p,R)\geq 0$ for all $R\geq 0$, $p\in \N_2$ and $n=2, \ldots, 8$. 

For the remaining cases with $R=0$, we first note that $\Phi_n(n-1,0)\geq 0$  is equivalent to $(n-1)^{(n-1)}2^{n/2}\geq n(n-1)!$.
Since $(n-1)^{n-1}\geq (n-1)!$, this holds if $2^{n/2}\geq n$, that is, $n\geq 4$. This proves the result for vanishing $R$.

For positive $R$, from~\eqref{A.1} we see that $\Phi_n(n-1,R)\geq 0$ holds
 if the stronger inequality
 \begin{equation}\label{Ineq}
 \sqrt{n-1+R}\left( (R+2n-2)^{\frac{n}{2}}-R^{\frac{n}{2}}\right)
 \geq n(n-1)\left(1+R+\frac{(n-3)(n-2)}{6(R+n-1)}	\right)^{\frac{(n-1)}{2}}.
 \end{equation}
 holds. This is proven in Lemma~\ref{lmx1}, and the proposition follows.
 \end{proof}
\subsection{Proof of Theorem~\ref{thmxquantitative}}

 Theorem~\ref{thmxquantitative} follows immediately from the next two theorems.
\begin{thm} \label{thmC}
\begin{enumerate}[{\rm 1)}]
\item If  $n=2,3, \ldots, 8$, for any $p\geq 2$ the Dirichlet eigenvalues of $\luned{n}{p}$
satisfy  $\Phi_n(p,R)\geq 0$ for all $R\geq 0$. In particular  
P\'{o}lya's  conjecture  is satisfied and   $\piphi(n)=\pin(n)=2$.\\[1mm]
\item  If  $n=9$ {and $p=2$} then all Dirichlet  eigenvalues satisfy P\'{o}lya inequality except for the first. On the other hand, for any  $p\geq 3$,
$\Phi_9(p,R)\geq 0$ for all $R\geq 0$. In particular   
$\luned{9}{p}$ satisfies P\'{o}lya conjecture,
and $\piphi(9)=\pin(9)=3$. 
\end{enumerate}
\end{thm}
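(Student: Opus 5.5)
The plan is to reduce both parts to the sufficient condition $\Phi_n(p,R)\ge 0$ via Lemma~\ref{REDUCE}, combined with the monotonicity in $p$ from Lemma~\ref{Dlemma1}. Lemma~\ref{REDUCE} gives: if $\Phi_n(p,R)\ge 0$ for all integers $R\ge 0$, then every Dirichlet eigenvalue of $\luned{n}{p}$ satisfies \eqref{polyanp}. Hence it suffices to establish $\Phi_n(p,R)\ge0$ for all $R\ge0$ at a single base value $p_0$ ($p_0=2$ for $n\le 8$, $p_0=3$ for $n=9$). We then propagate to all $p\ge p_0$ using $\partial_p\Phi_n\ge0$, which Lemma~\ref{Dlemma1} guarantees once $p\ge\hat p(n)$. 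From the table in Remark~\ref{8.1}, $\hat p(n)=1$ for $n\le5$ and $\hat p(n)=2$ for $6\le n\le 12$, so $p_0\ge \hat p(n)$ in all cases considered. For the lower bounds on the thresholds, Proposition~\ref{First} shows that for $3\le n\le 8$ the first eigenvalue fails for $p=1$ (the hemisphere), since $(n-1)^{n-1}\cdot n^n<(n!)^2$ there. For $n=9$, $p=2$, direct evaluation of \eqref{BigBang} gives $2^7\cdot 10^9<(9!)^2$, so the first eigenvalue fails. Consequently $\pin(n)=\piphi(n)=2$ for $n\le8$ and $\pin(9)=\piphi(9)=3$, using $\piphi\ge\pin$ from Lemma~\ref{REDUCE}. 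For $n=2$ the case $p\ge1$ even works directly, but the statement only requires $p\ge2$.

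The core step is proving $\Phi_n(p_0,R)\ge0$ for all real $R\ge0$ at the finitely many pairs $(n,p_0)$. I would write $X=R(R+n-1)$ and $Y=(R+p_0)(R+p_0+n-1)=X+p_0(2R+p_0+n-1)$. For even $n$ the expression $Y^{n/2}-X^{n/2}-np_0(R+1)^{\overline{n-1}}$ is a polynomial in $R$ of degree $n-1$, and I would expand it and check that all coefficients are nonnegative. If some coefficient is negative, I would show positivity by grouping terms, or by checking positivity on a small integer range and dominating the rest by the leading term. For odd $n$, I would instead use the equivalent $\Psi_n\ge0$ form of \eqref{PhiPsi}, which removes one square root. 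Alternatively, I would bound $Y^{n/2}-X^{n/2}\ge \frac n2 p_0(2R+p_0+n-1)X^{(n-2)/2}$ plus the next convexity term, and compare with the upper bound \eqref{A.1} for the rising factorial. Asymptotically in $R$ the leading terms are $np_0R^{n-1}$ on both sides. The second-order comparison is $np_0R^{n-2}\big(\tfrac{n}{2}(p_0+n-1)/1\ \text{vs}\ \tfrac{n(n-1)}2\big)$, which is favourable by $\tfrac n2p_0^2$-type margins. So positivity for large $R$ is automatic, and only a bounded range of $R$ needs explicit control.

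The main obstacle is the $n=9$ first part: showing that for $p=2$ every eigenvalue except the first satisfies the inequality. Here $\Phi_9(2,R)$ is negative at $R=0$, so the telescoping of Lemma~\ref{REDUCE} cannot start at $s=0$. I would work directly with $M_{9,2,r}(m)$ for $r\in\{0,1\}$. For $(r,m)=(0,0)$ we have the known failure. For $(r,m)=(1,0)$ the eigenvalue is $K=3$, which I would check numerically. For $m\ge1$ I would modify the telescoping: bound the $s=0$ term $np(r+1)^{\overline{8}}$ separately, then use $\Phi_9(2,R)\ge0$ for $R\ge2$ (to be verified as above) on the remaining terms. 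This leaves the finite check $\big(K(K+8)\big)^{9/2}-(2p+r)\text{-level correction}\ge$ the $s=0$ surplus. That check reduces to a one-variable polynomial inequality in $m$, which holds for $m\ge1$ by direct evaluation at small $m$ plus leading-order dominance. Explicit numerics are required at these few small cases, and I expect that to be where the work concentrates.
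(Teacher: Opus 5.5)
Your plan for part 1) and for the $p\ge 3$ half of part 2) is the paper's plan. You combine monotonicity in $p$ from Lemma~\ref{Dlemma1} (with $\hat p(n)\le 2$ for $n\le 12$), a finite check of $\Phi_n(p_0,R)\ge 0$ at the base value, and failure of the first eigenvalue via Proposition~\ref{First} for the lower thresholds. For the base check the paper expands $\Phi_n$ for even $n$. For odd $n$ it writes $\Psi_n=H_n-J_n$ and checks the polynomial conditions $H_n\ge 0$ and $\Theta_n=H_n^2-J_n^2\ge 0$. So passing to $\Psi_n$ is not enough: one more squaring is needed.

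\textbf{Where you differ: $n=9$, $p=2$.} The paper never uses $\Phi_9(2,\cdot)$ here. It writes $M_{9,2,0}(m)$ and $M_{9,2,1}(m)$ out as explicit degree-$17$ polynomials. The second has only positive coefficients. The first has just three negative low-order coefficients, which the $m^3$ term dominates once $m\ge 1$. Your hybrid telescoping stays closer to Lemma~\ref{REDUCE} and avoids these large polynomials. It costs an extra verification the paper never makes: $\Phi_9(2,R)\ge 0$ at the relevant $R\ge 1$, with square roots since $n$ is odd.

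It must also be anchored correctly. For $r=0$, suppose you telescope over $s=1,\dots,m$ and discard the surpluses. What remains is $20^{9/2}\ge 18\cdot 8!$, i.e.\ $M_{9,2,0}(0)\ge 0$. That is exactly the failing first eigenvalue ($\approx 715542<725760$), so this version does not work. Instead, check $K=4$ directly: $48^{9/2}\ge 18\,(8!+3^{\overline{8}})$, about $3.68\times 10^{7}$ against $3.34\times 10^{7}$. Then telescope only over $s\ge 2$. The residual is then a single numerical inequality, not a polynomial inequality in $m$. For $r=1$ nothing special is needed, since $\Phi_9(2,1)\approx 2.6\times 10^{5}>0$.

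\textbf{Slips to correct.}
\begin{itemize}
\item At $p=1$, \eqref{BigBang} reads $n^n\ge (n!)^2$. This fails for $n\ge 3$ because $(n!)^2=\prod_{k=1}^n k(n+1-k)$ and $k(n+1-k)-n=(k-1)(n-k)>0$ for $1<k<n$. The inequality you wrote, $(n-1)^{n-1}n^n<(n!)^2$, is false; for $n=3$ it reads $108<36$.
\item For even $n$, $\Phi_n(p,\cdot)$ has degree $n-2$, not $n-1$. Its leading coefficient is $\tfrac{n(n-1)}{2}p(p-1)$; for example $\Phi_4(2,R)=12R^2+52R+52$. This is also the leading asymptotic term for odd $n$. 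The margin vanishes at $p=1$, and that is why large-$R$ positivity genuinely requires $p_0\ge 2$. It is not a margin of order $\tfrac{n}{2}p_0^2$.
\item Your lower-bound argument $p^*(n)\ge 2$ is only available for $n\ge 3$. Since $\Phi_2(p,R)=p(p-1)$, $p=1$ works for $n=2$. Hence $p^*(2)=1$, as in the paper's own table. The stated value $2$ at $n=2$ holds only for $p^*_\Phi$, through the convention $p_\Phi(n)\ge 2$ built into its definition.
\end{itemize}
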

\begin{proof} The proofs of non-negativity of $\Phi_n(p,R)$ in both cases  (1) and (2) are given in Appendix~\ref{Sec 8}. It remains to prove the first statement of (2) case $n=9$ with $p=2$ that requires to compute $M_{9,2,r}(m)$, $r=0,1$.
We use the notations in (\ref{M}) and Lemma~\ref{3.4}.
\begin{gather*}
\begin{array}{ll}
M_{9,2,0}(m)=\!\!&-14727577600 - 153205309440 m - 109688868864 m^2 +  2982747746304 m^3\\
 & + 14549522402304 m^4 + 34877052684288 m^5 + 52813694047232 m^6\\
 & + 55190653820928 m^7 + 41552485536768 m^8 + 
 23061618984960 m^9\\
 & + 9541581634560 m^{10} + 
 2950812942336 m^{11}
 + 678376890368 m^{12}
 + 114172305408 m^{13}\\
 & + 13655801856 m^{14}
  + 1098842112 m^{15}
 + 53329920 m^{16} + 1179648 m^{17},\\[1mm]
M_{9,2,1}(m)=\!\!&3746550616353 + 39785540336892 m + 182331210037860 m^2 + 486289372922496 m^3\\ 
&+ 854300999930688 m^4 + 1056833334726912 m^5 + 
 958330908488960 m^6\\
 & + 653186000517120 m^7
  + 339811011691008 m^8 + 136045798322176 m^9 + 42010244133888 m^{10}\\
 & + 9970025840640 m^{11} + 
 1799957405696 m^{12} + 242516090880 m^{13}\\
 & + 23597285376 m^{14} + 
 1565589504 m^{15} + 63356928 m^{16} + 1179648 m^{17}.\end{array}
\end{gather*}
It follows that $M_{9,2,r}(m)>0$  except  for $m=r=0$ . So,  P\'{o}lya's inequality holds for all eigenvalues except for the first one.
\end{proof}
The next result considers larger values of $n$ and provides an estimate for the values of $p$ for which the lunes $\luned{n}{p}$
satisfy P\'{o}lya's conjecture. We also determine the corresponding optimal values of $p$ for $n$ up to $50$, showing that
$\piphi(n)=\pin(n)$ in this range, indicating that $\Phi_n(p,R)\geq 0$ is a close approximation of $M_{n,p,r}(m)\geq 0$.
\begin{thm}\label{thmD}  
For $n\geq 10$  the  Dirichlet eigenvalues of  $\luned{n}{p}$ satisfy $\Phi_n(p,R)\geq 0$ for all $R\geq 0$, and  all $p\geq \pnphi=n-1 $ (not optimal).
These lunes satisfy P\'{o}lya's conjecture.  For $n\leq 50$ the optimal values satisfy $\piphi(n)=\pin(n)$.
{\footnotesize 
\begin{table}[h]
\centering
\begin{tabular}{|c|c||c|c||c|c||c|c|}
\hline
$n$ & $\pin(n)$ & $n$ & $\pin(n)$ & $n$ & $\pin(n)$ & $n$ & $\pin(n)$\\
\hline
$2$ &$1$ &$9-16$ &$3$ &$24-31$ &$5$ &$40-47$ &$7$ \\
\hline
$3-8$ &$2$ &$17-23$ &$4$ &$32-39$ &$6$ &$48-50$ &$8$\\
 \hline
 \end{tabular}
 \vspace*{3mm}
 \caption{Values of $\pin(n)$ for $n=1\dots 50$ -- see also Tables~\ref{evenp} and \ref{oddp}.}
 \end{table}}  
\end{thm}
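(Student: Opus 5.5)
The statement has two parts: a general claim for $n\geq 10$ and $p\geq n-1$, and an optimality table for $n\leq 50$. I would treat them separately.

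\textbf{General part.} For $n\geq 10$ and $p\geq n-1$, I would invoke Proposition~\ref{start} directly. It gives $\Phi_n(n-1,R)\geq 0$ for all $R\geq 0$. By Lemma~\ref{Dlemma1}, $p\geq n-1\geq \hat p(n)$ makes $\Phi_n(\cdot,R)$ nondecreasing in $p$. Together these give $\Phi_n(p,R)\geq 0$ for every $p\geq n-1$ and every $R\geq 0$. Lemma~\ref{REDUCE} then yields the refined inequality \eqref{strongpPolya}, and hence $M_{n,p,r}(m)\geq 0$ for all $r$ and $m$. By Lemma~\ref{3.4} this is exactly P\'olya's inequality at the highest order of every chain, so by \eqref{PolyaExtreme} it holds for all $k$. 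Thus $n-1$ is an admissible value of $\pnphi$.

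The analytic weight of this part lies in Lemma~\ref{lmx1}, that is, inequality \eqref{Ineq}, which I am allowed to assume. If I had to prove it, I would expand $(R+2n-2)^{n/2}-R^{n/2}$ by the mean value theorem as $\geq (2n-2)\cdot\frac n2 (R+\xi)^{n/2-1}$. I would then compare with the right-hand side factor by factor, treating separately large $R$ (ratio of leading powers) and small $R$ (direct estimate).

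\textbf{Optimality for $n\leq 50$.} For each $n$ in the table, the plan is to exhibit $p^*=\pin(n)$ and check two things.

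First, $\Phi_n(p^*,R)\geq 0$ for all $R\geq 0$. Monotonicity in $p$ then covers all $p\geq p^*$; this uses Lemma~\ref{Dlemma1}, since $p^*\geq\hat p(n)$ by comparing the two tables. To check $\Phi_n(p^*,R)\geq 0$, I would divide by the leading power $R^{n-1}$ and expand in $1/R$. The leading coefficient is $np^*-np^*=0$. The next coefficient I would identify and show to be positive, which settles $R\geq R_0$ for an explicit $R_0$. The finitely many $R<R_0$ I would check directly, as in Appendix~\ref{Sec 8}.

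Second, failure at $p^*-1$. Here I would produce some $r$ and $m$, typically $m=0$, with $M_{n,p^*-1,r}(m)<0$. Often the first eigenvalue suffices, via \eqref{BigBang}, but the second or third is sometimes needed, as Remark~\ref{remark_examples1} shows for $n=24$.

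Together these give $\piphi(n)\leq p^*$ and $\pin(n)\geq p^*$. Combined with $\piphi(n)\geq\pin(n)$ from Lemma~\ref{REDUCE}, this yields equality.

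\textbf{Main obstacle.} The hard step is the uniform-in-$R$ positivity of $\Phi_n(p^*,R)$ near the threshold. There the margin is small, and the subleading asymptotic coefficient must be computed exactly (a cubic-type expression in $p,n$). The finite range $R<R_0$ can be large, so in practice it is a computer-assisted check.
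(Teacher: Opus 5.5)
Your route is the paper's. The general part is exactly Proposition~\ref{start} combined with Lemma~\ref{Dlemma1} and Lemma~\ref{REDUCE}. The table is obtained, as in Appendix~\ref{Sec 8}, by certifying $\Phi_n(p^*,R)\ge 0$ and exhibiting an eigenvalue that fails at $p^*-1$.

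\textbf{The gap.} The step that fails as written is your claim that $p^*\ge \hat p(n)$ for all $n\le 50$. From the proof of Lemma~\ref{Dlemma1}, $\hat p(n)\le q$ holds if and only if $6q^2+6(n-1)q\ge n(n+1)$. For $n=47$ and $q=7$ this reads $2226\ge 2256$, which is false. Hence $\hat p(47)=8$, as in Table~\ref{oddp}, while $\pin(47)=7$; the overlapping ranges in Remark~\ref{8.1} are misleading here. This is the only exception for $10\le n\le 50$, and it is not harmless:
\begin{itemize}
\item Lemma~\ref{Dlemma1} gives no monotonicity on $[7,8]$, so you cannot pass from $p=7$ to $p=8$.
\item The paper reports that $\Phi_{47}(7,R)<0$ for some non-integer $R\in(1.2,1.4)$. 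So no argument treating $R$ as a real variable can start at $p=7$.
\end{itemize}

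\textbf{The repair.} Do what the paper does:
\begin{itemize}
\item Verify $\Phi_{47}(8,R)\ge 0$ separately; the paper does this via $H_{47}(8,R)\ge 0$ and $\Theta_{47}(8,R)\ge 0$. Then invoke Lemma~\ref{Dlemma1} only from $p=8$ on.
\item Verify $\Phi_{47}(7,R)\ge 0$ only at integer $R$. That is all Lemma~\ref{REDUCE} and the definition of $\piphi(n)$ require.
\end{itemize}
Together with the failure of the first eigenvalue at $p=6$, this gives $\piphi(47)=\pin(47)=7$.

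\textbf{Your ``main obstacle'' is misplaced.} Expanding in $1/R$, the $R^{n-1}$ terms cancel and the $R^{n-2}$ coefficient of $\Phi_n(p,R)$ is $\frac{n(n-1)}{2}p(p-1)$. This is positive for every $p\ge 2$, so large $R$ is never delicate. All sign changes occur at small $R$: $R=0$ (first eigenvalue) in most rows, $R=1$ for $n=24$, and $R\in(1.2,1.4)$ for $n=47$.

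\textbf{Odd $n$.} For odd $n$, $\Phi_n$ is not a polynomial. That is why the paper passes to $\Psi_n=H_n-J_n$ and, where $H_n\ge 0$, to the polynomial $\Theta_n=H_n^2-J_n^2$. Your expansion route is workable, but it needs explicit remainder bounds to produce $R_0$.

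\textbf{Your sketch of Lemma~\ref{lmx1}.} The optional mean-value argument only works if $R+\xi$ is replaced by the midpoint $R+n-1$. That is exactly what the convexity of $g'$ in Lemma~\ref{auxconvlemma} supplies. With $\xi$ uncontrolled, the lower bound vanishes as $R\to 0$ while the right-hand side does not.
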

\begin{proof} From Lemma~\ref{Dlemma1},  $\hat{p}(n)\geq n-1$, that is $\Phi_n(p,R)$ increases on $p$ for $p\geq n-1$. Applying Proposition~\ref{start} for any $R\geq 0$, if $p\geq n-1$, then  $\Phi_n(p,R)\geq \Phi_n(n-1,R)\geq 0$. The computations of the optimal $\piphi(n)$ and  $\pin(n)$ for $n \leq 50$ are given in Appendix~\ref{Sec 8},
where we prove that for $n\geq 50$, $\pin(n)=\piphi(n)$.
\end{proof}

\appendix

\section{Reflection Groups and Hilbert-Poincar\'{e} series }\label{ApA}

{A finite Coxeter group $\mathcal{W}$ is a finite group of reflections (\cite{H})},  generated by a set $S$ of $l$ reflections on $\mathbb{R}^{l}$ that defines a reduced root system
$\mathcal{R}$ composed by the unit roots $\pm \alpha$ 
 of any reflection $s_{\alpha}\in\mathcal{W}_s$ (see (\ref{R2redution})) where $\mathcal{W}_s$ is the subset  of all reflections of $\mathcal{W}$. This system   satisfies the following three conditions  (\cite[p.6-7]{H}). (a) $\mathrm{Span}~ \mathcal{R}=\mathbb{R}^l$; (b) For all $\alpha \in \mathcal{R}$, $~\mathcal{R}\cap L_{\alpha}=\{-\alpha,\alpha\}$ where $L_{\alpha}=\mathbb{R}\alpha$ is a line of roots; (c) For all $\alpha\in \mathcal{R}$, $s_{\alpha}(\mathcal{R})\subset \mathcal{R}$. 
The Weyl chambers are  given by  the connected components of the complement of the set of reflecting walls,  $\mathfrak{H}=\{H_{\alpha}=[\alpha]^{\bot}\}$, where $\alpha\in \mathcal{R}$ is a chosen unit of each element $s_{\alpha}$. This is extended to $\mathbb{R}^{n+1}=\R^{n-1}\times \R^l$ (\cite{bb}), defining a  root system for the extended reflection group $\tilde{\mathcal{W}}=\{\id_{\mathbb{R}^{n+1-l}}\}\times \mathcal{W}$, not satisfying (a) (\cite[p.\ 7]{H}). A dihedral  $n$-lune $\lune{n}{\pi/p}$ is the intersection of  $\mathbb{S}^n$ with a sector of $\R^{n+1}$ defined by the walls of a chamber modelled by  $\tilde{\mathcal{W}}$ which extends the chamber $\mathfrak{C}$ modelled by the  dihedral group $\mathcal{W}=\D_p$  acting on $\R^2$.
 
 \subsection{The dihedral group $\D_p$.}\label{Dp} Given a unit vector $\beta\in \R^2$, let
$\sigma_{\beta}(u)=u-2\langle u,\beta\rangle \beta$, and set 
\begin{gather*}
\begin{array}{l}
\beta_0=e^{i\pi/2}=(0,1)=i,\quad \beta_1=e^{i\theta}\beta_0, \quad \theta=\frac{\pi}{p},\\
s_0=\sigma_{\beta_0}=S^0, \quad s_1=\sigma_{\beta_1}=e^{2i\theta}s_0=S^{2\theta},
\quad
r=s_1s_0=e^{2i\theta}=R^{2\theta}, \quad r^p=Id.
\end{array}
\end{gather*}
For all integer $k$ define
\[\beta_{k}=e^{i\theta}\beta_{k-1}= e^{ik\theta}\beta_0, \quad s_k=r^ks_0=\sigma_{\beta_k}.\]
Then $\D_p$ is generated by two reflections 
$S=\{s_0,s_1\}$ with respect to  the walls $[\beta_0]^{\bot}$ and $[\beta_1]^{\bot}$
making the angle $\pi/p$  of a chamber $\mathfrak{C}$. Then 
$\mathcal{W}=\mathcal{W}_s\cup \mathcal{W}_r$
with  
$\mathcal{W}_s=\{s_0,s_1, \ldots, s_{p-1}\}$,  $\mathcal{W}_r= 
\{\mathbbm{1}=r^0, r,r^2,\ldots, r^{p-1}\}$,
and 
$\mathfrak{H}:= \{H_{\beta_k}=[\beta_k]^{\bot}=\mathbb{R}e^{i(k\theta+\pi)}, \quad k=0,\ldots, p-1\}$.
 Hence 
$\#(\mathfrak{H})=p$,  
the Coxeter transformation 
$c=s_1s_0=r$ has order $h=p$, and the  exponents are given by $m_1=1$ and $m_2=h-1=p-1$ (\cite{BOU}, p.\ 119
Th\'{e}or\`{e}me 1).
For all $k,j\in \mathbb{N}$ we have,
{ \begin{equation}\label{relations}
 \begin{array}{l}
  s_jr^k=r^{p-k}s_j, \quad
  s_ks_j=r^{k-j}, \quad 
s_j(\beta_k)=\beta_{p-k+2j}\\
-\beta_k=\beta_{p+k}, \quad \beta_{-k}=\beta_{2p-k}=
\beta_{p+(p-k)}=-\beta_{p-k}, \quad \beta_{2p}=\beta_0
\end{array}
\end{equation}
}
Thus, we obtain a cyclic $2p$ sequence of distinct vectors that closes at $\beta_{p+p}=\beta_0$, 
\begin{equation}\label{anticlockwiseorder} 
\beta_0, ~\beta_1, ~\ldots, ~\beta_{p-1}, ~\beta_p=-\beta_0, ~\beta_{p+1}=-\beta_1,~\beta_{p+2}= -\beta_2, \ldots, ~\beta_{p+p-1}=-\beta_{p-1}
\end{equation}
 Moreover, they satisfy 
 $\angle(\beta_k,\beta_{k+1})=\theta = \angle([\beta_k]^{\bot},[\beta_{k+1}]^{\bot})$, and $\angle(\beta_0,\beta_{k})=k\theta = \angle([\beta_0]^{\bot},[\beta_{k}]^{\bot})$.
We take the reductive root system
$\mathcal{R}=\mathrm{I}_2(p):=\{\pm\beta_0, \dots, \pm\beta_{p-1}\}$.
This Coxeter system is irreducible if $p\geq 3$.
Note that $\mathcal{R}$ is a crystallographic system only for $p=2,3,4$ and $6$. In these cases $\mathbb{D}_p$ is a Weyl group.  

\subsection{The Hilbert-Poincar\'{e} series}\label{Sec. 6.2}
Given a graded algebra over $\mathbb{R}$ of homogeneous polynomials,
$\mathcal{M}=\oplus_{k\geq 0}M_k$,
where $k$ is the degree, we consider the Hilbert-Poincar\'{e} series of $\mathcal{M}$, 
$\mathcal{HP}_{\mathcal{M}}(T)=\sum_{k\geq 0} dim_{\mathbb{R}}(M_k)T^k$. 
In case of the graded algebra of homogeneous polynomials on $\mathbb{R}^l$, $P(l)=\oplus_{k\geq 0}P_k(l)$ the series is given by~\cite{amd} 
\begin{equation}\label{PS1}
\mathcal{HP}_{P(l)}(T)=\sum_{k\geq 0}\binom{l-1+k}{l-1}T^k=\frac{1}{(1-T)^l}.
\end{equation}
Let $P^{i}_k(l)$ and $P^{a}_k(l)$, $k\geq 0$, be the graded sub-algebras of homogeneous polynomials that are $\mathcal{W}-$invariant
and anti-invariant, respectively. The corresponding graded sub-algebras of homogeneous harmonic polynomials,
$H_k(l)$, $H^{a}_k(a)$ and $H^{a}_k(l)$, satisfy $\dim(H_k(l))=\dim(P_k(l))-\dim(P_{k-2}(l))$, with the same identities also holding in the invariant and anti-invariant cases. These identities also holds  replacing $l$ by $n+1$
and $\mathcal{W}$ by $\tilde{\mathcal{W}}$~\cite[B.12]{bb}. 
In particular
\begin{equation}\label{PSH}
 \mathcal{HP}_{H(l)}(T)=(1-T^2)\mathcal{HP}_{ P(l)}(T),\quad
  \mathcal{HP}_{H(n+1)}(T)=(1-T^2)\mathcal{HP}_{P(n+1)}(T),
\end{equation}
(\cite[Section C.2]{bb}) 
with similar identities for the invariant and anti-invariant cases.
{Note that $\tilde{\mathcal{W}}$ is reducible and acts as the identity map on the graded sub-algebra $P(n+1-l)$ of $P(n+1)$ given by
the homogeneous polynomials on $\mathbb{R}^{n+1-l}$.
Therefore $P^{i}(n+1-l)=P^{a}(n+1-l)=P(n+1-l)$ (the same holding in the harmonic case), and
 (\ref{PS1}), (\ref{PSH}) hold replacing $l$ by $n+1-l$. In particular $P^{i}(n+1)=P(n-1)\otimes P^{i}(2)$ and similarly for anti-invariant
 polynomials. Since the Poincar\'{e} series is given by the product of the series of the components (\cite[C.5. and  Prop. 6, first equality
 p.243]{bb},~\cite[V.5.1, p.103, (2)]{BOU}), we have}
\begin{equation}\label{NOTIRREDU}
\begin{array}{l}
\mathcal{HP}_{P^{i}(n+1)}(T)=
\mathcal{HP}_{P(n+1-l)}(T)\mathcal{HP}_{P^{i}(l)}(T)\eqskip
\mathcal{HP}_{H^{i}(n+1)}(T)=(1-T^2)
\mathcal{HP}_{P(n+1-l)}(T)\mathcal{HP}_{P^{i}(l)}(T),
\end{array}
\end{equation}
with similar identities holding for the anti-invariant case.
The dimension $\dim(P^i_k(l))$ can be derived from the exponents  $m_j$ of $\mathcal{W}$, given by the roots of the characteristic
polynomial of the Coxeter transformation $c=\sigma_1\cdot \ldots\cdot \sigma_l$ of order $h$
\begin{equation}\label{EXPO}
 P(t):=\det[ t\cdot \mathbbm{1}-c]={\ds \prod_{j=1}^l}\left(t- e^{2i\pi m_j/h}\right), \quad
0\leq m_1\leq m_2\leq\ldots \leq m_l<h.
\end{equation}
It is not  required  $\mathcal{W}$ to be irreducible, but requires to be essential in the sense of~\cite[V.3.7 p.83]{BOU},  
which is the case we treat here.
The Coxeter transformation of  $\mathcal{W}$ is given by the product of the ones of the  components  with order $h$ given by the smallest common multiple of the orders of the components,  and $P(t)$ is the product of the respective characteristic polynomials. 
For $\mathcal{W}$ irreducible we have
$ m_1=1$, $m_l=h-1$, and $\#(\mathfrak{H})=\#\mathcal{W}_s=\frac{1}{2}lh$
~\cite[Theorem 1, p.119]{BOU}.
Furthermore, the following equality holds with  $k_j=m_j+1$.
\begin{equation}\label{NOTYET}
\begin{array}{l}
\mathcal{HP}_{P^i(l)}(T) = \Big{(}{\ds \prod_{j=1}^l}(1-T^{k_j})\Big{)}^{-1}={\ds \prod_{j=1}^l}\frac{1}{(1-T^{m_j+1})}=
{\ds \prod_{j=1}^l}\left(\dsum_{k\geq 0} T^{(m_j+1)k}\right)
\end{array}
\end{equation}
(\cite[p.103, p.107, Prop. 3 p. 121]{BOU} and \cite[C.2, p.\ 241 and 244 Preuve, Remarque]{bb}). 
Note that $\tilde{\mathcal{W}}$ and $\mathcal{W}$ have the same cardinality and in the reducible case of $\mathcal{W}$ the exponents are the union of the exponents  of the components.

 To compute $\dim(P^a_k(n+1))$ we use the same argument given in  \cite{bb}(C.3.).
From now on we are assuming $l=2$, $\mathcal{W}=\mathbb{D}_p$, and consider the minimal homogeneous  polynomial  function 
$\mathcal{E}:\mathbb{R}^2\to \mathbb{R}$  that vanishes on each hyperplane $H_{\beta_k}\in \mathfrak{H}$, necessarily of degree $p$, 
\begin{equation}\label{Epoly}
\mathcal{E}(\xi)=g(\xi,\beta_0)g(\xi, \beta_1)\ldots g(\xi,\beta_{p-1}), 
\end{equation}
where $g$ is the Euclidean metric on $\R^2$. Its  extension  to $\mathbb{R}^{n+1}=\mathbb{R}^{n-1}\times\mathbb{R}^2$,  $\hat{\mathcal{E}}(w,\xi)=\mathcal{E}(\xi)$, vanishes on the
hyperplanes  $\mathbb{R}^{n-1}\times H_{\beta_k}$. The next lemma is a particular case  of
a more general framework given in  \cite{BOU}, Prop.\ 5, p.\ 113, Prop.\ 6, p.\ 116.,  following \cite{bb} Prop.\ C.3, p.\ 242, adapted now to $\mathbb{D}_p$,  choosing the root system $\mathcal{R}$  described in Section~\ref{Dp}, that does not need to be crystallographic.
\begin{lemma}\label{BOU BB}
 Let $\mathcal{W}=\D_p$. Then we have: 
 \begin{enumerate}[{\rm 1)}]
\item $\mathcal{E}\in P_p^a(2)$, and  $\hat{\mathcal{E}}\in P_p^a(n+1)$.\\
\item $P^a(2)=P^i(2)\cdot\mathcal{E}$
and $P^a(n+1)=P^i(n+1)\cdot\hat{\mathcal{E}}=P(n-1)\otimes P^i(2)\cdot \hat{\mathcal{E}}$.
\end{enumerate}
\end{lemma}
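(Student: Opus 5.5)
We prove the two items of Lemma~\ref{BOU BB} directly from the geometry of $\D_p$ described in Section~\ref{Dp}. The plan is to settle the two-dimensional statements first and then lift them to $\R^{n+1}$ through the splitting $\R^{n+1}=\R^{n-1}\times\R^2$.

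For 1), $\mathcal{E}$ is a product of $p$ linear forms, so it is homogeneous of degree $p$. Since $\D_p$ is generated by $s_0$ and $s_1$, it suffices to check $\mathcal{E}\circ s_j=-\mathcal{E}$ for $j=0,1$; we will in fact check it for every reflection $s_j$. Using $g(s_j\xi,\beta_k)=g(\xi,s_j\beta_k)$ and the relation $s_j(\beta_k)=\beta_{p-k+2j}$ from \eqref{relations}, each factor $g(\xi,\beta_k)$ is carried to $g(\xi,\beta_{p-k+2j})$. Reducing indices modulo $p$ via $\beta_{k+p}=-\beta_k$, the map $k\mapsto 2j-k \pmod p$ is a bijection of $\{0,\dots,p-1\}$, and the product acquires a sign $(-1)^{N}$, where $N$ is the number of indices whose representative requires the shift by $p$. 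Rather than count $N$, we use a cleaner argument. The factor $g(\xi,\beta_j)$ goes to $-g(\xi,\beta_j)$. The remaining roots come in pairs $\{\beta_k,\beta_{k'}\}$ exchanged up to sign by $s_j$, and each pair contributes $+1$, since $s_j$ is an involution: if $s_j\beta_k=\epsilon\beta_{k'}$ then $s_j\beta_{k'}=\epsilon\beta_k$, giving $\epsilon^2=1$. One further possible fixed line, $\beta_k\perp\beta_j$ (which occurs when $p$ is even), lies in the fixed plane of $s_j$ and contributes $+1$. Hence $\mathcal{E}\circ s_j=-\mathcal{E}=\det(s_j)\mathcal{E}$, and since $\det$ is multiplicative this extends to all of $\D_p$. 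Because $\tilde{\mathcal W}$ acts trivially on the $w$-variables, $\hat{\mathcal{E}}\in P^a_p(n+1)$ follows at once.

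For 2), the inclusion $P^i\cdot\mathcal{E}\subset P^a$ is clear. For the converse, take $f\in P^a(2)$. For each $k$ we have $f\circ s_k=-f$, so $f$ vanishes on the line $H_{\beta_k}$; hence the linear form $g(\cdot,\beta_k)$ divides $f$. The $p$ forms are pairwise non-proportional, so they are coprime irreducibles in the UFD $\R[\xi]$, and therefore $\mathcal{E}\mid f$. Writing $f=\mathcal{E}h$, we get $h\circ w=h$ because both $f$ and $\mathcal{E}$ pick up the factor $\det w$. The same divisibility argument works verbatim in $\R[w,\xi]$ with the linear forms $\hat g(\cdot,\beta_k)$, which gives $P^a(n+1)=P^i(n+1)\hat{\mathcal{E}}$. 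Finally, $P^i(n+1)=P(n-1)\otimes P^i(2)$ holds because $\tilde{\mathcal W}$ acts only on $\xi$: expanding a polynomial in monomials in $w$, its coefficients in $\xi$ must each be invariant.

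The main obstacle is the sign bookkeeping in 1), which is why we use the involution-pairing argument instead of explicit index counting. The divisibility step is standard: $f$ vanishes on the hyperplane, so it lies in the prime ideal generated by the defining linear form.
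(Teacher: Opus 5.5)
Your proof is correct. Item 2) follows the paper's route: $f$ vanishes on each mirror $H_{\beta_k}$, so each root form $g(\cdot,\beta_k)$ divides $f$, hence $\mathcal{E}\mid f$, and the quotient is invariant. Your appeal to pairwise non-associate irreducibles in the UFD $\R[\xi]$ is a cleaner version of the paper's step. The paper passes through $g(\xi,\beta_j)f_j=g(\xi,\beta_r)f_r$ and then asserts $\mathcal{E}\mid f$ without spelling out the induction. You also justify $P^i(n+1)=P(n-1)\otimes P^i(2)$ by expanding in $w$-monomials, whereas the paper takes this from the preceding discussion.

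The genuine difference is in 1). The paper checks anti-invariance only on the generators $s_0,s_1$, by explicit index bookkeeping with $s_j(\beta_k)=\beta_{p-k+2j}$ and $\beta_{k+p}=-\beta_k$. For instance, $\mathcal{E}\circ s_1$ produces the factors $\beta_{p+2},\beta_{p+1},\beta_p,\beta_{p-1},\dots,\beta_3$, exactly three of which flip sign. You instead treat $s_j$ as an involution permuting the $p$ mirror lines and track signs cycle by cycle. Each $2$-cycle contributes $\epsilon^2=1$, a root orthogonal to $\beta_j$ contributes $+1$, and $\beta_j$ itself contributes $-1$.

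Your argument avoids index counting, handles every reflection at once, and is uniform in $p$. In particular it covers $p=2$, where the paper's displayed $s_1$-computation does not literally apply (there are only two factors) and which the paper treats separately in the appendix. It is also the standard argument that the product of the mirror forms is anti-invariant for any finite reflection group. The paper's computation, in exchange, is fully explicit and needs no structural observation.

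One small slip: the fixed set of $s_j$ in $\R^2$ is the line $H_{\beta_j}$, not a plane.
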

\begin{proof} 
 The vectors $\beta_k$, $k\in \mathbb{Z}$,  satisfy the relations given in \eqref{relations} and \eqref{anticlockwiseorder}.
Moreover,  for any $j,k$,   
$s_j(\beta_k)=\beta_{p-k+2j}$. 
Since $\mathcal{W}$ is generated by $S=\{s_0,s_1\}$, to prove (1) it is sufficient to show  that
$\mathcal{E}(s_0(\xi))=- \mathcal{E}(\xi)$ and  
$\mathcal{E}(s_1(\xi))=- \mathcal{E}(\xi)$.
Reflections are isometric involutions, so
\begin{eqnarray*}
\mathcal{E}(s_0(\xi))&=& g(s_0(\xi),\beta_0)\ldots g(s_0(\xi),\beta_{p-1})\\
&=&g(\xi,s_0(\beta_0))\ldots g(\xi,s_0(\beta_{p-1}))\\
&=&g(\xi,\beta_p)g(\xi,\beta_{p-1})\ldots g(\xi,\beta_1)\\
&=&g(\xi, -\beta_0)g(\xi,\beta_{p-1})\ldots g(\xi,\beta_1)=
-\mathcal{E}(\xi),\\[1mm]
\mathcal{E}(s_1(\xi))
&=&g(\xi,s_1(\beta_0))\ldots g(\xi,s_1(\beta_{p-1}))\\
&=&g(\xi,\beta_{p+2})g(\xi,\beta_{p+1})\ldots 
g(\xi,\beta_3)\\
&=&g(\xi, -\beta_2)g(\xi,-\beta_1)g(\xi,-\beta_0)g(\xi,\beta_{p-1})\ldots g(\xi,\beta_3)=
-\mathcal{E}(\xi).
\end{eqnarray*}
Now we prove (2). 
 If $f\in P^a_k(2)$, then $f(\sigma_{\beta_j}(\xi))=-f(\xi)$, $j=0, \ldots, p-1$. On the other hand
 $f(\sigma_{\beta_j}(\xi))-f(\xi)=0$ for $\xi\in [\beta_j]^{\bot}$. Then $g(\xi,\beta_j)$ divides $f\circ \sigma_{\beta_j} -f$, that is
 $f(\sigma_{\beta_j}(\xi))-f(\xi)=g(\xi,\beta_j)f_j(\xi)$ 
 for some $f_j\in P_{k-1}(2)$. Hence,
 $-2f(\xi)=g(\xi,\beta_j)f_j(\xi)$, that is $g(\xi,\beta_j)f_j(\xi)=g(\xi,\beta_r)f_r(\xi)$,  for all $j, r$.
   Thus, if  $j\neq r$ and $\xi\in [\beta_j]^{\bot}\backslash [\beta_r]^{\bot}$, then
 $f_r(\xi)=0$, that is  $g(\xi,\beta_j)$ divides $f_r(\xi)$.
 Consequently, 
 $\mathcal{E}(\xi)$ divides $f(\xi)$, that is, 
  $f(\xi)=\mathcal{E}(\xi)Q(\xi)$, {for a unique homogeneous polynomial $Q(\xi)$ of degree $p-k$. It is clear that $Q$ must be invariant. }
Hence $P^a_k(2)\subset \mathcal{E}P^i_{k-p}(2)$, for all $k\geq p$, and obviously $\mathcal{E}P^i_{k-p}(2)\subset P^a_k(2)$, and so we have an equality. 
The same conclusions hold for the natural  extensions on $(w,\xi)\in \mathbb{R}^{n+1}=\mathbb{R}^{n-1}\times \mathbb{R}^2$. 
\end{proof}
\begin{remark} {Consider the holomorphic function $(\xi)^p=(x_n+ix_{n+1})^{p}=u_p(\xi)+iv_p(\xi)=\rho^p(\cos(p\theta)+i\sin(p\theta))$,
where $\xi=\rho e^{i\theta}$. It follows that  $\mathcal{E}$ given in Lemma~\ref{BOU BB}, $u_p$, $v_p$, and $x_iu_pv_p$ with $1\leq i\leq n-1$,
are all harmonic homogeneous polynomials. The first three of degree $p$ and the remaining of degree $2p+1$, all them except $u_p=x_n^2-x_{n+1}^2$
vanishing at $\beta_k$, $k=0, \ldots, p-1$, hence they are
eigenfunctions of $\lune{n}{\pi/p}$, with $\mathcal{E}=c v_p$, $c\neq 0$ constant, the principal eigenfunction.}
\end{remark} 

Now {we  derive} the Hilbert-Poincar\'{e} series.
\begin{proposition}
\label{Inv-Anti}
Let $\mathcal{W}=\D_p$ where $p\geq 3$, and
$\tilde{\mathcal{W}}$ its extension to $\mathbb{R}^{n+1}$.
Then for all $k\geq 0$
$\dim(P^a_{p+k}(2))=\dim(P^i_k(2))$ and $\dim(P^a_{p+k}(n+1))=\dim(P^i_k(n+1))$.
Therefore  we have  
$\mathcal{HP}_{P^a(2)}(T)=T^p\mathcal{HP}_{P^i(2)}(T)$, and $  \mathcal{HP}_{P^a(n+1)}(T)=T^p\mathcal{HP}_{P^i(n+1)}(T)$. 
 Consequently,
\begin{gather}
\begin{array}{ll}
\mathcal{HP}_{P^i(2)}(T) =\fr{1}{(1-T^2)(1-T^p)},&
\mathcal{HP}_{P^i(n+1)}(T) =\fr{1}{(1-T)^{{n-1}}(1-T^2)(1-T^p)},\eqskip
\mathcal{HP}_{P^a(2)}(T) =\fr{T^p}{(1-T^2)(1-T^p)},&
\mathcal{HP}_{P^a(n+1)}(T) =\fr{T^p}{(1-T)^{{n-1}}(1-T^2)(1-T^p)},\eqskip
 \mathcal{HP}_{H^i(2)}(T) = \fr{1}{(1-T^p)},&
\mathcal{HP}_{H^a(2)}(T)= \fr{T^p}{(1-T^p)},
\end{array}\nonumber \eqskip
\mathcal{HP}_{H^i(n+1)}(T) = \frac{1}{(1-T)^{n-1}(1-T^p)}, \quad
 \mathcal{HP}_{H^a(n+1)}(T)
=\fr{T^p}{(1-T)^{n-1}(1-T^p)}. \label{Fi}
\end{gather}
\end{proposition}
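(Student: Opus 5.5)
The plan is to first pin down the invariant algebra $P^i(2)$ of $\D_p$ acting on $\R^2$, and then obtain everything else from Lemma~\ref{BOU BB} together with the product and harmonic-projection rules \eqref{NOTIRREDU} and \eqref{PSH}.

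\textbf{The invariants.} From Section~\ref{Dp}, $\D_p$ is an irreducible rank-two Coxeter group for $p\geq 3$, with Coxeter number $h=p$ and exponents $m_1=1$, $m_2=p-1$. Formula \eqref{NOTYET} then gives
\[
\mathcal{HP}_{P^i(2)}(T)=\frac{1}{(1-T^2)(1-T^p)} .
\]
I would also check this concretely, so that we do not rely on a crystallographic hypothesis. The polynomials $|\xi|^2$ and $u_p=\mathrm{Re}(\xi^p)$, taken with the orientation of the chamber of Section~\ref{Dp}, are $\D_p$-invariant. They are algebraically independent, and their degrees multiply to $2p=|\D_p|$. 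By Chevalley's theorem \cite{BOU} they therefore generate $P^i(2)$ freely, and the series above follows.

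\textbf{The anti-invariants.} Lemma~\ref{BOU BB}~2) says that multiplication by $\mathcal{E}$, a polynomial of degree $p$, is a bijection from $P^i_k(2)$ onto $P^a_{k+p}(2)$. It is injective because $\mathcal{E}\neq0$ and the polynomial ring is a domain, and it is surjective by the lemma. Moreover $P^a_j(2)=0$ for $j<p$, since every anti-invariant is divisible by $\mathcal{E}$. Hence
\[
\dim P^a_{p+k}(2)=\dim P^i_k(2), \qquad \mathcal{HP}_{P^a(2)}=T^p\,\mathcal{HP}_{P^i(2)} .
\]
The same argument with $\hat{\mathcal{E}}$ gives the corresponding statement on $\R^{n+1}$.

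\textbf{Passing to $\R^{n+1}$ and to harmonics.} Since $\tilde{\mathcal{W}}$ acts trivially on the first $n-1$ coordinates, we have $P^i(n+1)=P(n-1)\otimes P^i(2)$. Using \eqref{NOTIRREDU} with \eqref{PS1} for $l=n-1$, this multiplies the series by $(1-T)^{-(n-1)}$, and the anti-invariant case is identical. Finally, the harmonic series are obtained from \eqref{PSH}, namely multiplication by $(1-T^2)$, which holds in the invariant and anti-invariant settings as cited from \cite[B.12]{bb}. This cancels the factor $(1-T^2)$ and yields \eqref{Fi}.

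\textbf{Main obstacle.} The delicate step is justifying \eqref{NOTYET} and \eqref{PSH} for the non-crystallographic group $\D_p$ (for $p\neq 3,4,6$) acting reducibly on $\R^{n+1}$. Two facts are needed. The first is that the Laplacian maps $P^{i}_k$ onto $P^{i}_{k-2}$, which holds because averaging over the group commutes with $\Delta$ and $\Delta\colon P_k\to P_{k-2}$ is surjective; the anti-invariant case is handled the same way with the sign-twisted average. The second is the Chevalley generator check described above. Both hold for any finite reflection group, so I expect them to go through.
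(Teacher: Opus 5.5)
Your proposal is correct and follows essentially the same route as the paper. The exponents $m_1=1$, $m_2=p-1$ in \eqref{NOTYET} give $\mathcal{HP}_{P^i(2)}$, Lemma~\ref{BOU BB} gives the shift by $T^p$ for the anti-invariants, and \eqref{NOTIRREDU} and \eqref{PSH} handle the passage to $\R^{n+1}$ and to harmonic polynomials; your additional checks are sound supplements to steps the paper only cites, namely the explicit basic invariants $|\xi|^2$, $\mathrm{Re}(\xi^p)$ via the degree-product criterion, the injectivity of multiplication by $\mathcal{E}$, and the averaging argument for surjectivity of $\Delta$ on (anti-)invariants.
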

\begin{proof}
From  Appendix \ref{Dp} and \ref{Sec. 6.2}, the exponents are $m_1=1$, $m_2=p-1$, and by
(\ref{NOTYET}) with $k_j=m_j+1$  we get the expressions of the Poincar\'{e} series of $P^i(2)$ and from (\ref{NOTIRREDU}) the one of $P^i(n+1)$, and by Lemma~\ref{BOU BB} we have 
\begin{eqnarray*}
\begin{array}{l}
\mathcal{HP}_{P^a(2)}(T)=\dsum_{k\geq 0}\dim(P^a_k(2))T^k=\sum_{k\geq 0}\dim(P^i_{k-p}(2))T^k=T^p\mathcal{HP}_{P^i(2)}(T).
\end{array}
\end{eqnarray*}
The series of $H^i(l)$ follows from (\ref{PSH}) what implies  the one of  $H^i(n+1)$  by using (\ref{PS1}) replacing $l$ by $n+1-l$,  and the formulas of the series on decomposition of {reducible} groups into the product of irreducible ones given in (\ref{NOTIRREDU}). Similarly
$\mathcal{HP}_{H^a(n+1)}(T)$ is obtained from
\[
\begin{array}{lllll}
\mathcal{HP}_{H^a(2)}(T)&=&(1-T^2)\mathcal{HP}_{P^a(2)}(T)&=&(1-T^2)T^p\, \mathcal{HP}_{P^i(2)}(T).
\end{array}
\]
\end{proof}

The case of $n$-hemispheres ($l=1$) is already known  (see e.g \cite{bb}), with
\[ \mathcal{HP}_{H^i(n+1)}(T)=\sum_{k\geq 0}\binom{n-1+k}{n-1}T^{k} = \frac{1}{(1-T)^n}\] and
\[
 \mathcal{HP}_{H^a(n+1)}(T)=
  \sum_{k\geq 0}\binom{n-1+k}{n-1}T^{k+1}=\frac{T}{(1-T)^n}.
\]
These series extend the case of $\lune{n}{\pi/p}$ to $p=1$.

{\subsection{ $\luned{n}{2}$: The case $l=2$ with {reducible} $A_1\times A_1=\mathbb{D}_2$    
} \label{Sec 2.4}
The two Coxeter systems on $\mathbb{R}^2$, 
$A_1\times A_1$ and $\mathbb{D}_2$,  are the same  since $\mathbb{D}_2$ 
is generated by $S=\{s_0, s_1=e^{i\pi}s_0=-s_0\}$, where  $s_0$ and $s_1$ commute. Then the split $S=\{s_0\} \cup\{-s_0\}$ defines $\mathcal{W}$ homeomorphic  to $\mathcal{W}_1\times \mathcal{W}_1$ where $\mathcal{W}_1$ is the previous case for $l=1$. 
The homogeneous polynomial  of degree two $\mathcal{E}(u,v)=\mathcal{E}_1(u)\mathcal{E}_2(v)=g(u,1)g(v,1)=uv$  is anti-invariant  with respect to $\mathcal{W}$. Applying
the same argument from proof of Lemma~\ref{BOU BB}, then 
a similar conclusion to Prop.\ ~\ref{Inv-Anti} follows, 
$P^i(2)=P^i(\mathbb{R}_u)\otimes P^i(
\mathbb{R}_v)$, and $P^a(2)= P^i(2)\otimes \mathcal{E}$.
Consequently
$dim (P^a_{2+k}(2))=dim (P^i_{k}(2))$ and
$dim (P^a_{2+k}(n+1))=dim (P^i_{k}(n+1))$, and 
we obtain} 
\begin{gather*}\begin{array}{l}
\mathcal{HP}_{P^i(2)}(T)=(\mathcal{HP}_{P^i(1)}(T))^2=\frac{1}{(1-T^2)^2}, \quad 
\mathcal{HP}_{P^a(2)}(T)=
\frac{T^2}{(1-T^2)^2},\quad 
 \mathcal{HP}_{H^a(2)}(T)=\frac{T^2}{(1-T^2)}.,
 \\
\mathcal{HP}_{H^i(n+1)}(T)=\frac{1}{(1-T)^{n-1}(1-T^2)}, \quad 
\mathcal{HP}_{H^a(n+1)}(T)=\frac{1}{(1-T)^{n-1}}\frac{T^2}{(1-T^2)}.
\end{array}
\end{gather*}
These series extends to $p=2$ the ones  given in  Proposition~{\ref{Inv-Anti}}  for $p\geq 3$.

\section{Optimal $\hat{p}(n)$, $\piphi(n)$ and $\pin(n)$,  for $2\leq n\leq 50$\label{Sec 8}}
Recall that $M_{n,p,r}(m)$ given in (\ref{M})  and for any $p\geq 1$, $n\geq 2$, we have
  \[\begin{array}{l}
  \Phi_n(p,0)=(p(p+n-1))^{n/2}-(n!p);\\
  M_{n,p,0}(0)=\Psi_n(p,0)=(p(p+n-1))^{n}-(n!p)^2
  \end{array}\]
 Moreover, the following statements are equivalent. (a) $\Phi_n(p,0)<0$; (b) $\Psi_n(p,0)<0$; (c) The first Dirichlet eigenvalue $\lambda_1$ of $\lune{n}{\pi/p}$ does not satisfy P\'{o}lya inequality (cf.\ Proposition~\ref{First})
  
For any $n\geq 2$ and $p\geq 1$ (even or odd) consider the following expressions
\begin{gather*}
\begin{array}{ll}
H_n(p,R)&=\Big{(}(R+p)(R+p+n-1)\Big{)}^{n}-
 \Big{(}R(R+n-1)\Big{)}^{n}
 - \left((np)(R+1)^{\overline{n-1}}\right)^2\eqskip
J_n(p,R) &= 2(R(R+n-1))^{n/2}(np)(R+1)^{\overline{n-1}}\eqskip
\Theta_n(p,R)&= H_n(p,R)^2-J_n(p,R)^2
   \end{array}
   \end{gather*}
 Recall that $\Phi_n(p,R)\geq 0$ is equivalent to $\Psi_n(p,R)\geq 0$ (\ref{PhiPsi}). Moreover, $\Psi_n(p,R)=H_n(p,R)-J_n(p,R)$, and so
 if $H_n(p,R)\geq 0$, then  $\Psi_n(p,R)\geq 0$
 is equivalent to $\Theta_n(p,R)\geq 0$.
\begin{lemma}\label{Dlemma2}
For all $p\geq 2$, $2\leq n\leq 8$,  we have $\Phi_n(p,R)\geq 0$ for all $R\geq 0$. This no longer holds with $n=9$ for
$p=2$, since the first eigenvalue does not satisfy P\'{o}lya's inequality. For $2\leq n\leq 50$ we derive the optimal
values of $\piphi(n)$ and $\pin(n)$ showing they coincide -- see also Tables~\ref{evenp} and~\ref{oddp}.
\end{lemma}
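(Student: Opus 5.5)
For each fixed $n\le 50$ I would reduce the claim $\Phi_n(p,R)\geq 0$ for all real $R\ge 0$ to finitely many checks, via the decomposition at the start of this appendix. Since $\Phi_n\ge 0$ is equivalent to $\Psi_n\ge 0$, and $\Psi_n=H_n-J_n$ with $J_n\ge 0$, it suffices where $H_n(p,R)\ge 0$ to prove $\Theta_n(p,R)=H_n^2-J_n^2\ge 0$. The point of this step is that $\Theta_n$ is a genuine polynomial in $R$ (for each parity of $n$ the square roots disappear after squaring $J_n$), with coefficients polynomial in $p$.

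For the range $2\le n\le 8$ and $p\ge 2$, I would first use Lemma~\ref{Dlemma1} and Remark~\ref{8.1}, which give $\hat p(n)\le 2$ for $n\le 12$. Hence $\Phi_n(p,R)$ is nondecreasing in $p$ for $p\ge 2$, so it is enough to treat $p=2$. For $p=2$ and each $n\le 8$ I would expand $H_n(2,R)$ and $\Theta_n(2,R)$ as explicit polynomials in $R$ (computer algebra) and check that both have all coefficients nonnegative. If that fails, I would check that they have no real roots in $[0,\infty)$, using Sturm sequences or a shift $R\mapsto R+R_0$ combined with a finite check for small $R$. This gives $\Phi_n(2,R)\ge 0$ for $n\le 8$.

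For $n=9$, $p=2$, the failure is just Proposition~\ref{First}: $M_{9,2,0}(0)<0$, equivalently $\Phi_9(2,0)<0$ by the equivalences stated above.

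For general $n\le 50$ the plan is to compute optimal thresholds. For each $n$ I would start from $p=\hat p(n)$, again by Lemma~\ref{Dlemma1}, and find the smallest $p\ge \hat p(n)$ for which the polynomial $\Theta_n(p,\cdot)$ (together with $H_n(p,\cdot)$) is nonnegative on $[0,\infty)$. Monotonicity in $p$ then yields $\piphi(n)$, after also checking the finitely many $p<\hat p(n)$ directly. Independently I would find $\pin(n)$ by locating, for each smaller $p$, an explicit $(r,m)$ with $M_{n,p,r}(m)<0$, typically $m=0$ as in Remark~\ref{remark_examples1}. Since $\piphi(n)\ge\pin(n)$ by Lemma~\ref{REDUCE}, exhibiting one violating eigenvalue at $p=\piphi(n)-1$ proves equality.

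The main obstacle is certifying positivity of the high-degree polynomial $\Theta_n$ in $R$ uniformly. Degrees grow like $4n$, so coefficient sign patterns may fail for moderate $R$. I would first try nonnegativity of coefficients after the substitution $R\mapsto R+R_0$, with a small $R_0$ chosen per case, and then handle $R\in[0,R_0]$ by interval evaluation.
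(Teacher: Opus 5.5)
Your overall strategy is the paper's: monotonicity in $p$ from Lemma~\ref{Dlemma1} via $\hat p(n)$, computer-checked positivity of $\Phi_n$ or of $H_n$ and $\Theta_n$, then $\pin(n)\le\piphi(n)$ together with an explicit negative $M_{n,p,r}(m)$. Your treatment of $n\le 8$ and of $n=9$, $p=2$ is fine.

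The gap is in the threshold computation for general $n\le 50$. You certify $\Phi_n(p,\cdot)\ge 0$ on all of $[0,\infty)$. However, $\piphi(n)$ is defined by requiring $\Phi_n(p,R)\ge 0$ only for non-negative \emph{integers} $R$, and the difference matters.

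At $n=47$, $p=7$ one has $\Phi_{47}(7,R)>0$ for $R=0,1$ and for all $R\ge 2$. But $\Phi_{47}(7,R)<0$ for some non-integer $R\in(1.2,1.4)$, for instance $R=1.3$. Your real-$R$ procedure would therefore return the threshold $8$ at $n=47$.

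Your final step, exhibiting a violating eigenvalue at $p=\piphi(n)-1=7$, then cannot succeed. The integer form of Lemma~\ref{REDUCE} shows that every Dirichlet eigenvalue of $\luned{47}{7}$ satisfies P\'olya's inequality, so $\pin(47)=7$. As written, your method does not prove $\piphi(n)=\pin(n)$ for all $n\le 50$; with a real-$R$ threshold the coincidence is in fact false at $n=47$.

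The fix is what the paper does at $n=47$: work only with integer $R$. Evaluate $\Phi_{47}(7,0)$ and $\Phi_{47}(7,1)$ directly, and prove positivity for $R\ge 2$, for example after the shift $R\mapsto R+2$. Note also that $\hat p(47)=8>7$, so $p=7$ must be handled by this direct check in any case. Monotonicity only covers $p\ge 8$, where $H_{47}(8,\cdot)\ge 0$ and $\Theta_{47}(8,\cdot)\ge 0$ on $[0,\infty)$.

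A minor simplification: for even $n$, $\Phi_n$ is already a polynomial in $R$. You can check its coefficients directly, e.g.\ $\Phi_4(2,R)=12R^2+52R+52$, rather than passing through $\Theta_n$, whose coefficient pattern is more likely to fail.
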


{\footnotesize 
\begin{table}[h!]
\centering
\begin{tabular}{|c|c|c|c|c|}
\hline
\scriptsize{$n$ even} &\scriptsize{$\hat{p}(n)$}&
\scriptsize{$(p,R):\Phi(p,R)<0$} &\scriptsize{$\min p:\Phi(p,R)\geq 0 ~\forall R$} & \scriptsize{$\piphi(n)$}\\ 
\hline
 $2$ & $1$  &  {}   & $1$ & $1$\\
\hline
 $4$ & $1$ & $(1,0)$ & $2$  & $2$\\
 \hline
 $6,8$ & $2$ & $(1,0)$ & $2$  & $2$\\
  \hline
  $10$ & $2$ & $(2,0)$ & $3$  & $3$\\
   \hline
   $12,14,16$ &3& $(2,0)$ & $3$ & $3$\\ 
    \hline
   $18$ & $3$ & $(3,0)$ & $4$  & $4$\\  
    \hline
  $20,22$ &4& $(3,0)$ & $4$ & $4$\\
   \hline
 $24$ & $4$ & $(3,0),(4,1)$ & $5$ & $5$\\
  \hline
 $26,28,30$ & $5$ & $(4,0)$ & $5$ & $5$\\
  \hline
 $32$ & $5$ & $(5,0)$ & $6$  & $6$\\
  \hline
  $34,36,38$ & $6$ &  $(5,0)$ & $6$  & $6$\\
   \hline
   $40,42,44,46$ &$7$&  $(6,0)$ & $7$  & $7$\\
    \hline
    $48,50$ & $8$ & $(7,0)$ & $8$ & $8$\\
 \hline
 \end{tabular}
 \vspace*{3mm}
 \caption{Values of $\hat{p}(n)$ and $\piphi(n)$ for $n$ even}\label{evenp}
 \end{table}}
{\footnotesize 
\begin{table}[h!]
\centering
\begin{tabular}{|c|c|c|c|c|c|}
\hline
\scriptsize{$n$ odd}&\scriptsize{$\hat{p}(n)$}&\scriptsize{$(p,R):H(p,R)<0$} &\scriptsize{$\min   p:H(p,R)\geq 0 ~\forall R$}& \scriptsize{$\min p:\Theta(p,R)\geq 0 ~\forall R$} & 
\scriptsize{$\piphi(n)$}\\ 
\hline
 $3$ & $2$  & $(1,0)$   & $2$ & $2$ & $2$\\
\hline
 $5,7$ & $2$  & $(1,0)$   & $2$ & $2$ & $2$\\
 \hline
  $9,11$ & $2$  & $(2,0)$   & $3$ & $3$ & $3$\\
 \hline
   $13,15$ & $3$  & $(2,0)$   & $3$ & $3$ & $3$\\
 \hline
  $17$ & $3$  & $(3,0)$   & $4$ & $4$ & $4$\\
 \hline
  $19,21,23$ & $4$  & $(3,0)$   & $4$ & $4$ & $4$\\
 \hline
  $25$ & $4$  & $(4,0)$   & $5$ & $5$ & $5$\\
 \hline
   $27,29,31$ & $5$  & $(4,0)$   & $5$ & $5$ & $5$\\
 \hline
   $33,35,37,39$ & $6$  & $(5,0)$   & $6$ & $6$ & $6$\\
 \hline
   $41,43$ & $7$  & $(6,0)$   & $7$ & $7$ & $7$\\
 \hline
   $45$ & $7$  & $(6,0)$   & $7$ & $7$ & $7$\\
 \hline
    $47$ & $8$  & $(6,0)$   & $7$ & $7$ & $7$\\
    \hline
 $49$ & $8$  & $(7,0)$   & $8$ & $8$ & $8$\\
  \hline     
\end{tabular}
\vspace*{3mm}
 \caption{Values of $\hat{p}(n)$ and $\piphi(n)$ for $n$ odd}\label{oddp}
 \end{table}
}

\begin{proof}
We obtain for $n$ even (the same for $n$ odd) $\piphi(n)$ using $\hat{p}(n)$ (Remark~\ref{8.1}) to reduce $p_{\Phi}(n)=n-1$ by checking when $\Phi_n(p,R)\geq 0$ for $2\leq n\leq 50$. Then we use the fact that $\pin(n)\leq \piphi(n)$, or inspection of $M_{n,p,r}(m)$ to determine $\pin(n)$.

 For example, if $n=2$, $\Phi(p,R)= 0$. Hence $\hat{p}(2)=\piphi(2)=\pin(2)=1$. If $n=4$, $\hat{p}(4)=1$, $\Phi(1,R)<0$,
$\Phi(2,R)=52+52R+12 R^2$ and  so $\piphi(4)=2$. Necessarily $\pin(4)=2$. This is the generic procedure. However, the case of $n=24$ is different: $\hat{p}(24)=4$,  $\Phi(3,0)<0$, $\Phi(4,0)>0$, but $\Phi(4,1)<0$, and $\Phi(5,R)\geq 0$. Hence $\piphi(24)=5$. Since   $M_{24,4,1}(0)<0$ we  conclude that $\pin(24)=5$ as well.
Most cases in the table all coefficients of $\Phi(p,R)$ are positive, for $p\geq p^*$.
If $n=3$, $\hat{p}(n)=2$, $H(1,0)=-9$, hence $\Psi(1,0)<0$. Now $H(2,R)\geq 0$, $\Theta(2,R)\geq 0$. Hence $\piphi(3)=\pin(3)=2$. This is the usual case. If $n=47$ we have a special case. $\Psi(7,R)$ is positive for $R=0,1$ and all $R\geq 2$ but it is negative for some non-integer $R\in (1.2,1.4)$. However, $H(8,R)\geq 0$ and $\Theta(8,R)\geq 0$  for all $R\geq 0$. Moreover, $\piphi(47)=\pin(47)=7$ and not $8$ because only integer $R$  matters for P\'{o}lya's conjecture.
 \end{proof}
\section{The counting function of a dihedral lune via geodesic billiards\label{ApC}} The purpose of this appendix is to derive a counterpart of
Theorem~\ref{thmxeventually} for the counting function, following the approach developed in~\cite{sava}.
\begin{proposition}\label{2termsava} The counting function for the Dirichlet eigenvalues of 
$\lune{n}{\pi/p}$, $N(\lambda)=\#\{j:\lambda_j< \lambda\}$,  satisfies the following inequalities for each $\varepsilon>0$ when $\lambda \to +\infty$, \cite[(1.7.4), p. 50]{sava},
\begin{equation}\label{1.7.4}
\begin{array}{l}
c_0(\lambda-\varepsilon)^{n/2}+\left(\ci +Q(\lambda-\varepsilon))\lambda^{(n-1)/2} +\so(\lambda^{(n-1)/2}\right)\eqskip
\quad\quad\leq \quad N(\lambda)\quad \leq \quad c_0(\lambda+\varepsilon)^{n/2}+\left(\ci +Q(\lambda+\varepsilon))\lambda^{((n-1)/2}
+\so(\lambda^{(n-1)/2}\right),
\end{array}
\end{equation}
where
\[Q(\lambda)=
\frac{1}{{(2\pi)^n}}|\mathbb{L}^n_{\pi/p}||\mathbb{S}^{n-1}|\left(\left\{{1}-\sqrt{\lambda}+\frac{2p+n-1}{2}\right\}-\frac{1}{2}\right),\]
and
\begin{equation}\label{cosntantn}
\begin{array}{lll}
c_0=\fr{|S^*(\lune{n}{\pi/p})|}{n(2\pi)^n}, & \ci =-\fr{1}{4}\fr{\omega_{n-1}|\mathbb{S}^{n-1}|}{(2\pi)^{n-1}},&
|S^*(\lune{n}{\pi/p})|= |\lune{n}{\pi/p}||\mathbb{S}^{n-1}|=\fr{|\mathbb{S}^{n}|}{2p}|\mathbb{S}^{n-1}|.
\end{array}
\end{equation}

\end{proposition}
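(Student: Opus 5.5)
Since the Dirichlet spectrum of $\lune{n}{\pi/p}$ is known exactly (Lemma~\ref{lemmaA}), I would bypass the microlocal machinery and compute $N(\lambda)$ directly, checking afterwards that it has the form \eqref{1.7.4} predicted in \cite[(1.7.4)]{sava}. The eigenvalues are $K(K+n-1)$ with $K\ge p$. Write $\lambda=w(w+n-1)$, so $w=\sqrt{\lambda+(n-1)^2/4}-\tfrac{n-1}{2}=\sqrt\lambda-\tfrac{n-1}{2}+\bo(\lambda^{-1/2})$. Then $N(\lambda)=\sigma_{n,p}(K^*)$, where $K^*=\lceil w\rceil-1$ is the largest $K$ with $K(K+n-1)<\lambda$. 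Set $K^*=(m+1)p+r$. From Lemma~\ref{lemmaA} and the expansion \eqref{kmais2} of $k_+(K)$, $\sigma_{n,p}(K^*)$ is a polynomial in $m$ with leading terms $m^n f_{n-1}/n+m^{n-1}(f_{n-1}/2+f_{n-2}/(n-1))$. Using $f_{n-1}=p^{n-1}/(n-1)!$ and $f_{n-2}=p^{n-2}(r+n/2)/(n-2)!$, I would re-express it in the variable $K^*$. My target form is
\[
\sigma_{n,p}(K)=\frac{K^n}{p\,n!}+\frac{K^{n-1}}{p\,(n-1)!}\Big(\frac{1}{2}-\frac{p}{2}-\dots\Big)+\bo(K^{n-2}).
\]
The coefficient of $K^{n-1}$ should depend only on $(n,p)$ and not on $r$. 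This is consistent with \eqref{consecutivep-sum2}: averaging over $p$ consecutive $K$ gives $\sigma_{n,1}$.

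The next step is to substitute $K^*=w-\{w\}'$, where $\{w\}'\in(0,1]$ is the fractional shift determined by $\lceil w\rceil-1$. Expanding $(w-\{w\}')^n$ to second order and $w=\sqrt\lambda-\tfrac{n-1}{2}+\dots$ gives
\[
N(\lambda)=\frac{\lambda^{n/2}}{p\,n!}+\frac{\lambda^{(n-1)/2}}{p\,(n-1)!}\big(\alpha_{n,p}-\{\cdot\}\big)+\so(\lambda^{(n-1)/2}).
\]
Here the fractional part collects $-\sqrt\lambda$ together with the constant shifts $\tfrac{n-1}{2}$ and $p$ coming from the $K^{n-1}$ coefficient. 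This should produce exactly the argument $1-\sqrt\lambda+\tfrac{2p+n-1}{2}$.

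I would then match constants. For $c_0$: $|S^*|/(n(2\pi)^n)=|\mathbb{S}^n||\mathbb{S}^{n-1}|/(2pn(2\pi)^n)$, and the identity $\omega_n|\mathbb{S}^n|=2^{n+1}\pi^n/n!$ together with $|\mathbb{S}^{n-1}|=n\omega_n$ reduces this to $1/(p\,n!)$. For $\ci$ plus the mean $-\tfrac12$ part of $Q$: one checks that $|\lune{n}{\pi/p}||\mathbb{S}^{n-1}|/(2\pi)^n=1/(p(n-1)!)$ after the same identities, and that $\tfrac14\omega_{n-1}|\mathbb{S}^{n-1}|/(2\pi)^{n-1}$ accounts for the Dirichlet boundary term. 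Finally, since $N$ is exact, the two-sided bounds with $\lambda\pm\varepsilon$ follow trivially: $N$ is monotone, and $Q$ only jumps where the fractional part jumps. Shifting $\lambda$ by $\pm\varepsilon$ moves $\sqrt\lambda$ by $\so(1)$, so the bounds hold with the $\so$ terms absorbing the error.

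The main obstacle is the bookkeeping of the $K^{n-1}$ coefficient together with the fractional-part convention. The shifts $(n-1)/2$ from $w$, $p$ from the index offset, and the ceiling-versus-floor choice must combine into precisely $\{1-\sqrt\lambda+\tfrac{2p+n-1}{2}\}-\tfrac12$, which requires the $r$-independence above. I would verify that independence first, using \eqref{consecutivep-sum2} or direct computation for $n=2$, where $\sigma_{2,p}(K)$ is explicit from Section~\ref{C6.1}. After that I would fix the sign conventions by testing against $n=2$, $p=1$ and \eqref{**}.
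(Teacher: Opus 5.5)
Your route differs from the paper's. For general $n$ the paper never uses the explicit spectrum. It applies \cite[Theorem~1.7.6]{sava} after computing the total shift $\boldsymbol{q}=-(n+2p-1)\pi$ of the $2\pi$-periodic orbits: phase $-\pi$ at each of the $2p$ Dirichlet reflections, Maslov index $2(n-1)$ from the conjugate points at $t=\pi,2\pi$, and zero subprincipal shift. A direct check from the eigenvalues is done only for $n=2$.

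Most of your computation is right. The $K^{n-1}$ coefficient is indeed independent of $r$:
\[
\sigma_{n,p}(K)=\frac{K^n}{p\,n!}+\frac{(n-p)\,K^{n-1}}{2p\,(n-1)!}+O(K^{n-2}),
\]
so the bracket in your target form is $\frac12-\frac p2+\frac{n-1}{2}$. Set $\delta(\lambda):=w-\lceil w\rceil+1\in(0,1]$. Then, uniformly in $\lambda$,
\[
N(\lambda)=\frac{\lambda^{n/2}}{p\,n!}+\frac{\lambda^{(n-1)/2}}{p\,(n-1)!}\Big(\frac{1-p}{2}-\delta(\lambda)\Big)+O(\lambda^{(n-2)/2}).
\]
The constants also match: $c_0=\frac{1}{p\,n!}$, the prefactor of $Q$ is $nc_0=\frac{1}{p\,(n-1)!}$, and $c_1=-\frac{1}{2(n-1)!}$. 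The integer $p$ inside the fractional part plays no role, and the $-\frac p2$ is carried by $c_1$. What remains is to match $1-\delta(\lambda)$ with $\{\frac{n-1}{2}-\sqrt\lambda\}$.

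\textbf{The gap is your final step.} That matching is not uniform. $N$ jumps by $m_{n,p}(K)\sim K^{n-1}/(p\,(n-1)!)$ just after $\lambda=K(K+n-1)$, where $w=K$. $Q$ jumps just after $\sqrt\lambda-\frac{n-1}{2}=K$, i.e.\ at $\lambda=K(K+n-1)+\frac{(n-1)^2}{4}$. On the window in between, $N$ is at the top of its sawtooth and $c_1+Q$ is at the bottom, a discrepancy of size about $\lambda^{(n-1)/2}/(p\,(n-1)!)$.

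Precisely because $\lambda\mapsto\lambda+\varepsilon$ moves $\sqrt\lambda$ only by $O(\varepsilon\lambda^{-1/2})$, a fixed $\varepsilon<\frac{(n-1)^2}{4}$ does not push $Q(\lambda+\varepsilon)$ across its jump. Nor can $c_0(\lambda+\varepsilon)^{n/2}-c_0\lambda^{n/2}=o(\lambda^{(n-1)/2})$ compensate. So the upper inequality, read with a fixed shift in the eigenvalue variable, fails infinitely often. For example, take $n=2$, $p=2$, $K$ even, $\lambda=K(K+1)+0.01$ and $\varepsilon=0.1$. Then $N(\lambda)=K^2/4$, while the right-hand side is $K^2/4-K/2+o(K)$.

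The argument closes once the shift is taken in $\sqrt\lambda$, as in \cite[(1.7.4)]{sava}, whose spectral parameter is the square root of the eigenvalue. The paper's own $n=2$ check likewise uses shifts $\epsilon=\bar\lambda_{K_\lambda}-\lambda\in[0,2K_\lambda)$, which are of order one in $\sqrt\lambda$. So replace $\lambda\pm\varepsilon$ by $(\sqrt\lambda\pm\varepsilon)^2$. Then $c_0(\sqrt\lambda\pm\varepsilon)^n$ contributes $\pm\frac{\varepsilon}{p\,(n-1)!}\lambda^{(n-1)/2}$, which exactly offsets the slope of the sawtooth. With $x=\sqrt\lambda-\frac{n-1}{2}$, both inequalities reduce to
\[
\lceil x-\varepsilon\rceil-x-o(1)\;\le\;\lceil w\rceil-w\;\le\;\lceil x+\varepsilon\rceil-x,
\]
which hold for large $\lambda$ because $0<w-x=O(\lambda^{-1/2})<\varepsilon$.

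Corrected this way, your computation is a complete elementary proof for every $n$. It does not rely on the microlocal theorem covering the cornered boundary, and it independently confirms $\boldsymbol{q}$ modulo $2\pi$. The paper's route instead explains $Q$ through the billiard data.
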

The above function $Q(\lambda)$ is defined in~\cite[Theorem~1.7.6 (1.7.9)]{sava} as the total shift along each periodic trajectory $\Gamma=(x^*(t, y, \eta), \xi^*(t,y, \eta))$
given by the sum of three shifts \cite[Definition  1.7.2]{sava},
namely, the phase shifts $f_{r}$, $f_{c}$, and $f_{s}$, generated respectively by the reflections, 
the passage of $\Gamma$ through caustics, and by the subprincipal symbol. Writing
  \[
  \boldsymbol{q}(y, \eta)=f(\boldsymbol{T}(y,\eta); y, \eta)= f_r(\boldsymbol{T}(y,\eta); y, \eta)+f_c(\boldsymbol{T}(y,\eta); y, \eta)+ f_s(\boldsymbol{T}(y,\eta); y, \eta),
  \]
$Q$ is then given by the  integral formula
\begin{equation}\label{integral}
Q(\lambda)=
\int_{S^*(\mathbb{L}^n_{\pi/p})}\frac{\{\pi-\boldsymbol{q}(y,\eta)-\sqrt{\lambda} \boldsymbol{T}(y,\eta)\}_{2\pi}}{2\pi} dy \dbar\tilde{\eta},
\end{equation}
where $\{\tau\}_{2\pi}=\tau+2k\pi
\in [-\pi, \pi)$ for some $k\in \Z$,  $\boldsymbol{T}(y,\eta)
=2\pi$ is the $t$-period of the trajectory and
\begin{equation}\label{Qbounds}
\frac{\{\pi-q-\sqrt{\lambda} T\}_{2\pi}}{2\pi}=\left\{{1}-\frac{q }{2\pi}-\fr{\sqrt{\lambda} T}{2\pi}\right\}-\frac{1}{2}
=
\frac{1}{\pi}\sum_{k=1}^{\infty} \frac{\sin(k(\sqrt{\lambda} T +q))}{k}\in \left[-\frac{1}{2}, \frac{1}{2}\right].
\end{equation}
Here $\{\cdot\}$ denotes the fractional part of a real number which, for a negative number, we define by the  sawtooth wave function. $Q(\lambda)$ is then extended as a left-countinuous function. 

The  constant  on the second term of (\ref{2termweyl})  (Dirichlet case) is  in \cite[Theorem 1.6.1]{sava} related to
$\ci $.
We derive the three shifts for any $n$-dimensional dihedral lune in Section~\ref{3shift}, $n\geq 2$. When $n$ equals $2$ we obtain the above inequalities in the next section,
using formulas from Section~\ref{Sec Third term}, related to the  generalised two-term asymptotic formula given in Theorem~\ref{thmxeventually}. \\[2mm]
\subsection{ $n=2$}\label{C1} We consider the Dirichlet eigenvalues of $\lune{2}{\pi/p}$ with $p\geq 1$.
Then, given $K$ and $k$ in the corresponding $K$-chain, we have  $\lambda_k=\lambda_{k_-(K)}=\lambda_{k_+(K)}$
and so for $ \epsilon$ such that $0\leq \epsilon < 2K$ and $0<\epsilon \leq 2(K+1)$ in the following first and
second sets of identities,
\begin{gather}\label{mp}
\begin{array}{l}
N(\bar{\lambda}_K-\epsilon)=N(\bar{\lambda}_K)=k_-(K)-1\eqskip
N(\bar{\lambda}_K+\epsilon)=k_+(K)=k_-(K)-1 +m_{2,p}(K),
\end{array}
\end{gather}
respectively. Since $c_0>0$, for any real constants $\tilde{c}$
and $\varepsilon$, with $\varepsilon\geq 0$ (variable independent of $\epsilon$), for  $\lambda$ large enough the following inequalities hold,
\begin{eqnarray}\nonumber
(\lambda+\varepsilon)^{1/2}( c_0(\lambda+\varepsilon)^{1/2} +\tilde{c})+ \so(\lambda^{1/2})&\geq& 
\lambda^{1/2}( c_0\lambda^{1/2} +\tilde{c})+ \so(\lambda^{1/2})\eqskip
&\geq&(\lambda-\varepsilon)^{1/2}( c_0(\lambda-\varepsilon)^{1/2} +\tilde{c})+  \so(\lambda^{1/2}).
\label{trivial2}
\end{eqnarray}

Given $\lambda>p(p+1)$ let $K_{\lambda}\in \N$ such that
$\bar{\lambda}_{K_{\lambda}-1}<\lambda \leq \bar{\lambda}_{K_{\lambda}}$.  Then $\lambda=\bar{\lambda}_{K_{\lambda}}-\epsilon$, for a unique $0\leq \epsilon <2K$. Thus,  $N(\lambda)=N(\bar{\lambda}_{K_{\lambda}})$. 
Reciprocally, if $\lambda=\bar{\lambda}_K-\epsilon$ and $0\leq \epsilon< 2K$ then $K_{\lambda}=K$. If  
 $\lambda=\bar{\lambda}_K+\epsilon$ and $0<\epsilon\leq 2(K+1)$ then $\bar{\lambda}_K<\lambda\leq \bar{\lambda}_{K+1}$, with $K_{\lambda}=K+1$, and so on.
Note that, for $\epsilon$ fixed, or $\epsilon$ bounded from above by a linear map of $K$,  $\so((\lambda_k\pm \epsilon)^{1/2})=\so(\lambda_k^{1/2})$ holds when $k\to +\infty$ with $k$ in the corresponding $K$-chain.

When $n$ is $2$ we have
\begin{equation}\label{Qlambda}
Q(\lambda)=
\frac{1}{p}\left(\left\{1-\sqrt{\lambda} +\frac{2p+1}{2}\right\}-\frac{1}{2}\right),
\end{equation}
where for each real number $x$ we denote by $\{x\}=x-\lfloor x\rfloor$ its fractional part, with values on the interval $[0,1]$.
Then $Q(\lambda)\in [-\frac{1}{2p},\frac{1}{2p}]$ (see \ref{Qbounds} below) is a  sawtooth wave function with non constant sign.
 
 At points of discontinuity $Q(\lambda)$ is extended as left-continuous. Moreover, 
   $Q(\lambda)$ increases for  $\sqrt{\lambda}\in (j +\frac{1}{2}, j+1 + \frac{1}{2}]$, and $(j +\frac{1}{2})^2$ are the points of
   discontinuity points  with  $Q((j^++\frac{1}{2})^2)=-\frac{1}{2p}$,  and  $Q(((j+1)^{-}+\frac{1}{2})^2)= Q((j+1+\frac{1}{2})^2)=\frac{1}{2p}$.
\begin{proposition} Let $n=2$, $p\in \N$, and consider the  strict counting function $N(\lambda)$ on the Dirichlet eigenvalues of the dihedral lune $\lune{2}{\pi/p}$.
\begin{enumerate}[{\rm 1)}]
\item The following equality and inequalities hold  for all $k\to +\infty$ and $0\leq \epsilon<2K$, where $\lambda_k$ is in the $K$-chain
\begin{eqnarray*}
N(\lambda_k-\epsilon)=N(\lambda_k)
&=& c_0\lambda_k+ \left( \ci -\frac{1}{2p}\right)\lambda_k^{1/2}+
\so(\lambda_k^{1/2})\nonumber \\[1mm]
&\geq &c_0(\lambda_k-\epsilon)+ \left(\ci -\frac{1}{2p}\right)(\lambda_k-\epsilon)^{1/2}+\so(\lambda_k^{1/2})\label{N1}\\
&\geq &c_0(\lambda_k-\epsilon)+ \left(\ci -\frac{1}{2p}\right)\lambda_k^{1/2}+\so(\lambda_k^{1/2}).\nonumber
\end{eqnarray*}
 If $k\to +\infty$ and $0\leq \epsilon<2(K+1)$, where $\lambda_k$ is in the $K$-chain, the following equality and inequalities hold, 
 \begin{eqnarray*}
N(\lambda_k+\epsilon)=
N(\lambda_k)+ m_{2,p}(K) &=&  c_0\lambda_k+ \left( \ci +\frac{1}{2p}\right)\lambda_k^{1/2}+\so(\lambda_k^{1/2})\\
&\leq& c_0(\lambda_k+\epsilon)+\left( \ci +\frac{1}{2p}\right)(\bar{\lambda}_K+\epsilon)^{1/2}+\so((\bar{\lambda}_K)^{1/2})\\
&\leq& c_0(\lambda_k+\epsilon)+\left( \ci +\frac{1}{2p}\right)\bar{\lambda}_K^{1/2}+
\so(\bar{\lambda}_K^{1/2}).
\end{eqnarray*}
\item The following inequality holds for all non-negative $\varepsilon$ and positive $\lambda$
 \[Q((\lambda+\varepsilon)^2)-Q(\lambda^2)\quad \geq -2\varepsilon c_0=-\frac{\varepsilon}{p}.
\]
That is, \cite[(1.7.2), p. 50]{sava} holds for all $p\geq 1$ 
\item Since the Hamiltonian billiard of the dihedral lunes satisfies the simple reflection principle, then \eqref{2termsava} holds. These inequalities can also be proved directly from {\rm 1)}.
\end{enumerate}
  \end{proposition}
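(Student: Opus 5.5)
We will work directly with the explicit Dirichlet spectrum of $\lune{2}{\pi/p}$ from Section~\ref{C6.1}. There, for $K=(m+1)p+r$ with $0\le r\le p-1$, we have $\bar{\lambda}_K=K(K+1)$, $m_{2,p}(K)=m+1$ and
\[
k_-(K)-1=\tfrac{m(m+1)}{2}p+(m+1)r,\qquad k_+(K)=k_-(K)-1+(m+1).
\]
By \eqref{mp}, the values $N(\lambda_k\mp\epsilon)$ are exactly $k_-(K)-1$ and $k_+(K)$. The constants in \eqref{cosntantn} will first be specialised to $n=2$. This gives $c_0=|\lune{2}{\pi/p}|\,|\Se^1|/(2(2\pi)^2)=1/(2p)$, and $\ci$ is then a number depending only on $\omega_1$ and $|\Se^1|$.

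\textbf{Part 1).} We will express $k_-(K)-1$ and $k_+(K)$ as functions of $\bar{\lambda}_K$. Write $\sqrt{\bar{\lambda}_K}=K+\tfrac12+\bo(1/K)$, hence $m+1=(\sqrt{\bar{\lambda}_K}-r-\tfrac12)/p+\bo(1/K)$. Substituting into $k_-(K)-1$ and expanding gives $\bar{\lambda}_K/(2p)$ plus a multiple of $\sqrt{\bar{\lambda}_K}$ plus $\bo(1)$, uniformly in $r$, since $0\le r<p$ and therefore $r$ only enters the $\bo(1)$ term. Adding $m_{2,p}(K)=m+1=\sqrt{\bar{\lambda}_K}/p+\bo(1)$ raises the $\sqrt{\lambda}$-coefficient by exactly $1/p$. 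This matches the jump from $\ci-\tfrac1{2p}$ to $\ci+\tfrac1{2p}$.

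The step I expect to be the main obstacle is checking that the lower $\sqrt{\lambda}$-coefficient is literally $\ci-\tfrac1{2p}$. This is bookkeeping of the normalisation of $\ci$ (the factors $\omega_{n-1}$, $|\Se^{n-1}|$, $(2\pi)^{n-1}$) against the explicit $-(m+1)(p+1)/2$ term coming from $k_-(K)-1$. I will do it carefully and, if necessary, confirm it against the second Weyl term \eqref{2termweyl} via $|\partial\lune{2}{\pi/p}|=|\Se^1|$.

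Once the two equalities are established, the two chains of inequalities in 1) are immediate. Since $c_0>0$, the map $\lambda\mapsto c_0\lambda+\tilde c\lambda^{1/2}$ is eventually monotone, as recorded in \eqref{trivial2}. Moreover, $\epsilon\le 2(K+1)=\bo(\bar{\lambda}_K^{1/2})$ gives $(\bar{\lambda}_K\pm\epsilon)^{1/2}=\bar{\lambda}_K^{1/2}+\bo(1)$. Hence all the remainders stay $\so(\bar{\lambda}_K^{1/2})$, as observed after \eqref{trivial2}.

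\textbf{Part 2).} By \eqref{Qlambda},
\[
Q(\lambda^2)=\tfrac1p\bigl(\{c-\lambda\}-\tfrac12\bigr)\quad\text{with } c=1+\tfrac{2p+1}{2},
\]
so $Q((\lambda+\varepsilon)^2)-Q(\lambda^2)=\tfrac1p\bigl(\{x-\varepsilon\}-\{x\}\bigr)$ with $x=c-\lambda$. The fractional part has slope $+1$ and only upward jumps. Therefore $\{x-\varepsilon\}\ge\{x\}-\varepsilon$ for every $\varepsilon\ge0$. This holds either with no jump in $(x-\varepsilon,x]$, or, if a jump occurs, because $\{x-\varepsilon\}\ge0>\{x\}-\varepsilon$ when $\varepsilon>\{x\}$. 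The left-continuous convention at the jumps needs to be checked once. The resulting bound $-\varepsilon/p=-2\varepsilon c_0$ is exactly \cite[(1.7.2)]{sava}.

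\textbf{Part 3).} By Theorem~\ref{thmA}~3b) and Remark~\ref{zeromeasure}~2), every billiard trajectory is absolutely periodic with $t$-period $2\pi$ and obeys the simple reflection law. Together with 2), this places us in the setting of \cite[Theorem~1.7.6]{sava}, which yields \eqref{1.7.4}.

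For the direct route, I would take $\lambda$ with $\bar{\lambda}_{K-1}<\lambda\le\bar{\lambda}_K$. In this range $\sqrt{\lambda}$ lies in $(K-\tfrac12,K+\tfrac12]$ up to $\bo(1/K)$, and there $Q$ runs over $[-\tfrac1{2p},\tfrac1{2p}]$. The extreme values $\pm\tfrac1{2p}$ sit at the endpoints, which is exactly where 1) gives $N$ with coefficient $\ci\mp\tfrac1{2p}$. Between the endpoints, $N$ is constant while $c_0(\lambda\pm\varepsilon)$ moves, and it is this slack that the $\pm\varepsilon$ in \eqref{1.7.4} absorbs.
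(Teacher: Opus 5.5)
Your proposal is correct. Parts 1) and 3) follow the paper, and part 2) takes a different, cleaner route.

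For 1), the paper also subtracts $c_0\bar\lambda_K$ ($c_0=\frac1{2p}$) from $k_-(K)-1$ and $k_+(K)$, and shows that the quotient by $\bar\lambda_K^{1/2}$ tends to $\ci\mp\frac1{2p}$. The step you flag as the main obstacle is immediate. Since $\omega_1=2$ and $|\Se^1|=2\pi$, you get $\ci=-\frac12$. Your term $-(m+1)(p+1)/2$ is then $-\frac{p+1}{2p}\bar\lambda_K^{1/2}+\bo(1)=(\ci-\frac1{2p})\bar\lambda_K^{1/2}+\bo(1)$. For 3), the paper likewise invokes \cite[Theorem~1.7.6]{sava} and adds the same direct argument you sketch.

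For 2), the paper writes $\lambda=(j+\frac12+\epsilon)^2$ and $\lambda+\varepsilon=(j'+\frac12+\epsilon')^2$, and works through a case table. Your reduction to $\{x-\varepsilon\}\ge\{x\}-\varepsilon$ is shorter and stays in the variable of the statement. It closes in one line via $\{x-\varepsilon\}-\{x\}+\varepsilon=\lfloor x\rfloor-\lfloor x-\varepsilon\rfloor\ge 0$. That identity also settles your left-continuity check, because the standard fractional part in $x=c-\lambda$ is left-continuous in $\lambda$. What the paper's table buys is explicit increments of $Q$ across its jumps, of the kind used in Remark~\ref{newC1}. One wording slip: as a function of $y$, $\{y\}$ has slope $+1$ and \emph{downward} jumps. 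What you actually use is that $\lambda\mapsto Q(\lambda^2)$ has slope $-\frac1p$ and only upward jumps.

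There is one caveat on the direct route in 3), which you share with the paper. Using only that $N$ is constant on $(\bar\lambda_{K-1},\bar\lambda_K]$, together with $|Q|\le\frac1{2p}$, gives \eqref{1.7.4} only when the shift is at least the distance to the next eigenvalue. That distance is of order $\bar\lambda_K^{1/2}$, so this does not give a fixed $\varepsilon$. For fixed $\varepsilon$, what you need is the linear part of the sawtooth. While $N$ is constant, $c_0\lambda$ grows by about $\bar\lambda_K^{1/2}/p$, and $Q\,\lambda^{1/2}$ drops by the same amount. Hence $N=c_0\lambda+(\ci+Q)\lambda^{1/2}+\bo(1)$ except on the windows $\bigl(\bar\lambda_{K-1},(K-\frac12)^2\bigr]$ preceding each jump of $Q$, and the $\pm\varepsilon$ only has to bridge these windows. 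Because $(K-\frac12)^2=\bar\lambda_{K-1}+\frac14$, the shifts in \eqref{1.7.4} must be read in the variable $\sqrt\lambda$, as in \cite{sava} and in your part 2). With a fixed shift $\varepsilon<\frac14$ in $\lambda$ itself, the upper bound fails just above every $\bar\lambda_{K-1}$. Your main route through \cite[Theorem~1.7.6]{sava} is unaffected.
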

%
%
\begin{remark}\label{newC1}
  Note that, however,  $Q(\lambda)$ is not continuous, and then the quasi-Weyl asymptotic formula \cite[(1.7.5), p. 50]{sava}, $N(\lambda)=c_0\lambda+(\ci +Q(\lambda))\lambda^{1/2} +\so(\lambda^{1/2})$, does not hold.
 Furthermore,  from \cite[Corollary 1.7.9, p.\ 53, and (1.7.7), p.\ 50]{sava}, with $q=-(2p+1)\pi$, for
 $\varepsilon_j \to 0^+$, $j\in \N_0$, and for $K\to +\infty$, $\varepsilon_K\to 0^+$
\begin{gather*} \begin{array}{ll}
{\rm 4)}\hspace*{3mm} Q\left((-\frac{(2p+1)}{2}+j +\frac{1}{2} +\varepsilon_j)^2\right)- Q\left((-\frac{(2p+1)}{2}+j+ \frac{1}{2})^2\right) &=~~\frac{(1-\varepsilon_j)}{p}\eqskip
{\rm 5)}\hspace*{3mm} Q\left((-\frac{(2p+1)}{2}+j +\frac{1}{2}+\varepsilon_j)^2\right)- Q\left((-\frac{(2p+1)}{2}+j +\frac{1}{2} -\varepsilon_j)^2\right) &=~~\frac{(1-2\varepsilon_j)}{p}\eqskip
{\rm 6)}\hspace*{3mm}
N\left( K(K+1)+\varepsilon_K\right)-
 N\left( K(K+1)-\varepsilon_K\right) &\geq~~ \frac{1}{2p}
 \left(K(K+1)\right)^{1/2}
+\so\left( (K(K+1))^{1/2} \right)
\end{array}
\end{gather*}
In particular, conditions 1. to 6. in~\cite[p.\ 53]{sava} are satisfied and the spectrum contains clusters. See the last paragraph of the proof below for the proof
of the above statements.
\end{remark}

\begin{proof} 1) We show the first identity, for $k_-(K)-1=N(\lambda_k-\epsilon)=N(\lambda_k)$ (\ref{mp}, Sec.\ \ref{C6.1}), that is
\begin{eqnarray*}
\frac{m(m+1)}{2}p +(m+1)r
&=& \frac{1}{2p}\left( m^2p^2+ mp(2p+2r+1) + (p+r)(p+r+1)\right)\\
&&+\left( -\frac{1}{2} -\frac{1}{2p}\right)\left( m^2p^2+ mp(2p+2r+1) + (p+r)(p+r+1)\right)^{1/2}\\
&&
+\so\left(\left( m^2p^2+ mp(2p+2r+1) + (p+r)(p+r+1)\right)^{1/2}\right).
\end{eqnarray*}
We have to consider the following limit and see if it evaluates to zero when $m\to +\infty$
\begin{gather*}
\begin{array}{l}
{\ds \lim_{m\to +\infty}}\left(\fr{\frac{m(m+1)}{2}p +(m+1)r
-\fr{1}{2p}\left( m^2p^2+ mp(2p+2r+1) + (p+r)(p+r+1)\right)}{\left( m^2p^2+ mp(2p+2r+1) + (p+r)(p+r+1)\right)^{1/2}} + \left( \frac{1}{2}+\frac{1}{2p}\right)\right)\eqskip
\hspace*{1cm}={\ds \lim_{m\to +\infty}}
\left(\fr{m\left( -\frac{p}{2}-\frac{1}{2}\right)}{mp\left( 1+ \frac{1}{2}\left(\frac{(2p+2r+1)}{mp} + \frac{(p+r)(p+r+1)}{m^2p^2}\right)+\so\left(\frac{1}{m}\right)\right)}
+ \left( \frac{1}{2}+\frac{1}{2p}\right)
\right)\eqskip
\hspace*{1cm}= -\fr{1}{p}\left( \fr{p}{2}+\fr{1}{2}\right)
+ \left( \fr{1}{2}+\fr{1}{2p}\right)\eqskip \hspace*{1cm}=0.
\end{array}
\end{gather*}

Now we show the second identity, for $N(\lambda_k+\epsilon)$ (cf.\ {\ref{mp}}, Sec.\ \ref{C6.1}) given by
\begin{gather*}
\begin{array}{lll}
\frac{m(m+1)}{2}p +(m+1)(r+1)
&=& \frac{1}{2p}\left( m^2p^2+ mp(2p+2r+1) + (p+r)(p+r+1)\right)\eqskip
&&\hspace*{5mm}+\left( -\frac{1}{2} +\frac{1}{2p}\right)\left( m^2p^2+ mp(2p+2r+1) + (p+r)(p+r+1)\right)^{1/2}\eqskip
&& \hspace*{10mm}
+\so\left(\left( m^2p^2+ mp(2p+2r+1) + (p+r)(p+r+1)\right)^{1/2}\right).
\end{array}
\end{gather*}
That is, 
\begin{gather*}
\begin{array}{l}
{\ds \lim_{m\to +\infty}}\left(\fr{\frac{m(m+1)}{2}p +(m+1)(r+1)
-\frac{1}{2p}\left( m^2p^2+ mp(2p+2r+1) + (p+r)(p+r+1)\right)}{\left( m^2p^2+ mp(2p+2r+1) + (p+r)(p+r+1)\right)^{1/2}} + \left( \frac{1}{2}-\frac{1}{2p}\right)\right)\eqskip
\hspace*{1cm}={\ds \lim_{m\to +\infty}}
\left(\fr{m\left( -\frac{p}{2}+\frac{1}{2}\right)}{mp\left( 1+ \frac{1}{2}\left(\frac{(2p+2r+1)}{mp} + \frac{(p+r)(p+r+1)}{m^2p^2}\right)+o\left(\frac{1}{m}\right)\right)}
+ \left( \frac{1}{2}-\frac{1}{2p}\right)
\right)\eqskip
\hspace*{1cm}= \fr{1}{p}\left( -\fr{p}{2}+\fr{1}{2}\right)
+ \left( \frac{1}{2}-\frac{1}{2p}\right)\eqskip
\hspace*{1cm}=0
\end{array}
\end{gather*} 
The other inequalities follow trivially from (\ref{trivial2}).\\[1mm]
2) The properties of $Q(\lambda)$ are elementary. We may write $\lambda=(j+\frac{1}{2}+\epsilon)^2$, where  $0\leq \epsilon <1$, $j\in \mathbb{N}_0$, and so $Q(\lambda)=-\frac{1}{2p}$ if $\epsilon=0$ and $Q(\lambda)=\frac{(1-2\epsilon)}{2p}$ if $0<\epsilon<1$,
Then for any $\varepsilon\geq 0$, writing
$\lambda + \varepsilon= (j'+\frac{1}{2}+ \epsilon')^2$. It follows that
\[ Q(\lambda+\varepsilon)-Q(\lambda)= \left\{
\begin{array}{cll}
0& \mbox{if~} j'=j \mbox{~and~} \varepsilon=\epsilon'=\epsilon=0\\
\fr{1-\epsilon'}{p} & \mbox{if~} j'=j \mbox{~and~} \epsilon'>\epsilon=0\\
\fr{\epsilon-\epsilon'}{p} & \mbox{if~} j'=j \mbox{~and~} \epsilon'>\epsilon>0\\
0& \mbox{if~} j'\geq j+1 \mbox{~and~} \varepsilon=\epsilon'=\epsilon=0\\
\fr{1-\epsilon'}{p} & \mbox{if~} j'\geq j+1 \mbox{~and~} \epsilon'>\epsilon=0\\
-\fr{(1-\epsilon)}{p} & \mbox{if~} j'\geq j+1 \mbox{~and~} \epsilon>\epsilon'=0\\
\fr{\epsilon-\epsilon'}{p} & \mbox{if~} { j'\geq j+1}  \mbox{~and~} \epsilon'\geq \epsilon>0 \mbox{~or~}
 \epsilon\geq \epsilon'>0 
\end{array}\right.\]
For $j=j'$ we have $\varepsilon=\epsilon'(\epsilon' +1)-\epsilon(\epsilon+1)$. If $j'\geq j+1$ we have
$\varepsilon\geq (1+\epsilon'-\epsilon)(2 +\epsilon'+\epsilon)$, in particular $\varepsilon \geq (1-\epsilon)(2+\epsilon)\geq (1-\epsilon)$.
Thus, if either $j'= j$ or $j'\geq j+1$, we always have $\epsilon-\epsilon'\geq -\varepsilon$ provided $\epsilon \epsilon'\neq 0$. It follows that in all cases the inequality
$Q(\lambda +\varepsilon)-Q(\lambda)
\geq -\frac{\varepsilon}{p}$ holds. 
\\[1mm]
3) All the statements in this point are a consequence of the conditions in~\cite[Theorem 1.7.6., p.\ 52]{sava} being satisfied.
First we prove inequality (\ref{1.7.4}) from 1). 
From the introduction just before the Proposition,  we take unique $K_{\lambda}$ such that $\bar{\lambda}_{K_{\lambda}-1}<\lambda\leq \bar{\lambda}_{K_{\lambda}}$,  and so $\lambda=\bar{\lambda}_{K_{\lambda}}-\epsilon$ for some $0\leq \epsilon< 2K_{\lambda}$. 
We have $\bar{\lambda}_K=\lambda + \epsilon$. Then applying the first equality of 1), 
\begin{eqnarray*}
N(\lambda)&=& N(\bar{\lambda}_K)=c_0\bar{\lambda}_K+\left(-\frac{1}{2}-\frac{1}{2p}\right)\bar{\lambda}^{1/2}_K+ \so(\bar{\lambda}_K^{1/2})\\
&=&c_0(\lambda +\epsilon)+ \left(-\frac{1}{2}-\frac{1}{2p}\right)(\lambda+\epsilon)^{1/2}+ \so(\lambda^{1/2})\\
&\leq& c_0(\lambda +\epsilon)+ \left(-\frac{1}{2}-\frac{1}{2p}\right)\lambda^{1/2}+ \so(\lambda^{1/2})\\
&\leq & c_0(\lambda +\epsilon)+ \left(-\frac{1}{2}+Q(\lambda+\epsilon)\right)\lambda^{1/2}+ \so(\lambda^{1/2})
 \end{eqnarray*} 
 Note that if we take any  $\varepsilon> \epsilon$
 we may replace $\epsilon$ by $\varepsilon$ in the last inequality.
 
We take $\bar{\lambda}_K<\lambda \leq \bar{\lambda}_{K+1}$ and so $\lambda =\bar{\lambda}_K+\epsilon$ with $\epsilon <2(K+1)$ and using the second inequality of 1) we get, 
 \begin{eqnarray*}
N(\lambda )&=& N(\bar{\lambda}_K+\epsilon) =
N(\bar{\lambda}_K)+m_{2,p}(K)\\
&=&c_0 \bar{\lambda}_K+\left(-\frac{1}{2}+ \frac{1}{2p}\right)\bar{\lambda}^{1/2}_K+ \so(\bar{\lambda}_K^{1/2})\\
&=& c_0(\lambda -\epsilon)+ \left(-\frac{1}{2}+\frac{1}{2p}\right)(\lambda-\epsilon)^{1/2}+ \so(\lambda^{1/2})\\
&\geq &c_0(\lambda -\epsilon)+ \left(-\frac{1}{2}+ \frac{1}{2p}\right)\lambda^{1/2}+ \so(\lambda^{1/2})\\
&\geq & c_0(\lambda -\epsilon)+ \left(-\frac{1}{2}+Q(\lambda-\epsilon)\right)\lambda^{1/2}+ \so(\lambda^{1/2}).
 \end{eqnarray*} 
 Again we may replace the last inequality by taking $\varepsilon >\epsilon$.
The next two equalities with respect to the $Q$ follows trivially from the definition of $Q(\lambda)$.
The second one and (\ref{1.7.4}) implies the third inequality taking $j$ large enough so that $1/(2p)< (1-2\varepsilon_j)/p$.\\

The simple reflection is satisfied
 (see next subsection). Now we verify that conditions $1.$ to $5.$ of $Q(\lambda)$ given in \cite[p.53]{sava}  are satisfied.
From (\ref{Qlambda}) and (\ref{Qbounds}) given above,
we obtain $|Q(\lambda)|\leq \frac{1}{2p}= \frac{\pi}{\boldsymbol{T}}
\int_{S^*\lune{2}{p}}dy\dbar\tilde{\eta}$. Hence $1.$ is satisfied. It is also clear from the expression for $Q(\lambda)$ that this is oscillatory  and left-continuous, so $2.$ and $3.$ are also satisfied.  Condition $4.$ follows from 2).   5. is a consequence of 3) where
  $\Pi^a(\lambda)$ is defined with 
  $q(y,\eta)=-(2p+1)\pi$ constant.  
Finally, for $6.$  we observe first that if we denote by
  $\tilde{Q}(\tilde{\lambda})$ the function defined in \cite{sava}[ (1.7.9)]  (for the eigenvalue problem $\Delta v+\tilde{\lambda}^2 v=0$, with
  $\tilde{\lambda}=\sqrt{\lambda}$), then for the dihedral lunes our function $Q(\lambda)$ is given by $ Q(\lambda)=\tilde{Q}(\sqrt{\lambda})$,. While
  $\tilde{Q}$ is $1$-periodic in the variable
  $\tilde{\lambda}$,   $Q(\lambda)$  is oscillatory in the real variable $\lambda$ and periodic over the two discrete sequences
  $\lambda=(j^{\pm}+\frac{1}{2})^2$,  with $Q((j^{\pm}+\frac{1}{2})^2)=\mp\frac{1}{2p}$ holding for all $j\in \mathbb{Z}$. The proof of these six
conditions implies the statements in Remark~\ref{newC1}. More precisely, the first identity 4) of Remark~\ref{newC1}
corresponds to the identity in \cite[Corollary 1.7.9]{sava} where $C_q= |S^*\lune{2}{\pi/p}|=(2\pi)^2/p$ with
$T=2\pi$, and $n=2$, and inequalities 5) and 6) agree with inequalities~(1.7.6) and~(1.7.7) in~\cite{sava}  (note that
$\frac{(1-2\varepsilon)}{p}\geq \frac{1}{2p}$ for $\varepsilon\to 0^+$), confirming the existence of clusters of eigenvalues lying in an
$\varepsilon_K$-neighbourhood of $K(K+1)$.  Furthermore, Using (\ref{mp})  we get for any $K\ to +\infty$ and 
$\varepsilon_K\to 0^+$, 
\begin{eqnarray*}
\begin{array}{lcl}
N( K(K+1)+\varepsilon_K)-N (K(K+1)-\varepsilon_K)
&=& k_+(K)-k_-(K)+1\eqskip
&=&\fr{(K-r)}{p}\eqskip
&\geq & \fr{1}{2p}\sqrt{K(K+1)}
+ \so\left(\frac{(K-r)}{p} \right)
\end{array}
\end{eqnarray*} 
\end{proof}

\subsection{$n\geq 2$. Trajectory phase shifts~\cite{sava}}\label{3shift}
In what follows we describe in more detail certain concepts used in the derivation of an expression for $Q(\lambda)$ using the shifts of the periodic trajectories
at the boundary hitting points. We then apply~\cite[Theorem 1.7.6]{sava} to the case of $\mathbb{L}^n_{\pi/p}$ and interpret the two-term asymptotic inequalities
using the periodic trajectories.

The billiard trajectories of an admissible initial  geodesic in a dihedral lune $\lune{n}{\pi/p}$ are not necessarily unit-speed but preserve the Hamiltonian $h(y, \eta)=\|\eta\|_{g(y)}$, where $g(y)$ is the Riemannian metric of the tangent space of the lune at $y\in \lune{n}{\pi/p}$ and $|\eta|$ is the Hilbert Schmidt norm on  the cotangent space $T^*_y \lune{n}{\pi/p}$, that corresponds to the dual inner product with respect to $g(y)$. Let $\theta=\pi/p$ and a trajectory
$\Gamma(t,y, \eta)=(x^*(t,y, \eta), \xi^*(t,y,\eta))\in T^*(\lune{n}{\pi/p})$, where $(y,\eta)$ is the value of $\Gamma$ at $t=0$.
Then $\Gamma$  has period $\boldsymbol{T}=2\pi/|\eta|$ on the variable $t$ and hits the boundary at $2p$ points, points where it reflects and closes at the $2p$-th point.
Let $v^*(t,y,\eta)=\frac{\partial}{\partial t}x^*(t,y,\eta)\in T_{x^*(t,y,\eta)}(\lune{n}{\pi/p})$, not continuous at  boundary points.\\

From now on we only consider unit-speed trajectories.\\

 The initial segment $\Gamma(t,y,\eta)$ is of the form, 
\begin{equation}\label{inicialA}\begin{array}{l}
x^*(t,y,\eta)=\cos(t)y+\sin(t)\eta^{\sharp}\\
v^*(t,y,\eta)=\left(-\sin(t)y+\cos(t)\eta^{\sharp}\right)\\
\xi^*(t,y,\eta)=(v^*(t,y,\eta))^{\flat},
\end{array}
\end{equation}
where $\eta(u)=g(u, \eta^{\sharp})$ for $u\in T_y(\lune{n}{\pi/p})$, and
 $\xi^*(t,y,\eta)(u)=g(u, v^*(t,y, \eta))$
 for $u\in T_{x^*(t,y,\eta)}(\lune{n}{\pi/p})$,
  where $g$ is the metric of $\mathbb{S}^n$.

   We take initial values with $y\in \partial_0$ and $(y,\eta)\in S^*{\lune{n}{\pi/p}}$
  with  $\eta$  not tangent to $\partial \mathbb{L}^n_{\pi/p}$,  so  that $x^*(t, y, \eta)$ is an admissible geodesic.
After reaching $\partial_{\theta}$ at time $t_1$ the trajectories is of the  form
\begin{equation}\label{inicialB}
x^*(t,y,\eta)= \cos(t)Q(y)+ \sin(t) Q\left(\eta^{\sharp}\right), 
\end{equation}
 where $Q=S_{2\theta}$ (\ref{R2redution}) and so successively on $t_j$,  multiplying alternately $S_{2\theta}$ and  $S_0$, obtaining an isometry $Q$ that preserves the polar axis. All trajectories but one are periodic of period $2p$, and for those starting at $y$ a boundary point we have $2p$ hitting points and $2p$ reflections up to the closing point that is the initial point $y$. We are considering the $2p$-th reflection at the closing point $t=t_{2p}=t_0+2\pi$ in order to consider the right derivative at $y$ as a complete end of a cycle. Let $t_0=0<t_1<\ldots< t_{2p-1}<t_{2p}$ be the boundary reflection points of $x^*(t, y, \eta)$, where  $t_{2p}=t_{0}+\boldsymbol{T}$, (Lemma~\ref{thmAA} and Lemma~\ref{thmAB}). We do not lose generality by taking initial conditions at the boundary, since
 any trajectory starting at an interior point $y_{\delta}$ can always be written using a initial geodesic of the form  
 $x_{\delta}^*(s,y_{\delta},\eta_{\delta})=
 x^*(s+\delta, y, \eta)$ with $\delta$
 sufficiently small. 
The set of admissible unit-speed absolutely periodic trajectories is $S^*(\mathbb{L}^n_{\pi/p})$ up to a zero measure set. Then we have $\boldsymbol{T}=2\pi$, except for the equatorial trajectory.

Geodesic billiards satisfy the strong simple reflection given in \cite[Definition 1.3.32]{sava} with $h(x,\xi)=(g^{ij}(x)\xi^i\xi^j)^{1/2}$, since the non-admissible geodesics form a set of zero  measure (see Remark~\ref{zeromeasure} 1)). The  boundary is not smooth, but this is not a problem if  $\theta< \pi$ (cf.\ \cite{va}), that is the  case $p\geq 2$,  neither a problem for $p=1$, that is  $\theta= \pi$, since the boundary is smooth.
Following \cite[Definition 1.7.2, p.\ 52]{sava},  for each $T$-admissible trajectory $\Gamma(t,y,\eta)$, for $0\leq t\leq T$,
some phase shifts are defined as follows. Consider the following functions. \\[1mm]
\begin{enumerate}[1)]
\item $f_r(\boldsymbol{T},y,\eta)$ the total phase shift generated
 by the reflections of the trajectory $\Gamma$ , that is given by the sum of the phase shifts generated by
the reflections on $\Gamma$;
\item If $x_{\eta}(\boldsymbol{T}, y, \eta)=0$ then define
$f_c(T; y, \eta)=-\alpha_{\Gamma} \pi/2$ the phase shift generated by the passage of the trajectory $\Gamma$ through the caustics  where $\alpha_{\Gamma}$ is the Maslov index.
\item $f_s(\boldsymbol{T}, y, \eta)=-\frac{1}{2}
\int_0^T A_{sub}(x^*(t; y, \eta), \xi^*(t,y,\eta))dt$ is the phase shift of the subprincipal symbol $A_{sub}$ of $-\Delta u$,
$ A(x,\xi)=g^{ij}(x) \xi_i\xi_j+ \sqrt{-1}g^{ij}(x)\Gamma^k_{ij}(x)\xi_k= A_{pr}(x,\xi)+ \sqrt{-1}A_{sub}(x,\xi).$
\end{enumerate}
Explicitly, we have the following.
(1) An auxiliary one-dimensional eigenvalue problem associated to the trajectories $\Gamma(t,y,\eta)$ is defined as follows on a Riemannian domain $\Omega$.  Given $y\in \partial \Omega$ a smooth boundary point,  we take a local  coordinate system
 $x'$ of $ \partial\Omega$ normal at  $y$, and set $x_n(\cdot)=d(\cdot,\partial\Omega)$ defining a Fermi coordinate system $x=(x', x_n)$ on $\Omega\cup \partial \Omega$ ($y$ is identified
 with  $(y,0)=x(y)=(x'(y),0)$). Hence $g_{ij}(y)=\delta_{ij}$ for all $i,j$, and $\nabla x_n=\partial/\partial_{x_n}$ is a local unit normal to $\partial \Omega$ (\cite{gray}), and $dx_n$
 is the co-normal. The auxiliary eigenvalue problem at fixed $(y,\eta')\in T^*\partial \Omega$, where $\eta'$ in these coordinates is the orthogonal projection of $\eta^{\sharp}$ on the
 tangent space of the boundary, given by
 $A_{pr} \left(y,0,\eta', -i{\partial}/{\partial x_n}\right)u(t)
 =-u''(t)+|\eta'|^2u(t)=\nu u(t)$, with $ u(0)=0$ and for $t=x_n\in [0, +\infty)$. There are no true eigenvalues, i.e. there are no $L^2$ solutions,
 and so its counting function vanishes $N^+=0$. However  the continuous spectrum is given by $[\nu_1^{st}, +\infty)$, where $\nu_1^{st}=|\eta'|^2=\min_{\xi_n}A_{pr}(y,0,\eta',\xi_n)$ is a threshold of the symbol in the sense that $A_{pr}(\xi_n)=\nu_1^{st}$  has a multiple real $\xi_n$-root, while  each $\nu> \nu^{st}_1$ has multiplicity one with a reflection matrix  $R(\nu)$ that satisfies   $\mathrm{arg}(R(\nu))= -\pi$.
 (cf. \cite[p.\ 28-32, (1.6.15) p.\ 43, Example 1.6.14  p.44-46]{sava}).

On $\lune{n}{\pi/p}$,  $\eta=(\eta',\eta_n)$ where $\eta_n\neq 0$ by  admissibility, hence $|\eta'|<1$, and  we have $\mathrm{arg}(R(1))=-{\pi}$.  The total phase shift $f_r(\boldsymbol{T},y,\eta)$ is the sum of the phase shifts generated by the reflections of $T(t)$, that are $2p$ in number.  Hence, 
\begin{equation}\label{fr}
f_r(\boldsymbol{T},y,\eta)= 2p\left(\mathrm{arg}( R(1))\right)=-2p\pi.
 \end{equation}
\noindent
(2) Now we compute the Maslov index $\alpha_{\Gamma}$ and $f_c(\boldsymbol{T}, y, \eta)$. On each geodesic segment $t\in [t_j, t_{j+1}]$, we have  
$x^*(t,y,\eta)= \cos(t)Q(y)+\sin(t)Q(\eta^{\sharp})$, where $Q=Q_j$ is an isometry of $\mathbb{R}^{n+1}$ that fixes the polar axis. We take an  orthonormal basis $e_k\in T_y\Se^n$, $k=1,\ldots, n$ where $e_n=\eta^{\sharp}$. The derivative of $x^*(t,y, \cdot)$ at $\eta$ is zero in the $e^{\flat}_n$ direction. More precisely, any variation of $\eta^{\sharp}$ by unit elements in $T_y\Se^n$ is of the form  $\eta^{\sharp}_s=(\eta^{\sharp} + se_k)/|\eta^{\sharp} + se_k|$, $s\in \R$, with $\eta_0=\eta $, and  $V_k:=\frac{d}{ds}|_{s=0} \eta^{\sharp}_s=
e_k- g(e_k,\eta^{\sharp})\eta^{\sharp}$, and  this is $e_k$ for $k\neq n$ and zero for $k=n$.   Consequently, 
\begin{equation}
d x^*(t,y,\cdot)(V_k^{\flat})=\frac{d}{ds}|_{s=0}x^*(t,y, \eta_s)= \sin(t)\, Q(e_k^* -g(e_k, \eta^{\sharp})\eta), 
\end{equation} 
we conclude  $\mathrm{rank}\left(\frac{\partial}{\partial \eta_k} x^*(t,y,\eta)\right) $ is $n-1$ for $t\neq m\pi$, and $0$ for $t=m\pi$. 
The  conjugate points are the points $t$ where $\mathrm{rank}\left(\frac{\partial}{\partial \eta_k} x^*(t,y,\eta)\right) $ is $< n-1$.
By definition (cf.~\cite[Definition 1.7.2, p.\ 52]{sava}),  $f_c(\boldsymbol{T},y,\eta)=-\alpha_{\Gamma}\pi/2$, where $\alpha_{\Gamma}$ is the Maslov index that is the number of
conjugate points on $(0,\boldsymbol{T}]$ counted with their multiplicities~\cite[Example 1.5.8., p.\ 36]{sava}, and so they are at $t=\pi$ and $t= 2\pi=\boldsymbol{T}$, each one with
multiplicity given by the formula $n-1-\mathrm{rank}=n-1$, \cite[p.\ 34]{sava}. Therefore,
\begin{equation}\label{fc}
\alpha_{\Gamma}=2(n-1), \quad\quad f_c(\boldsymbol{T}, y,\eta)=-\alpha_{\Gamma}\frac{\pi}{2}= -(n-1)\pi.
\end{equation}

\noindent
(3) Next consider the stereographic projection
$x: \Se^n\to \mathbb{R}^n$, $x(y)= \frac{y'}{(1-y_0)}=z$ with $x^{-1}(z)=\left(\frac{|z|^2-1}{|z|^2+1}, \frac{2z}{(1+|z|^2)}\right)$.  We have a coordinate chart  $(x,\xi):T^*\Se^n\to \R^n\times (\R^n)^*$ given by $(x,\xi)(y,\eta)=(x^i(y), \xi_i(y,\eta))$ where $\eta_i=\xi_i(y,\eta)$ is the components of $\eta$ in the basis $dx^i(y)$, that is $\eta=\sum_i\eta_idx^i(y)$, 
$\eta_i=\eta\left(\partial_{x_i}(y)\right)$ where $\partial_{x_i}(y):=\frac{\partial}{\partial x_i}(y)= dx^{-1}(x(y))(\epsilon_i)$, and $dx^i(y)\circ dx^{-1}(x(y))=\epsilon_i^*\in (\mathbb{R}^n)^*$. 
Denoting by  $g$ and $\langle, \rangle$  the Riemannian metrics of $\Se^n$, and $\R^n$ respectively,   $x$ is a conformal map  with coefficient of conformality $ \frac{4}{(1+|z|^2)^2}$, that is
\begin{eqnarray*}
g(dx^{-1}(z)(X), dx^{-1}(z)(Y)) &=&\frac{4}{(1+|z|^2)^2}\langle X,Y\rangle,
\end{eqnarray*}
and so, $e_i(y)=\frac{1+|x(y)|^2}{2}\partial_{x_i}(y)$ is a o.n. basis of $T_{y}\Se^n$. We use the following notations,   $g_{ij}(x(y))=g\left( \partial_{x_i}(y),  \partial_{x_i}(y)\right)=\frac{4}{(1+ |x(y)|^2)^2}\delta_{ij}$, 
and for  short $x^*=x^*(t,y,\eta)$,
{$\xi^*=\xi^*(t,y,\eta)\in T^*_{x^*}\mathbb{S}^n$ with corresponding dual
$\xi^{*\sharp}\in T_{x^*}\mathbb{S}^n$}. So,  
\[(x(x^*), \xi(x^*, \xi^*))=\left(\frac{ x_1^*}{1-x^*_0}, \ldots \frac{x_n^*}{1-x^*_0}, \xi^*(\partial_{x_1}(x^*)), \ldots, \xi^*(\partial_{x_n}(x^*)\right),\]
with $\xi^*\left(\partial_{x_k}(x^*)\right)=\frac{2}{1+|x(x^*)|^2}\xi^*(e_k(x^*))$. 
The trajectories preserve the  norm $|\xi^*|$ along the time  $t$ , and so $|\eta|=|\xi^*|$. 
 The Christoffel symbols are given by
\[\Gamma^i_{ii}(x)=\Gamma^j_{ij}(x)=\Gamma^j_{ji}(x)= -\frac{2 x^i}{1+|x|^2}~\mbox{~~if}~i\neq j, \quad\quad \Gamma^k_{ii}=\frac{2 x^k}{1+|x|^2}~\mbox{~~if}~i\neq k.\]
Hence, and using 
$dx^{-1}(z)(z) = \left(\frac{4|z|^2}{(|z|^2+1)^2}, \frac{ 2z(1-|z|^2)}{(|z|^2+1)^2}\right)$, we have 
\begin{eqnarray*}
\lefteqn{A_{sub}(x^*(t,y,\eta), \xi^*(t,y,\eta)) = g^{ij}(x(x^*))\Gamma^{k}_{ij}(x(x^*)) \xi^*(\partial_{x_k}(x^*))}\\
& =& \frac{(1+|x(x^*)|^2)^2}{4}{\sum_{k,s}} \Gamma^k_{ss}(x(x^*))\xi^*(\partial_{x_k}(x^*))\\
& =&  \frac{(1+|x(x^*)|^2)^2}{4}\sum_s\left(\sum_{k\neq s} \frac{2 x^k(x^*)}{1+|x^*|^2}\xi^*(\partial_{x_k}(x^*))-\frac{2 x^s(x^*)}{1+|x^*(x^*)|^2}\xi^*(\partial_{x_s}(x^*)))\right)\\
&=&(n-2)  \frac{(1+|x(x^*)|^2)}{2}\sum_{k=1}^n (x^k(x^*)\xi^*(\partial_{x_k}(x^*))\\
&=& (n-2)\sum_k {g((\xi^{*})^{\sharp }, e_k(x^*))}\langle x(x^*), \epsilon_k\rangle \quad\quad  
\\
&=& (n-2) \frac{(1+|x(x^*)|^2)^2}{4}g((\xi^{*})^{\sharp }, e_k(x^*))g\left( dx^{-1}(x(x^*))(x(x^*)), dx^{-1}(x(x^*))(\epsilon_k)\right)\\
&=&(n-2) \frac{(1+|x(x^*)|^2)}{2}g((\xi^{*})^{\sharp }, e_k(x^*))g\left( dx^{-1}(x(x^*))(x(x^*)), e_k(x^*)\right)\\
&=&(n-2) \frac{(1+|x(x^*)|^2)}{2}g\left((\xi^*)^{\sharp}~, ~ \left(\frac{4|x(x^*)|^2}{(1+|x(x^*)|^2)^2},
2x(x^*)\frac{(1-|x(x^*)|^2)}{(1+|x(x^*)|^2)^2}\right)\right)\\
&=&(n-2)~ g\left(v^*, \left(\frac{|x(x^*)|^2}{(1+|x(x^*)|^2)}, 2x(x^*)\frac{(1-|x(x^*)|^2)}{(1+|x(x^*)|^2)}\right)\right).
\end{eqnarray*}
 Let us assume $n\geq 3$ and $|\eta|=1$.
Note that $x^*_0\neq 1$ by admissibility. We have 
\[ |x(x^*)|^2=\frac{1+x^*_0}{1-x^*_0}, \quad 2x(x^*)=\frac{2 (x^*)'}{1-x^*_0}, \quad 1+|x(x^*)|^2=\frac{2}{1-x^*_0}, \quad \frac{1-|x(x^*)|^2}{1+|x(x^*)|^2}=-x^*_0\]
\begin{eqnarray*}
f_s(\boldsymbol{T},y,\eta)&=&-\frac{(n-2)}{2}\int_0^{2\pi} 
g\left(v^*, \left(\frac{|x(x^*)|^2}{(1+|x(x^*)|^2)}, 2x(x^*)\frac{(1-|x(x^*)|^2)}{(1+|x(x^*)|^2)}\right)\right)dt\nonumber\\
&=&-\frac{(n-2)}{2}\int_0^{2\pi} 
g\left(v^*, \left(\frac{1+x^*_0}{2}, -\frac{x^*_0}{1-x^*_0}2(x^*)'\right)\right)dt
\end{eqnarray*}
We take the reflection points $t_1, \ldots, t_{2p}=2\pi$., ordered as  $0=t_0<t_1<\ldots<t_{2p-1}<t_{2p}$.  Then
\begin{gather*}
f_s(\boldsymbol{T},y,\eta)=\sum_{j=0}^{2p-1}-\frac{(n-2)}{2}\int_{t_j}^{t_{j+1}}
 g\left( -\sin(t)Q_j(y)+\cos(t)Q_j(\eta^{\sharp})~,~ \right.\\
 \left.\left(
 \frac{(1+\cos(t)Q_i(y)_0 + \sin(t)Q_j(\eta^{\sharp})_0)}{{2}},
 -\frac{(\cos(t)Q_j(y)_0 + \sin(t)Q_j(\eta^{\sharp})_0)}{(1-\cos(t)Q_j(y)_0-\sin(t)Q_j(\eta^{\sharp})_0)}2
 \left(\cos(t)Q_j(y)'+\sin(t)Q_j(\eta^{\sharp})'\right)\right)
 \right) dt\\[1mm]
 = \sum_{j=0}^{2p-1}-\frac{(n-2)}{2}\int_{t_j}^{t_{j+1}}
 \left[\left( -\sin(t)Q_j(y)_0+\cos(t)Q_j(\eta^{\sharp})_0\right)
 \times \left(
 \frac{(1+\cos(t)Q_j(y)_0 + \sin(t)Q_j(\eta^{\sharp})_0)}{{2}}\right)\right.\\
 \left.+ \left\langle  \left(-\sin(t)Q_j(y)'+\cos(t)Q_j(\eta^{\sharp})'\right),\left(
 -\frac{(\cos(t)Q_j(y)_0 + \sin(t)Q_j(\eta^{\sharp})_0)}{(1-\cos(t)Q_j(y)_0-\sin(t)Q_j(\eta^{\sharp})_0)} \times 2
 \left(\cos(t)Q_j(y)'+\sin(t)Q_j(\eta^{\sharp})'\right)\right)\right\rangle \right]dt
\\[1mm]
=\sum_{j=0}^{2p-1}-\frac{(n-2)}{2}\int_{t_j}^{t_{j+1}}
\left[ \left( -\sin(t)Q_j(y)_0+\cos(t)Q_j(\eta^{\sharp})_0\right)
 \times \left(
 \frac{(1+\cos(t)Q_j(y)_0 + \sin(t)Q_j(\eta^{\sharp})_0)}{{2}}\right)\right.\\[1mm]
\left. -2\left(\frac{(\cos(t)Q_j(y)_0 + \sin(t)Q_j(\eta^{\sharp})_0)}{(1-\cos(t)Q_j(y)_0-\sin(t)Q_j(\eta^{\sharp})_0)}\right)  \left\langle  \left(-\sin(t)Q_j(y)'+\cos(t)Q_j(\eta^{\sharp})'\right),\left(
\cos(t)Q_j(y)'+\sin(t)Q_j(\eta^{\sharp})'\right)\right\rangle\right] dt\\[1mm]
=\sum_{j=0}^{2p-1}-\frac{(n-2)}{2}\int_{t_j}^{t_{j+1}}
 \left[\left( -\sin(t)Q_j(y)_0+\cos(t)Q_j(\eta^{\sharp})_0\right)
 \times \left(
 \frac{(1+\cos(t)Q_j(y)_0 + \sin(t)Q_j(\eta^{\sharp})_0)}{{2}}\right)\right.\\[1mm]
 \left.+2\left(\frac{(\cos(t)Q_j(y)_0 + \sin(t)Q_j(\eta^{\sharp})_0)}{(1-\cos(t)Q_j(y)_0-\sin(t)Q_j(\eta^{\sharp})_0)}\right)    \left(-\sin(t)Q_j(y)_0+\cos(t)Q_j(\eta^{\sharp})_0\right)\times\left(
\cos(t)Q_j(y)_0+\sin(t)Q_j(\eta^{\sharp})_0\right)\right] dt\\[1mm]
=\sum_{j=0}^{2p-1}-\frac{(n-2)}{2}\int_{t_j}^{t_{j+1}}
 \left( -\sin(t)Q_j(y)_0+\cos(t)Q_j(\eta^{\sharp})_0\right)
 \times {\frac{1+3\left(\cos(t)Q_j(y)_0 + \sin(t)Q_j(\eta^{\sharp})_0\right)^2}{2(1-\cos(t)Q_j(y)_0 - \sin(t)Q_j(\eta^{\sharp})_0)}}dt.
\end{gather*}
Set for $t\in [t_j,t_{j+1}]$, 
$ F_j(t)=(Q_j(\gamma(t)))_0=
 \cos(t)Q_j(y)_0 +\sin(t)Q_j(\eta^{\sharp})_0$
  that never takes the value $1$ by admissibility.
Using the representation  (\ref{representation}) of $\gamma(t)$ an the fact that $Q_j$ preserves $w(t)$,  we see that $F_j(t)=w(t)_0$ for all $j$.
 We are thus lead to
\begin{eqnarray*}
f_s(\boldsymbol{T},y,\eta)  	
 &=&-\frac{(n-2)}{4}\sum_{j=0}^{2p-1}\int_{t_j}^{t_{j+1}}\frac{ F'_j(t)(1+3F_j^3(t))}{(1-F_j(t))}dt\\
 &=&-\frac{(n-2)}{4}\sum_{j=0}^{2p-1}\left.\left( 6(1-F_j(t))+\frac{3}{2}(1-F_j(t))^2-4\log(1-F_j(t))\right)\right]^{t_{j+1}}_{t_j}\nonumber\\ 
 &=& -\frac{(n-2)}{4}\sum_{j=0}^{2p-1}\left.\left( 6(1-w(t)_0)+\frac{3}{2}(1-w(t)_0)^2-4\log(1-w(t)_0)\right)\right]^{t_{j+1}}_{t_j}\nonumber\\
 &=&  -\frac{(n-2)}{4}\sum_{j=0}^{2p-1}\left( 6(1-w(t_{j+1})_0)+\frac{3}{2}(1-w(t_{j+1})_0)^2-4\log(1-w(t_{j+1})_0)\right.\nonumber\\
&& \quad\quad\quad\quad\quad\quad\left.-6(1-w(t_{j})_0)+\frac{3}{2}(1-w(t_{j})_0)^2-4\log(1-w(t_{j})_0)\right)\nonumber\\
&=&-\frac{(n-2)}{4}\left( 6(1-w(t_{2p})_0)+\frac{3}{2}(1-w(t_{2p})_0)^2-4\log(1-w(t_{2p})_0)\right.\nonumber\\
&&  \quad\quad\quad\quad\quad\quad\left.-6(1-w(t_{0})_0)+\frac{3}{2}(1-w(t_{0})_0)^2-4\log(1-w(t_{0})_0)\right)\nonumber\\
&=& 0 \nonumber
\end{eqnarray*}
\begin{remark}
We considered the points  $0=t_0<t_1<\ldots<t_{2p-1}<t_{2p}=t_{0}+2\pi$ corresponding to orbits whose first point is on the boundary, while
in~\cite[p.33]{sava}) $t_{0}$ is taken to be positive and the initial point does not belong to the boundary. However the two approaches
are equivalent in this instance, as trajectories are periodic of period $2\pi$ and so the integrals involved have the same value on any interval
of length $2\pi$.
\end{remark}

 We are ready to derive the expressions of $\boldsymbol{q}(y,\eta)$ and $Q(\lambda)$ in the next lemma.
\begin{lemma} The total shift  of a $n$-dimensional dihedral lune  $\lune{n}{\pi/p}$ 
is given by the sum of the 3 shifts, with $f_s(2\pi, y,\eta)=0$, 
\begin{equation}\label{boldq}
\boldsymbol{q}(y, \eta)=f_r(2\pi, y,\eta)+ f_c(2\pi,y,\eta)=-2p\pi -(n-1)\pi=-(n+2p-1)\pi.
\end{equation}
Moreover,
\begin{eqnarray}\label{Q2}
Q(\lambda)&=& {\frac{1}{p}}\left(\left\{{1}-\sqrt{\lambda}+\frac{2p+1}{2}\right\}-\frac{1}{2}\right),
\hspace*{3cm} \mbox{ if~}n=2\\
\label{Qn}
Q(\lambda)&=&\frac{1}{(2\pi)^n} {\frac{|\mathbb{S}^n|}{2p}}|\mathbb{S}^{n-1}|\left(\left\{{1}-\sqrt{\lambda}+\frac{2p+n-1}{2}\right\}-\frac{1}{2}\right)  \mbox{, if~}n\geq 2.
\end{eqnarray}
\end{lemma}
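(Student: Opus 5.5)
The formula for $\boldsymbol{q}(y,\eta)$ in \eqref{boldq} is the sum of the three phase shifts computed just before the statement, so the plan is first to collect them and then to feed $\boldsymbol{q}$ into the integral formula \eqref{integral}. By \eqref{fr} the $2p$ reflections contribute $f_r=-2p\pi$, since each has $\mathrm{arg}(R(1))=-\pi$. By \eqref{fc} the Maslov index is $2(n-1)$, which gives $f_c=-(n-1)\pi$. The telescoping computation shows $f_s=0$: along every segment $F_j(t)=w(t)_0$, and the endpoint values at $t_0$ and $t_{2p}=t_0+2\pi$ coincide. Adding these gives $\boldsymbol{q}=-(n+2p-1)\pi$. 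The value is the same for every admissible unit covector, and these form a full-measure subset of $S^*(\lune{n}{\pi/p})$ by Remark~\ref{zeromeasure}.

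Next I substitute into \eqref{integral}, using $\boldsymbol{T}(y,\eta)=2\pi$ off a null set: the equatorial orbit and the non-admissible geodesics both have measure zero by Lemma~\ref{elementar}~3). The integrand then no longer depends on $(y,\eta)$. By \eqref{Qbounds} it equals
\[
\left\{1-\frac{\boldsymbol{q}}{2\pi}-\frac{\sqrt{\lambda}\,2\pi}{2\pi}\right\}-\frac12=\left\{1-\sqrt{\lambda}+\frac{2p+n-1}{2}\right\}-\frac12 .
\]
The integral therefore reduces to this constant times the measure of $S^*(\lune{n}{\pi/p})$ with respect to $dy\,\dbar\tilde{\eta}$. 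Following the normalisation of \cite{sava}, this measure is $(2\pi)^{-n}|\lune{n}{\pi/p}||\mathbb{S}^{n-1}|$, and \eqref{areap} gives $|\lune{n}{\pi/p}|=|\mathbb{S}^n|/(2p)$. This yields \eqref{Qn}.

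For \eqref{Q2} I specialise to $n=2$. There $|\mathbb{S}^2||\mathbb{S}^1|/(2p(2\pi)^2)=4\pi\cdot2\pi/(8\pi^2p)=1/p$, and the shift is $(2p+1)/2$, which agrees with \eqref{Qlambda}. One check is needed because the $f_s$ computation assumed $n\geq3$. When $n=2$ the factor $(n-2)$ in $A_{sub}$ is zero, so $f_s=0$ holds trivially.

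The only delicate point is the sign and normalisation convention: whether $\dbar\tilde{\eta}$ carries the factor $(2\pi)^{-n}$ and which measure it induces on the cosphere bundle. I would fix this by comparing with $c_0$ in \eqref{cosntantn}, which uses the same normalisation, and by matching the $n=2$ constant $1/(2p)$ obtained independently in Section~\ref{C1}.
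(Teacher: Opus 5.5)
Your proposal is correct and follows the paper's proof. You collect $f_r=-2p\pi$, $f_c=-(n-1)\pi$ and $f_s=0$ from the preceding computations, note that the integrand in \eqref{integral} is then the constant $\{1-\sqrt{\lambda}+\frac{2p+n-1}{2}\}-\frac12$ off a null set, and multiply by $(2\pi)^{-n}|\lune{n}{\pi/p}||\mathbb{S}^{n-1}|$ with $|\lune{n}{\pi/p}|=|\mathbb{S}^n|/(2p)$. Your observation that $f_s=0$ holds trivially for $n=2$, because of the factor $(n-2)$ in $A_{sub}$, is a worthwhile addition, since the paper computes $f_s$ only under the assumption $n\geq 3$.
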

\begin{proof}
For any $n\geq 2$, 
\begin{eqnarray*}
\frac{\{\pi-\boldsymbol{q}-\sqrt{\lambda} \boldsymbol{T}\}_{2\pi}}{2\pi}&=&
\left\{ 1-\sqrt{\lambda} +\frac{2p+n-1}{2}\right\}-\frac{1}{2}.
\end{eqnarray*}
Therefore,
\begin{eqnarray*}
Q(\lambda)&=&
\int_{S^*\mathbb{L}^n_{\pi/p}}\left(\left\{{1}-\sqrt{\lambda}+\frac{2p+n-1}{2}\right\}-\frac{1}{2}\right) dy{\dbar\tilde{\eta}}\eqskip
&=&\frac{1}{{(2\pi)^n}}|S^*\mathbb{L}^n_{\pi/p}|\left(\left\{{1}- \sqrt{\lambda} +\frac{2p+n-1}{2}\right\}-\frac{1}{2}\right)\nonumber \eqskip
&=&
\frac{1}{{(2\pi)^n}}|\mathbb{L}^n_{\pi/p}||\mathbb{S}^{n-1}|\left(\left\{{1}-\sqrt{\lambda}+\frac{2p+n-1}{2}\right\}-\frac{1}{2}\right)\nonumber \eqskip
&=&\frac{1}{(2\pi)^n} {\frac{|\mathbb{S}^n|}{2p}}|\mathbb{S}^{n-1}|\left(\left\{{1}-\sqrt{\lambda}+\frac{2p+n-1}{2}\right\}-\frac{1}{2}\right)
\end{eqnarray*}
The case $n=2$ follows from $|\lune{2}{\pi/p}|=|\mathbb{S}^2|/(2p)=4\pi/(2p)=2\pi/p$.
\end{proof}
\begin{remark}
 Taking $\varepsilon\to 0^+$ at (\ref{1.7.4}) and the fact that 
$Q(\lambda)\leq \frac{1}{2p}$  we get  for $p\geq 2$, 
$N(\lambda)+\frac{|\mathbb{S}^{n-1}|}{(2\pi)^{n-1}}\frac{\omega_{n-1}}{4}(1-\frac{1}{p})<c_0\lambda^{n/2}$
when $\lambda $ sufficiently large.  This also provides another proof that dihedral lunes with $p\geq 2$
satisfy P\'{o}lya's conjecture eventually, as stated in Corollary~\ref{CorC} in Section~\ref{Sec Third term}.
More generally, for any domain $\Omega$ of a Riemannian manifold satisfying  $\sup Q(\lambda)<-c_1$ we conclude the Dirichlet eigenvalues of $\Omega$  satisfy P\'{o}lya conjecture eventually.
This is the case when 
$\frac{|\partial \Omega|}{|\Omega|}> 2 \frac{|\mathbb{S}^{n-1}|}{|\mathbb{S}^{n}|}$,
independently of the size of the  subset of initial conditions of the periodic trajectores  $\Pi^a\subset S^*(\Omega)$,  or when the canonic measure of $\Pi^a$ as a subset of $S^*(\Omega)$ satisfies 
 $|\Pi^a|< -2c_1=\frac{\omega_{n-1}|\partial \Omega|}{2(2\pi)^{n-1}}$. 
\end{remark}
 
\section{Proof of inequality~(\ref{Ineq})\label{prooflemma64}}

\begin{lemma}\label{lmx1}
For all positive integer $n$ and positive real $R$ we have
\[
\sqrt{n-1+R}\left[ (R+2n-2)^{\frac{n}{2}}-R^{\frac{n}{2}}\right]
\geq n(n-1)\left[1+R+\frac{(n-3)(n-2)}{6(R+n-1)} \right]^{\frac{n-1}{2}},
\]
with equality for $n=1,2$.
\end{lemma}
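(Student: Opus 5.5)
The plan is to recentre the inequality so that the left-hand side becomes a symmetric difference, and then compare both sides by convexity. Put $x=R+n-1$ and $a=n-1$, so that $R=x-a$, $R+2n-2=x+a$ and $1+R=x-(n-2)$. The claim then reads
\[
\sqrt{x}\left[(x+a)^{n/2}-(x-a)^{n/2}\right]\geq n(n-1)\left[x-(n-2)+\frac{(n-2)(n-3)}{6x}\right]^{\frac{n-1}{2}},\qquad x> a=n-1.
\]
For $n=1$ both sides vanish. For $n=2$ the left side is $2\sqrt{x}$ and the right side is $2x^{1/2}$, which gives the stated equalities. So I assume $n\geq 3$. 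I will write the bracket on the left as an integral,
\[
(x+a)^{n/2}-(x-a)^{n/2}=\frac{n}{2}\int_{-a}^{a}(x+s)^{\frac{n}{2}-1}\,ds ,
\]
which is legitimate since $x-a=R>0$.

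\emph{Case $n\geq 4$.} Here $s\mapsto (x+s)^{n/2-1}$ is convex on $[-a,a]$, because its exponent is at least $1$. The Hermite--Hadamard (midpoint) inequality bounds the integral from below by $2a\,x^{n/2-1}$. Hence the left-hand side is at least $n(n-1)x^{(n-1)/2}$. It then suffices to show that the bracket on the right is at most $x$, that is, $(n-2)(n-3)/(6x)\leq n-2$, or equivalently $6x\geq n-3$. This holds since $x>n-1$.

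\emph{Case $n=3$.} The correction term vanishes, and the claim becomes $\sqrt{x}\,[(x+2)^{3/2}-(x-2)^{3/2}]\geq 6(x-1)$ for $x>2$. The integrand $\sqrt{x+s}$ is now concave, so the midpoint bound points the wrong way and I use the trapezoid bound instead. It gives the lower bound $3\big(\sqrt{x^2+2x}+\sqrt{x^2-2x}\big)$ for the left side. I then use $\sqrt{y^2-1}\geq y-1/y$, valid for $y\geq 1$, with $y=x\pm 1$, to get
\[
\sqrt{x^2+2x}+\sqrt{x^2-2x}\geq 2x-\frac{1}{x+1}-\frac{1}{x-1}.
\]
The last two terms sum to at most $2$ once $x\geq 2$, which yields the bound $\geq 2x-2$ and closes this case.

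The main obstacle is the low-dimensional case $n=3$, where convexity fails. There I expect to need the explicit elementary estimate above rather than a uniform argument. For $n\geq 4$ the midpoint bound has plenty of room, because the right-hand bracket lies below $x$ by roughly $n-2$.
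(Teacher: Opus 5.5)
Your proof is correct and follows the paper's route. It uses the same recentring $x=R+n-1$, $a=n-1$ (the paper's $M$ and $d$) and the same midpoint Hermite--Hadamard bound for $n\ge 4$, reducing to $6x\ge n-3$; for $n=3$ it reaches the same intermediate bound $3\sqrt{x}\,(\sqrt{x+2}+\sqrt{x-2})$, which the paper gets via the substitution $t=y^2$ and Jensen rather than the trapezoid inequality. Your estimate $\sqrt{y^2-1}\ge y-1/y$ then gives an explicit finish for the step the paper only describes as ``tedious but standard calculations''.
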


\begin{proof}
 Define the function
 \[
  f(n,R) = \sqrt{n-1+R}\left[ (R+2n-2)^{\frac{n}{2}}-R^{\frac{n}{2}}\right] - n(n-1)\left[1+R+\frac{(n-3)(n-2)}{6(R+n-1)} \right]^{\frac{n-1}{2}}
 \]
 for all positive integer $n$ and positive real $R$. Direct calculations yield $f(1,R) = f(2,R) = 0$, and so the result holds
 with equality for $n=1,2$.

 We postpone the proof of the case of $n=3$ till later, and shall now prove positivity of $f$ for all $n$ greater than
 or equal to $4$ and all positive $R$.
 Let $d=n-1$ and $M=R+n-1$. Then $n-1+R = M$, $R+2(n-1) = M+d$, and $R=M-d$. We now consider the function
 \[
 \begin{array}{lll}
  F(d,M) & = & f(d+1,M-n+1)\eqskip
  & = & M^{1/2} \left[ (M+d)^{\frac{d+1}{2}} - (M-d)^{\frac{d+1}{2}} \right]
  - d(d+1)\left[ M+1-d +\fr{(d-1)(d-2)}{6M} \right]^{\frac{d}{2}}.
 \end{array}
 \]
 Clearly $f$ is positive for $n$ greater than or equal to $4$ and all positive $R$ if we show
 that $F$ is positive for all $d$ greater than or equal to $3$ and all positive $M$. We first note
 that, for $d$ larger than $3$, the function $g:\R^{+}\to \R^{+}$ defined by $g(x) = x^{\frac{d+1}{2}}$ has
 a first derivative which is continuous and convex. Then, by Lemma~\ref{auxconvlemma} below,
 we have
 \[
  g(M+d)-g(M-d) \geq 2d g'(M),
 \]
 and so
 \begin{equation}
 \begin{array}{lll}
  F(d,M) & \geq & M^{1/2} \left[d(d+1) M^{\frac{d-1}{2}}\right]-d(d+1) \left[ M+1-d + \fr{(d-1)(d-2)}{6M}\right]^{\frac{d}{2}}\eqskip
  & = & d(d+1) \left( M^{\frac{d}{2}} - \left[M+1-d + \fr{(d-1)(d-2)}{6M}\right]^{\frac{d}{2}}\right).\label{flowb}
 \end{array}
 \end{equation}
 This will be positive provided that
 \[
  \begin{array}{lcll}
   & M & > & M+1-d + \fr{(d-1)(d-2)}{6M} \eqskip
   \Leftrightarrow & d-1 & > & \fr{(d-1)(d-2)}{6M} \eqskip
   \Leftrightarrow & 1 & > & \fr{(d-2)}{6M} \eqskip
   \Leftrightarrow & 6M & > & d-2 \eqskip
   \Leftrightarrow & 6(R+n-1) & > & n-3 \eqskip
   \Leftrightarrow & 6R + 5 n - 3 & > &0,
  \end{array}
 \]
 which is clearly true for $n$ larger than $3$.

 Finally we will consider the case $n$ equal to three. In this instance Lemma~\ref{auxconvlemma} may not be applied directly,
 as the correcponding function $g(x) = x^{3/2}$ does not have a convex derivative. We will, however, be able to establish the
 result in this case by following a similar path to that used in the proof of the lemma. When $d=2$ the function $F$ defined above
 becomes
 \[
 \begin{array}{lll}
  F(2,M) & = & M^{\frac{1}{2}}\left[ (M+2)^{\frac{3}{2}} - (M-2)^{\frac{3}{2}} \right] -6(M-1)\eqskip
  & = & \fr{3}{2} M^{\frac{1}{2}} \dint_{M-2}^{M+2} t^{1/2} \ {\rm d}t - 6(M-1)\eqskip
  & = & 3M^{\frac{1}{2}} \dint_{(M-2)^{1/2}}^{(M+2)^{1/2}} y^2 \ {\rm d}y - 6(M-1) \eqskip
  & \geq & \fr{12 M^{\frac{1}{2}}}{(M+2)^{1/2}-(M-2)^{1/2}}- 6(M-1)\eqskip
  & = & 3M^{\frac{1}{2}} \Big[(M+2)^{1/2}+(M-2)^{1/2}\Big]- 6(M-1),
 \end{array}
 \]
 where the inequality follows from Jensen's inequality. To see that this is positive for all $M$ larger than $2$ ($R>0$),
 it is now enough to consider the equivalent inequaliy $3M^{\frac{1}{2}} \Big[(M+2)^{1/2}+(M-2)^{1/2}\Big] > 6(M-1)$.
 After some tedious but standard calculations, we conclude that this holds for all $M$ larger than or equal to $2$.
\end{proof}
\begin{remark}
 By applying Lemma~\ref{auxconvlemma} to~\eqref{flowb} again, now with $g(x) = x^{d/2}$ and $d$ greater than or equal to $4$,
 it is possible to obtain another, perhaps more explicit, lower bound for $F(d,M)$ and, as a consequence, for the function $f(n,R)$.
 More precisely, for $n\geq5$ and all positive $R$ we have
 \[
  f(n,R) \geq \fr{1}{2}n(n-1)^2(n-2) \left(1-\fr{n-3}{6(R+n-1)} \right) \left( R+\fr{n}{2} +\fr{n(n-1)}{12(R+n-1)}\right).
 \]
 This may then be used to bound the function $\Phi_n$ used in the proof of Theorem~\ref{thmxquantitative}, but we will not pursue
 this approach further.
\end{remark}

The following lemma is a variation of the Hermite-Hadamard inequality for convex functions, written in differential form. 
For completeness, we provide the statement and the corresponding proof here.
\begin{lemma}\label{auxconvlemma}
 Let $g:\R^{+}\to \R^{+}$ have a continuous first derivative which is convex. Then
 \[
  g(x+y) - g(x-y) \geq 2y g'(x),
 \]
 for all positive numbers $x,y$ such that $y<x$.
\end{lemma}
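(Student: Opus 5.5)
The plan is to reduce the inequality to a pointwise comparison of $g'$ on the interval $[x-y,x+y]$, via the fundamental theorem of calculus. Since $g'$ is continuous, we have
\[
g(x+y)-g(x-y)=\int_{x-y}^{x+y} g'(t)\,{\rm d}t=\int_{0}^{y}\big(g'(x+s)+g'(x-s)\big)\,{\rm d}s,
\]
where the second equality splits the interval at $x$ and substitutes $t=x\pm s$. The condition $0<y<x$ guarantees that all points $x\pm s$ with $s\in[0,y]$ lie in the domain $\R^{+}$, or at worst at its endpoint, where continuity suffices.

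The key step is the midpoint convexity inequality for $g'$. For each $s\in[0,y]$, the point $x$ is the midpoint of $x-s$ and $x+s$, so convexity of $g'$ gives
\[
g'(x)\le \tfrac12\big(g'(x+s)+g'(x-s)\big).
\]
Integrating this over $s\in[0,y]$ yields
\[
\int_0^y\big(g'(x+s)+g'(x-s)\big)\,{\rm d}s\ \ge\ 2y\,g'(x),
\]
which is exactly the claimed inequality $g(x+y)-g(x-y)\ge 2y g'(x)$. This is the left half of the Hermite--Hadamard inequality applied to the convex function $g'$ on $[x-y,x+y]$.

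There is no real obstacle here. The only points needing care are justifying the integral representation, which uses only that $g\in C^1$, and handling the endpoint $x-y$ near $0$, which is harmless since $y<x$. An alternative route, which avoids integrals altogether, is to take a supporting line $\ell(t)=g'(x)+c(t-x)$ of the convex function $g'$ at $x$ and integrate the inequality $g'\ge\ell$ over $[x-y,x+y]$. The linear term integrates to zero by symmetry, and the conclusion follows in the same way.
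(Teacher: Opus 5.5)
Your proof is correct and is essentially the paper's argument: both write $g(x+y)-g(x-y)=\int_{x-y}^{x+y}g'(t)\,{\rm d}t$ and apply the left half of the Hermite--Hadamard inequality to the convex function $g'$. The only difference is that the paper cites Jensen's inequality for the uniform average on $[x-y,x+y]$, whose mean is $x$, while you pair the symmetric points $x\pm s$ and use midpoint convexity, which is a slightly more elementary route to the same bound.
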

\begin{proof}
 We have $g(x+y) -g(x-y) = \dint_{x-y}^{x+y} g'(t) \, {\rm d} t.$ Writing $\varphi(t) = g'(t)$, by Jensen's inequality
 we have
 \[
 \begin{array}{lcll}
  & \varphi\left( \fr{1}{2y} \dint_{x-y}^{x+y} t \ {\rm d}t\right) & \leq  & \fr{1}{2y} \dint_{x-y}^{x+y} \varphi(t) \, {\rm d}t\eqskip
  \Leftrightarrow & \varphi(x) & \leq & \fr{1}{2y} \dint_{x-y}^{x+y} \varphi(t) \, {\rm d}t\eqskip
  \Leftrightarrow & g'(x) & \leq & \fr{1}{2y} \left( g(x+y) - g(x-y)\right),
 \end{array}
 \]
as desired.
\end{proof}

\section*{Acknowledgements} This work was partially supported by the Funda\c c\~{a}o para a Ci\^{e}ncia e a Tecnologia (Portugal) through project UID/00208/2025.
The second author is grateful to Professor Liu Xiang for his hospitality during a visit to the Lanzhou Center for Theoretical Physics, School of
Physical Science and Technology, Lanzhou University, where part of this work was carried out.

\end{document}